\documentclass[11pt,reqno]{amsart}

\usepackage{bm}   % new PF 13 May
\usepackage{stmaryrd} 

\usepackage{amsmath,amsfonts,amsthm,color,amssymb,mathrsfs}
\usepackage{placeins}% for \FloatBarrier
\usepackage{float}  % new PF 30 Jun

\usepackage{longtable}
\usepackage{array}
\newcolumntype{L}[1]{>{\raggedright\arraybackslash}p{#1}}
\usepackage{xcolor}

\usepackage[T1]{fontenc}
\usepackage{lmodern}
\usepackage[normalem]{ulem}
\usepackage[left=1in, right=1in, top=1.1in,bottom=1.1in]{geometry}
\usepackage{enumitem}
\usepackage{hyperref}
\hypersetup{
colorlinks   = true,
citecolor    = blue,
linkcolor=blue
}
\usepackage{cancel}
\usepackage{comment}
\usepackage{changepage}
\allowdisplaybreaks
\usepackage{csquotes}
\usepackage{graphicx}
\usepackage{appendix}

\usepackage{tikz} \usetikzlibrary{arrows.meta,positioning}

\usepackage{todonotes}

\makeatletter
\newcommand{\cond}[2]{%
    \phantomsection
    \def\@currentlabel{(#1)}%
    \label{#2}%
    \textup{\bf(#1)}
} %for the specific condition of assumptions

 \makeatother
\usepackage{lipsum} 

\usepackage{xargs}         
\newtheorem{lemma}{Lemma}[section]
\newtheorem{theorem}[lemma]{Theorem}
\newtheorem{definition}[lemma]{Definition}
\newtheorem{corollary}[lemma]{Corollary}
\newtheorem{remark}[lemma]{Remark}
\newtheorem{proposition}[lemma]{Proposition}

\newtheorem{assumption}{Assumption}
\newtheorem{assumptionalt}{Assumption}[assumption]
\newenvironment{assumptionp}[1]{
\renewcommand\theassumptionalt{#1}
\assumptionalt
}{\endassumptionalt}

\DeclareMathOperator*{\esssup}{ess\,sup}

\newcommand{\B}{{\mathbb{B}}}

\newcommand{\FP}{{\mathfrak{P}}}

\newcommand{\FC}{{\mathfrak{C}}}

\newcommand{\BX}{{\mathbf{X}}}
\newcommand{\BB}{{\mathbf{B}}}

\newcommand{\MF}{{\mathcal{F}}}
\newcommand{\MG}{{\mathcal{G}}}

\newcommand{\MP}{{\mathcal{P}}}

\numberwithin{equation}{section}

\def\PP{{\mathbb P}}
\def\rme{{\rme}}

\def\R{\mathbb R}
\def\E{\mathbb E}

\newcommand{\normmm}[1]{{\left\vert\kern-0.25ex\left\vert\kern-0.25ex\left\vert #1 
   \right\vert\kern-0.25ex\right\vert\kern-0.25ex\right\vert}}
\newcommand{\shortthickbar}{%
  \vcenter{\hbox{\rule{1.8pt}{2.2ex}}}%
}

\newcommand{\thicknm}[1]{%
  \mathopen{\shortthickbar}%
  \mkern4mu #1 \mkern4mu%
  \mathclose{\shortthickbar}%
}

\newcommand{\triplenorm}[1]{%
\left|\!\left|\!\left|#1\right|\!\right|\!\right|%
}

\begin{document}

\title[Rough BSDEs]{Rough backward stochastic differential equations}

\author{Peter K. Friz}
\address{Institut für Mathematik, 
Technische Universität Berlin and Weierstraß Institut, Berlin, Germany}
\email{friz@math.tu-berlin.de}

\author[J. Song]{Jian Song}\address{Research Center for Mathematics and Interdisciplinary Sciences; Frontiers Science Center for Nonlinear Expectations, Ministry of Education, Shandong University, Qingdao 266237, China}
\email{txjsong@sdu.edu.cn}

\author[H. Zhang]{Huilin Zhang}\address{Research Center for Mathematics and Interdisciplinary Sciences; Frontiers Science Center for Nonlinear Expectations (Ministry of Education), Shandong University, Qingdao 266237, China}
\email{huilinzhang@sdu.edu.cn}

\author[K. Zhang]{Kuan Zhang}\address{Institute for Theoretical Sciences, Westlake Institute for Advanced Study, Westlake University, Hangzhou, Zhejiang, 310030, China}
\email{zhk01815@163.com}

\subjclass[2020]{60L20, 60H10}
\keywords{Backward stochastic differential equations, rough paths, rough semimartingales}

\begin{abstract}

We develop an intrinsic well-posedness theory for nonlinear backward stochastic differential equations (BSDEs) driven simultaneously by Brownian motion and a (level-$2$) rough path of finite $p$-variation. 
Unlike earlier approaches based on smooth approximation or transformation methods, we formulate the equation directly by viewing its solution as a rough semimartingale (in the sense of \cite{friz2023rough}). 
This framework is particularly suited to BSDEs, whose Brownian martingale component is only implicitly defined and lacks the \emph{a priori} time regularity required by stochastic-sewing-based controlled rough path methods \cite{fhl21,allan2024rough}. 

We establish comparison, existence, uniqueness, and stability under the natural regularity condition $H\in C_b^\gamma$, $\gamma>p$. The main analytical difficulty, loss of integrability arising from nonlinear composition, is overcome through new conditional $p$-variation norms with BMO-type properties. Finally, by randomizing the rough driver as a Brownian rough path, we establish a direct correspondence between rough BSDEs and backward doubly stochastic differential equations (BDSDEs). 
\end{abstract}

\maketitle
\tableofcontents

\section{Introduction}

Let $\mathbf X$ be a $p$-rough path with $2<p<3$. On a filtered probability
space carrying a Brownian motion $W$, we consider the following rough backward
stochastic differential equation (rough BSDE)
\begin{equation}\label{e:BSDE}
Y_{t}
=
\xi
+
\int_{t}^{T} f(r,Y_{r},Z_{r})\,dr
+
\int_{t}^{T} H(Y_{r})\,d\mathbf X_r
-
\int_{t}^{T} Z_r\,dW_r,
\qquad t\in[0,T].
\end{equation}
This equation was originally introduced by Diehl and Friz \cite{DF}. Their construction
starts with smooth paths $\mathrm X^n$ whose canonical rough path lifts
$\mathbf X^n$ converge to $\mathbf X$. The corresponding classical BSDEs are
solved via a Doss--Sussmann (type flow) transformation, and the solution to \eqref{e:BSDE} is then defined as the limit of the resulting pairs $(Y^n,Z^n)$.
Formally, the rough integral is interpreted as
\begin{equation}\label{e:approximate rough integral}
\int_t^T H(Y_r)\,d\mathbf X_r
:=
\lim_{n\to\infty}
\int_t^T H(Y^n_r)\,d\mathbf X^n_r,
\end{equation}
where the limit is independent of the approximating sequence and gives a robust
solution map in the rough path topology. It does not, however, provide an
intrinsic definition of the integral in~\eqref{e:BSDE}. In particular,
$H(Y)$ is not treated as an integrand within a rough stochastic integration
theory, and the dynamics of $Y$ are characterized only through approximation.

A direct theory for~\eqref{e:BSDE} is available in the Young regime. Diehl and Zhang
\cite{diehl2017young} consider drivers of finite $q$-variation with
$q<2$ and establish well-posedness via a fixed-point argument. In that setting,
the integral involving $Y$ is a pathwise Young integral and the equation is
satisfied directly. Song et al. \cite{BSDEYoung-I, BSDEYoung-II} further develop the nonlinear Young regime, providing a probabilistic interpretation of stochastic partial differential equations (SPDEs) driven by space-time noise in the Young regime. A general jump case is studied by Becherer and Sun \cite{BS25}. For genuinely rough drivers, Liang and Tang
\cite{liang2025multidimensional} extend the approximation and flow approach
of \cite{DF} to multidimensional BSDEs, under either a smallness condition or
a diagonal structural assumption on the nonlinear rough coefficient. They
also develop an intrinsic theory for multidimensional equations but with a
linear rough drift. Their construction
uses stochastic controlled rough paths, a $p$-variation rough stochastic
integral, and the stochastic sewing lemma. Stochastic controlled rough paths
and stochastic sewing also play a central role in recent developments for
rough (forward) stochastic differential equations; see, for example,
\cite{fhl21,allan2024rough}. A reflected rough BSDE theory is introduced by Li et al. \cite{li2024reflected}, who also study the associated  rough PDEs with obstacles, as well as optimal stopping and American option pricing problems. 

The aim of the present paper is to solve \eqref{e:BSDE} intrinsically
with a
nonlinear rough coefficient $H(Y)$ appearing in the rough integral,
without resorting to  Doss--Sussmann transformation or defining the solution through smooth approximations. We
show that the pair $\bigl(Y,-H(Y)\bigr)$
belongs to an appropriate space of rough semimartingales (RSMs), originally introduced in \cite{friz2023rough}. Within this
space, the integral against $\mathbf X$ in \eqref{e:BSDE} is defined intrinsically as a rough stochastic
integral via compensated Riemann sums:
\begin{equation}\label{e:intro rough integral}
\int_t^T H(Y_r)\,d\mathbf X_r
:=
\lim_{|\pi|\to0}
\sum_{[u,v]\in\pi}
\left(
H(Y_u)\,\delta\mathrm X_{u,v}
-
DH(Y_u)H(Y_u)\,\mathbb X_{u,v}
\right),
\end{equation}
where the limit is taken in probability. Thus, the solution pair $(Y,Z)$
satisfies \eqref{e:BSDE} as a hybrid rough stochastic integral equation in
the precise sense of Definition~\ref{def:def of BSDEs}.

A notable feature of our work is that we are able to obtain all required estimates in the RSM setting. In particular, neither the
construction of \eqref{e:intro rough integral} nor the well-posedness proof
uses the stochastic sewing lemma. This differs from the stochastic
controlled rough path approach of Liang and Tang
\cite{liang2025multidimensional} and from the rough SDE theory developed in
\cite{fhl21,allan2024rough}. The present work therefore provides an original alternative to stochastic sewing for natural classes of equations that combine rough signals with (implicit) martingale components.

The distinction is relevant for BSDEs because the martingale integrand $Z$
is part of the solution and is known only implicitly as a general predictable process. Stochastic controlled
rough path methods require suitable time regularity of the martingale
component in an $L^k$-scale. Such regularity is not available from the
definition of the classical BSDE and must normally be established as part of the
analysis. A rough semimartingale, by contrast, is specified through a
Doob--Meyer-type decomposition and does not impose time regularity on its
martingale part. Moreover, its definition is independent of a prescribed
$L^k$-integrability exponent. This allows the notion of solution to be
formulated before any integrability estimates are proved.

The intrinsic formulation also allows the comparison principle and stability estimates
to be established directly from the equation. Strict comparison, for example, is
not generally preserved under passage to the limit: approximation readily
gives non-strict ordering, but strictness may be lost. Similarly, direct
difference estimates are difficult when the rough integral is known only
through the approximation~\eqref{e:approximate rough integral}. Working in the RSM space, we
prove a strict comparison theorem, an \emph{a priori} estimate, existence
and uniqueness, and local stability with respect to the terminal condition,
the drift coefficient, and the driving rough path.

Our method also reduces the regularity required for the rough coefficient, and we only need 
\begin{equation}\label{e:con}
H\in C_b^\gamma, \quad 
 \text{for some } \gamma>p.
\end{equation}
This is the standard regularity threshold for the corresponding RDE. In contrast, the flow transformation approach in \cite{DF} requires the stronger condition
$H\in C_b^{\gamma+2}$. The additional two derivatives are needed to control
the transformed stochastic equation and are not intrinsic to the rough
dynamics. Since no transformation is used in the present approach, the strict comparison
theorem, the \emph{a priori} estimate, and the well-posedness result all hold
under the natural condition~\eqref{e:con}.

We now describe the main estimates. Suppose first that $Y$ is essentially
bounded. To control the RSM norm of $(Y,-H(Y))$, one needs estimates for the
remainder in the rough expansion of $H(Y)$. In contrast to the deterministic
RDE case, this remainder contains a martingale component, which can be controlled via a recent result on $p$-variation estimates for martingale transforms \cite{friz2023rough}. Its conditional version Lemma~\ref{lem:p-var martingale estim} gives the
required control of the mixed conditional moments of the $p$-variation
norm; see \eqref{e:K4} in Proposition~\ref{prop:G(Y) in D}. This is the main
input in the composition estimates for rough semimartingales.

An essential bound for $Y$ is obtained through comparison. The difference
of two solutions satisfies a linear rough BSDE. We derive an explicit
exponential representation of this linear equation and combine it with a
John--Nirenberg inequality for $\mathrm{VMO}^{p\text{-var}}$ processes. This yields the comparison theorem and, by comparison with suitable deterministic
RDEs, the required uniform bound. The same representation also yields strict
comparison under the corresponding strict ordering assumptions.

The remaining difficulty is the loss of integrability caused by nonlinear
composition. As observed in the stochastic controlled rough path setting
\cite{fhl21}, estimates for $H(Y)-H(\widetilde Y)$ naturally require a
higher integrability exponent than the one initially available for
$Y-\widetilde Y$. We address this by proving an energy estimate for suitable
increasing processes. It implies that the conditional norms $\left\|
\|\bm\cdot\|_{p\text{-}\mathrm{var}}
\right\|_{\infty;k,\infty}$, 
defined by conditional moments of the $p$-variation norm, are equivalent
for different values of $k$. These norms have the same self-improving
feature as classical $k$-\textrm{BMO} norms, and we therefore refer to them
as \textrm{BMO}-type norms. Their equivalence compensates for the
integrability loss in the nonlinear composition estimate. This leads to a
local contraction and, by iteration, to existence and uniqueness of the solution on the
full time interval. The same estimates yield continuous dependence of the
solution on the data.

Finally, we relate rough BSDEs to backward doubly stochastic differential
equations (BDSDEs), famously introduced by Pardoux and Peng \cite{PardouxPeng1994BDSDE}, as a counterpart in the BSDE theory for SPDEs.
To this end, we first introduce a backward causality property for processes with rough parameters to describe their measurability with respect to the backward filtration. Then we show that the solution to rough BSDEs has such a measurable version with respect to the $\sigma$-algebra generated by the forward Brownian motion and the backward rough path flow, via the measurable selection initiated by Dellacherie and Meyer \cite{Meyer1978} and recently developed by Denkert et al. \cite{DKP24}. Randomizing $\mathbf X$ by a Brownian rough path then produces
a BDSDE, while the intrinsic rough integral identifies the corresponding
backward stochastic integral. This, for the first time, provides a direct connection between the
rough BSDE and BDSDE formulations without passing through smooth
approximations of the additional Brownian signal. (Applications to semilinear rough and stochastic PDEs, in the spirit of \cite{PardouxPeng1994BDSDE,LionsSouganidis2000semilinear,DF,liang2025multidimensional}, are also possible but will be pursued elsewhere.)

In conclusion, our contributions are threefold. Firstly, we introduce the intrinsic notion of solution to rough BSDEs in the framework of rough semimartingales, as well as the corresponding rough stochastic analysis (without reliance on --- and complementary to --- the work \cite{fhl21}).
Secondly, we establish the well-posedness and overcome the problem of degenerating integrability by establishing energy estimates for BMO-type processes. Thirdly, we connect rough BSDEs and BDSDEs via a novel direct randomization argument.

{\em Organization of Paper.}
In Section~\ref{sec:notations}, we introduce notations and assumptions. Section~\ref{sec:rough sto analysis} recalls the RSM framework and the construction of rough stochastic integrals. 
In Section~\ref{subsec:energy estimates and RSM estimates}, we prove energy estimates for increasing processes and the equivalence of the \textrm{BMO}-type $p$-variation norms. Composition and integration estimates for RSMs are established in Section~\ref{subsec:estimates for rough semimartingales}, and the RSM seminorm used in the stability analysis is introduced in Section~\ref{subsec:seminorms}. 
Section~\ref{sec:Rough BSDE} contains the well-posedness theory. We first prove the comparison theorem in Section~\ref{subsec:comparison}, derive the \emph{a priori} estimate in Section~\ref{subsec:a priori estimate}, and then establish existence, uniqueness, and stability under the condition~\eqref{e:con} in Section~\ref{subsec:Rough BSDE with low regularity H}. Multidimensional equations are studied in Section~\ref{subsec:Multi dimensional}.
Section~\ref{sec:BDSDEs} studies the connection with BDSDEs, including backward causality in Section~\ref{subsec:causality} and Brownian randomization in Section~\ref{subsec:randomization of BDSDEs}. 
Appendix~\ref{sec:Appendix A} collects L\'epingle-type inequalities, conditional $p$-variation estimates for martingale transforms, and estimates for RDEs with unbounded drift. The symbolic index is provided in Appendix~\ref{app:symbolic_index}.

{\em Mathematical Roadmap.} First and importantly,   Lemma~\ref{lem:BMO of q-var} implies that the $q$-variation of a continuous process is a \textrm{BMO}-process. Consequently, the conditional norms $\| \|\bm\cdot\|_{q\text{-}\mathrm{var}} \|_{\infty;k,\infty}$ are comparable for all $k \ge 1$. Together with the conditional martingale-transform estimate of Lemma~\ref{lem:p-var martingale estim}, this underlies two parallel lines of the argument.
The first controls a single solution: Proposition~\ref{prop:G(Y) in D} bounds the remainder of $H(Y)$, Corollary~\ref{cor:int of the solution} converts this into a bound for $\int H(Y)\,d\mathbf X$, and Propositions~\ref{prop:bounded by BMO} and~\ref{prop:BMO and ess sup} then show that, once $\|Y\|_{\infty}$ is controlled, both $\|\delta M^{Y}\|_{\mathrm{BMO}}$ and the RSM seminorm are bounded.
The missing bound for $\|Y\|_{\infty}$ comes from a different direction: the comparison theorem (Theorem~\ref{thm:comparison}), proved via the exponential representation of the linearized equation and the John--Nirenberg inequality, compares $Y$ with two deterministic RDEs and yields the \emph{a priori} estimate of Theorem~\ref{thm:BMO bound}. 
The second line of argument controls differences: Proposition~\ref{prop:R^G - R^G}, in which the equivalence of norms compensates for the integrability loss in the nonlinear composition, feeds into Lemma~\ref{lem:Gamma and I} and then into the contraction property of Proposition~\ref{prop:contraction}. Theorem~\ref{thm:the existence} combines the contraction on small intervals with the \emph{a priori} estimate to obtain global well-posedness, and the same difference estimates give the stability result, Theorem~\ref{thm:stability}.  Since the comparison theorem is the only genuinely scalar step, Theorem~\ref{thm:multi-d conditional} records that the whole argument goes through in the multidimensional case once the sup-norm bound is available.
Theorem~\ref{thm:BDSDE} shows that every randomized rough BSDE solution is the unique $L^2$ solution of a BDSDE, and its proof relies on the preceding theory only through well-posedness, stability, and the RDE bound underlying the \emph{a priori} estimate.

%\begin{funding}
{\bf Acknowledgment}
PKF and HZ acknowledge support from DFG CRC/TRR 388 ``Rough
Analysis, Stochastic Dynamics and Related Fields'', Projects A07, B04 and B05. Part of this work was carried out during a visit of the first author to Shandong University. 
%KL acknowledges supports from EPSRC
%[grant number EP/Y016955/1] and from the Humboldt fellowship while at TU Berlin where this project was commenced. 
JS is partially supported by National Natural Science Foundation of China (Grant Numbers 12521001, 12471142); JS and HZ are supported by the Fundamental Research Funds for the Central Universities.  HZ is partially supported by NSF of Shandong (ZR2023MA026).
%\end{funding}

\section{Preliminaries}

\subsection{Notations and Assumptions}\label{sec:notations}

We collect notations that  will be used in this article. Let $(E,\|\bm\cdot\|_{E})$ be a Banach space, and let $V$ and $K$ be Euclidean spaces. We denote by $|\bm\cdot|$ the Euclidean norm  for both vectors and matrices. Some of the definitions introduced below depend on the underlying time interval or terminal time. For simplicity of notation, we omit this dependence when the interval is $[0,T]$ (with terminal time $T$) and no confusion arises.

\begin{itemize}[leftmargin=1.2em, labelsep=0.2em]

\item {\it Probability space:} Fix a finite time horizon $T>0$. For an integer $d\ge 1$, let $\left\{W_t\right\}_{t\in[0,T]}$ be a $d$-dimensional standard Brownian motion on the probability space $(\Omega,\mathcal F,\mathbb P)$, and let $(\mathcal F_{t})_{t\in[0,T]}$ be the augmented filtration generated by $W$. Denote the conditional expectation by $\E_{t}[\bm\cdot] := \E[\bm\cdot |\mathcal F_{t}]$. 

\

\item {\it Partition:} For $0\le s\le t\le T$, set $\Delta_{[s,t]}:=\{(u, v): s \leq u \leq v \leq t\}$ and $\Delta := \Delta_{[0,T]}$. Let $\mathcal P_{[s,t]}$ denote the set of all partitions $\pi=\{[t_i,t_{i+1}];s=t_0< t_1<\cdots< t_n=t\}$ of $[s,t]$, and define the mesh size of $\pi$ by $|\pi| :=\max_{0\le i<n}(t_{i+1}-t_i)$.

\

\item {\it Control function:} For a function $w:\Delta \to \R^{+}$, we call $w$ a control function if $w$ is continuous and satisfies that $w(s,u) + w(u,t) \le w(s,t)$ for $s\le u \le t$. In particular, $w(s,s) = 0$ for all $s$.

\ 

\item {\it Linear map:} We denote by $\mathscr L(V,K)$ the space of linear maps from $V$ to $K$, endowed with the operator norm $\|F\|_{\mathscr L(V,K)}=\sup _{x \in V,|x| \leq 1}|F(x)|$. We write $V\otimes K$ for the (algebraic) tensor product of $V$ and $K$,
equipped with the projective norm.
In finite dimensions, the spaces $\mathscr L(V,K)$ and $K \otimes V$
are canonically isomorphic, and the above norms are equivalent.

\ 

\item {\it Space of functions:} 
For any closed set $U\subset V$ and a measurable function $f:U\to E$, define the uniform norm by
\begin{equation*}
\|f\|_{\infty;U} := \esssup_{r\in U}\|f(r)\|_{E}.
\end{equation*}
If $U$ is clear from the context, we simply write $\|f\|_{\infty}$ for $ \|f\|_{\infty;U}$.
Denote by $C(U,E)$ the space of continuous functions $f:U\to E$. Denote by $C_{b}(U,E)$ the space of functions $f\in C(U,E)$ with finite uniform norm. 

For $(s,t) \in \Delta$ and $k \ge 1$, denote by $L^{k}([s,t],E)$ the space of measurable functions $f:[s,t] \to E$ such that  
\begin{equation*}
\|f\|_{L^{k}([s,t])}:=\Big\{\int_{s}^{t} |f_r|^{k} dr\Big\}^{1/k} < \infty. 
\end{equation*}
For $\lambda\ge 1$, denote by $C^{\lambda\text{-var}}(\Delta_{[s,t]},E)$ the space of continuous functions $f\in C(\Delta_{[s,t]},E)$ such that  
\begin{equation*}
\|f\|_{\lambda\text{-var};[s,t]} := \sup_{\pi\in \mathcal P_{[s,t]}}\Big\{\sum_{[u,v]\in\pi}\| f_{u,v}\|^{\lambda}_{E}\Big\}^{1/\lambda} < \infty.
\end{equation*}
Also denote the $\lambda$-variation norm of $f$ as a two-parameter function defined by   
\begin{equation*}
(\|f\|_{\lambda\text{-}\mathrm{var}})_{s,t} := \|f\|_{\lambda\text{-}\mathrm{var};[s,t]}.
\end{equation*}

For any $\alpha \in (0,1]$ and $f\in C(\Delta_{[s,t]},E)$, we write $f\in C^{\alpha}(\Delta_{[s,t]},E)$ if 
\begin{equation*}
\|f\|_{\alpha\text{-H\"ol};[s,t]}  := \sup\limits_{(u,v)\in  \Delta_{[s,t]};u\neq v} \frac{\|f_{u,v}\|_E}{|u-v|^{\alpha}}  < \infty.
\end{equation*}

For $g\in C([s,t],E)$, define the increment operator $\delta g:\Delta_{[s,t]}\to E$ by 
\begin{equation*}
\delta g_{u,v} := g_{v} - g_{u},\quad \text{where } (u,v)\in \Delta_{[s,t]}.
\end{equation*}
For a one-parameter function $g\in C([0,T],E)$, we write $\|\delta g\|_{\lambda\text{-}\mathrm{var}}$ and similarly for the H\"older norm. 

Assume $\mathbf X = (\mathrm X,\mathbb X)$ with $\mathrm X\in C([0,T],V)$ and  $\mathbb X\in C(\Delta,V^{\otimes 2})$. 
For any $(s,t)\in \Delta$, set 
\begin{equation}\label{eq:XX-norm}
|\delta \mathbf X |_{\lambda\text{-}\mathrm{var};[s,t]} := \|\delta \mathrm X\|_{\lambda\text{-}\mathrm{var};[s,t]} + \|\mathbb X\|_{\lambda/2\text{-}\mathrm{var};[s,t]} \, 
\end{equation}
and 
\[
\triplenorm{\delta \mathbf X}_{\lambda\text{-}\mathrm{var};[s,t]}
:=
\|\delta \mathrm X\|_{\lambda\text{-}\mathrm{var};[s,t]} + \|\mathbb X\|^{1/2}_{\lambda/2\text{-}\mathrm{var};[s,t]}.
\]
We set $|\delta \mathbf X |_{\alpha;[s,t]} := \|\delta \mathrm X\|_{\alpha\text{-H\"ol};[s,t]} + \|\mathbb X\|_{2\alpha\text{-H\"ol};[s,t]}$ and $\triplenorm{\delta \mathbf X}_{\alpha;[s,t]}
:=
\|\delta \mathrm X\|_{\alpha\text{-H\"ol};[s,t]} + \|\mathbb X\|^{1/2}_{2\alpha\text{-H\"ol};[s,t]}$. 

A path $\mathrm{X} \in C([0,T],V)$ is called a Lipschitz path if it is Lipschitz continuous in time. It is termed a smooth path if it possesses continuous derivatives of all orders on the closed interval $[0,T]$, in which case we write $\mathrm{X} \in C^{\infty}([0,T],V)$. For a Lipschitz path $\mathrm{X}$, we define its (canonical) lift as the pair $\mathbf{X} = (\mathrm{X},\mathbb{X})$, where the second-order component $\mathbb{X}_{s,t}$ is given by $\mathbb X_{s,t} := \int_{s}^{t} \delta \mathrm X_{s,r} \otimes \partial_r \mathrm X_{r} dr$, $(s,t) \in \Delta$. 

\

\item {\it Differentiability:} For $\gamma\ge 0$, let $\lfloor \gamma \rfloor$ denote the largest integer less than or equal to $\gamma$. Denote by $C^{\gamma}_b(V, K)$ the space of $\lfloor\gamma\rfloor$ times continuously differentiable functions $F: V \to K$ with bounded derivatives up to order $\lfloor\gamma\rfloor$,
and such that $D^{\lfloor\gamma\rfloor}F$ is H\"older continuous with
exponent $\gamma-\lfloor\gamma\rfloor$. The norm is defined by
\begin{equation*}
\|F\|_{C^{\gamma}_b}:=\sum_{i=0}^{\lfloor\gamma\rfloor} \| D^i F \|_{\infty} + \|D^{\lfloor \gamma \rfloor} F\|_{(\gamma - \lfloor \gamma \rfloor)\text{-H\"ol}}.
\end{equation*}

\ 

\item {\it $L^{k}$-norm:} For $k \ge 1$ ($k = \infty$ resp.), denote by  
$L^{k}_{t}(V) := L^{k}(\mathcal F_t,V)$ the space of $V$-valued
$\mathcal F_t$-measurable random variables $\xi$ such that
\begin{equation*}
\|\xi\|_{k} := \big\{\E\big[|\xi|^k\big]\big\}^{\frac1k} < \infty \quad (\|\xi\|_{\infty} := \esssup_{\omega\in\Omega} |\xi(\omega)| < \infty\ \text{resp.})
\end{equation*}
Moreover, we denote by $L^0$ the space of measurable random variables.

For $(s,t) \in \Delta$ and $k\ge 1$ ($k=\infty$ resp.), define $\mathcal L^{k}([s,t]\times\Omega,V)$ ($\mathcal L^{\infty}([s,t]\times\Omega,V)$ resp.) as the space of progressively measurable processes $S$ satisfying 
\begin{equation*}
\|S\|_{\mathcal L^{k}([s,t] \times \Omega)} := \left\| \|S_{\bm\cdot}\|_k \right\|_{L^{k}([s,t])}
< \infty \quad \Big(\|S\|_{\mathcal L^{\infty}([s,t] \times \Omega)}:=  \sup_{r\in[s,t]}\|S_r\|_{\infty} < \infty  \quad \text{resp.}\Big), 
\end{equation*}
where we may also use  $\|\bm\cdot\|_{\infty} := \|\bm\cdot\|_{\mathcal L^{\infty }([0,T]\times \Omega)}$ for processes as well if there is no confusion. 

Denote by $\mathcal H^{k,\infty}([s,t])$ the space of progressively measurable processes $S$ such that  
\begin{equation*}
\|S\|_{\mathcal H^{k,\infty};[s,t]} := \left\{ \left\| \E_{\bm\cdot}\left[\|S\|^k_{L^{k}([\bm\cdot,t])}\right] \right\|_{\mathcal L^{\infty}([s,t]\times\Omega)} \right\}^{\frac{1}{k}} < \infty. 
\end{equation*}

\

\item {\it Mixed integrability:} Set $1\le m\le n\le \infty$ and $m<\infty$. Let $\mathcal G$ be a sub-$\sigma$-field of $\mathcal F$. Define the conditional $L^m$-norm as follows: 
\begin{equation*}
\left\|\left. \xi \right|\mathcal G\right\|_{m} := \left\{\E\left[\left.|\xi|^{m}\right|\mathcal G\right]\right\}^{\frac{1}{m}}\quad \end{equation*}
Define the following $(m,n)$-norm by taking the $L^n$-norm of the conditional $L^m$-norm:
\begin{equation}\label{e:mixed moment}
\left\|\left.\xi \right|\mathcal G\right\|_{m,n} := \left\| \left\|\left. \xi \right|\mathcal G\right\|_{m} \right\|_{n}.
\end{equation}
We have (\cite[Proposition~2.3]{fhl21}), for $n\ge m$,
\[ \|\xi\|_m =\| \|\xi |\mathcal G\|_m\|_m \le \|\|\xi |\mathcal G\|_m\|_n\le \|\|\xi |\mathcal G\|_n\|_n=\|\xi \|_n, \quad \text{for all } \xi \in L^n.\]
For a two-parameter process  $F:\Delta\times \Omega\to V$ define
\begin{equation*}
\|F\|_{\infty;m,n} := \sup_{\substack{(s,t)\in\Delta}} \left\|\left. F_{s,t} \right|\mathcal F_{s}\right\|_{m,n},\ \text{ and }\ \|F\|_{\infty;n} :=\sup_{\substack{(s,t)\in\Delta}} \left\| F_{s,t} \right\|_{n}.
\end{equation*}
\ \ 

\item {\it \emph{BMO}-martingale:}  
For $(s,t)\in\Delta$, let $\mathcal M^{\text{loc}}_{[s,t]}(V)$ be the space of continuous local martingales on $[s,t]$, with $M_{s} = 0$. For $M_{\bm\cdot} - M_{s}\in \mathcal M^{\text{loc}}_{[s,t]}(V)$, we call $M\in \text{BMO}_{[s,t]}(V)$ if 
\begin{equation*}
\|\delta M\|_{\mathrm{BMO};[s,t]}:=\esssup_{(u,\omega)\in[s,t]\times \Omega}\left\{\E_{u}\left[\delta \langle M \rangle_{u,t} \right]\right\}^{\frac{1}{2}}<\infty.
\end{equation*}

\ 

\item {\it \emph{BMO} and \emph{VMO}-process:} Let $S$ be a $V$-valued adapted right continuous process with left limits (\textrm{RCLL}). Then, we call $S$ a \textrm{BMO}-process on $[s,t]$ if
\begin{equation*}
\varrho_{s,t}(S) := \sup_{s \le \tau \le \tau' \le t} \big \|S_{\tau'} - S_{\tau-} \big| \mathcal F_{\tau}\big\|_{1,\infty} < \infty,
\end{equation*}
where the supremum is taken over all stopping times $\tau,\tau'$, and  $S_{\tau-}$ is the left limit of $S_{\tau}$. 
When $S$ is continuous in time, the stopping times in the supremum can be replaced by deterministic times, which can be referred to \cite[Proposition~2.2]{le2022quantitative}. 

As a corollary of \eqref{e:BMO vs p-var} in Lemma~\ref{lem:Lepingle}, for any continuous local martingale $M$, the BMO norm $\|\delta M\|_{\mathrm{BMO};[s,t]}$ is equivalent to the modulus of mean oscillation $\varrho_{s,t}(M)$. Hence, a continuous local martingale is a \textrm{BMO}-process on $[s,t]$ if and only if it belongs to $\text{BMO}_{[s,t]}$. 

For a \textrm{BMO}-process $S$ on $[0,T]$, we call $S$ a \textrm{VMO} process if 
\begin{equation*}
\lim_{h\downarrow 0} \sup_{(s,t)\in\Delta, t-s \le h} \varrho_{s,t}(S) = 0.
\end{equation*}
For $\lambda \ge 1$ and a \textrm{VMO} process $S$, we call $S$ a $\mathrm{VMO}^{\lambda\text{-}\mathrm{var}}$ process if $\|\varrho(S)\|_{\lambda\text{-}\mathrm{var};[0,T]} < \infty$.
\end{itemize}

\begin{lemma}\label{lem:Banach property of L C}
Assume $1\le m \le n \le \infty$, $m < \infty$, and $q \ge 1$. For $(s,t) \in \Delta$, denote by $L^{\infty;m,n}C^{q\text{-}\mathrm{var}}_2([s,t])$ the set of all two-parameter continuous processes $P$ with $\| \|P\|_{q\text{-}\mathrm{var}} \|_{\infty;m,n;[s,t]} < \infty$, and denote by $L^{\infty;m,n}C^{q\text{-}\mathrm{var}}([s,t])$ the set of all one-parameter continuous processes $S$ with $\|S\|_{L^{\infty;m,n}C^{q\text{-}\mathrm{var}};[s,t]} := \|S_t\|_n + \| \|\delta S\|_{q\text{-}\mathrm{var}} \|_{\infty;m,n;[s,t]} < \infty$.

Then, $(L^{\infty;m,n}C^{q\text{-}\mathrm{var}}_2([s,t]), \| \|\bm\cdot\|_{q\text{-}\mathrm{var}} \|_{\infty;m,n;[s,t]})$ and $(L^{\infty;m,n}C^{q\text{-}\mathrm{var}}([s,t]), \|\bm\cdot\|_{L^{\infty;m,n}C^{q\text{-}\mathrm{var}};[s,t]})$ are two Banach spaces. 
\end{lemma}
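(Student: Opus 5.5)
We follow the standard route for completeness of a normed space: first check that the functionals are (semi)norms, then show that every Cauchy sequence converges in norm. Throughout, write $N(P):=\| \|P\|_{q\text{-}\mathrm{var}}\|_{\infty;m,n;[s,t]}=\sup_{(u,v)\in\Delta_{[s,t]}}\big\|\,\|P\|_{q\text{-}\mathrm{var};[u,v]}\,\big|\mathcal F_u\big\|_{m,n}$.

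\emph{Norm properties.} For each fixed $(u,v)$, the map $P\mapsto \|P\|_{q\text{-}\mathrm{var};[u,v]}$ is a pathwise seminorm, since it is the $\ell^q$-norm of the increments maximised over partitions. Conditional $L^m$ satisfies Minkowski's inequality, and so does $L^n$. Hence $N$ is subadditive and absolutely homogeneous. If $N(P)=0$, take $u=s$ and $v=t$: then $\|P\|_{q\text{-}\mathrm{var};[s,t]}=0$ a.s., so $P\equiv0$ a.s., because each $|P_{a,b}|$ is bounded by the $q$-variation over $[s,t]$. For the one-parameter space, the term $\|S_t\|_n$ pins down the constant, so $\|\cdot\|_{L^{\infty;m,n}C^{q\text{-}\mathrm{var}}}$ is a genuine norm. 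Processes are identified up to indistinguishability; continuity makes this harmless.

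\emph{Completeness, two-parameter case.} Let $(P^j)$ be Cauchy. Using $u=s$, $v=t$ and the inequality $\|\cdot\|_m\le\|\|\cdot|\mathcal F_s\|_m\|_n$ recalled above, we get $\E\|P^j-P^l\|^m_{q\text{-}\mathrm{var};[s,t]}\to0$. Pass to a subsequence with $\sum_j\|\|P^{j+1}-P^j\|_{q\text{-var};[s,t]}\|_m<\infty$. Then a.s. the sequence $P^j$ is Cauchy in the Banach space of continuous two-parameter functions with finite $q$-variation (with sup dominated by $q$-variation). This gives an a.s. limit $P$ that is continuous and adapted as a limit of adapted processes, and $\|P^j-P\|_{q\text{-var};[s,t]}\to0$ a.s. along the subsequence.

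To upgrade to convergence in $N$, fix $(u,v)$ and $\varepsilon$, and choose $J$ with $N(P^j-P^l)\le\varepsilon$ for $j,l\ge J$. For $j\ge J$, apply the conditional Fatou lemma along the subsequence $l\to\infty$:
\[
\E\big[\|P^j-P\|^m_{q\text{-var};[u,v]}\,\big|\,\mathcal F_u\big]\le\liminf_l\E\big[\|P^j-P^l\|^m_{q\text{-var};[u,v]}\,\big|\,\mathcal F_u\big].
\]
Here we use the pathwise lower semicontinuity $\|P^j-P\|_{q\text{-var}}\le\liminf_l\|P^j-P^l\|_{q\text{-var}}$, which holds because the $q$-variation is a supremum over partitions of quantities that converge pointwise. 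Then take the $L^n$-norm. For $n<\infty$ use Fatou again; for $n=\infty$ note that $\liminf$ of functions with essential sup $\le\varepsilon$ has essential sup $\le\varepsilon$. This gives $\|\|P^j-P\|_{q\text{-var};[u,v]}|\mathcal F_u\|_{m,n}\le\varepsilon$ uniformly in $(u,v)$. Hence $P\in L^{\infty;m,n}C^{q\text{-var}}_2$ by the triangle inequality, and $P^j\to P$. Since a Cauchy sequence with a convergent subsequence converges, the full sequence converges.

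\emph{One-parameter case and main obstacle.} For a Cauchy sequence $(S^j)$ in $L^{\infty;m,n}C^{q\text{-var}}$, we have $S^j_t\to\xi$ in $L^n$, and $\delta S^j\to P$ in the two-parameter space by the previous step. The candidate limit is $S_r:=\xi-P_{r,t}$. This is consistent with the previous step: a.s. along the subsequence, $S^j_r=S^j_t-\delta S^j_{r,t}$ converges uniformly to $S_r$, so $P=\delta S$ pathwise. It follows that $S$ is continuous and adapted, and $\|S^j-S\|\to0$.

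The only delicate point is the interchange of the limit with the conditional expectation and the outer $L^n$, especially for $n=\infty$. We handle it by the pathwise lower semicontinuity combined with conditional Fatou, as above, rather than by any uniform integrability argument.
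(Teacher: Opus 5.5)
Your proposal is correct and is essentially the paper's argument: extract a subsequence with summable differences so that it converges a.s. in $q$-variation on $[s,t]$, then combine the pathwise lower semicontinuity of the $q$-variation with the conditional Fatou lemma to upgrade this to convergence in $\| \|\cdot\|_{q\text{-}\mathrm{var}}\|_{\infty;m,n;[s,t]}$, and conclude for the whole Cauchy sequence. One small imprecision is in the one-parameter case: $S^j_t\to\xi$ only in $L^n$, so you need either a further subsequence to get a.s.\ convergence, or simply the observation that the pathwise limit satisfies $P_{u,v}=P_{u,t}-P_{v,t}$, which already gives $P=\delta S$ for $S_r:=\xi-P_{r,t}$.
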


\begin{proof}
Let $\{P^n\}_{n \ge 1}$ be a Cauchy sequence of $L^{\infty;m,n}C^{q\text{-}\mathrm{var}}_2([s,t])$. Then, there exists a subsequence $\{n_j\}_{j \ge 1}$ such that 
\begin{equation*}
\lim_{j' \to \infty}\sum_{j \ge j'} \E \left[ \|P^{n_j} - P^{n_{j+1}}\|_{q\text{-}\mathrm{var};[s,t]} \right] = 0.
\end{equation*}
Hence, by L\'evy's theorem, we have 
\begin{equation*}
\E \Big[\lim_{j' \to \infty}\sum_{j \ge j'}  \|P^{n_j} - P^{n_{j+1}}\|_{q\text{-}\mathrm{var};[s,t]} \Big] = 0.
\end{equation*}
Therefore, a.s., $\{P^{n_j}\}_{j \ge 1}$ forms a Cauchy sequence under the norm $\|\bm\cdot\|_{q\text{-}\mathrm{var};[s,t]}$. 

Denote by $P$ the limit of $\{P^{n_j}\}_{j \ge 1}$. By Fatou's lemma for the conditional expectation and the lower semi-continuity property of $\|\bm\cdot\|_{q\text{-}\mathrm{var}}$, we see that  
\begin{equation*}
\lim_{j\to \infty } \big\| \|P^{n_j} - P\|_{q\text{-}\mathrm{var}} \big\|_{\infty;m,n;[s,t]} = 0.
\end{equation*}
Consequently, the Cauchy property forces the sequence $\{P^n\}_{n\ge 1}$ to have a unique limit $P$, which implies the desired Banach space property of $(L^{\infty;m,n}C^{q\text{-}\mathrm{var}}_2([s,t]), \| \|\bm\cdot\|_{q\text{-}\mathrm{var}} \|_{\infty;m,n;[s,t]})$. Similarly, one can prove that $(L^{\infty;m,n}C^{q\text{-}\mathrm{var}}([s,t]), \|\bm\cdot\|_{L^{\infty;m,n}C^{q\text{-}\mathrm{var}};[s,t]})$ is also a Banach space. 
\end{proof}

Below are assumptions that will be used in the paper.  
\begin{assumptionp}{($\mathbf{A}_{\mathbf{par}}$)}
\noindent  \\[0.5em]
\cond{$p,p'$}{(p,p')}:
$p \in(2,3)$, $p \le p'$, and $\frac{2}{p} + \frac{1}{p'} > 1$. \\[0.5em] 
\noindent \cond{$p,q,q'$}{(p,q,q')}:
$p \in(2,3)$, $p \le q \le q'$, and $\frac{1}{p} + \frac{1}{q} + \frac{1}{q'} > 1$. In particular, this implies $q < 4$.
\\[0.5em] 
\noindent \cond{$\gamma,p,q,q'$}{(gamma)}:
Assumption~\ref{(p,q,q')} holds, and $\gamma > p$ satisfies 
\begin{equation*}
\gamma - 1 \ge (q+q')/q'.
\end{equation*}
\end{assumptionp}

Throughout this paper, for $q \le q'$ we denote by
\begin{equation*}
q''/2 := \left(1/q + 1/q'\right)^{-1} = qq'/(q + q').
\end{equation*}

\begin{assumptionp}{($\mathbf{A}_{\mathbf{f}}$)}
\noindent  \\[0.5em]
\cond{$f$}{(f)}: 
Let $C_f > 0$ be a constant. Suppose that $f: \Omega \times [0,T] \times \R \times \R^{d}\to \R$ is a progressively measurable function such that the following bounds hold 
\begin{equation*}
|f(\omega,t,0,0)| \le C_f \quad\text{and}\quad |f(\omega,t,y,z) - f(\omega,t,y',z')| \le C_f (|y-y'| + |z-z'|)
\end{equation*}
for all $(\omega,t,y,y',z,z') \in \Omega \times [0,T] \times \R \times \R \times \R^d \times \R^d$. \\[0.5em]
\noindent 
\cond{$f_{\mathrm{det}}$}{(fdet)}: 
Assumption~\ref{(f)} holds. Assume further that $f$ is a deterministic function. 
\end{assumptionp} 

Under Assumption \ref{(p,q,q')}, for simplicity of notation, we denote the parameter set by 
\begin{equation}\label{e:Theta}
\Theta := (T,d,q,q',p).
\end{equation}
In addition, we write $\lesssim$ to represent that an inequality holds up to the right-hand side (RHS) multiplied by a constant that depends only on $\Theta$; for a parameter $K$, the notation $\lesssim_{K}$ represents that an inequality holds up to the RHS multiplied by a constant that depends only on $(\Theta,K)$.

\subsection{Elements of Rough Stochastic Analysis}\label{sec:rough sto analysis}
Throughout this section, $p\in(2,3)$ is a fixed number, $p',q,q'$ are  numbers greater than two such that $p'\ge p$ and $q'\ge q$, and $V_1$, $V_2$, and $V$ are Euclidean spaces.

\begin{definition}[$p$-rough path]\label{def:rough path}
For any $\mathrm X\in C^{p\text{-}\mathrm{var}}([0,T],V)$ and  $\mathbb X\in C^{\frac{p}{2}\text{-}\mathrm{var}}(\Delta,V^{\otimes 2})$, we call $\mathbf X = (\mathrm X,\mathbb X)$ a \emph{$p$-rough path} (a rough path with finite $p$-variation), denoted by $\mathbf X\in \mathscr{C}^{p\text{-}\mathrm{var}}([0,T], V)$, if the following \emph{Chen's relation} holds:
\begin{equation}\label{e:chen's relation}
\mathbb X_{s,t} - \mathbb X_{s,u} - \mathbb X_{u,t} = \delta\mathrm X_{s,u}\otimes \delta\mathrm X_{u,t},\quad 0\le s\le u\le t\le T.
\end{equation}
We call $\mathbf X$ a \emph{geometric $p$-rough path}, denoted by $\mathbf X\in \mathscr{C}^{0,p\text{-}\mathrm{var}}_{g}([0,T],V)$,  if $\mathbf X \in \mathscr{C}^{p\text{-}\mathrm{var}}([0,T],V)$ and there exists a sequence of smooth paths $\{\mathrm X^n\}_{n\ge 1}$ with their canonical lifts $\{\mathbf X^n:=(\mathrm X^n , \mathbb X^n)\}_{n\ge 1} $ such that $|\delta( \mathbf X^n - \mathbf X) |_{p\text{-}\mathrm{var}}\to 0$ as $n\to \infty$.
\end{definition}

\begin{definition}[$1/p$-H\"older rough path]\label{def:Holde rough path}
For $\mathbf X \in \mathscr C^{p\text{-}\mathrm{var}}([0,T],V)$, we call $\mathbf X$ a $1/p$-\emph{H\"older} rough path, denoted by $\mathbf X \in \mathscr C^{1/p}([0,T],V)$, if $|\delta \mathbf X|_{1/p} < \infty$. We call $\mathbf X$ a geometric $1/p$-\emph{H\"older} rough path, denoted by $\mathbf X \in \mathscr C^{0,1/p}_g([0,T],V)$, if $\mathbf X \in \mathscr C^{1/p}([0,T],V)$ and there exists a sequence of smooth paths $\{\mathrm X^n\}_{n\ge 1}$ with their canonical lifts $\{\mathbf X^n:=(\mathrm X^n , \mathbb X^n)\}_{n\ge 1} $ such that $|\delta( \mathbf X^n - \mathbf X) |_{1/p}\to 0$ as $n\to \infty$. 
\end{definition}

\begin{remark}\label{rem:chen's relation}
Define the bracket process
\begin{equation*}
[\mathbf X]_{t} := \delta \mathrm X_{0,t}\otimes \delta \mathrm X_{0,t}- \left(\mathbb X_{0,t} + \mathbb X^{\top}_{0,t} \right).
\end{equation*}
By \eqref{e:chen's relation}, we have $\delta[\mathbf X]_{s,t} =  \delta \mathrm X_{s,t}^{\otimes2}-2 \mathrm{Sym}(\mathbb X_{s,t})$, and hence, $[\mathbf X]\in C^{\frac{p}{2}\text{-}\mathrm{var}}([0,T],V^{\otimes 2})$. 
\end{remark}

Let $I\subset[0,T]$ be an interval and let $\mathbf X=(\mathrm X,\mathbb X)\in \mathscr{C}^{p\text{-}\mathrm{var}}(I,V)$. We introduce the following three notions as integrands for rough stochastic integrals.

\begin{definition}[Controlled rough path]\label{def:control rough path} 
Assume $\psi:I \to V_1$ and $\psi':I \to \mathscr L(V,V_1)$ are two deterministic continuous functions. We call $(\psi,\psi')$ an $\mathrm X$-controlled rough path with finite $(p,p')$-variation, denoted by $(\psi,\psi')\in \mathscr{D}^{(p,p')\text{-}\mathrm{var}}_{\mathrm X}(I,V_1)$, if 
\begin{equation*}
\|(\psi,\psi')\|_{\mathscr{D}^{(p,p')\text{-}\mathrm{var}}_{\mathrm X};I} := \|\delta \psi\|_{p\text{-}\mathrm{var};I} + \|\delta \psi'\|_{p'\text{-}\mathrm{var};I} + \|R^{\psi}\|_{\frac{pp'}{p+p'}\text{-}\mathrm{var};I} <\infty ,
\end{equation*}
where the remainder $R^{\psi}_{s,t} := \delta \psi_{s,t} - \psi'_{s}\delta\mathrm X_{s,t}$ for $s,t\in \Delta_I.$
\end{definition}

\begin{definition}[Rough semimartingale]\label{def:rough semimartingale}
Let
\[
Y : I \times \Omega \to V_1,
\qquad
Y' : I \times \Omega \to \mathscr{L}(V,V_1)
\]
be continuous adapted processes. We call $(Y,Y')$ an
$\mathrm{X}$-controlled rough semimartingale ({\em RSM}) with $(p,p')$-variation,
and write
\[
(Y,Y')
\in
\mathrm{RSM}^{(p,p')\text{-}\mathrm{var}}_{\mathrm{X}}(I,V_1),
\]
if there exist a continuous local martingale
\[
M^Y \in \mathcal{M}_I^{\mathrm{loc}}(V_1)
\]
and a two-parameter process $P^Y$ such that
\[
\delta Y_{s,t}
=
Y'_s\,\delta \mathrm{X}_{s,t} 
+
P^Y_{s,t}
+
\delta M^Y_{s,t} 
\equiv
Y'_s\,\delta \mathrm{X}_{s,t} 
+
R^Y_{s,t}
\qquad
(s,t)\in\Delta_I,
\]
and $(Y-M^Y,Y') 
\in \mathscr{D}^{(p,p')\text{-}\mathrm{var}}_{\mathrm X}(I,V_1)$, a.s.
% , i.e. is an $\mathrm{X}$-controlled process with a.s. $(p,p')$-variation,
% the definition of which includes a.s. $(1/p+1/p')$-variation of $P^Y$. 
We call $P^Y$ (resp. $R^Y$) the pathwise (resp. stochastic) remainder 
of the \emph{RSM} $(Y,Y')$. 
\end{definition}

\begin{remark}\label{rem:given (Y,Y') in RSm}
Given $(Y,Y')\in\mathrm{RSM}^{(p,p')\text{-}\mathrm{var}}_{\mathrm X}([0,T],V)$, whenever $\frac{1}{p} + \frac{1}{p'} > \frac{1}{2}$, the decomposition $Y = (Y - M^Y) + M^{Y}$ in Definition~\ref{def:rough semimartingale} is unique, as a consequence of \cite[Theorem~1.6]{friz2023rough}.
\end{remark}

\begin{definition}[Rough stochastic integral]\label{def:def of int}
Let $\mathbf X:=(\mathrm X, \mathbb X)$ belong to $ \mathscr{C}^{p\text{-}\mathrm{var}}([0,T],V)$. Let $(Y,Y')\in \mathrm{RSM}^{(p,p')\text{-}\mathrm{var}}_{\mathrm X}([0,T],V_1)$. By \cite[Theorem~1.7]{friz2023rough}, assuming \ref{(p,p')}, the limit in probability of the following Riemann sum exists:
\begin{equation}\label{e:Riemann sum}
\int_{u}^{v} Y_r d\mathbf X_r := \int_{u}^{v} (Y_r,Y'_r) d\mathbf X_r:= \lim_{\substack{|\pi|\to 0\\ \pi \in \mathcal P_{[u,v]}}}\sum_{[s,t]\in\pi} Y_{s}\delta \mathrm X_{s,t} + Y'_{s} \mathbb X_{s,t}.
\end{equation}
We call $\int_{u}^{v} Y_r d\mathbf X_r$ a rough stochastic integral.
\end{definition}

\begin{remark}\label{rem:degenerate Lebesgue}
When $\mathrm X$ is Lipschitz continuous and $\mathbf X$ is the canonical lift of $\mathrm X$ (we refer to \cite[Definition~7.2]{friz2010multidimensional} for the definition), then $[\mathbf X] = 0$. Moreover, for any continuous process $Y$, the integral $\int_{u}^{v} Y_{r} d\mathbf X_{r}$ defined by the Riemann sum in \eqref{e:Riemann sum} is equal to the Lebesgue integral $\int_{u}^{v} Y_{r} \partial_{r}\mathrm X_r dr$, where $\partial_r \mathrm X $ denotes the time derivative of $\mathrm X$. 
\end{remark}

\begin{remark}\label{rem:int by parts}
For $(Y,Y')\in \mathrm{RSM}^{(p,p')\text{-}\mathrm{var}}_{\mathrm X}([0,T],V_1)$ with $p'$ satisfying \ref{(p,p')}, by the integration by parts formula in \cite[Section~5.3]{friz2023rough}, the following equality holds:
\begin{equation}\label{e:def of inte}
\int_{u}^{v} Y_{r} d\mathbf X_{r} := M^{Y}_{v}\mathrm X_{v} - M^{Y}_{u}\mathrm X_{u} - \int_{u}^{v} \mathrm X_{r} dM^{Y}_{r} + \int_{u}^{v} (Y - M^{Y})_r d\mathbf X_{r}, 
\end{equation}
where $\int \mathrm X_{r} dM^{Y}_r$ is an It\^o integral, and $\int (Y - M^{Y})_r d\mathbf X_r$ is a  rough integral (see, e.g., \cite{friz2020course}) defined pathwise. 
\end{remark}

\begin{remark}\label{rem:Gamma-RSM}
For an \emph{RSM}, its integration against a rough path yields again an \emph{RSM}. 
More precisely, given $(Y, Y')\in\mathrm{RSM}^{(p,p')\text{-}\mathrm{var}}_{\mathrm X}([0,T],\mathscr L(V,V_1))$ with $p'$ satisfying \ref{(p,p')}, let $\Gamma_t := \int_{0}^{t} Y_r d\mathbf X_{r}$ be defined as in Definition~\ref{def:def of int}. 
By \cite[Theorem~1.7]{friz2023rough},
$(\Gamma, Y)\in\mathscr{D}^{(p,p)\text{-}\mathrm{var}}_{\mathrm X}([0,T],V_1)$ a.s. Consequently, $M^{\Gamma} = 0$ and $P^{\Gamma} = R^{\Gamma}$. 
\end{remark}

\section{Rough Stochastic Analysis of Rough Semimartingales}\label{sec:advanced analysis of rough path}

In this section, we establish several estimates for RSMs that will be used in the study of BSDE~\eqref{e:BSDE} in Section~\ref{sec:Rough BSDE}. Recall that $p\in(2,3)$ is a fixed number associated with a given rough path $\mathbf X=(\mathrm X, \mathbb X)$. Throughout this section, $V$, $V_1$, and $V_2$ are Euclidean spaces; and $p'$, $q$, and $q'$ are positive numbers satisfying 
\begin{equation}
\label{cond31}
p\le p', \quad p\le q \le q'.
\end{equation}

\subsection{Energy Estimates and Equivalence of BMO-Type Norms}\label{subsec:energy estimates and RSM estimates}

One of our main tools is the following energy inequality for adapted, non-decreasing processes, the proof of which can be referred to, e.g., \cite[Lemma~A.2]{le2022quantitative}. In the following, we denote $A_{u-} := \lim_{v \uparrow u} A_v$ for $u > 0$ and $A_{0-} := A_0$. 

\begin{lemma}\label{lem:Energy inequality}
Let $s,t \in [0,T]$ be two fixed times. Assume $C$ is a deterministic constant and that $\{A_r\}_{r \in [s,t]}$ is an adapted, right continuous, non-decreasing process such that 
\begin{equation*}
\|\E_{\tau} [A_{t} - A_{\tau-}]\|_{\infty} \le C,\ \text{ for all stopping times }\ \tau \in [s,t].
\end{equation*} 
That is, $\{A_r\}_{r \in [s,t]}$ is a \emph{BMO}-process on $[s,t]$. Then, for every stopping time $\tau' \in [s,t]$ and $k \ge 1$,
\begin{equation*}
\| \delta A_{\tau',t} | \mathcal F_{\tau'}\|_{k,\infty} \le |k!|^{1/k} \, C.
\end{equation*}
\end{lemma}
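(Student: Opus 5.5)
The plan is to prove the stronger moment bound $\E_{\tau'}[(A_t-A_{\tau'-})^k]\le k!\,C^k$ a.s. by induction on integer $k$, and then handle non-integer $k$ by interpolation. The base case $k=1$ is exactly the hypothesis. For non-integer $k$ we use conditional Jensen/Hölder with $\lceil k\rceil$: $\|\delta A_{\tau',t}|\mathcal F_{\tau'}\|_{k}\le \|\delta A_{\tau',t}|\mathcal F_{\tau'}\|_{\lceil k\rceil}\le (\lceil k\rceil !)^{1/\lceil k\rceil}C$. This is not exactly $|k!|^{1/k}C$, but it is comparable. If the sharp constant for non-integer $k$ is wanted, one interprets $k!$ as $\Gamma(k+1)$ and interpolates via log-convexity of $k\mapsto \E_{\tau'}[\,\cdot\,^k]$. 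I would state the integer case as the main content and treat the non-integer case as a remark.

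For the inductive step, write $\delta A:=A_t-A_{\tau'-}$ and use the pathwise identity for a nondecreasing right-continuous process,
\[
(A_t-A_{\tau'-})^{k}\le k\int_{[\tau',t]} (A_t-A_{r-})^{k-1}\,dA_r .
\]
This comes from $d\big(-(A_t-A_{r-})^k\big)$ and the mean-value theorem. At jumps the left-limit version gives the inequality in the right direction, since $a^k-b^k\le k(a-b)a^{k-1}$ for $a\ge b\ge0$ with $a=A_t-A_{r-}$. Taking $\E_{\tau'}$, the integrand $(A_t-A_{r-})^{k-1}$ is not adapted, so we replace it by its optional projection. Because $dA_r$ is a nonnegative adapted (optional) measure, $\E_{\tau'}\int_{[\tau',t]}H_r\,dA_r=\E_{\tau'}\int_{[\tau',t]}{}^{o}H_r\,dA_r$ for nonnegative measurable $H$; this is the standard Dellacherie--Meyer projection property for optional increasing processes. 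Here ${}^{o}H_r=\E_r[(A_t-A_{r-})^{k-1}]$. We then apply the induction hypothesis at the deterministic time (or stopping time) $r$ to get ${}^oH_r\le (k-1)!\,C^{k-1}$. Hence
\[
\E_{\tau'}[(\delta A)^k]\le k\,(k-1)!\,C^{k-1}\,\E_{\tau'}[A_t-A_{\tau'-}]\le k!\,C^k .
\]

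The main obstacle is the rigorous justification of the projection step when the integrand uses the left limit $A_{r-}$. One needs $\E_r[(A_t-A_{r-})^{k-1}]$ to be bounded simultaneously for all $r$, as an optional process, not just for each fixed $r$ almost surely. I would handle this by proving the hypothesis bound for all stopping times, which is given, and invoking the section theorem: an optional process bounded by a constant at every stopping time is indistinguishable from one bounded everywhere. A simpler alternative avoids projections altogether. Discretize $[\tau',t]$ along dyadic stopping times and use the elementary inequality $(\sum a_i)^k\le k\sum_i a_i(\sum_{j\ge i}a_j)^{k-1}$. Then condition term by term using the tower property at each grid point, and pass to the limit by monotone convergence and right-continuity.
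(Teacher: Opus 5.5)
Your integer-$k$ argument is correct. It combines the pathwise bound $(A_t-A_{\tau'-})^k\le k\int_{[\tau',t]}(A_t-A_{r-})^{k-1}\,dA_r$, the replacement of the integrand by its optional projection against the optional measure $dA$, and the induction hypothesis at every stopping time together with the section theorem. This is the classical energy-inequality proof behind the reference the paper cites for this lemma (\cite[Lemma~A.2]{le2022quantitative}); the paper gives no proof of its own.

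The gap is the non-integer case. Reading $|k!|$ as $\Gamma(k+1)$, neither of your routes reaches the stated constant. Jensen with $\lceil k\rceil$ gives $(\lceil k\rceil!)^{1/\lceil k\rceil}C$, which is strictly larger than $\Gamma(k+1)^{1/k}C$, because $k\mapsto\log\Gamma(k+1)/k$ is strictly increasing. The log-convexity repair also fails. With $X:=A_t-A_{\tau'-}$ and $k=n+\theta$, $\theta\in(0,1)$, Lyapunov's inequality gives
\[
\E_{\tau'}[X^k]\le (n!\,C^n)^{1-\theta}\big((n+1)!\,C^{n+1}\big)^{\theta}=n!\,(n+1)^{\theta}\,C^k .
\]
But strict convexity of $\log\Gamma$ gives $\Gamma(n+1+\theta)<n!\,(n+1)^{\theta}$, so interpolation lands above the claimed bound. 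For $k=3/2$ it gives $\sqrt2\approx1.414$ versus $\Gamma(5/2)\approx1.329$. Starting your induction at the fractional part with Jensen gives $\Gamma(k+1)/\Gamma(1+\theta)$, again too large. So you prove the lemma as stated only for integer $k$. For other $k$ you get a larger $k$-dependent constant, which is all the later uses in the paper actually need (Lemma~\ref{lem:BMO of q-var} and the $\lesssim_k$ estimates built on it).

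A short fix covering every real $k\ge1$ at once, with no projection or section theorem, is a stop-loss argument. For $a>0$ put $\sigma_a:=\inf\{r\in[\tau',t]:A_r-A_{\tau'-}\ge a\}$ with $\inf\emptyset:=t$. Then $\{X\ge a\}=\{A_{\sigma_a}-A_{\tau'-}\ge a\}\in\mathcal F_{\sigma_a}$, and $A_{\sigma_a-}-A_{\tau'-}\le a$ on this event. Hence $(X-a)_+\le \mathbf 1_{\{X\ge a\}}(A_t-A_{\sigma_a-})$. Applying the hypothesis at $\sigma_a$, for every $G\in\mathcal F_{\tau'}$,
\[
\Psi_G(a):=\E\big[\mathbf 1_G(X-a)_+\big]\le C\,\E\big[\mathbf 1_G\mathbf 1_{\{X\ge a\}}\big]=-C\,\Psi_G'(a)\quad\text{for a.e. } a>0 .
\]
Therefore $\Psi_G(a)\le\Psi_G(0)e^{-a/C}\le C\,\PP(G)\,e^{-a/C}$. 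For $k>1$ this yields $\E[\mathbf 1_G X^k]=k(k-1)\int_0^\infty a^{k-2}\Psi_G(a)\,da\le\Gamma(k+1)\,C^k\,\PP(G)$, which is the claim.

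Finally, your ``simpler'' dyadic alternative does not work as sketched. With $a_i=A_{t_{i+1}}-A_{t_i}$, the weight $a_i$ in $k\sum_i a_i(\sum_{j\ge i}a_j)^{k-1}$ is not $\mathcal F_{t_i}$-measurable. With the other indexing you need to bound $\E_{t_i}[(A_t-A_{t_{i-1}})^{k-1}]$. The hypothesis controls this only up to the increment of $A$ over $(t_{i-1},t_i)$, and that increment does not vanish at jumps of $A$. Stick with the projection argument.
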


Recall $\| \|\delta V\|_{q\text{-}\mathrm{var}} \|_{\infty;k,\infty;[s,t]} := \sup_{(u,v) \in \Delta_{[s,t]}} \| \|\delta V\|_{q\text{-}\mathrm{var};[u,v]} |\mathcal F_u\|_{k,\infty}$. 
Lemma~\ref{lem:BMO of q-var} shows that the \textrm{BMO}-type norm $\| \|\delta V\|_{q\text{-}\mathrm{var}} \|_{\infty;k,\infty}$ with any $k \ge 1$ is dominated by the version with $k=1$. 

\begin{lemma}\label{lem:BMO of q-var}
For a one-parameter continuous process $V$, assume $\| \|\delta V\|_{q\text{-}\mathrm{var}} \|_{\infty;1,\infty;[s,t]} < \infty$. 

Then its $q$-variation, $\{  \|\delta V\|_{q\text{-}\mathrm{var};[s,r]}\}_{r\in [s,t]}$, is a \emph{BMO}-process on $[s,t]$. Moreover, we have for every $r \in [s,t]$ and $k \ge 1$,
\begin{equation}\label{e:enery esti for q-var}
\big\| \|\delta V\|_{q\text{-}\mathrm{var};[r,t]} \big| \mathcal F_{r} \big\|_{k,\infty} \le |k!|^{1/k} \, \big\| \|\delta V\|_{q\text{-}\mathrm{var}} \big\|_{\infty;1,\infty;[s,t]}. 
\end{equation}
Consequently, it holds $\big\| \|\delta V\|_{q\text{-}\mathrm{var}} \big\|_{\infty;k,\infty;[s,t]} \le |k!|^{1/k} \, \big\| \|\delta V\|_{q\text{-}\mathrm{var}} \big\|_{\infty;1,\infty;[s,t]}$.  
\end{lemma}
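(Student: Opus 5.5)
We apply Lemma~\ref{lem:Energy inequality} to the non-decreasing process $A_r := \|\delta V\|_{q\text{-}\mathrm{var};[s,r]}^q$? No: the energy inequality should act directly on $A_r := \|\delta V\|_{q\text{-}\mathrm{var};[s,r]}$, $r\in[s,t]$. This process is adapted, continuous (since $V$ is continuous, the $q$-variation over $[s,r]$ is continuous in $r$) and non-decreasing. Continuity gives $A_{\tau-}=A_\tau$, and by the remark after the definition of BMO-processes (\cite[Proposition~2.2]{le2022quantitative}) it suffices to check the BMO condition at deterministic times. So the plan has three steps: bound the increments of $A$ by $q$-variation over subintervals, deduce the BMO property with constant $C:=\| \|\delta V\|_{q\text{-}\mathrm{var}} \|_{\infty;1,\infty;[s,t]}$, and then invoke Lemma~\ref{lem:Energy inequality}.

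\textbf{Increment bound.} For $s\le u\le t$ we need $A_t - A_u \le \|\delta V\|_{q\text{-}\mathrm{var};[u,t]}$. Since $q\ge1$, the triangle inequality in $\ell^q$ gives the subadditivity $\|\delta V\|_{q\text{-}\mathrm{var};[s,t]}\le \|\delta V\|_{q\text{-}\mathrm{var};[s,u]}+\|\delta V\|_{q\text{-}\mathrm{var};[u,t]}$. To see this, take a partition $\pi$ of $[s,t]$. If $\pi$ contains an interval $[a,b]$ with $a<u<b$, split $|\delta V_{a,b}|\le |\delta V_{a,u}|+|\delta V_{u,b}|$. The resulting family of increments then splits into a partition of $[s,u]$ and a partition of $[u,t]$, and Minkowski's inequality in $\ell^q$ yields
\[
\Big(\sum_{\pi}|\delta V|^q\Big)^{1/q}\le \Big(\sum_{\pi_1}|\delta V|^q\Big)^{1/q}+\Big(\sum_{\pi_2}|\delta V|^q\Big)^{1/q}.
\]
Taking the supremum over $\pi$ gives the claim. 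Taking conditional expectations then gives $\E_u[A_t-A_u]\le \E_u[\|\delta V\|_{q\text{-}\mathrm{var};[u,t]}]\le C$ a.s. The same argument applies at stopping times $\tau$ by approximating from above with discrete stopping times, or simply by citing the reduction to deterministic times. This is the only mildly delicate point, and I expect it to be routine.

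\textbf{Energy inequality.} Lemma~\ref{lem:Energy inequality} now gives $\|A_t-A_r|\mathcal F_r\|_{k,\infty}\le |k!|^{1/k}C$. This is not yet \eqref{e:enery esti for q-var}, since we need $\|\delta V\|_{q\text{-}\mathrm{var};[r,t]}$ rather than $A_t-A_r$, and subadditivity goes the wrong way. The fix is to apply the argument to the process restarted at $r$, namely $A^{(r)}_v:=\|\delta V\|_{q\text{-}\mathrm{var};[r,v]}$ for $v\in[r,t]$. For $u\in[r,t]$ the same increment bound gives $\E_u[A^{(r)}_t-A^{(r)}_u]\le C$, because $[u,t]\subset[s,t]$ and $C$ is a supremum over all subintervals. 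Applying Lemma~\ref{lem:Energy inequality} on $[r,t]$ with $\tau'=r$ and $A^{(r)}_r=0$ yields exactly \eqref{e:enery esti for q-var}.

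\textbf{Consequence.} For the final statement, apply \eqref{e:enery esti for q-var} with $[s,t]$ replaced by $[u,v]\subset[s,t]$ and $r=u$. Since the constant over $[u,v]$ is at most the constant over $[s,t]$, taking the supremum over $(u,v)\in\Delta_{[s,t]}$ gives the final inequality.
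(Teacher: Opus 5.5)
Your proposal is correct and is essentially the paper's proof: restart the $q$-variation at $r$, bound its increments by $\|\delta V\|_{q\text{-}\mathrm{var};[u,t]}$ via subadditivity of the $q$-variation norm, pass from deterministic times to stopping times, and apply Lemma~\ref{lem:Energy inequality} with $\tau'=r$. The only cosmetic difference is in the stopping-time step. The paper uses continuity of $u\mapsto \E_u[\delta A^r_{u,t}]$, whereas you approximate by discrete stopping times from above, which relies on the same (right-)continuity of that process and gives the same constant.
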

 
\begin{remark}\label{rem:use of BMO of q-var}
Note that the above lemma is the key observation to solve the problem of degenerating integrability when one considers a composition of rough semimartingales with smooth functions (see \eqref{e:dM and d hat M terms} in Proposition~\ref{prop:R^G - R^G}).  
When $q > 2$, the above equivalence of norms for continuous martingales also follows from \eqref{e:BMO vs p-var} in Lemma~\ref{lem:Lepingle}. 
\end{remark}

\begin{proof}
For $r \in [s,t]$, denote $A^r_u := \|\delta V\|_{q\text{-}\mathrm{var};[r,u]}$. Then, a.s., $u \mapsto \E_{u} [\delta A^r_{u,t}] = \E_{u} [\delta A^r_{r,t}] - \delta A^r_{r,u}$ is continuous. Moreover, for any stopping time $\tau \in [r,t]$, $\E_{\tau}[\delta A^{r}_{\tau,t}] = \E_{\tau}[\delta A^{r}_{r,t}] - A^r_{r,\tau} = \E_{u} [\delta A^r_{u,t}] \mid_{u = \tau}$. Hence, 
\begin{equation*}
\|\E_{\tau} [A^{r}_{t} - A^{r}_{\tau-} ]\|_{\infty} = \|\E_{\tau} [\delta A^{r}_{\tau,t} ]\|_{\infty} \le \sup_{u \in [r,t]} \| \E_u[\delta A^r_{u,t}] \|_{\infty}.
\end{equation*}

Furthermore, noting that $\delta A^r_{u,t} = \|\delta V\|_{q\text{-}\mathrm{var};[r,t]} - \|\delta V\|_{q\text{-}\mathrm{var};[r,u]} \le \|\delta V\|_{q\text{-}\mathrm{var};[u,t]}$ from the subadditivity of the $q$-variation norm, and that $[r,t] \subseteq [s,t]$, we obtain that for all $u \in [r,t]$, 
\begin{equation*}
\big\| \E_{u}[\delta A^{r}_{u,t}] \big\|_{\infty} \le \big\| \E_{u}[ \|\delta V\|_{q\text{-}\mathrm{var};[u,t]} ] \big\|_{\infty} \le \big\| \|\delta V\|_{q\text{-}\mathrm{var}} \big\|_{\infty;1,\infty;[s,t]} =: C. 
\end{equation*}

Consequently, $\|\E_{\tau} [A^{r}_{t} - A^{r}_{\tau-} ]\|_{\infty} \le C$ for any stopping time $\tau \in [r,t]$. By Lemma~\ref{lem:Energy inequality} with $\tau' = r$, 
\begin{equation*}
\big\| \|\delta V\|_{q\text{-}\mathrm{var};[r,t]} \big| \mathcal F_{r} \big\|_{k,\infty} = \big\| \delta A^{r}_{r,t} \big| \mathcal F_{r} \big\|_{k,\infty} \le |k!|^{1/k} C, 
\end{equation*}
which implies \eqref{e:enery esti for q-var}. The other inequality follows by taking the supremum over time. 
\end{proof}

\subsection{Estimates for Rough Semimartingales}\label{subsec:estimates for rough semimartingales}

Unless otherwise specified, throughout this subsection, we assume $\mathbf X \in \mathscr{C}^{p\text{-}\mathrm{var}}([0,T],V)$ and $(Y,Y')\in \mathrm{RSM}^{(p,p')\text{-}\mathrm{var}}_{\mathrm X}([0,T],V_1)$. 

The following lemma concerns the fact that the composition of $G(\bm\cdot)$ with an RSM remains an RSM. The following general integral form turns out to be helpful for later purposes (see Theorem~\ref{thm:comparison}).

\begin{lemma}\label{lem:int of RSM}
Assume $(p,p')$ satisfies \eqref{cond31}, and $\gamma - 2\ge p/p'$. Let $G \in C^{\gamma - 1}(\R,\R^d)$. Assume both $(Y,Y')$ and $(\bar Y,\bar Y')$ belong to $ \mathrm{RSM}^{(p,p')\text{-}\mathrm{var}}_{\mathrm X}([0,T],\R)$. Denote  
\begin{equation}\label{e:def of alpha}
\alpha_t := \int_{0}^{1}G( (1-\lambda) Y_t + \lambda\bar Y_t) d\lambda,\quad \alpha'_t := \int_{0}^{1} DG( (1-\lambda) Y_t + \lambda\bar Y_t) \cdot \big( (1-\lambda)Y'_{t} + \lambda \bar Y'_{t} \big) d\lambda.
\end{equation}

Then, we have $(\alpha,\alpha') \in \mathrm{RSM}^{(p,p')\text{-}\mathrm{var}}_{\mathrm X}([0,T],\R^d)$ with  the local martingale part given by 
\begin{equation}\label{e:rep of M^alpha}
M^{\alpha}_{t} = \int_{0}^{1} \Big\{\int_{0}^{t} DG( (1-\lambda) Y_r + \lambda\bar Y_r) \big[(1 - \lambda)dM_r^{Y} + \lambda dM^{\bar Y}_r\big] \Big\} d\lambda.
\end{equation}
In particular, when $\gamma = 3$ and $Y=\bar Y$, the pair $(G(Y), D G(Y)Y')$ is an \textrm{RSM}.

\end{lemma}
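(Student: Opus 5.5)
The plan is to reduce everything to a pathwise It\^o-type expansion along each segment $Z^\lambda := (1-\lambda)Y+\lambda\bar Y$. For fixed $\lambda\in[0,1]$, $(Z^\lambda,(Z^\lambda)')$ with $(Z^\lambda)' := (1-\lambda)Y'+\lambda\bar Y'$ is again an RSM: it is a convex combination of RSMs, with martingale part $M^{Z^\lambda}=(1-\lambda)M^Y+\lambda M^{\bar Y}$ and pathwise remainder $P^{Z^\lambda}=(1-\lambda)P^Y+\lambda P^{\bar Y}$. So the first task is to show that $(G(Z^\lambda),DG(Z^\lambda)(Z^\lambda)')$ is an RSM, uniformly in $\lambda$. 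Afterwards we integrate in $\lambda$, since Bochner/Fubini integration in $\lambda$ preserves both the decomposition and the variation bounds.

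For fixed $\lambda$, write $Z=Z^\lambda$ and use It\^o's formula for the semimartingale-like process $Z$. Its bracket is $\langle M^Z\rangle$, since the pathwise part $Z-M^Z$ has finite $p$-variation with $p<3$ but is controlled; to avoid invoking a rough It\^o formula, I would instead Taylor expand directly. For $s\le t$,
\[
\delta G(Z)_{s,t}=DG(Z_s)\delta Z_{s,t}+R^{G}_{s,t},\qquad |R^G_{s,t}|\lesssim \|G\|_{C^{\gamma-1}}|\delta Z_{s,t}|^{\min(\gamma-1,2)}.
\]
Then substitute $\delta Z_{s,t}=Z'_s\delta\mathrm X_{s,t}+P^Z_{s,t}+\delta M^Z_{s,t}$. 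The natural candidate martingale is $M^{G(Z)}_t:=\int_0^t DG(Z_r)\,dM^Z_r$, an It\^o integral of a bounded continuous adapted integrand, hence a continuous local martingale. The candidate pathwise part is $G(Z)-M^{G(Z)}$, with remainder
\[
P^{G(Z)}_{s,t}=DG(Z_s)P^Z_{s,t}+\Big(DG(Z_s)\delta M^Z_{s,t}-\int_s^t DG(Z_r)dM^Z_r\Big)+R^G_{s,t}.
\]
Integrating over $\lambda$ and using linearity of the stochastic integral in $\lambda$ (stochastic Fubini, valid since the integrands are bounded and jointly measurable) yields exactly \eqref{e:rep of M^alpha}.

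It remains to verify, almost surely, that $(\alpha-M^\alpha,\alpha')\in\mathscr D^{(p,p')\text{-}\mathrm{var}}_{\mathrm X}$. First, $\delta(\alpha-M^\alpha)$ has finite $p$-variation because $\delta\alpha$ does (since $G$ is Lipschitz and $Z$ has finite $p$-variation a.s.) and $M^\alpha$ does (continuous local martingales have finite $p$-variation for $p>2$ by L\'epingle, Lemma~\ref{lem:Lepingle}). Second, $\delta\alpha'$ has finite $p'$-variation: $DG\in C^{\gamma-2}$, so $DG(Z)$ has finite $p/(\gamma-2)$-variation, which is at most $p'$ by $\gamma-2\ge p/p'$, and products of bounded finite-variation paths preserve this. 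The main obstacle is the remainder. Its pathwise value must have finite $pp'/(p+p')$-variation, but the middle term in $P^{G(Z)}$, the martingale-transform difference, is not pathwise small. I would follow the Friz--Hocquet--L\^e style argument for RSMs: rather than estimating $P$ term by term, use the uniqueness of the decomposition (Remark~\ref{rem:given (Y,Y') in RSm}) together with the characterization in \cite[Theorem~1.6/1.7]{friz2023rough}. There, Itô's formula for RSMs (their Section~5) shows that for $G\in C^{\gamma-1}$ with the stated regularity, $G(Z)$ is an RSM whose martingale part is $\int DG(Z)dM^Z$. If that result is only available for $G\in C^3$, I would approximate $G$ by mollified $G^n\in C^3$, use uniform-in-$n$ bounds on the $pp'/(p+p')$-variation of the remainders, valid by the Taylor bound with exponent $\gamma-1\ge 1+p/p'$, and pass to the limit by lower semicontinuity of variation norms. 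The Taylor term $R^G$ contributes $|\delta Z|^{1+p/p'}$, and since $|\delta Z|^{1+p/p'}$ with $Z$ of finite $p$-variation has finite $pp'/(p+p')$-variation, this is exactly where the hypothesis $\gamma-2\ge p/p'$ enters. The special case $\gamma=3$, $Y=\bar Y$ follows immediately, since then $\alpha=G(Y)$ and $\alpha'=DG(Y)Y'$.
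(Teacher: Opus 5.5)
Your treatment of the Taylor remainder, of $\delta\alpha'$ and of $M^\alpha$ (via stochastic Fubini) is fine. Lumping the paper's $K^{\lambda;1}$, $K^{\lambda;3}$ and part of $K^{\lambda;2}$ into the single bound $|R^G_{s,t}|\lesssim|\delta Z_{s,t}|^{\gamma-1}$ is even slightly cleaner.

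The gap is exactly the step you flag as the main obstacle. You never show that the martingale-transform term $\int_s^t \bigl(DG(Z_r)-DG(Z_s)\bigr)\,dM^Z_r$ has a.s.\ finite $\frac{pp'}{p+p'}$-variation. None of your three suggestions closes it:
\begin{itemize}
\item Uniqueness of the decomposition (Remark~\ref{rem:given (Y,Y') in RSm}) only identifies a decomposition once one exists. It says nothing about whether $G(Z)-\int DG(Z)\,dM^Z$ has a remainder of the required regularity.
\item Theorem~1.7 of \cite{friz2023rough} concerns rough integration, not composition. An It\^o formula there covering $G\in C^{\gamma-1}$ is a citation you have not verified.
\item The mollification fallback is circular. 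Uniform-in-$n$ bounds for the Taylor part are easy. Uniform-in-$n$ bounds for $\int_s^t \bigl(DG^n(Z_r)-DG^n(Z_s)\bigr)\,dM^Z_r$ need precisely the quantitative estimate you are trying to avoid. Without them, lower semicontinuity of the variation norm gives nothing.
\end{itemize}

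The missing ingredient is the $p$-variation estimate for martingale transforms, Lemma~\ref{lem:p-var martingale estim} (the conditional form of \cite[Theorem~1.1]{friz2023rough}); this is how the paper handles its term $K^{\lambda;4}$.
\begin{itemize}
\item Apply it with $F=DG(Z^\lambda)$, which has finite $r_1$-variation for $r_1=p/(\gamma-2)$, and with $r=\frac{pp'}{p+p'}$.
\item The hypothesis $1/r<1/r_1+1/2$ reads $\frac1p+\frac1{p'}<\frac{\gamma-2}{p}+\frac12$. It holds because $\frac{\gamma-2}{p}\ge\frac1{p'}$ and $p>2$. So the assumption $\gamma-2\ge p/p'$ is used here, not only in the Taylor term.
\item First localize by stopping times so that $\|\delta Y\|_{p\text{-}\mathrm{var}}$, $\|\delta\bar Y\|_{p\text{-}\mathrm{var}}$, $\langle M^Y\rangle_T$ and $\langle M^{\bar Y}\rangle_T$ are bounded, and so that $G$ may be taken in $C^{\gamma-1}_b$.
\item The lemma then bounds the moments of the $r$-variation of this term uniformly in $\lambda$, by $C_G\|\|\delta Z^\lambda\|_{p\text{-}\mathrm{var}}\|^{\gamma-2}$ times the conditional moments of $\langle M^{Z^\lambda}\rangle^{1/2}$.
\item Hence the $\lambda$-integral of these variation norms is a.s.\ finite, and Minkowski's inequality in $\lambda$ gives the required bound on $P^\alpha$.
\end{itemize}
Equivalently, after your stochastic Fubini step, the averaged term is a sum of two martingale transforms, against $M^Y$ and $M^{\bar Y}$. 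Their integrands are $\int_0^1(1-\lambda)\,DG(Z^\lambda)\,d\lambda$ and $\int_0^1\lambda\,DG(Z^\lambda)\,d\lambda$, each of finite $p/(\gamma-2)$-variation, so the lemma applies to them directly.
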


\begin{proof}
Noting that $p/p'\in(0,1]$, it suffices to prove the result further assuming $\gamma\le 3$.   For $\lambda \in [0,1]$ and $t\in[0,T]$, denote $Y^{\lambda}_t := (1-\lambda)Y_t + \lambda \bar Y_t$, $(Y^{\lambda})'_t := (1-\lambda)Y'_t + \lambda \bar Y'_t$, 
\begin{equation}\label{e:def of G^lambda}
G^{\lambda}_{t} := G(Y^{\lambda}_t),\ \text{ and }\ (G^\lambda)'_{t}:= DG( Y^{\lambda}_t ) \cdot (Y^{\lambda})'_t.
\end{equation}
For $(s,t)\in\Delta$, denote 
\begin{equation}\label{e:decom of G^lambda}
P^{\lambda}_{s,t} := \delta G^{\lambda}_{s,t} - (G^{\lambda})'_{s}\delta\mathrm X_{s,t} - \int_{s}^{t} DG(Y^{\lambda}_{r})dM^{Y^{\lambda}}_r.
\end{equation}

{\bf Step 1.}
In this step, we prove that $(G^{\lambda},(G^{\lambda})')$ is an RSM. Without loss of generality, assume $G(\bm\cdot) \in C^{\gamma - 1}_b$ and that every term below is bounded and hence has finite $k$-moments for any given $k \ge 1$; otherwise, the standard localization argument applies. For the simplicity of notations, denote $C_G := \|G(\bm\cdot)\|_{C^{\gamma - 1}_b}$. 

First, by the boundedness and the $(\gamma - 2)$-H\"older continuity of $DG$, for $t\in[0,T]$ it holds that
\begin{equation*}
\|\delta (G^{\lambda})'\|_{p'\text{-}\mathrm{var};[t,T]} \le C_G \Big( \|\delta (Y^{\lambda})'\|_{p'\text{-}\mathrm{var};[t,T]} + \|\delta Y^{\lambda}\|^{\gamma - 2}_{p\text{-}\mathrm{var};[t,T]} \|(Y^{\lambda})'\|_{\infty}\Big),\  
\|(G^{\lambda})'\|_{\infty} \le C_G \|(Y^{\lambda})'\|_{\infty},
\end{equation*}
where in the first inequality we used the condition $p'(\gamma-2)\ge p$. Clearly $\|(G^{\lambda})'\|_{\infty}$ is uniformly bounded in $\lambda \in [0,1]$. Furthermore, noting that $Y^{\lambda}$ ($(Y^{\lambda})'$ resp.) is a convex combination of $Y$ and $\bar Y$ ($Y'$ and $\bar Y'$ resp.), we have that for all $\lambda \in [0,1]$,  
\begin{equation}\label{e:G^lambda < Y}
\begin{aligned}
&\|\delta (G^{\lambda})'\|_{p'\text{-}\mathrm{var};[t,T]} \\
&\le C_G \Big( \|\delta Y'\|_{p'\text{-}\mathrm{var};[t,T]} + \|\delta \bar Y'\|_{p'\text{-}\mathrm{var};[t,T]} + \big(\|\delta Y\|^{\gamma - 2}_{p\text{-}\mathrm{var};[t,T]} + \|\delta \bar Y\|^{\gamma - 2}_{p\text{-}\mathrm{var};[t,T]}\big) (\|Y'\|_{\infty} + \|\bar Y'\|_{\infty}) \Big).
\end{aligned}
\end{equation}
This implies the uniform boundedness $\|\delta (G^{\lambda})'\|_{p'\text{-}\mathrm{var}}$, due to the finiteness of $\|\delta Y'\|_{p'\text{-}\mathrm{var}}$, $\|\delta \bar Y'\|_{p'\text{-}\mathrm{var}}$, $\|\delta Y\|_{p\text{-}\mathrm{var}}$, and $\|\delta \bar Y\|_{p\text{-}\mathrm{var}}$. 

Next, by the definition of $P^{\lambda}$ we have 
\begin{align}\nonumber
P^{\lambda}_{s,t} &= \int_{0}^{1}\left[D G(Y^{\lambda}_{s} + \theta\cdot \delta Y^{\lambda}_{s,t}) - D G(Y^{\lambda}_{s})\right]d\theta \cdot \delta M^{Y^{\lambda}}_{s,t} + \int_{0}^{1}D G(Y^{\lambda}_s + \theta \cdot \delta Y^{\lambda}_{s,t} )d\theta \cdot  P^{Y^{\lambda}}_{s,t}\\ \nonumber
&\quad + \int_{0}^{1} \left[D G(Y^{\lambda}_{s} + \theta \cdot \delta Y^{\lambda}_{s,t} ) - D G(Y^{\lambda}_{s})\right] d\theta \cdot ((Y^{\lambda})'_{s} \delta\mathrm X_{s,t}) - \int_{s}^{t} \left[D G(Y^{\lambda}_{r}) - D G(Y^{\lambda}_{s})\right]dM^{Y^{\lambda}}_{r}\\ \label{e:R^lambda}
&=: K^{\lambda;1}_{s,t} + K^{\lambda;2}_{s,t} + K^{\lambda;3}_{s,t} + K^{\lambda;4}_{s,t}.
\end{align}
For $K^{\lambda;1}$ and $K^{\lambda;3}$, since $DG(\bm\cdot)$ is $(\gamma - 2)$-H\"older continuous, we have
\begin{equation*}
\int_{0}^{1}|DG(Y^{\lambda}_{s} + \theta\cdot \delta Y^{\lambda}_{s,t}) - DG(Y^{\lambda}_{s})| d\theta \le \|G(\bm\cdot)\|_{C^{\gamma-1}_b} |\delta Y^{\lambda}_{s,t}|^{\gamma - 2} = C_G |\delta Y^{\lambda}_{s,t}|^{\gamma - 2}.
\end{equation*}
Thus, noting $\frac{pp'}{p+p'} \ge \frac{p}{\gamma - 1}$, by H\"older's inequality we have for $t\in [0,T]$, 
\begin{equation}\label{e:K lambda 1}
\|K^{\lambda;1}\|_{\frac{pp'}{p+p'}\text{-}\mathrm{var};[t,T]} \le \|K^{\lambda;1}\|_{\frac{p}{\gamma - 1}\text{-}\mathrm{var};[t,T]} \le C_{G} \|\delta Y^{\lambda}\|^{\gamma - 2}_{p\text{-}\mathrm{var};[t,T]} \|\delta M^{Y^{\lambda}}\|_{p\text{-}\mathrm{var};[t,T]},
\end{equation}
and similarly, 
\begin{equation}\label{e:K lambda 3}
\|K^{\lambda;3}\|_{\frac{pp'}{p+p'}\text{-}\mathrm{var};[t,T]} \le \|K^{\lambda;3}\|_{\frac{p}{\gamma - 1}\text{-}\mathrm{var};[t,T]}\le C_{G} \|(Y^{\lambda})'\|_{\infty;[t,T]} \|\delta Y^{\lambda}\|^{\gamma - 2}_{p\text{-}\mathrm{var};[t,T]} \|\delta \mathrm X\|_{p\text{-}\mathrm{var};[t,T]}.
\end{equation} 

For $K^{\lambda;2}$, by the boundedness of $\|DG(\bm\cdot)\|_{\infty}$, we have
\begin{equation}\label{e:K lambda 2}
\|K^{\lambda;2}\|_{\frac{pp'}{p + p'}\text{-}\mathrm{var};[t,T]} \le \|DG(\bm\cdot)\|_{\infty} \cdot \|P^{Y^{\lambda}}\|_{\frac{pp'}{p + p'}\text{-}\mathrm{var};[t,T]}.
\end{equation}
To estimate $K^{\lambda;4}$, for $k\ge 1$, by Lemma~\ref{lem:p-var martingale estim} with $r = \frac{pp'}{p+p'}$, $r_1 = \frac{p}{\gamma - 2}$, $k_1 = 2k$, $k_0 = 2k$, 
\begin{equation}\label{e:K lambda 4}
\begin{aligned}
\big\| \| K^{\lambda;4} \|_{\frac{pp'}{p+p'}\text{-}\mathrm{var};[t,T]} \big| \mathcal F_{t}\big\|_{k} &\lesssim_{k,\gamma} \big\| \|\delta DG(Y^{\lambda}_{\bm\cdot})\|_{\frac{p}{\gamma - 2}\text{-}\mathrm{var};[t,T]} \big| \mathcal F_{t} \big\|_{2k} \big\| \delta \langle M^{Y^{\lambda}} \rangle^{\frac{1}{2}}_{t,T} \big| \mathcal F_{t} \big\|_{2k} \\
&\le C_G \big\| \|\delta Y^{\lambda}\|_{p\text{-}\mathrm{var};[t,T]} \big| \mathcal F_t \big\|^{\gamma - 2}_{2k} \big\| \delta \langle M^{Y^{\lambda}} \rangle^{\frac{1}{2}}_{t,T} \big| \mathcal F_{t} \big\|_{2k}. 
\end{aligned}
\end{equation}
Combining \eqref{e:R^lambda}, \eqref{e:K lambda 1}, \eqref{e:K lambda 3}, \eqref{e:K lambda 2}, and \eqref{e:K lambda 4}, we have that a.s., $P^{\lambda}$ has finite $\frac{pp'}{p+p'}$-variation. This together with the finiteness of $\|\delta G^{\lambda}\|_{p\text{-}\mathrm{var}}+\|\delta (G^{\lambda})'\|_{p'\text{-}\mathrm{var}} + \|(G^{\lambda})'\|_{\infty}$ and the decomposition \eqref{e:decom of G^lambda} implies that $(G^{\lambda},(G^{\lambda})') \in \mathrm{RSM}^{(p,p')\text{-}\mathrm{var}}_{\mathrm X}$.

{\textbf{Step 2.}} In this step, we prove that $(\alpha,\alpha')$ is an RSM. 
Since $\alpha'$ is given by  the integral of $(G^{\lambda})'$, which is uniformly bounded in $\lambda\in[0,1]$, 
$\|\alpha'\|_{\infty}$ is bounded.
Furthermore, in view of the inequality \eqref{e:G^lambda < Y}, Minkowski's inequality yields  
\begin{equation*} \|\delta \alpha'\|_{p'\text{-}\mathrm{var};[t,T]} 
\le C_G \times \text{RHS of \eqref{e:G^lambda < Y}}.
\end{equation*}
Next, we establish the \textrm{RSM} decomposition of $(\alpha,\alpha')$. Note that the following equality holds:
\begin{equation*}
\delta \alpha_{s,t} - \alpha'_{s} \delta \mathrm X_{s,t} = \int_{0}^{1} P^{\lambda}_{s,t}  d\lambda + \int_{0}^{1} \delta M^{G^{\lambda}}_{s,t} d\lambda =: P_{s,t} + \delta M_{s,t}, 
\end{equation*}
where $P_{s,t} := \int_{0}^{1} P^{\lambda}_{s,t}  d\lambda$ and $M_{t} := \int_{0}^{1} M^{G^{\lambda}}_{t} d\lambda$. By Fubini's theorem, we have, noting $M^{G^{\lambda}}_t = \int_{0}^{t} DG(Y^{\lambda}_r)dM^{Y^{\lambda}}_r$ from \eqref{e:decom of G^lambda}, 
\begin{equation*}
M_{t} = \int_{0}^{1} \E_{t}\Big[\int_{0}^{T} DG(Y^{\lambda}_r)dM^{Y^{\lambda}}_r \Big] d\lambda = \E_{t}\Big[\int_{0}^{1}\int_{0}^{T}  DG(Y^{\lambda}_r)dM^{Y^{\lambda}}_r d\lambda \Big] = \E_{t}[M_T]. 
\end{equation*}
Therefore, $M_t$ is a continuous martingale, and then $(s,t)\mapsto P_{s,t}$ is continuous a.s. Moreover, similar to $\|\delta \alpha'\|_{p'\text{-}\mathrm{var}}$, Minkowski's inequality again implies the finiteness of the $\frac{pp'}{p+p'}$-variation of $P$. Consequently, we get that $(\alpha,\alpha')\in \mathrm{RSM}^{(p,p')\text{-}\mathrm{var}}_{\mathrm X}$ with $M^{\alpha} = M$ and $P^{\alpha} = P$.  
\end{proof}

Below we obtain  upper bounds for the pathwise remainder $P^{G(Y)}$ as well as for $M^{G(Y)}$. Recall $\frac{q''}{2} := \frac{qq'}{q+q'}$.

\begin{proposition}\label{prop:G(Y) in D}
Assume that \eqref{cond31} and $q < \min\{4,q'\}$ hold. Let $G\in C_b^2(V_1,V_2)$ with
$\|G\|_{C_b^2}\le L$ for some constant $L>0$. Fix $k\ge1$ and set
$k_0:=\frac{2k}{q-2}$. Then, we have that for $(s,t) \in \Delta$, 
\begin{align}\nonumber
\big\| \|P^{G(Y)}\|_{\frac{q''}{2}\text{-}\mathrm{var};[s,t]} | \mathcal F_{s} \big\|_{k} 
& \lesssim_{k, L} \big\| \|\delta Y\|_{q\text{-}\mathrm{var};[s,t]} | \mathcal F_{s} \big\|^{\frac{4-q}{2}}_{k} \big\| \delta \langle M^{Y}\rangle^{\frac{1}{2}}_{s,t} | \mathcal F_{s} \big\|_{k_0} + \big\|\|P^{Y}\|_{\frac{q''}{2}\text{-}\mathrm{var};[s,t]} | \mathcal F_{s}\big\|_{k} \\  \label{e:R^G(Y)}
&\qquad + \|\delta \mathrm X\|_{p\text{-}\mathrm{var};[s,t]}\big\| \|\delta Y\|_{q\text{-}\mathrm{var};[s,t]} | \mathcal F_{s} \big\|^{\frac{q}{q'}}_{k} \|Y'\|_{\infty} ,
\end{align}
and
\begin{equation}\label{e:M^G(Y)}
\big\|\delta \langle M^{G(Y)}\rangle^{\frac{1}{2}}_{s,t} | \mathcal F_{s}\big\|_{k}\lesssim_{k,L} \big\|\delta \langle M^{Y}\rangle^{\frac{1}{2}}_{s,t} | \mathcal F_{s} \big\|_{k}.
\end{equation}
\end{proposition}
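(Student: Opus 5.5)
We follow the scheme of the proof of Lemma~\ref{lem:int of RSM} with $Y=\bar Y$ and $G\in C^2_b$ (the case $\gamma=3$, so $DG$ is Lipschitz). The decomposition is
\[
\delta G(Y)_{s,t}=DG(Y_s)Y'_s\,\delta\mathrm X_{s,t}+P^{G(Y)}_{s,t}+\delta M^{G(Y)}_{s,t},\qquad M^{G(Y)}_t=\int_0^t DG(Y_r)\,dM^Y_r .
\]
Once this is fixed, \eqref{e:M^G(Y)} follows directly. Indeed, $\delta\langle M^{G(Y)}\rangle_{s,t}=\int_s^t |DG(Y_r)|^2\, d\langle M^Y\rangle_r$ up to dimensional constants, so $\delta\langle M^{G(Y)}\rangle^{1/2}_{s,t}\le L\,\delta\langle M^Y\rangle^{1/2}_{s,t}$ pathwise. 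We then take conditional $L^k$ norms.

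For \eqref{e:R^G(Y)} we split $P^{G(Y)}=K^1+K^2+K^3+K^4$ exactly as in \eqref{e:R^lambda}, and measure every term in $\frac{q''}{2}$-variation, where $\frac{2}{q''}=\frac1q+\frac1{q'}$.

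\begin{itemize}[leftmargin=1.2em, labelsep=0.2em]
\item $K^2$: here $|K^2_{s,t}|\le L|P^Y_{s,t}|$, which gives the second term.
\item $K^3$: here $|K^3_{u,v}|\le L\|Y'\|_\infty |\delta Y_{u,v}|^{\theta}|\delta\mathrm X_{u,v}|$. We choose $\theta=q/q'\le 1$, using $|\delta Y|^{\theta}\le (2\|Y\|_\infty)^{\cdot}$-free interpolation. Since $DG$ is Lipschitz and bounded, it is $\theta$-Hölder with constant $\lesssim L$. By Hölder's inequality for variation norms, $\frac{\theta}{q}+\frac1p=\frac1{q'}+\frac1p\ge\frac1{q'}+\frac1q$ (since $p\le q$), so $\|K^3\|_{q''/2\text{-var}}\lesssim L\|Y'\|_\infty\|\delta\mathrm X\|_{p\text{-var}}\|\delta Y\|^{q/q'}_{q\text{-var}}$. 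Taking conditional $L^k$ norms and applying Jensen's inequality (as $q/q'\le1$) gives the third term.
\item $K^1$: the bound is $|K^1_{u,v}|\lesssim L|\delta Y_{u,v}|^{\theta}|\delta M^Y_{u,v}|$, and it should be bounded via Hölder with $\|\delta M^Y\|_{q\text{-var}}$ controlled by Lemma~\ref{lem:Lepingle} (Lépingle, valid since $q>2$).
\item $K^4$: this is the martingale transform $\int_u^v[DG(Y_r)-DG(Y_u)]\,dM^Y_r$. It is handled by Lemma~\ref{lem:p-var martingale estim} with integrand $DG(Y)$ of finite $q/\theta$-variation, $\|\delta DG(Y)\|_{q/\theta\text{-var}}\le L\|\delta Y\|^{\theta}_{q\text{-var}}$.
\end{itemize}

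The main obstacle is matching the exponents in $K^1$ and $K^4$ to the stated form $\|\|\delta Y\|_{q\text{-var}}|\mathcal F_s\|_k^{(4-q)/2}\,\|\delta\langle M^Y\rangle^{1/2}|\mathcal F_s\|_{k_0}$ with $k_0=\frac{2k}{q-2}$. For these two terms we take $\theta=\frac{4-q}{2}\in(0,1)$, which is allowed since $q<4$. The variation exponents then fit: $\frac{\theta}{q}+\frac1q=\frac{6-q}{2q}\ge \frac2{q''}$ requires $\frac{4-q}{2q}\ge\frac1{q'}$, i.e. $q'(4-q)\ge 2q$. This is not automatic, so I would instead pair $\delta M^Y$ in $r$-variation with $r$ chosen slightly above $2$ so that $\frac{\theta}{q}+\frac1r=\frac2{q''}$, which is possible because $q<q'$ leaves room (Lépingle applies for any $r>2$). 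For the integrability, conditional Hölder is applied with exponents $\frac{k}{\theta}$ and $k_0$: we need $\frac{\theta}{k}\cdot$ (from Jensen on $\|\delta Y\|^\theta$) and $\frac1{k_0}$ to sum to $\frac1k$. Since $\frac{\theta}{k}+\frac{q-2}{2k}=\frac1k$, this is exactly the prescribed $k_0$. The same splitting is fed into Lemma~\ref{lem:p-var martingale estim} for $K^4$, using the BDG-type control of $\delta M^Y$ in $r$-variation by $\delta\langle M^Y\rangle^{1/2}_{s,t}$ in conditional $L^{k_0}$. I expect checking that the hypotheses of Lemma~\ref{lem:p-var martingale estim} admit this exponent choice to be the delicate point. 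Constants depend on $k$ and $L$ (and $\Theta$). As in Lemma~\ref{lem:int of RSM}, we work first under localization so that all moments are finite, and remove it by monotone convergence.
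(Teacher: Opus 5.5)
Your proposal is correct and matches the paper's proof essentially step for step. It uses the same splitting $P^{G(Y)}=K^1+K^2+K^3+K^4$, the interpolation exponent $\frac{4-q}{2}$ for $K^1,K^4$ and $\frac{q}{q'}$ for $K^3$, L\'epingle applied to $\delta M^Y$ in $Q$-variation with $Q=(\frac12-\frac1q+\frac1{q'})^{-1}>2$ (exactly your $r$), and Lemma~\ref{lem:p-var martingale estim} with $r=\frac{q''}{2}$, $r_1=\frac{2q}{4-q}$, $k_1=\frac{2k}{4-q}$, $k_0=\frac{2k}{q-2}$ for $K^4$. The hypothesis check you flagged as delicate is immediate: $\frac1r<\frac1{r_1}+\frac12$ reduces to $\frac1q+\frac1{q'}<\frac2q$, i.e.\ $q<q'$, and $k_1>1$ because $q>2$.
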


\begin{proof}
For $(s,t) \in \Delta$, by standard calculus, we obtain 
\begin{align}\nonumber
&G(Y_{t}) - G(Y_{s}) - D G(Y_{s}) Y'_{s} \delta\mathrm X_{s,t}\\ \label{e:G(Yt)-G(Ys)}
&= \int_{0}^{1} D G(Y_{s} + \lambda\cdot \delta Y_{s,t} )d\lambda \cdot \delta Y_{s,t} - \int_{0}^{1} D G(Y_{s}) d\lambda (Y'_{s} \delta\mathrm X_{s,t})\\ \nonumber
&= \int_{0}^{1} D G(Y_{s} + \lambda\cdot \delta Y_{s,t} ) d\lambda (\delta M^{Y}_{s,t} + P^{Y}_{s,t}) + \int_{0}^{1} \left[ D G(Y_{s} + \lambda \cdot \delta Y_{s,t} ) - D G(Y_{s})\right] d\lambda (Y'_{s} \delta\mathrm X_{s,t}),
\end{align}
where the last step is due to $\delta Y_{s,t} = \delta M^{Y}_{s,t} + P^Y_{s,t} + Y'_{s} \delta \mathrm X_{s,t}$. For brevity, we denote 
\begin{equation*}
M := M^{G(Y)}\text{ and }P := P^{G(Y)}.
\end{equation*}
By \eqref{e:rep of M^alpha} in Lemma~\ref{lem:int of RSM} with $Y = \bar Y$, we have $M_t = \int_{0}^{t} D G(Y_{r}) d M^{Y}_{r}$, $t \in [0,T]$. 
For $(s,t)\in \Delta$, by the definition of $P$, we get
\begin{equation*}
\begin{aligned}
P_{s,t}&:=\text{RHS of \eqref{e:G(Yt)-G(Ys)}} - \delta M_{s,t}\\
&= \int_{0}^{1}\left[D G(Y_{s} + \lambda\cdot \delta Y_{s,t}) - D G(Y_{s})\right]d\lambda \cdot \delta M^{Y}_{s,t} + \int_{0}^{1} D G(Y_s + \lambda \cdot \delta Y_{s,t} )d\lambda \cdot  P^Y_{s,t}\\
&\quad + \int_{0}^{1} \left[D G(Y_{s} + \lambda \cdot \delta Y_{s,t} ) - D G(Y_{s})\right] d\lambda (Y'_{s} \delta\mathrm X_{s,t}) - \int_{s}^{t} \left[D G(Y_{r}) - D G(Y_{s})\right]dM^{Y}_{r}\\
&=: K^1_{s,t} + K^2_{s,t} + K^{3}_{s,t} + K^{4}_{s,t}.
\end{aligned}
\end{equation*}

For $K^{1}$, since $DG(\bm\cdot)$ and $D^2 G(\bm\cdot)$ are bounded, we have for all $(s,t) \in \Delta$ and $(u,v) \in \Delta_{[s,t]}$, 
\begin{equation*}
\int_{0}^{1}|DG(Y_{u} + \lambda\cdot \delta Y_{u,v}) - DG(Y_{u})| d\lambda \le 2^{\frac{q-2}{2}} \|DG(\bm\cdot)\|^{\frac{q-2}{2}}_{\infty} \|D^2 G(\bm\cdot)\|^{\frac{4-q}{2}}_{\infty} \cdot |\delta Y_{u,v}|^{\frac{4-q}{2}} =: C_G |\delta Y_{u,v}|^{\frac{4-q}{2}},
\end{equation*}
where $C_G := 2^{\frac{q-2}{2}} \|DG(\bm\cdot)\|^{\frac{q-2}{2}}_{\infty} \|D^2G(\bm\cdot)\|^{\frac{4-q}{2}}_{\infty}$. 
Hence, we have, with $Q := (\frac{1}{2} - \frac{1}{q} + \frac{1}{q'})^{-1} > 2$,  
\begin{equation}\label{e:K1}
\begin{aligned}
\|K^1\|_{\frac{q''}{2}\text{-var};[s,t]}&\le C_G \| \, |\delta Y|^{\frac{4 - q}{2}} \cdot \delta M^{Y} \|_{\frac{q''}{2}\text{-}\mathrm{var};[s,t]}  \le C_G \|\delta Y\|^{\frac{4-q}{2}}_{q\text{-var};[s,t]} \|\delta M^Y\|_{Q\text{-var};[s,t]},
\end{aligned}
\end{equation}
where the second inequality follows from  H\"older's inequality. For $K^2$, it is straightforward  to get 
\begin{equation*}
\|K^{2} \|_{\frac{q''}{2}\text{-var};[s,t]} \le  \|DG(\bm\cdot)\|_\infty \cdot \|P^Y\|_{\frac{q''}{2}\text{-var};[s,t]}.
\end{equation*}
For $K^3$, using the same argument leading to  \eqref{e:K1}, we obtain 
\begin{equation}\label{e:K3}
\| K^{3}\|_{\frac{q''}{2}\text{-var};[s,t]}\lesssim  \|DG(\bm\cdot)\|^{\frac{q'-q}{q'}}_\infty \|D^2 G(\bm\cdot)\|^{\frac{q}{q'}}_\infty \cdot \|Y'\|_{\infty;[s,t]} \|\delta Y\|^{\frac{q}{q'}}_{q\text{-var};[s,t]} \|\delta \mathrm X\|_{q\text{-var};[s,t]}.
\end{equation}

Recall $k_0 = \frac{2k}{q-2}$. Taking conditional $k$-moments in \eqref{e:K1}--\eqref{e:K3} 
and applying H\"older's inequality, we have 
\begin{equation}\label{e:K1K2K3}
\begin{aligned}
&\big\| \|K^{1} + K^2 + K^3\|_{\frac{q''}{2}\text{-var};[s,t]} | \mathcal F_{s} \big\|_{k}\\
&\qquad \lesssim_{k,L} \big\|\|\delta Y\|_{q\text{-var};[s,t]} | \mathcal F_{s} \big\|^{\frac{4-q}{2}}_{k}\big\| 
\|\delta M^{Y}\|_{Q\text{-var};[s,t]} | \mathcal F_{s} \big\|_{k_0} + \big\| \|P^Y\|_{\frac{q''}{2}\text{-var};[s,t]} \big| \mathcal F_{s} \big\|_{k}\\
&\qquad\qquad + \|\delta \mathrm X\|_{q\text{-var};[s,t]}\big\| \|\delta Y\|_{q\text{-var};[s,t]} | \mathcal F_{s} \big\|^{\frac{q}{q'}}_{k} \|Y'\|_{\infty}\\
&\qquad \lesssim_{k} \big\| \|\delta Y\|_{q\text{-var};[s,t]} | \mathcal F_{s} \big\|^{\frac{4-q}{2}}_{k} \big\| \delta \langle M^{Y}\rangle^{\frac{1}{2}}_{s,t} | \mathcal F_{s} \big\|_{k_0}
 + \big\| \|P^Y\|_{\frac{q''}{2}\text{-var};[s,t]} | \mathcal F_{s} \big\|_{k}\\
&\qquad\qquad + \|\delta \mathrm X\|_{q\text{-var};[s,t]} \big\| \|\delta Y\|_{q\text{-var};[s,t]} | \mathcal F_{s} \big\|^{\frac{q}{q'}}_{k} \|Y'\|_{\infty},
\end{aligned}
\end{equation}
where the second inequality follows from L\'epingle's inequality (Lemma~\ref{lem:Lepingle}).

For the term $K^{4}$, applying Lemma~\ref{lem:p-var martingale estim} with $r = \frac{q''}{2}, r_{1} = \frac{2q}{4-q}, k_{1} = \frac{2k}{4-q}$ and $k_{0} = \frac{2k}{q-2}$, we have
\begin{equation}\label{e:K4}
\begin{aligned}
\big\| \|K^{4}\|_{\frac{q''}{2}\text{-var};[s,t]} | \mathcal F_{s} \big\|_{k} &\lesssim_{k} \big\| \|\delta D G(Y_{\bm\cdot})\|_{\frac{2q}{4-q}\text{-var};[s,t]} | \mathcal F_{s} \big\|_{k_1} \big\| \delta \langle M^{Y}\rangle^{\frac{1}{2}}_{s,t} | \mathcal F_{s} \big\|_{k_0}\\
&\lesssim_{k} C_G \big\| \|\delta Y\|_{q\text{-var};[s,t]} | \mathcal F_{s} \big\|^{\frac{4-q}{2}}_{k} \big\| \delta \langle M^{Y} \rangle^{\frac{1}{2}}_{s,t} | \mathcal F_{s} \big\|_{k_0},
\end{aligned}
\end{equation}
where the second inequality is due to the fact that $DG$ is both Lipschitz and bounded. Combining \eqref{e:K1K2K3} and \eqref{e:K4}, we get the desired \eqref{e:R^G(Y)}.

Finally, recalling $M_t = \int_{0}^{t} D G(Y_{r}) d M^{Y}_{r}$, we get $\big\|\delta \langle M\rangle^{1/2}_{s,t}| \mathcal F_{s} \big\|_{k} \le \|DG(\bm\cdot)\|_{\infty} \big\| \delta \langle M^{Y}\rangle^{1/2}_{s,t} | \mathcal F_{s}\big\|_{k}$
and then \eqref{e:M^G(Y)} follows.
\end{proof}

The following proposition establishes an estimate for the rough stochastic integral and, in particular, implies that we can improve the regularity of the integral from $\mathrm{RSM}^{(p,p')\text{-}\mathrm{var}}_{\mathrm X}$ to $\mathrm{RSM}^{(p,p)\text{-}\mathrm{var}}_{\mathrm X}$.

\begin{proposition}\label{prop:estim of integral}
Assume \ref{(p,p')} holds. For $(Y,Y')\in\mathrm{RSM}^{(p,p')\text{-}\mathrm{var}}_{\mathrm X}([0,T], \mathscr L(V,V_1) )$, denote the rough stochastic integral $\Gamma_{t} := \int_{0}^{t}Y_{r}d\mathbf X_r$, $t\in[0,T]$,
as in Definition~\ref{def:def of int}.  

Then, $(\Gamma,Y)$ belongs to $\mathrm{RSM}^{(p,p)\text{-}\mathrm{var}}_{\mathrm X}([0,T],V_1)$. Moreover, assuming \ref{(p,q,q')} and $k\ge 1$, the remainder $R^{\Gamma}_{s,t} := \delta\Gamma_{s,t} - Y_{s}\delta\mathrm X_{s,t}$ satisfies the following estimate for all $(s,t) \in \Delta$:
\begin{equation*}
\begin{aligned}
\big\| \|R^{\Gamma}\|_{\frac{p}{2}\text{-}\mathrm{var};[s,t]} \big| \mathcal F_{s} \big\|_{k}  &\lesssim_{k} \|\mathbb X\|_{\frac{p}{2}\text{-}\mathrm{var};[s,t]}  \big( \|Y'\|_{\infty} + \big\|\|\delta Y'\|_{q'\text{-}\mathrm{var};[s,t]}\big|\mathcal F_{s}\big\|_{k} \big)\\
&\qquad + \|\delta \mathrm X\|_{p\text{-}\mathrm{var};[s,t]} \Big( \big\|\|P^Y\|_{\frac{q''}{2}\text{-}\mathrm{var};[s,t]}\big|\mathcal F_{s}\big\|_{k} + \|\delta M^{Y}\|_{\mathrm{BMO};[s,t]} \Big).  
\end{aligned}
\end{equation*}
\end{proposition}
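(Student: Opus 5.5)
The first claim, $(\Gamma,Y)\in\mathrm{RSM}^{(p,p)\text{-}\mathrm{var}}_{\mathrm X}$, is Remark~\ref{rem:Gamma-RSM}, i.e.\ \cite[Theorem~1.7]{friz2023rough}. The real task is the quantitative bound for $R^\Gamma$. Since $M^\Gamma=0$, I would identify $R^\Gamma$ with the limit of compensated Riemann sums. For $(u,v)\in\Delta_{[s,t]}$ set the germ $\Xi_{u,v}:=Y_u\delta\mathrm X_{u,v}+Y'_u\mathbb X_{u,v}$. Then
\[
R^\Gamma_{u,v}=Y'_u\mathbb X_{u,v}+\big(\textstyle\int_u^v Y\,d\mathbf X-\Xi_{u,v}\big).
\]
The first term contributes $\|Y'\|_\infty\|\mathbb X\|_{\frac p2\text{-var};[s,t]}$, deterministically. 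For the second term, Chen's relation gives
\[
\delta\Xi_{u,m,v}=-R^Y_{u,m}\delta\mathrm X_{m,v}-\delta Y'_{u,m}\mathbb X_{m,v}.
\]
Here $R^Y=P^Y+\delta M^Y$. I would split the integral accordingly, using the integration by parts formula \eqref{e:def of inte} from Remark~\ref{rem:int by parts}, into a pathwise rough integral of the controlled part $(Y-M^Y,Y')$ and a martingale part.

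\textbf{Pathwise part.} For the germ built from $Y-M^Y$, the defect is $-P^Y_{u,m}\delta\mathrm X_{m,v}-\delta Y'_{u,m}\mathbb X_{m,v}$. The deterministic sewing lemma in $p$-variation, with controls $w_1=\|P^Y\|^{q''/2}_{\frac{q''}{2}\text{-var}}$, $w_{\mathrm X}=\|\delta\mathrm X\|^p_{p\text{-var}}$, $w_2=\|\delta Y'\|^{q'}_{q'\text{-var}}$ and $w_{\mathbb X}$, applies because $\frac2{q''}+\frac1p=\frac1q+\frac1{q'}+\frac1p>1$ and $\frac1{q'}+\frac2p>1$ by \ref{(p,q,q')}. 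It gives the pathwise bound
\[
\lesssim \|P^Y\|_{\frac{q''}2;[s,t]}\|\delta\mathrm X\|_{p;[s,t]}+\|\delta Y'\|_{q';[s,t]}\|\mathbb X\|_{\frac p2;[s,t]}.
\]
The resulting two-parameter remainder is superadditive-controlled, so the same bound holds for its $\frac p2$-variation over $[s,t]$. I would then take $\|\cdot|\mathcal F_s\|_k$; only random norms appear linearly, so the conditional moments pass directly.

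\textbf{Martingale part.} After subtracting, what remains is $\int_u^v \delta M^Y_{u,r}\,d\mathrm X_r$, understood through integration by parts: $\delta M^Y_{u,v}\delta\mathrm X_{u,v}\cdot$-type terms minus $\int_u^v\delta\mathrm X_{u,r}\,dM^Y_r$. Concretely, the contribution to $R^\Gamma$ is
\[
A_{u,v}:=\int_u^v \delta\mathrm X_{r,v}\,dM^Y_r,
\]
up to the pathwise germ part already handled. Its $\frac p2$-variation is what must be controlled. I would use that $\frac1p+\frac1Q>\frac2p$ fails in general, so pure pathwise sewing with $M^Y$ in $Q$-variation, $Q>2$, only gives $\frac{pQ}{p+Q}$-variation, not $\frac p2$-variation. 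This is the main obstacle.

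My first attempt is a conditional martingale-transform estimate: Lemma~\ref{lem:p-var martingale estim}, applied with the deterministic integrand $\delta\mathrm X$. It should give
\[
\big\|\|A\|_{\frac p2\text{-var};[s,t]}|\mathcal F_s\big\|_k\lesssim_k \|\delta\mathrm X\|_{p\text{-var};[s,t]}\,\|\delta\langle M^Y\rangle^{1/2}_{s,t}|\mathcal F_s\|_{k_0}.
\]
This is exactly analogous to the treatment of $K^4$ in Proposition~\ref{prop:G(Y) in D}, where $\delta DG(Y)$ is replaced by the deterministic $\delta\mathrm X$ with $r_1=p$. The combination $\frac1p+\frac12>\frac2p$ holds since $p>2$.

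Finally, the conditional moment $\|\delta\langle M^Y\rangle^{1/2}_{s,t}|\mathcal F_s\|_{k_0}$ is bounded by $\lesssim_k\|\delta M^Y\|_{\mathrm{BMO};[s,t]}$. This follows from Lemma~\ref{lem:Energy inequality} applied to $A=\langle M^Y\rangle$, or from the John--Nirenberg/Lépingle-type inequality in Lemma~\ref{lem:Lepingle}. This yields the $\|\delta\mathrm X\|_{p\text{-var}}\|\delta M^Y\|_{\mathrm{BMO}}$ term. Summing the three pieces then gives the stated estimate.
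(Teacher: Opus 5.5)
Your proposal is correct and follows essentially the paper's proof. The paper uses the same splitting $R^\Gamma_{s,t}=\Xi^1_{s,t}+\Xi^2_{s,t}+Y'_s\mathbb X_{s,t}$ via integration by parts, deterministic $p$-variation sewing for the controlled part $(Y-M^Y,Y')$, and Lemma~\ref{lem:p-var martingale estim} with Lemma~\ref{lem:Lepingle} for $\int_s^t\delta\mathrm X_{r,t}\,dM^Y_r=\delta\mathrm X_{s,t}\delta M^Y_{s,t}-\int_s^t\delta\mathrm X_{s,r}\,dM^Y_r$. Note that the product piece $\delta\mathrm X_{s,t}\delta M^Y_{s,t}$ is bounded by H\"older's inequality and L\'epingle's inequality, not by the martingale-transform lemma. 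Also, your aside on pathwise sewing misidentifies the obstruction: it is $\frac1p+\frac1Q<1$ for $p,Q>2$, not the resulting variation exponent.
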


\begin{remark}\label{rem:for Psi}
The expression on the right-hand side is closely related to the \emph{RSM} seminorm $\thicknm{(Y,Y')}_{\mathrm X;q,q';k,\infty}$ which is introduced in \eqref{e:Psi} in Section~\ref{subsec:seminorms}.
\end{remark}

\begin{proof} 
By \cite{friz2023rough}, we have $(\Gamma,Y) \in \mathrm{RSM}^{(p,p)\text{-}\mathrm{var}}_{\mathrm X}([0,T],V_1)$; see Remark~\ref{rem:Gamma-RSM} for details. Note that 
\begin{equation}\label{e:R^Gamma = Xi^1 + Xi^2}
R^{\Gamma}_{s,t} = \int_{s}^{t}\delta Y_{s,r} d\mathbf X_r=\Xi^1_{s,t} + \Xi^2_{s,t} + Y'_{s}\mathbb X_{s,t},
\end{equation}
where 
\begin{equation*}
\Xi^1_{s,t} := \int_{s}^{t}\delta Y_{s,r} d\mathbf X_r - Y'_{s} \mathbb X_{s,t} - \int_{s}^{t}\delta M^{Y}_{s,r}d\mathbf X_{r}\quad \text{ and }\quad \Xi^2_{s,t} := \int_{s}^{t}\delta M^{Y}_{s,r} d\mathbf X_{r}.
\end{equation*}

Recall that $\delta Y_{s,r} = \delta (Y - M^Y)_{s,r} + \delta M^{Y}_{s,r}$. Hence, denoting $A_{s,t} := (Y - M^Y)_{s}\delta\mathrm X_{s,t} + Y'_{s}\mathbb X_{s,t}$, 
\begin{equation*}
\Xi^1_{s,t} = \int_{s}^{t}\delta (Y - M^Y)_{s,r} d\mathbf X_r + \int_{s}^{t}\delta M^{Y}_{s,r} d\mathbf X_{r} - Y'_{s}\mathbb X_{s,t} - \int_{s}^{t}\delta M^{Y}_{s,r} d\mathbf X_{r} = \int_s^t (Y - M^Y)_{r} d\mathbf X_r - A_{s,t}.
\end{equation*}
Since $((Y - M^Y),Y')\in\mathscr{D}^{(p,p')\text{-}\mathrm{var}}_{\mathrm X}$ a.s., by the definition of rough integrals (see, e.g., \cite{friz2020course,FrizZhang-2018}), 
\begin{equation*}
\int_s^t (Y - M^Y)_{r} d\mathbf X_r = \lim_{|\pi|\to 0}\sum_{[u,v]\in\pi} A_{u\vee s,v\land t}\quad \text{a.s.}
\end{equation*}
Note $\Xi^{1}_{s,t} = \int_{s}^{t} (Y - M^Y)_{r}d\mathbf X_{r} - A_{s,t}$. Then, since \ref{(p,q,q')} holds, the sewing lemma (see \cite[Theorem~2.5]{FrizZhang-2018}) yields a universal constant $C>0$ such that, a.s., the following estimate holds 
\begin{equation}\label{e:Xi <= Y X + R^Y X}
\|\Xi^1\|_{\frac{p}{2}\text{-}\mathrm{var};[s,t]} \le C \Big( \|\delta Y'\|_{q'\text{-}\mathrm{var};[s,t]} \|\mathbb X\|_{\frac{p}{2}\text{-var};[s,t]} +  \|P^{Y}\|_{\frac{q''}{2}\text{-var};[s,t]}\|\delta \mathrm X\|_{p\text{-var};[s,t]} \Big).
\end{equation}

For the term $\Xi^{2}_{s,t}$, by \eqref{e:def of inte} we get $\Xi^{2}_{s,t} = \int_{s}^{t}\delta \mathrm X_{r,t} dM^{Y}_{r} = \delta \mathrm X_{s,t}\delta M^{Y}_{s,t} - \int_{s}^{t}\delta \mathrm X_{s,r} dM^{Y}_{r}$. 
Then, Lemma~\ref{lem:p-var martingale estim} and Lemma~\ref{lem:Lepingle} yield 
\begin{equation}\label{e:A <= M X}
\big\|\|\Xi^{2}\|_{\frac{p}{2}\text{-var};[s,t]}\big|\mathcal F_{s}\big\|_{k} \lesssim \big\| \|\delta M^{Y}_{s,\cdot}\|_{\infty;[s,t]} \big|\mathcal F_{s}\big\|_{k} \|\delta \mathrm X\|_{p\text{-}\mathrm{var};[s,t]}  \lesssim_k  \|\delta M^{Y}\|_{\mathrm{BMO};[s,t]}\|\delta \mathrm X\|_{p\text{-var};[s,t]}.
\end{equation}

The $\frac{p}{2}$-variation of the last term $Y_s'\mathbb X_{s,t}$ in \eqref{e:R^Gamma = Xi^1 + Xi^2} is plainly bounded by $ \|Y'\|_{\infty;[s,t]} \|\mathbb X\|_{\frac{p}{2}\text{-}\mathrm{var};[s,t]}$.
Combining this with \eqref{e:R^Gamma = Xi^1 + Xi^2}, \eqref{e:Xi <= Y X + R^Y X}, \eqref{e:A <= M X}, we get the desired inequality.
\end{proof}

The following corollary provides the estimate for the rough stochastic integral term $\int G(Y_r) d\mathbf X_r$ under the rough semimartingale norm. 

\begin{corollary}\label{cor:int of the solution}
Assume \ref{(p,p')} and  \ref{(p,q,q')} hold with $q < q'$. For $G\in C^{2}_b(V_2,\mathscr L(V,V_1))$ and $(Y,Y') \in \mathrm{RSM}^{(p,p')\text{-}\mathrm{var}}_{\mathrm X}([0,T],V_2)$ with $\| Y' \|_{\infty} + \| G \|_{C^{2}_{b}} \le K$, set $\Lambda_{t} :=\int_{0}^{t}G(Y_{r})d\mathbf X_{r}$. 
Then, for any $k \ge 1$, there is a constant $C_{k,K} > 0$ depending only on $(k,K)$ such that for all $(s,t)\in\Delta$,
\begin{equation}\label{e:(Gamma,-H(Y))}
\begin{aligned}
&\big\| \|R^{\Lambda}\|_{\frac{p}{2}\text{-}\mathrm{var};[s,t]}\big| \mathcal F_{s} \big\|_{k} + \big\| \|\delta \Lambda\|_{p\text{-}\mathrm{var};[s,t]} \big| \mathcal F_{s}\big\|_{k} \\
&\le C_{k,K} |\delta \mathbf X|_{p\text{-}\mathrm{var};[s,t]} \Big(1 + |\delta \mathbf X|_{p\text{-}\mathrm{var};[s,t]} + \big\| \|\delta Y'\|_{q'\text{-}\mathrm{var};[s,t]}\big|\mathcal F_{s}\big\|_{k}\\
& \quad + (1+  |\delta \mathbf X|_{p\text{-}\mathrm{var};[s,t]}) \cdot \big\|\|\delta Y\|_{q\text{-}\mathrm{var};[s,t]} \big| \mathcal F_{s}\big\|_{k} +  \|\delta M^{Y}\|^{\frac{2}{q-2}}_{\mathrm{BMO};[s,t]} + \big\|  \|P^Y\|_{\frac{q''}{2}\text{-}\mathrm{var};[s,t]}\big|\mathcal F_{s}\big\|_{k}  \Big).
\end{aligned}
\end{equation}
\end{corollary}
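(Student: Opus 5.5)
Since $G\in C^2_b$, Lemma~\ref{lem:int of RSM} (with $\gamma=3$, $Y=\bar Y$) gives $(G(Y),DG(Y)Y')\in\mathrm{RSM}^{(p,p')\text{-}\mathrm{var}}_{\mathrm X}$. So $\Lambda$ is the rough stochastic integral of this RSM, and I would apply Proposition~\ref{prop:estim of integral} with the integrand $(G(Y),(G(Y))')$ in place of $(Y,Y')$. This yields
\begin{equation*}
\big\|\|R^{\Lambda}\|_{\frac p2\text{-}\mathrm{var};[s,t]}\big|\mathcal F_s\big\|_k\lesssim_k \|\mathbb X\|_{\frac p2\text{-}\mathrm{var};[s,t]}\big(\|(G(Y))'\|_\infty+\big\|\|\delta (G(Y))'\|_{q'\text{-}\mathrm{var};[s,t]}\big|\mathcal F_s\big\|_k\big)+\|\delta\mathrm X\|_{p\text{-}\mathrm{var};[s,t]}\Big(\big\|\|P^{G(Y)}\|_{\frac{q''}{2}\text{-}\mathrm{var};[s,t]}\big|\mathcal F_s\big\|_k+\|\delta M^{G(Y)}\|_{\mathrm{BMO};[s,t]}\Big).
\end{equation*}
The $p$-variation of $\delta\Lambda$ then follows from $\delta\Lambda_{u,v}=G(Y_u)\delta\mathrm X_{u,v}+R^\Lambda_{u,v}$. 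This gives $\|\delta\Lambda\|_{p\text{-}\mathrm{var}}\le \|G\|_\infty\|\delta\mathrm X\|_{p\text{-}\mathrm{var}}+\|R^\Lambda\|_{\frac p2\text{-}\mathrm{var}}$, so only the remainder needs work.

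I would bound each term on the right in turn.
\begin{itemize}
\item First, $\|(G(Y))'\|_\infty\le \|DG\|_\infty\|Y'\|_\infty\le K^2$.
\item Second, $\delta(DG(Y)Y')_{u,v}=DG(Y_v)\delta Y'_{u,v}+(DG(Y_v)-DG(Y_u))Y'_u$, which gives $\|\delta(G(Y))'\|_{q'\text{-}\mathrm{var}}\le K\|\delta Y'\|_{q'\text{-}\mathrm{var}}+K^2\|\delta Y\|_{q\text{-}\mathrm{var}}$ since $q\le q'$. This produces the $\|\delta Y'\|$ term and part of the $\|\delta Y\|$ term, each multiplied by $|\delta\mathbf X|$.
\item Third, $M^{G(Y)}=\int DG(Y)dM^Y$, so $\|\delta M^{G(Y)}\|_{\mathrm{BMO}}\le K\|\delta M^Y\|_{\mathrm{BMO}}$.
\item Fourth, $P^{G(Y)}$ is handled by Proposition~\ref{prop:G(Y) in D}, which gives three terms: $\|P^Y\|$ directly, $K\,\|\delta\mathrm X\|_{p\text{-}\mathrm{var}}\|\|\delta Y\|_{q\text{-}\mathrm{var}}|\mathcal F_s\|_k^{q/q'}$, and the mixed product $\|\|\delta Y\|_{q\text{-var}}|\mathcal F_s\|_k^{\frac{4-q}{2}}\|\delta\langle M^Y\rangle^{1/2}_{s,t}|\mathcal F_s\|_{k_0}$.
\end{itemize}

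The main obstacle is turning these fractional powers into the linear-plus-power form of \eqref{e:(Gamma,-H(Y))}; I would use Young's inequality. For the mixed product, Young with exponents $\frac{2}{4-q}$ and $\frac{2}{q-2}$ gives $a^{\frac{4-q}{2}}b\le a+b^{\frac{2}{q-2}}$. This leaves $\|\|\delta Y\|_{q\text{-}\mathrm{var}}|\mathcal F_s\|_k$ plus $\|\delta\langle M^Y\rangle^{1/2}_{s,t}|\mathcal F_s\|_{k_0}^{2/(q-2)}$. The latter is controlled by $\|\delta M^Y\|_{\mathrm{BMO};[s,t]}^{2/(q-2)}$ up to a $k$-dependent constant, using the energy inequality (Lemma~\ref{lem:Energy inequality} applied to $\langle M^Y\rangle$, or equivalently Lemma~\ref{lem:Lepingle}).

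Two further terms remain. The BMO term from the third item appears linearly, and I would absorb it via $b\le 1+b^{2/(q-2)}$, valid since $2/(q-2)\ge1$ because $q<4$. For the power $q/q'<1$, I would use $a^{q/q'}\le 1+a$. That term already carries an extra factor $\|\delta\mathrm X\|_{p\text{-var}}$, so it produces the contributions $|\delta\mathbf X|$ and $|\delta\mathbf X|\,\|\|\delta Y\|_{q\text{-var}}|\mathcal F_s\|_k$. Finally I would collect terms, bound $\|\mathbb X\|_{p/2}$ and $\|\delta\mathrm X\|_p$ by $|\delta\mathbf X|_{p\text{-var}}$, and absorb all $K$-dependence into $C_{k,K}$, which yields \eqref{e:(Gamma,-H(Y))}.
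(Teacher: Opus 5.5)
Your proposal is correct and matches the paper's argument step by step. Like the paper, you compose via Lemma~\ref{lem:int of RSM}, feed Proposition~\ref{prop:G(Y) in D} and the bound $\|\delta M^{G(Y)}\|_{\mathrm{BMO}}\le K\|\delta M^{Y}\|_{\mathrm{BMO}}$ into Proposition~\ref{prop:estim of integral}, and close with the Young inequalities $a^{(4-q)/2}b\le a+b^{2/(q-2)}$ and $a^{q/q'}\le 1+a$ together with $\delta\Lambda_{u,v}=G(Y_u)\delta\mathrm X_{u,v}+R^{\Lambda}_{u,v}$. Your explicit absorption of the linear BMO term via $b\le 1+b^{2/(q-2)}$ (valid since $q<4$) is a step the paper leaves implicit.
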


\begin{proof}
As explained at the beginning of this subsection, $(G(Y), DG(Y) Y') \in \mathrm{RSM}^{(p,p')\text{-}\mathrm{var}}_{\mathrm X}$. Denote the corresponding Gubinelli derivative by $G(Y)' = DG(Y) Y'$. Then, by Proposition~\ref{prop:G(Y) in D} (where $q < q'$ is required), we obtain  
\begin{equation*}
\begin{aligned}
\big\|\|P^{G(Y)}\|_{\frac{q''}{2}\text{-var};[s,t]}\big|\mathcal F_{s}\big\|_{k} &\lesssim_{k,K} \big\|\|\delta Y\|_{q\text{-var};[s,t]}\big|\mathcal F_{s}\big\|^{\frac{4-q}{2}}_{k}\|\delta M^{Y}\|_{\mathrm{BMO};[s,t]} \\
&\qquad  + \big\|\|P^Y\|_{\frac{q''}{2}\text{-var};[s,t]}\big|\mathcal F_{s}\big\|_{k} + |\delta \mathbf X|_{p\text{-var};[s,t]} \big\|\|\delta  Y\|_{q\text{-var};[s,t]}\big|\mathcal F_{s}\big\|^{\frac{q}{q'}}_{k},
\end{aligned}
\end{equation*}
and $\|\delta M^{G(Y)}\|_{\mathrm{BMO}} \lesssim_{K} \|\delta M^{Y}\|_{\mathrm{BMO}}$. This, together with Proposition~\ref{prop:estim of integral} yields
\begin{align*}\nonumber
&\big\|\|R^{\Lambda}\|_{\frac{p}{2}\text{-}\mathrm{var};[s,t]} \big| \mathcal F_{s}\big\|_{k} \\ \nonumber
& \lesssim_{k} |\delta \mathbf X|_{p\text{-}\mathrm{var};[s,t]} \Big( \left\|G(Y)'\right\|_{\infty} + \big\|\|\delta G(Y)'\|_{q'\text{-var};[s,t]}\big|\mathcal F_{s}\big\|_{k} + \big\|\|\delta Y\|_{q\text{-var};[s,t]} \big|\mathcal F_{s}\big\|^{\frac{4-q}{2}}_{k} \|\delta M^{Y}\|_{\mathrm{BMO};[s,t]} \\
& \qquad + \big\|  \|P^Y\|_{\frac{q''}{2}\text{-}\mathrm{var};[s,t]}\big|\mathcal F_{s}\big\|_{k} +  |\delta \mathbf X|_{p\text{-var};[s,t]} \big\|\|\delta Y\|_{q\text{-var};[s,t]} \big|\mathcal F_{s}\big\|^{\frac{q}{q'}}_{k} + \|\delta M^{Y}\|_{\mathrm{BMO};[s,t]} \Big)\\ \nonumber
& \lesssim_{K} |\delta \mathbf X|_{p\text{-}\mathrm{var};[s,t]} \Big( 1 + \big\| \|\delta Y\|_{q\text{-var};[s,t]}\big|\mathcal F_{s}\big\|_{k} 
 + \big\| \|\delta Y'\|_{q'\text{-var};[s,t]}\big|\mathcal F_{s}\big\|_{k}\\ \nonumber
& \qquad + \big\|\|\delta Y\|_{q\text{-var};[s,t]}\big|\mathcal F_{s}\big\|^{\frac{4-q}{2}}_{k}  \|\delta M^{Y}\|_{\mathrm{BMO};[s,t]} + \big\| \|P^Y\|_{\frac{q''}{2}\text{-}\mathrm{var};[s,t]}\big|\mathcal F_{s}\big\|_{k} \\
&\qquad +  |\delta \mathbf X|_{p\text{-var};[s,t]} \big\|\|\delta Y\|_{q\text{-var};[s,t]} \big|\mathcal F_{s}\big\|^{\frac{q}{q'}}_{k} + \|\delta M^{Y}\|_{\mathrm{BMO};[s,t]}\Big).
\end{align*}

Noting $|a|^{(4-q)/2}|b| \le |a| + |b|^{2/(q-2)}$ and $|a|^{q/q'} \le 1 + |a|$, we get 
\begin{equation}\label{e:Lambda RHS}
\big\| \|R^{\Lambda}\|_{\frac{p}{2}\text{-var};[s,t]} \big| \mathcal F_{s} \big\|_{k} \lesssim_{k,K} \text{ RHS of \eqref{e:(Gamma,-H(Y))}}.
\end{equation}
Then, the desired estimate \eqref{e:(Gamma,-H(Y))} follows from  \eqref{e:Lambda RHS} and the fact $\delta \Lambda_{s,t} = G(Y_{s})\delta \mathrm X_{s,t} + R^{\Lambda}_{s,t}$. 
\end{proof}

\begin{remark}\label{rem:on the exponent}
The condition $0<(4-q)/2<1$, equivalently $q\in(2,4)$, appearing in \eqref{e:R^G(Y)} of Proposition~\ref{prop:G(Y) in D}, is crucial for the proof of Corollary~\ref{cor:int of the solution}. Combined with Young's inequality, this condition leads to the exponent $2/(q-2)$ in the \emph{BMO} norm of $M^{Y}$ in \eqref{e:(Gamma,-H(Y))}. The additional requirement that $2/(q-2) < 2$, namely $q>3$, in turn is the critical threshold in the proof of Proposition~\ref{prop:BMO and ess sup}. 
\end{remark}

The following two propositions provide quantitative estimates for the difference of two RSMs. Let $\mathbf X = (\mathrm X, \mathbb X)$ and $\bar{\mathbf X} = (\bar{\mathrm X},\bar{\mathbb X})$ belong to $\mathscr C^{p\text{-}\mathrm{var}}([s,t],V)$, and 
\begin{equation}\label{e:Y in RSM}
(Y,Y') \in \mathrm{RSM}^{(p,p')\text{-}\mathrm{var}}_{\mathrm X}([s,t],V_1),\quad (\bar Y,\bar Y') \in \mathrm{RSM}^{(p,p')\text{-}\mathrm{var}}_{\bar{\mathrm X}}([s,t],V_1).
\end{equation}

\begin{proposition}\label{prop:R^G - R^G}
Assume \eqref{cond31} and \eqref{e:Y in RSM} hold. 
Let $G\in C^{\gamma}_{b}(V_1,V_2)$ for $\gamma \in (2,3]$ satisfying $\frac{\gamma - 1}{q} \ge \frac{2}{q''}$ or equivalently $q\le q'(\gamma-2)$ (recalling $\frac{2}{q''} = \frac{q + q'}{qq'}$). Then for every $k \ge 1$, we have 
\begin{equation}\label{e:E. R - R}
\Big\| \|P^{G(Y)} - P^{G(\bar Y)} \|_{\frac{q''}{2}\text{-}\mathrm{var};[s,t]} \big| \mathcal F_{s} \Big\|_{k,\infty} \lesssim_{k,\gamma} \|G\|_{C^{\gamma}_b}\Big( L_{1} + L_{2} + L_{3} + L_4 \Big),
\end{equation}
where $L_i$, $i = 1,2,3,4$, are defined as follows:  
\begin{equation*}
\begin{cases}\displaystyle
L_1 := \|Y - \bar Y\|_{\mathcal L^{\infty}([s,t]\times\Omega)}\cdot ( \big\| \|\delta Y\|_{q\text{-}\mathrm{var}} \big\|^{\gamma - 1}_{\infty;1,\infty;[s,t]} + \big\| \|\delta \bar Y\|_{q\text{-}\mathrm{var}} \big\|^{\gamma - 1}_{\infty;1,\infty;[s,t]} ),\\ \displaystyle
L_2 := \big\| \|\delta (Y - \bar Y)\|_{q\text{-}\mathrm{var}} \big\|_{\infty;1,\infty;[s,t]} \cdot (\big\| \| \delta Y\|_{q\text{-}\mathrm{var}} \big\|_{\infty;1,\infty;[s,t]} + \big\| \|\delta \bar Y \|_{q\text{-}\mathrm{var}} \big\|_{\infty;1,\infty;[s,t]}),\\ \displaystyle
L_3 := \big\| \| P^Y - P^{\bar Y}\|_{\frac{q''}{2}\text{-}\mathrm{var}} \big\|_{\infty;k,\infty} + \|Y - \bar Y\|_{\mathcal L^{\infty}([s,t] \times \Omega)} \cdot \big\| \|P^{\bar Y}\|_{\frac{q''}{2}\text{-}\mathrm{var}} \big\|_{\infty;k,\infty},\\ \displaystyle
L_4 := \Big(\|Y - \bar Y\|_{\mathcal L^{\infty}([s,t]\times\Omega)} \big( \big\| \|\delta Y\|_{q\text{-}\mathrm{var}} \big\|^{\gamma - 2}_{\infty;1,\infty;[s,t]} + \big\| \|\delta \bar Y\|_{q\text{-}\mathrm{var}} \big\|^{\gamma - 2}_{\infty;1,\infty;[s,t]}\big) \\ \displaystyle
\quad + \big\| \|\delta (Y - \bar Y)\|_{q\text{-}\mathrm{var}} \big\|_{\infty;1,\infty;[s,t]} \Big) \|\delta M^{\bar Y}\|_{\mathrm{BMO};[s,t]}  + \big\| \|\delta Y\|_{q\text{-}\mathrm{var}} \big\|_{\infty;1,\infty;[s,t]} \|\delta (M^Y - M^{\bar Y})\|_{\mathrm{BMO};[s,t]}.
\end{cases}
\end{equation*}
\end{proposition}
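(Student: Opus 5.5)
We follow the decomposition used in the proofs of Lemma~\ref{lem:int of RSM} and Proposition~\ref{prop:G(Y) in D}. For both $Y$ and $\bar Y$ we write
\[
P^{G(Y)}=K^1+K^2+K^3+K^4,
\qquad
P^{G(\bar Y)}=\bar K^1+\bar K^2+\bar K^3+\bar K^4,
\]
where $M^{G(Y)}=\int DG(Y)\,dM^Y$ and $M^{G(\bar Y)}=\int DG(\bar Y)\,dM^{\bar Y}$. We then estimate each difference $K^i-\bar K^i$ in $\frac{q''}{2}$-variation, conditionally on $\mathcal F_s$, in the $(k,\infty)$-norm. The basic device is the usual difference-of-composition identity. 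With $g(y,h):=\int_0^1[DG(y+\lambda h)-DG(y)]\,d\lambda$, we have
\[
g(y,h)-g(\bar y,\bar h)=O\bigl(|y-\bar y|\,|h|^{\gamma-2}+|h-\bar h|\bigr),
\]
and, symmetrizing, also $O\bigl(|y-\bar y|(|h|^{\gamma-2}+|\bar h|^{\gamma-2})+|h-\bar h|\bigr)$. This uses $G\in C^\gamma_b$ with $\gamma\in(2,3]$: $D^2G$ is $(\gamma-2)$-H\"older, and $DG(y+\cdot)-DG(\bar y+\cdot)$ is controlled by interpolating between the Lipschitz bound and the $C^{\gamma-1}$ bound.

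\textbf{Terms $K^1,K^3$.} Write $K^1-\bar K^1=g(Y_u,\delta Y_{u,v})\,\delta(M^Y-M^{\bar Y})_{u,v}+[g(Y_u,\delta Y_{u,v})-g(\bar Y_u,\delta\bar Y_{u,v})]\,\delta M^{\bar Y}_{u,v}$. By H\"older's inequality in variation (exponents $q$ and $Q$ as in \eqref{e:K1}, or the exponent $\frac{q}{\gamma-2}$), the first piece is bounded by $\|\delta Y\|_{q\text{-var}}\,\|\delta(M^Y-M^{\bar Y})\|_{Q\text{-var}}$. The second piece is bounded by $\bigl(\|Y-\bar Y\|_\infty\|\delta Y\|^{\gamma-2}_{q\text{-var}}+\|\delta(Y-\bar Y)\|_{q\text{-var}}\bigr)\|\delta M^{\bar Y}\|_{Q\text{-var}}$. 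Next we take conditional $k$-moments and apply conditional H\"older with large auxiliary exponents. Lemma~\ref{lem:BMO of q-var} converts every conditional $k'$-moment of a $q$-variation norm into the $(\infty;1,\infty)$-norm. Lemma~\ref{lem:Lepingle} converts the martingale $Q$-variation moments into BMO norms. Together these give the $L_4$ terms. The term $K^3-\bar K^3$ contains the factors $g(\cdot)\,Y'_u\delta\mathrm X$, where $Y'_u\delta\mathrm X_{u,v}=\delta Y_{u,v}-P^Y_{u,v}-\delta M^Y_{u,v}$. We therefore rewrite it via the RSM decomposition (or absorb it by regrouping $K^1+K^3$ as $g\cdot(\delta Y-P^Y)$), so that no $\mathbf X$-norm appears in the final bound. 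The piece $g(Y_u,\delta Y)\,\delta Y$ minus its barred counterpart is then bounded, using $\frac{\gamma-1}{q}\ge \frac{2}{q''}$, by $L_1+L_2$: we get $\|Y-\bar Y\|_\infty\|\delta Y\|^{\gamma-1}_{q\text{-var}}$, and $\|\delta(Y-\bar Y)\|_{q\text{-var}}\bigl(\|\delta Y\|_{q\text{-var}}+\|\delta\bar Y\|_{q\text{-var}}\bigr)$ via a mean-value bound with $D^2G$ bounded. The $P$-pieces are handled together with $K^2$.

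\textbf{Term $K^2$.} Here
\[
K^2-\bar K^2=\int_0^1 DG(Y_u+\lambda\delta Y)\,d\lambda\,\bigl(P^Y-P^{\bar Y}\bigr)+\bigl[\cdots\bigr]P^{\bar Y},
\]
where the bracket is $O\bigl(\|Y-\bar Y\|_\infty\bigr)$ because $DG$ is Lipschitz. This gives $L_3$ directly, with the $k$-moment kept.

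\textbf{Term $K^4$ (main obstacle).} We split
\[
K^4-\bar K^4=-\int_u^v\bigl[DG(Y_r)-DG(Y_u)\bigr]\,d(M^Y-M^{\bar Y})_r-\int_u^v\bigl[(DG(Y_r)-DG(\bar Y_r))-(DG(Y_u)-DG(\bar Y_u))\bigr]\,dM^{\bar Y}_r .
\]
We apply the conditional martingale-transform estimate Lemma~\ref{lem:p-var martingale estim} to each integral. For the first, the integrand has $\frac{q}{\gamma-2}$-variation, or $q$-variation since $DG$ is Lipschitz, bounded by $\|\delta Y\|_{q\text{-var}}$. For the second, the integrand $DG(Y)-DG(\bar Y)$ has $q$-variation bounded by $\|Y-\bar Y\|_\infty(\|\delta Y\|_{q\text{-var}}+\|\delta\bar Y\|_{q\text{-var}})^{\gamma-2}+\|\delta(Y-\bar Y)\|_{q\text{-var}}$; this is where $C^\gamma$ with $\gamma>2$ is used, through the H\"older continuity of $D^2G$. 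The difficulty is the integrability loss: Lemma~\ref{lem:p-var martingale estim} requires higher conditional moments ($2k$ or larger) of these variation norms, and to take the $L^\infty$ outer norm we need conditional moments of products to be bounded uniformly. This is resolved by Lemma~\ref{lem:BMO of q-var}, which bounds all conditional moments by $|k'!|^{1/k'}$ times the $(\infty;1,\infty)$-norm, and by Lemma~\ref{lem:Lepingle} for $\langle M\rangle$. Summing the four contributions and taking the $\operatorname{ess\,sup}$ then yields \eqref{e:E. R - R}.
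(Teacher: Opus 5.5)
Your proposal is essentially the paper's proof. Once you substitute $Y'_u\delta\mathrm X_{u,v}=\delta Y_{u,v}-P^Y_{u,v}-\delta M^Y_{u,v}$, and its barred analogue with $\bar{\mathrm X}$ (necessary here, since $Y$ and $\bar Y$ are controlled by different rough paths), $K^1+K^2+K^3$ collapses to $\int_0^1[DG(Y_u+\lambda\delta Y_{u,v})-DG(Y_u)]\,d\lambda\,\delta Y_{u,v}+DG(Y_u)P^Y_{u,v}$. This is exactly the decomposition \eqref{e:identity E.R - R}, and your treatment of $K^4$ via Lemma~\ref{lem:p-var martingale estim}, Lemma~\ref{lem:BMO of q-var} and Lemma~\ref{lem:Lepingle} is \eqref{e:dM and d hat M terms}.

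Two points need tidying.

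First, after the regrouping the $\delta M$-parts of $K^1$ and $K^3$ cancel. Your separate H\"older estimate of $K^1-\bar K^1$ is therefore redundant. It is harmless, but the $L_4$ terms in fact come only from $K^4$.

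Second, in the $K^4$ step you claim that $DG(Y)-DG(\bar Y)$ has $q$-variation bounded by $\|Y-\bar Y\|_\infty(\|\delta Y\|_{q\text{-}\mathrm{var}}+\|\delta\bar Y\|_{q\text{-}\mathrm{var}})^{\gamma-2}+\|\delta(Y-\bar Y)\|_{q\text{-}\mathrm{var}}$. This is false for $\gamma<3$, because $(u,v)\mapsto|\delta Y_{u,v}|^{\gamma-2}$ is controlled only in $\frac{q}{\gamma-2}$-variation, not in $q$-variation. You must measure this integrand in $q'$-variation, which is exactly where the hypothesis $q\le q'(\gamma-2)$ enters (Lemma~\ref{lem:gamma - 2}). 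Lemma~\ref{lem:p-var martingale estim} still applies with $r_1=q'$, since $\frac{2}{q''}<\frac{1}{q'}+\frac12$ is equivalent to $q>2$.
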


\begin{remark}
The above proposition shows that to bound the $(k,\infty)$ norm of the \emph{RSM} remainder after composition, it suffices to calculate $(1,\infty)$ norms of the \emph{RSM}, $(k,\infty)$ norms of its remainder, and \emph{BMO} norms of its martingale part. Thanks to this proposition, the problem of degenerating integrability, as shown in \cite{fhl21}, is overcome. 
\end{remark}

\begin{proof}
For $(s,t) \in \Delta$, we have  
\begin{equation}\label{e:R^H(Y) difference}
\begin{aligned}
&(P^{G(Y)} - P^{G(\bar Y)})_{s,t} \\
& = \delta G(Y)_{s,t} - DG(Y_{s})Y'_{s} \delta \mathrm X_{s,t} - \delta M^{G(Y)}_{s,t} - \big( \delta G(\bar Y)_{s,t}- DG(\bar Y_{s})\bar Y'_{s} \delta \bar{\mathrm X}_{s,t} - \delta M^{G(\bar Y)}_{s,t}\big).
\end{aligned}
\end{equation}
Define 
$
\hat Y_t := Y_t - \bar Y_t$, $\hat M := M^Y - M^{\bar Y}$, and $ \hat P_{s,t} := P^Y_{s,t} - P^{\bar Y}_{s,t}
$.
Then, plugging  $Y'_{s}\delta \mathrm X_{s,t} = \delta Y_{s,t} - P^Y_{s,t} - \delta M^{Y}_{s,t}$, $\bar Y'_{s} \delta \bar{\mathrm X}_{s,t} = \delta \bar Y_{s,t} - P^{\bar Y}_{s,t} - \delta M^{\bar Y}_{s,t}$, $\delta M_{s,t}^{G(Y)}=\int_s^t DG(Y_r) dM_r^Y$, and $\delta M_{s,t}^{G(\bar Y)}=\int_s^t DG(\bar Y_r) dM_r^{\bar Y}$ into \eqref{e:R^H(Y) difference}, we obtain 
\begin{align}\nonumber
&(P^{G(Y)} - P^{G(\bar Y)})_{s,t}\\ \nonumber
&= \int_{0}^{1}[DG(Y_{s} + \lambda \cdot \delta Y_{s,t}) - DG(Y_s)]d\lambda \cdot \delta Y_{s,t} \\ \nonumber
&\quad - \int_{0}^{1}[DG(\bar Y_{s} + \lambda \cdot \delta \bar Y_{s,t}) - DG(\bar Y_s)]d\lambda \cdot \delta \bar Y_{s,t} + DG(Y_s) P^Y_{s,t} - DG(\bar Y_s) P^{\bar Y}_{s,t} \\ \nonumber
&\quad + DG(Y_s)\delta M^Y_{s,t} - DG(\bar Y_s)\delta M^{\bar Y}_{s,t} - (\delta M^{G(Y)}_{s,t} - \delta M^{G(\bar Y)}_{s,t}) \\ \label{e:identity E.R - R}
& = \int_{0}^{1} [ DG(Y_{s} + \lambda \cdot \delta Y_{s,t}) - DG(\bar Y_{s} + \lambda \cdot \delta \bar Y_{s,t}) - DG(Y_{s})  + DG(\bar Y_{s})] d\lambda \cdot \delta Y_{s,t}\\ \nonumber
&\quad + \int_{0}^{1} [  DG(\bar Y_{s} + \lambda \cdot \delta \bar Y_{s,t}) - DG(\bar Y_{s})] d\lambda \cdot \delta \hat Y_{s,t} + DG(Y_s) \hat P_{s,t} + \left(DG(Y_{s}) - DG(\bar Y_s)\right) P^{\bar Y}_{s,t}\\ \nonumber
&\quad - \int_{s}^{t} \delta (DG(Y) - DG(\bar Y))_{s,r} d M^{\bar Y}_r - \int_{s}^{t} \delta DG(Y)_{s,r} d \hat M_r. 
\end{align}
For the first term on the RHS of the above equation, we have, 
\begin{align} \nonumber
&\Big| \int_{0}^{1} [ DG(\bar Y_{s} + \lambda \delta \bar Y_{s,t}) - DG(Y_{s} + \lambda \delta Y_{s,t}) - DG(\bar Y_{s}) + DG(Y_{s}) ] d\lambda \Big| \\  \nonumber
&\le \Big| \int_{0}^{1} \int_{0}^{1} [ D^2 G(\bar Y_s + \lambda \cdot\delta \bar Y_{s,t} + \tau (\hat Y_{s} + \lambda \cdot \delta \hat Y_{s,t}) ) - D^2 G(\bar Y_s + \tau \hat Y_s) ]d\tau \cdot (\hat Y_s + \lambda \cdot \delta \hat Y_{s,t}) d\lambda\Big| \\  \nonumber
&\quad + \Big| \int_{0}^{1}\int_{0}^{1} [D^{2} G(\bar Y_{s} + \tau \hat Y_{s}) d\tau \cdot (\hat Y_{s} + \lambda\cdot \delta \hat Y_{s,t} - \hat Y_s) d\lambda \Big|\\  \label{e:identity DH - DH - DH + DH}
& \le \|G\|_{C^{\gamma}_b}( |\delta \hat Y_{s,t}|^{\gamma - 2} + |\delta \bar Y_{s,t}|^{\gamma - 2}) \|\hat Y\|_{\infty} + \|G\|_{C^{2}_{b}} |\delta \hat Y_{s,t}| ,
\end{align}
and for the second one, 
\begin{equation}\label{e:DH - DH}
\Big| \int_{0}^{1} [ DG(\bar Y_{s} + \lambda \delta \bar Y_{s,t}) - DG(\bar Y_{s})]d\lambda \Big| \le \|G\|_{C^{2}_{b}} |\delta \bar Y_{s,t}|.
\end{equation}
For the $d M^{\bar Y}$ and $d \hat M$ terms, denote 
\begin{equation*}
\Xi_{s,t} := \int_{s}^{t} \delta DG(Y)_{s,r} d \hat M_r + \int_{s}^{t} \delta (DG(Y) - DG(\bar Y))_{s,r} d M^{\bar Y}_r.
\end{equation*}
By Lemma~\ref{lem:gamma - 2} we get (noting $q \le q' (\gamma - 2)$) 
\begin{equation*}
\begin{aligned}
&\| \delta (DG(Y) - DG(\bar Y)) \|_{q'\text{-}\mathrm{var};[s,t]} \le \|D^2 G\|_{C^{\gamma - 2}_b}\\
&\quad \times \Big( \|\hat Y\|_{\mathcal L^{\infty}([s,t]\times\Omega)} \big( \big\| \|\delta Y\|_{q\text{-}\mathrm{var}} \big\|^{\gamma - 2}_{\infty;1,\infty;[s,t]} + \big\| \|\delta \bar Y\|_{q\text{-}\mathrm{var}} \big\|^{\gamma - 2}_{\infty;1,\infty;[s,t]}\big) + \big\| \|\delta \hat Y\|_{q\text{-}\mathrm{var}} \big\|_{\infty;1,\infty;[s,t]} \Big). 
\end{aligned}
\end{equation*}
Hence, by Lemma~\ref{lem:p-var martingale estim} and~\ref{lem:BMO of q-var}, we have 
\begin{align}\nonumber
&\big\| \| \Xi \|_{\frac{q''}{2}\text{-}\mathrm{var};[s,t]} \big| \mathcal F_s \big\|_{k,\infty} \\ \nonumber
&\lesssim_{k} \big\| \| \delta (DG(Y) - DG(\bar Y)) \|_{q'\text{-}\mathrm{var};[s,t]} \big| \mathcal F_s \big\|_{2k,\infty} \|\delta M^{\bar Y}\|_{\mathrm{BMO};[s,t]} + \big\| \| \delta DG(Y) \|_{q\text{-}\mathrm{var};[s,t]} \big| \mathcal F_s \big\|_{2k,\infty} \\ \nonumber
&\quad \times \|\delta \hat M\|_{\mathrm{BMO};[s,t]}\\ \label{e:dM and d hat M terms}
&\lesssim_{k} \big\| \| \delta (DG(Y) - DG(\bar Y)) \|_{q'\text{-}\mathrm{var}} \big\|_{\infty;1,\infty;[s,t]} \|\delta M^{\bar Y}\|_{\mathrm{BMO};[s,t]} + \big\| \| \delta DG(Y) \|_{q\text{-}\mathrm{var}} \big\|_{\infty;1,\infty;[s,t]}  \\ \nonumber
&\quad \times \|\delta \hat M\|_{\mathrm{BMO};[s,t]}\\ \nonumber
&\le \|G\|_{C^{\gamma}_b} \Big(\|\hat Y\|_{\mathcal L^{\infty}([s,t]\times\Omega)} \big( \big\| \|\delta Y\|_{q\text{-}\mathrm{var}} \big\|^{\gamma - 2}_{\infty;1,\infty;[s,t]} + \big\| \|\delta \bar Y\|_{q\text{-}\mathrm{var}} \big\|^{\gamma - 2}_{\infty;1,\infty;[s,t]}\big) + \big\| \|\delta \hat Y\|_{q\text{-}\mathrm{var}} \big\|_{\infty;1,\infty;[s,t]}\Big) \\ \nonumber
&\quad \times \|\delta M^{\bar Y}\|_{\mathrm{BMO};[s,t]} + \big\| \|\delta Y\|_{q\text{-}\mathrm{var}} \big\|_{\infty;1,\infty;[s,t]} \|\delta \hat M\|_{\mathrm{BMO};[s,t]} . 
\end{align}

Combining \eqref{e:identity E.R - R}, \eqref{e:identity DH - DH - DH + DH}, \eqref{e:DH - DH},  and \eqref{e:dM and d hat M terms}, by taking conditional expectation at time $s$ and using Lemma~\ref{lem:BMO of q-var} again, we have
\begin{equation*}
\begin{aligned}
&\big\| \| P^{G(Y)} - P^{G(\bar Y)} \|_{\frac{q''}{2}\text{-}\mathrm{var}} \big| \mathcal F_s \big\|_{\infty;k,\infty;[s,t]} \\
&\lesssim_k \|G\|_{C^{\gamma}_b}\bigg(  \|\hat Y\|_{\mathcal L^{\infty}([s,t] \times \Omega)} (\big\| \|\delta Y\|_{q\text{-}\mathrm{var}} \big\|^{\gamma - 1}_{\infty;1,\infty;[s,t]} + \big\| \|\delta \bar Y\|_{q\text{-}\mathrm{var}} \big\|^{\gamma - 1}_{\infty;1,\infty;[s,t]}) \\ 
&\quad + \big\| \|\delta \hat Y\|_{q\text{-}\mathrm{var}} \big\|_{\infty;1,\infty;[s,t]} \big( \big\| \|\delta Y\|_{q\text{-}\mathrm{var}} \big\|_{\infty;1,\infty;[s,t]} + \big\| \|\delta \bar Y\|_{q\text{-}\mathrm{var}} \big\|_{\infty;1,\infty;[s,t]} \big) \\
&\quad + \big\| \|\hat P\|_{\frac{q''}{2}\text{-}\mathrm{var}} \big\|_{\infty;k,\infty;[s,t]} + \|\hat Y\|_{\mathcal L^{\infty}([s,t]\times\Omega)} \big\| \| P^{\bar Y} \|_{\frac{q''}{2}\text{-}\mathrm{var}} \big\|_{\infty;k,\infty;[s,t]} \\
&\quad + \Big( \|\hat Y\|_{\mathcal L^{\infty}([s,t]\times\Omega)} \big( \big\| \|\delta Y\|_{q\text{-}\mathrm{var}} \big\|^{\gamma - 2}_{\infty;1,\infty;[s,t]} + \big\| \|\delta \bar Y\|_{q\text{-}\mathrm{var}} \big\|^{\gamma - 2}_{\infty;1,\infty;[s,t]} \big) + \big\| \|\delta \hat Y\|_{q\text{-}\mathrm{var}} \big\|_{\infty;1,\infty;[s,t]} \Big)  \\
&\quad \times \|\delta M^{\bar Y}\|_{\mathrm{BMO};[s,t]} + \big\| \|\delta Y\|_{q\text{-}\mathrm{var}} \big\|_{\infty;1,\infty;[s,t]} \|\delta \hat M\|_{\mathrm{BMO};[s,t]} \bigg),
\end{aligned}
\end{equation*}
which is exactly the desired estimate \eqref{e:E. R - R}. 
\end{proof}

The next proposition provides the estimate for the difference of two rough stochastic integrals.  
 
\begin{proposition}\label{prop:estim of integral'}
Assume that \ref{(p,p')} and \ref{(p,q,q')} hold, and that \eqref{e:Y in RSM} holds with $V_1$ replaced by $\mathscr L(V,V_1)$. Denote rough stochastic integrals by 
\begin{equation*}
I^{\mathbf X}_t := \int_{0}^{t}Y_{r} d\mathbf X_{r} \quad \text{ and  } \quad I^{\bar{\mathbf X}}_{t} := \int_{0}^{t}\bar Y_{r}d\bar{\mathbf X}_r,  \quad \text{ for } t \in [0,T].
\end{equation*}
Then, for $k \ge 1$, it holds that for all $(s,t) \in \Delta$, 
\begin{align}\nonumber
&\big\| \| R^{I^{\mathbf X}} - R^{I^{\bar{ \mathbf X}}} \|_{\frac{p}{2}\text{-}\mathrm{var}} \big\|_{\infty;k,\infty;[s,t]}\\ \nonumber 
&\lesssim \big\| \| P^Y - P^{\bar Y}\|_{\frac{q''}{2}\text{-}\mathrm{var}} \big\|_{\infty;k,\infty;[s,t]} \|\delta \bar{\mathrm X} \|_{p\text{-}\mathrm{var};[s,t]} + \big\| \|P^Y\|_{\frac{q''}{2}\text{-}\mathrm{var}} \big\|_{\infty;k,\infty;[s,t]} \|\delta( \mathrm X - \bar{\mathrm X} )\|_{p\text{-}\mathrm{var};[s,t]} \\ \label{e:esti for R - R 1}
&\quad + \big( \|Y' - \bar Y'\|_{\infty} + \big\| \|\delta(Y' - \bar Y')\|_{q'\text{-}\mathrm{var}} \big\|_{\infty;k,\infty;[s,t]} \big) \| \bar{\mathbb X} \|_{\frac{p}{2}\text{-}\mathrm{var};[s,t]}
\\ \nonumber 
&\quad + \big(\|Y'\|_{\infty} + \big\| \|\delta Y'\|_{q'\text{-}\mathrm{var}} \big\|_{\infty;k,\infty;[s,t]} \big) \| \mathbb X - \bar{\mathbb X} \|_{\frac{p}{2}\text{-}\mathrm{var};[s,t]} +  \|\delta (M^Y - M^{\bar Y})\|_{\mathrm{BMO};[s,t]} \|\delta \bar{\mathrm X}\|_{p\text{-}\mathrm{var};[s,t]} \\ \nonumber
&\quad + \|\delta M^Y\|_{\mathrm{BMO};[s,t]} \|\delta (\mathrm X - \bar{\mathrm X})\|_{p\text{-}\mathrm{var};[s,t]}. 
\end{align}
\end{proposition}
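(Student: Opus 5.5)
We follow the proof of Proposition~\ref{prop:estim of integral}, applying its decomposition to both integrals and estimating the differences term by term. As in \eqref{e:R^Gamma = Xi^1 + Xi^2}, we write
\begin{equation*}
R^{I^{\mathbf X}}_{s,t} = \Xi^{1}_{s,t} + \Xi^{2}_{s,t} + Y'_s\mathbb X_{s,t}, \qquad R^{I^{\bar{\mathbf X}}}_{s,t} = \bar\Xi^{1}_{s,t} + \bar\Xi^{2}_{s,t} + \bar Y'_s\bar{\mathbb X}_{s,t},
\end{equation*}
where $\Xi^1$ (resp.\ $\bar\Xi^1$) is the pathwise sewing remainder of the controlled path $(Y-M^Y,Y')$ against $\mathbf X$ (resp.\ of $(\bar Y - M^{\bar Y},\bar Y')$ against $\bar{\mathbf X}$). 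The martingale parts are $\Xi^2_{s,t}=\int_s^t \delta\mathrm X_{r,t}\,dM^Y_r$ and $\bar\Xi^2_{s,t}=\int_s^t\delta\bar{\mathrm X}_{r,t}\,dM^{\bar Y}_r$. We then estimate the three differences separately, in the $\frac p2$-variation norm and under $\|\cdot|\mathcal F_s\|_{k,\infty}$ (supremum over subintervals).

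\textbf{Last term.} We split
\begin{equation*}
Y'_s\mathbb X_{s,t}-\bar Y'_s\bar{\mathbb X}_{s,t} = (Y'_s-\bar Y'_s)\bar{\mathbb X}_{s,t} + Y'_s(\mathbb X-\bar{\mathbb X})_{s,t}.
\end{equation*}
Its $\frac p2$-variation is bounded directly by $\|Y'-\bar Y'\|_\infty\|\bar{\mathbb X}\|_{\frac p2\text{-var}}+\|Y'\|_\infty\|\mathbb X-\bar{\mathbb X}\|_{\frac p2\text{-var}}$.

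\textbf{Martingale term.} We write
\begin{equation*}
\Xi^2-\bar\Xi^2 = \int_s^t\delta\bar{\mathrm X}_{r,t}\,d(M^Y-M^{\bar Y})_r + \int_s^t\delta(\mathrm X-\bar{\mathrm X})_{r,t}\,dM^Y_r.
\end{equation*}
Each piece is an integral of a deterministic $p$-variation path against a continuous martingale. Exactly as in \eqref{e:A <= M X} (integration by parts, then Lemma~\ref{lem:p-var martingale estim} and Lemma~\ref{lem:Lepingle}), the pieces are bounded by $\|\delta(M^Y-M^{\bar Y})\|_{\mathrm{BMO}}\|\delta\bar{\mathrm X}\|_{p\text{-var}}$ and $\|\delta M^Y\|_{\mathrm{BMO}}\|\delta(\mathrm X-\bar{\mathrm X})\|_{p\text{-var}}$ respectively. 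Since BMO norms are already $\sup_\omega$ quantities, the conditional $(k,\infty)$ norms cause no difficulty here.

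\textbf{Sewing term (main step).} The difference $\Xi^1-\bar\Xi^1$ is the sewing remainder of the germ
\begin{equation*}
A_{u,v}-\bar A_{u,v} = (Y-M^Y)_u\delta\mathrm X_{u,v}+Y'_u\mathbb X_{u,v} - (\bar Y-M^{\bar Y})_u\delta\bar{\mathrm X}_{u,v} - \bar Y'_u\bar{\mathbb X}_{u,v}.
\end{equation*}
Using Chen's relation for both paths, its defect is
\begin{equation*}
\delta(A-\bar A)_{u,m,v} = -P^Y_{u,m}\delta\mathrm X_{m,v} + P^{\bar Y}_{u,m}\delta\bar{\mathrm X}_{m,v} - \delta Y'_{u,m}\mathbb X_{m,v} + \delta\bar Y'_{u,m}\bar{\mathbb X}_{m,v}.
\end{equation*}
We rewrite it as
\begin{equation*}
-(P^Y-P^{\bar Y})_{u,m}\delta\bar{\mathrm X}_{m,v} - P^Y_{u,m}\delta(\mathrm X-\bar{\mathrm X})_{m,v} - \delta(Y'-\bar Y')_{u,m}\bar{\mathbb X}_{m,v} - \delta Y'_{u,m}(\mathbb X-\bar{\mathbb X})_{m,v}.
\end{equation*}
Each product is controlled by a product of controls whose exponents sum to more than $1$, by \ref{(p,q,q')}. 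The pathwise sewing lemma in $p$-variation form \cite[Theorem~2.5]{FrizZhang-2018} therefore gives an a.s.\ bound of $\|\Xi^1-\bar\Xi^1\|_{\frac p2\text{-var};[s,t]}$ by a sum of exactly these four products of variation norms. This is the pathwise inequality \eqref{e:Xi <= Y X + R^Y X} in difference form.

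The obstacle is that the sewing lemma needs a single control dominating the defect, whereas we have a sum of four products. I would handle this by working with the superadditive controls $\omega_1(u,v)=\|P^Y-P^{\bar Y}\|^{q''/2}_{\frac{q''}{2}\text{-var};[u,v]}$, $\omega_2(u,v)=\|\delta\bar{\mathrm X}\|^{p}_{p\text{-var};[u,v]}$, and so on. A defect bounded by a sum of terms $\omega_i^{a}\omega_j^{b}$ with $a+b>1$ is admissible, and the sewing lemma is linear in the germ. Applied to each of the four pieces separately, it yields one product bound per piece.

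\textbf{Conditioning.} Once the pathwise bound is in hand, we take $\|\cdot|\mathcal F_s\|_k$. The rough path factors are deterministic and come out of the conditional expectation. The remaining random factors $\|P^Y-P^{\bar Y}\|_{\frac{q''}{2}\text{-var}}$, $\|P^Y\|_{\frac{q''}{2}\text{-var}}$, $\|\delta(Y'-\bar Y')\|_{q'\text{-var}}$ and $\|\delta Y'\|_{q'\text{-var}}$ each appear linearly. Taking $\sup_\omega$ and then the supremum over subintervals $[u,v]\subseteq[s,t]$ (conditioning on $\mathcal F_u$) gives the $\|\cdot\|_{\infty;k,\infty;[s,t]}$ terms. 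Summing the three estimates gives \eqref{e:esti for R - R 1}.
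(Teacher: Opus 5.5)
Your proposal is correct and follows the paper's proof essentially step for step. It uses the same splitting of $R^{I^{\mathbf X}}-R^{I^{\bar{\mathbf X}}}$ into the difference of pathwise sewing remainders, the martingale part (integration by parts plus L\'epingle/BMO bounds), and $Y'_s\mathbb X_{s,t}-\bar Y'_s\bar{\mathbb X}_{s,t}$, and it rewrites the sewing defect into the same four products.

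One phrase needs fixing: you cannot sew ``each of the four pieces separately''. A single piece such as $(P^Y-P^{\bar Y})_{u,m}\delta\bar{\mathrm X}_{m,v}$ is not the defect of any germ, because it violates the cocycle identity ($P^Y-P^{\bar Y}$ is not additive). Instead, apply the sewing lemma once to the whole germ under the sum-of-products defect bound, e.g.\ after normalizing the controls and adding them; this is the admissibility you already invoke, and it gives the same four-term estimate.
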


\begin{proof} 
Note that for $(s,t) \in \Delta$,
\begin{equation}\label{e:R-R = Xi + Y'X}
(R^{I^{\mathbf X}} - R^{I^{\bar{\mathbf X}}})_{s,t} = \delta(I^{\mathbf X} - I^{\bar{\mathbf X}})_{s,t} - Y_{s}\delta \mathrm X_{s,t} + \bar Y_{s}\delta \bar{\mathrm X}_{s,t} = A^1_{s,t} + A^{2}_{s,t} + Y'_{s} \mathbb X_{s,t} - \bar Y'_{s} \bar{\mathbb X}_{s,t}, 
\end{equation}
where
$
A^1_{s,t} := \int_{s}^{t} (Y - M^Y)_r d\mathbf X_r - \int_{s}^{t} (\bar Y - M^{\bar Y})_r d\bar{\mathbf X}_r - \big( (Y - M^Y)_{s}\delta \mathrm X_{s,t} - (\bar Y - M^{\bar Y})_{s}\delta \bar{\mathrm X}_{s,t} + Y'_{s} \mathbb X_{s,t} - \bar Y'_{s} \bar{\mathbb X}_{s,t} \big)
$ and $A^2_{s,t} := \int_{s}^{t} \delta M^{Y}_{s,r} d\mathbf X_r - \int_{s}^{t} \delta M^{\bar Y}_{s,r} d\bar{\mathbf X}_r$. 
Define
$
\Xi_{s,t} := (Y - M^Y)_{s}\delta\mathrm X_{s,t} - (\bar Y - M^{\bar Y})_{s}\delta \bar{\mathrm X}_{s,t} + Y'_{s} \mathbb X_{s,t} - \bar Y'_{s} \bar{\mathbb X}_{s,t}
$. 
It follows that
$A^1_{s,t} = \int_{s}^{t} (Y - M^Y)_{r} d\mathbf X_r - \int_{s}^{t} (\bar Y - M^{\bar Y})_{r} d\bar{\mathbf X}_r - \Xi_{s,t}$.

Next, we apply the sewing lemma to estimate $A^1_{s,t}$. For $s\le u\le t$, note that 
\begin{equation*}
\begin{aligned}
\delta \Xi_{s,u,t} &= - P^Y_{s,u} \delta\mathrm X_{u,t} - \delta Y'_{s,u}\mathbb X_{u,t} + P^{\bar Y}_{s,u}  \delta\bar{\mathrm X}_{u,t} + \delta \bar Y'_{s,u}\bar{\mathbb X}_{u,t}\\
& = -(P^Y - P^{\bar Y})_{s,u} \delta \bar{\mathrm X}_{u,t} - P^Y_{s,u} \delta(\mathrm X -  \bar{\mathrm X})_{u,t} - \delta (Y' - \bar Y')_{s,u} \bar{\mathbb X}_{u,t} - \delta Y'_{s,u} \big(\mathbb X_{u,t} - \bar{\mathbb X}_{u,t} \big). 
\end{aligned}
\end{equation*}
Hence, we obtain 
\begin{equation}\label{e:For sto sewing 1}
\begin{aligned}
\big| \delta \Xi_{s,u,t} \big| 
&\le \|P^Y - P^{\bar Y}\|_{\frac{q''}{2}\text{-}\mathrm{var};[s,t]}
\|\delta \bar {\mathrm X} \|_{p\text{-}\mathrm{var};[s,t]} + \|P^Y\|_{\frac{q''}{2}\text{-}\mathrm{var};[s,t]}
\|\delta( \mathrm X -\bar{\mathrm X} )\|_{p\text{-}\mathrm{var};[s,t]} \\ 
& \quad + \|\delta(Y' - \bar Y')\|_{q'\text{-}\mathrm{var};[s,t]} \|\bar{\mathbb X}\|_{\frac{p}{2}\text{-}\mathrm{var};[s,t]} + \|\delta Y'\|_{q'\text{-}\mathrm{var};[s,t]} \|\mathbb X - \bar{\mathbb X}\|_{\frac{p}{2}\text{-}\mathrm{var};[s,t]}.
\end{aligned}
\end{equation}
Note $A^1_{s,t} = \int_{s}^{t} (Y - M^Y)_{r }d\mathbf X_{r} - \int_{s}^{t} (\bar Y - M^{\bar Y})_{r} d\bar{\mathbf X}_{r} - \Xi_{s,t}$. Since \ref{(p,q,q')} holds, by the sewing lemma (see, e.g., \cite[Theorem~2.5]{FrizZhang-2018}), \eqref{e:For sto sewing 1} implies that a.s.   
\begin{equation}\label{e:RHS RHS}
\big| A^1_{s,t} \big| \lesssim \text{ RHS of \eqref{e:For sto sewing 1}}.
\end{equation}
Moreover, noting that $A^2_{s,t} = \int_s^t \delta \bar{\mathrm X}_{s,r} d(M^{Y} - M^{\bar Y})_r + \int_s^t \delta (\mathrm X - \bar{\mathrm X})_{s,r} dM^{Y}_r$ from integration by parts, by L\'epingle's inequality for martingale transforms, we have 
\begin{equation}\label{e:esti for A^2}
\begin{aligned}
&\big\| \|A^2\|_{\frac{p}{2}\text{-}\mathrm{var};[s,t]} \big | \mathcal F_s \big\|_{k,\infty} \\
&\lesssim_k \|\delta (M^Y - M^{\bar Y})\|_{\mathrm{BMO};[s,t]} \|\delta \bar{\mathrm X}\|_{p\text{-}\mathrm{var};[s,t]} + \|\delta M^Y\|_{\mathrm{BMO};[s,t]} \|\delta (\mathrm X - \bar{\mathrm X})\|_{p\text{-}\mathrm{var};[s,t]}. 
\end{aligned}
\end{equation}

In addition, we have
\begin{equation}\label{e:esti for Y'X - Y'X}
\begin{aligned}
|Y'_{s}\mathbb X_{s,t} - \bar Y'_{s}\bar{\mathbb X}_{s,t}| &\le | Y'_{s} \bar{\mathbb X}_{s,t} - \bar Y'_{s} \bar{\mathbb X}_{s,t} | + | Y'_{s}\mathbb X_{s,t}  - Y'_{s} \bar{\mathbb X}_{s,t} | \\
&\le 2 \big( \|Y' - \bar Y'\|_{\infty} \| \bar{\mathbb X} \|_{\frac{p}{2}\text{-}\mathrm{var};[s,t]} + \|Y'\|_{\infty} \| \mathbb X - \bar{\mathbb X} \|_{\frac{p}{2}\text{-}\mathrm{var};[s,t]} \big). 
\end{aligned}
\end{equation}

Finally, by \eqref{e:R-R = Xi + Y'X} we have $(R^{I^{\mathbf X}} - R^{I^{\bar{ \mathbf X}}})_{s,t} = A^1_{s,t} + A^2_{s,t} + Y'_{s}\mathbb X_{s,t} - \bar Y'_{s}\bar{\mathbb X}_{s,t}$. Together with \eqref{e:RHS RHS}, \eqref{e:esti for A^2}, and \eqref{e:esti for Y'X - Y'X}, this implies
\eqref{e:esti for R - R 1}.  
\end{proof}

%------------------

\subsection{Seminorms}\label{subsec:seminorms}

We now introduce some seminorms that will be used  for the fixed point argument in the next section.
As discussed in the introduction, one key difficulty in the analysis of rough BSDEs, compared with rough SDEs, lies in establishing the regularity of $Z$. To address this issue, we introduce the \textrm{RSM} seminorm $\thicknm{\bm \cdot}_{\mathrm X;q,q';k,\infty}$. This quantity arises naturally in rough stochastic integral estimates (see Remark~\ref{rem:for Psi}) and in the proof of the comparison principle (Theorem~\ref{thm:comparison}). 

For $(p',q,q')$ satisfying \eqref{cond31}, $\mathbf X = (\mathrm X,\mathbb X) \in \mathscr{C}^{p\text{-}\mathrm{var}}([0,T],\R^d)$, $(Y,Y') \in \mathrm{RSM}^{(p,p')\text{-}\mathrm{var}}_{\mathrm X}([0,T],\R)$, and $k \ge 1$, define the seminorm (recall $q''/2 := qq'/(q+q')$)  
\begin{equation}\label{e:Psi}
\thicknm{(Y,Y')}_{\mathrm X;q,q';k,\infty} :=  \left\|\|\delta Y'\|_{q'\text{-}\mathrm{var}}\right\|_{\infty;k,\infty} + \|Y'\|_{\infty} + \big\| \|P^{Y}\|_{\frac{q''}{2}\text{-}\mathrm{var}} \big\|_{\infty;k,\infty} + \|\delta M^{Y}\|_{\mathrm{BMO}}.
\end{equation}
Here, recall  $(\|\bm\cdot\|_{q\text{-}\mathrm{var}})_{s,t} := \|\bm\cdot\|_{q\text{-}\mathrm{var};[s,t]}: \Delta_{[0,T]}\to \R$ is a two-parameter function.
For an arbitrary interval $I\subset [0,T]$, the notation $\thicknm{(Y,Y')}_{\mathrm X;q,q';k,\infty;I}$ is defined analogously by restricting the underlying objects to $I$ with, for example, $\|Y'\|_{\infty}$ replaced by $\|Y'\|_{\mathcal L^{\infty}(I \times \Omega)}$.

For another $\bar{\mathbf X} = (\bar{\mathrm X},\bar{\mathbb X}) \in \mathscr C^{p\text{-}\mathrm{var}}([s,t],\R^d)$ and $(\bar Y,\bar Y') \in \mathrm{RSM}^{(p,p')\text{-}\mathrm{var}}_{\bar{\mathrm X}}([s,t],\R)$, define 
\begin{equation*}
\begin{aligned}
&\thicknm{Y,Y';\bar Y,\bar Y'}_{\mathrm X,\bar{\mathrm X};q,q';k,\infty} \\
&:=  \big\| \|\delta (Y' - \bar Y')\|_{q'\text{-}\mathrm{var}}\big\|_{\infty;k,\infty} + \|Y' - \bar Y'\|_{\infty} + \big\| \|P^{Y} - P^{\bar Y}\|_{\frac{q''}{2}\text{-}\mathrm{var}} \big\|_{\infty;k,\infty} + \|\delta (M^{Y} - M^{\bar Y})\|_{\mathrm{BMO}}.
\end{aligned}
\end{equation*}
The notation $\thicknm{Y,Y';\bar Y,\bar Y'}_{\mathrm X,\bar{\mathrm X};q,q';k,\infty;I}$ is defined by restricting the underlying objects to $I$.

\begin{lemma}\label{lem:Y <= R^Y'}
Assume \eqref{cond31} holds and $p \ge q''/2$. Let $(s,t) \in \Delta$, and suppose $\mathbf X \in \mathscr C^{p\text{-}\mathrm{var}}([s,t],\R^d)$ and $(Y,Y') \in \mathrm{RSM}^{(p,p')\text{-}\mathrm{var}}_{\mathrm X}([s,t],\R)$. Then, for any $k \ge 1$, it holds 
\begin{equation}\label{e:Y q-var infty}
\big\|\|\delta Y \|_{p\text{-}\mathrm{var}} \big\|_{\infty;k,\infty;[s,t]} \lesssim_k (1 + \|\delta \mathrm X\|_{p\text{-}\mathrm{var};[s,t]}) \, \thicknm{(Y,Y')}_{\mathrm X;q,q';k,\infty;[s,t]},
\end{equation}
\begin{equation}\label{e:Y q-var infty infty}
\big\|\|\delta Y \|_{p\text{-}\mathrm{var}} \big\|_{\infty;k,\infty;[s,t]} + \left\|Y\right\|_{\mathcal L^{\infty}([s,t]\times\Omega)}  \lesssim_k \|Y_{t}\|_{\infty} + (1 + \|\delta \mathrm X\|_{p\text{-}\mathrm{var};[s,t]})\,  \thicknm{(Y,Y')}_{\mathrm X;q,q';k,\infty;[s,t]}.
\end{equation}
Moreover, for another $\bar{\mathbf X} \in \mathscr C^{p\text{-}\mathrm{var}}([s,t],\R^d)$ and $(\bar Y,\bar Y') \in \mathrm{RSM}^{(p,p')\text{-}\mathrm{var}}_{\bar{\mathrm X}}([s,t],\R)$, it holds 
\begin{equation}\label{e:Y - Y q-var infty infty}
\begin{aligned}
&\big\|\|\delta (Y - \bar Y) \|_{p\text{-}\mathrm{var}} \big\|_{\infty;k,\infty;[s,t]} + \|Y - \bar Y\|_{\mathcal L^{\infty}([s,t]\times\Omega)} \lesssim_k \|(Y -\bar Y)_{t}\|_{\infty}\\
&\quad  + (1 + \|\delta \mathrm X\|_{p\text{-}\mathrm{var};[s,t]})\,  \thicknm{Y,Y';\bar Y,\bar Y'}_{\mathrm X,\bar{\mathrm X};q,q';k,\infty;[s,t]} + \|\bar Y'\|_{\mathcal L^{\infty}([s,t] \times \Omega)} \|\delta (\mathrm X - \bar{\mathrm X})\|_{p\text{-}\mathrm{var};[s,t]}. 
\end{aligned}
\end{equation}
\end{lemma}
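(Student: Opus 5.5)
The plan is to start from the RSM decomposition $\delta Y_{u,v} = Y'_u\,\delta\mathrm X_{u,v} + P^Y_{u,v} + \delta M^Y_{u,v}$ on $\Delta_{[s,t]}$ and bound the $p$-variation of each of the three pieces separately on a subinterval $[u,v]\subseteq[s,t]$.

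For the first piece, $\|Y'_\cdot\delta\mathrm X\|_{p\text{-}\mathrm{var};[u,v]}\le \|Y'\|_{\infty}\|\delta\mathrm X\|_{p\text{-}\mathrm{var};[s,t]}$. For the remainder, the assumption $p\ge q''/2$ and monotonicity of $r$-variation in $r$ give $\|P^Y\|_{p\text{-}\mathrm{var};[u,v]}\le \|P^Y\|_{\frac{q''}{2}\text{-}\mathrm{var};[u,v]}$. One point needs care here: $P^Y$ is two-parameter and not additive, so its $p$-variation is understood in the sense of the definition in Section~\ref{sec:notations}. The estimate I need is on $\|\delta Y\|_{p\text{-}\mathrm{var}}$, and by the triangle inequality for the sum over a partition this is bounded by the sum of the three $p$-variations, so non-additivity causes no issue. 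For the martingale, L\'epingle's inequality (Lemma~\ref{lem:Lepingle}) with $p>2$ gives $\big\|\|\delta M^Y\|_{p\text{-}\mathrm{var};[u,v]}\big|\mathcal F_u\big\|_{k}\lesssim_k \big\|\delta\langle M^Y\rangle^{1/2}_{u,v}\big|\mathcal F_u\big\|_{k}$. The energy inequality, through \eqref{e:BMO vs p-var} or Lemma~\ref{lem:Energy inequality} applied to $\langle M^Y\rangle$, then bounds the right-hand side in $L^\infty$ by $C_k\|\delta M^Y\|_{\mathrm{BMO};[s,t]}$. Taking $\|\cdot|\mathcal F_u\|_{k,\infty}$ and the supremum over $(u,v)$, and using $\|Y'\|_\infty\le\thicknm{(Y,Y')}$, yields \eqref{e:Y q-var infty}.

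For \eqref{e:Y q-var infty infty}, I will write $Y_r = \E_r[Y_t] - \E_r[\delta Y_{r,t}]$. Since $|\E_r[\delta Y_{r,t}]|\le \E_r\big[\|\delta Y\|_{p\text{-}\mathrm{var};[r,t]}\big]\le \big\|\|\delta Y\|_{p\text{-}\mathrm{var}}\big\|_{\infty;1,\infty;[s,t]}$, this gives $\|Y_r\|_\infty\le\|Y_t\|_\infty + $ the already bounded quantity, uniformly in $r$.

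For the difference estimate \eqref{e:Y - Y q-var infty infty}, set $\hat Y:=Y-\bar Y$ and write
\[
\delta\hat Y_{u,v} = (Y'-\bar Y')_u\,\delta\mathrm X_{u,v} + \bar Y'_u\,\delta(\mathrm X-\bar{\mathrm X})_{u,v} + (P^Y-P^{\bar Y})_{u,v} + \delta(M^Y-M^{\bar Y})_{u,v}.
\]
The same three bounds apply term by term. The extra term $\bar Y'_u\,\delta(\mathrm X-\bar{\mathrm X})_{u,v}$ produces $\|\bar Y'\|_{\mathcal L^\infty}\|\delta(\mathrm X-\bar{\mathrm X})\|_{p\text{-}\mathrm{var};[s,t]}$. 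The sup-norm part follows by the same conditional-expectation trick as before.

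I expect the main obstacle to be the martingale step: passing from conditional $k$-moments of the $p$-variation to the BMO norm uniformly in $k$. This needs the conditional form of L\'epingle together with the John--Nirenberg/energy inequality for $\langle M^Y\rangle$. I will invoke these directly from Lemma~\ref{lem:Lepingle} and Lemma~\ref{lem:Energy inequality}. Everything else is bookkeeping.
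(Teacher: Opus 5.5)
Your proof is correct and follows the paper's argument essentially step for step. You split $\delta Y$ via the RSM decomposition, bound $P^Y$ using $p\ge q''/2$, control the martingale part by L\'epingle's inequality \eqref{e:BMO vs p-var}, recover the sup-norm from $|Y_u|\le |\E_u[Y_t]|+|\E_u[\delta Y_{u,t}]|$, and handle the difference estimate through the extra term $\bar Y'_u\,\delta(\mathrm X-\bar{\mathrm X})_{u,v}$. The appeal to the energy inequality is not needed, since \eqref{e:BMO vs p-var} already gives the bound by $\|\delta M^Y\|_{\mathrm{BMO};[s,t]}$ uniformly in $u$.
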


\begin{proof}
By the equation $\delta Y = Y' \delta \mathrm X + P^{Y} + \delta M^{Y}$, for $(u,v) \in \Delta_{[s,t]}$ we have
\begin{equation}\label{e:bound for Y p-var'}
\big\|\|\delta Y\|_{p\text{-}\mathrm{var};[u,v]}|\mathcal F_{u}\big\|_{k}\le \|Y'\|_{\mathcal L^{\infty}([u,v] \times \Omega)}\|\delta \mathrm X\|_{p\text{-}\mathrm{var};[u,v]} + \big\|\| P^Y \|_{p\text{-}\mathrm{var};[u,v]}|\mathcal F_{u}\big\|_{k} + \big\|\|\delta M^{Y} \|_{p\text{-}\mathrm{var};[u,v]}|\mathcal F_{u}\big\|_{k}.
\end{equation} 
For the RHS of \eqref{e:bound for Y p-var'}, noting $p \ge q''/2$ so that $\| P^Y \|_{p\text{-}\mathrm{var};[u,v]} \le \|P^Y\|_{q''/2\text{-}\mathrm{var};[u,v]}$; and by L\'epingle's inequality (Lemma~\ref{lem:Lepingle}) we get $\|\|\delta M^{Y} \|_{p\text{-}\mathrm{var};[u,v]}|\mathcal F_{u}\|_{k} \lesssim_k \|\delta M^Y\|_{\mathrm{BMO};[s,t]}$. Hence, \eqref{e:Y q-var infty} holds.
Furthermore, for $u \in [s,t]$ noting that $\|Y_u\|_{\infty} \le \|Y_{t}\|_{\infty} + \|\delta Y_{u,t} | \mathcal F_u\|_{k,\infty}$ from the inequality $|Y_u| = |\E_u[Y_u]| \le |Y_t| + |\E_{u}[\delta Y_{u,t}]|$, \eqref{e:Y q-var infty} yields \eqref{e:Y q-var infty infty}. \eqref{e:Y - Y q-var infty infty} can be proved similarly. 
\end{proof}

The following Lemma~\ref{lem:int of RSM'}, in contrast to Proposition~\ref{prop:G(Y) in D} where $G\in C^{2}_b$ is assumed, concerns a weaker condition $G\in C^{\gamma - 1}_b$, and gives estimates on the composition of $G$ and an RSM. Since $G$ may not be twice differentiable, the interpolation argument cannot be applied as we did in Proposition~\ref{prop:G(Y) in D}. However, the lower regularity condition enables us to prove the comparison principle in Theorem~\ref{thm:comparison}  under the condition $H\in C^\gamma_b$, in which case $G = DH$.  

\begin{lemma}\label{lem:int of RSM'}
Assume \eqref{cond31} holds with $p' \le q'$. Let $\mathbf X \in \mathscr C^{p\text{-}\mathrm{var}}([0,T],\R^d)$, and let $\gamma$ satisfy $\gamma - 2 \ge p/p'$, and $G \in C^{\gamma - 1}_b$. Assume $(Y,Y'),(\bar Y,\bar Y') \in \mathrm{RSM}^{(p,p')\text{-}\mathrm{var}}_{\mathrm X}([0,T],\R)$. Let $(\alpha,\alpha')$ be defined in \eqref{e:def of alpha}. Denote $C_G := \|G\|_{C^{\gamma - 1}_b}$. 
Then, for every $k \ge 1$,   
\begin{equation}\label{e:Psi alpha}
\begin{aligned}
\thicknm{(\alpha,\alpha') }_{\mathrm X;q,q' ; k , \infty} \lesssim_{k,\gamma} C_G \big(1 + |\delta \mathbf X|^{\gamma - 1}_{p\text{-}\mathrm{var};[0,T]}\big) \big(1 + \thicknm{(Y,Y')}_{\mathrm X;q,q';k,\infty}^{\gamma - 1} + \thicknm{(\bar Y,\bar Y')}_{\mathrm X;q,q';k,\infty}^{\gamma - 1}\big).
\end{aligned}
\end{equation}
Moreover, assume further that $(q,q')$ satisfies \ref{(p,q,q')}. Then, it follows that for all $(s,t)\in \Delta$, 
\begin{equation}\label{e:Psi alpha'}
\begin{aligned}
&\Big\| \Big\|\delta \Big( \int_{s}^{\bm\cdot}\alpha_{r}d\mathbf X_r \Big) \Big\|_{p\text{-}\mathrm{var};[s,t]} \Big| \mathcal F_s \Big\|_{k,\infty} \\
&\lesssim_{k,\gamma} C_G |\delta \mathbf X|_{p\text{-}\mathrm{var};[s,t]} \big(1 + |\delta \mathbf X|^{\gamma - 1}_{p\text{-}\mathrm{var};[0,T]} \big) \big(1 + \thicknm{(Y,Y')}_{\mathrm X;q,q';k,\infty}^{\gamma - 1} + \thicknm{(\bar Y,\bar Y')}_{\mathrm X;q,q';k,\infty}^{\gamma - 1} \big).
\end{aligned}
\end{equation}
\end{lemma}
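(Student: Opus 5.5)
The estimate \eqref{e:Psi alpha} will be proved by bounding each of the four pieces of the seminorm \eqref{e:Psi} for $(\alpha,\alpha')$, reusing the decomposition from the proof of Lemma~\ref{lem:int of RSM}. We may assume $\gamma\le 3$.

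\emph{Derivative terms.} The bound $\|\alpha'\|_\infty\le C_G(\|Y'\|_\infty+\|\bar Y'\|_\infty)$ is immediate. For $\|\delta\alpha'\|_{q'\text{-var}}$ we repeat \eqref{e:G^lambda < Y} with $p'$ replaced by $q'$; this is allowed since $q'(\gamma-2)\ge p'(\gamma-2)\ge p$. The term $\|\delta Y\|^{\gamma-2}$ must then be measured in $p$-variation. Taking conditional $k$-moments and using Jensen (since $\gamma-2\le1$), the result is controlled by $\|\|\delta Y'\|_{q'\text{-var}}\|_{\infty;k,\infty}+\|\|\delta Y\|_{p\text{-var}}\|^{\gamma-2}_{\infty;k,\infty}\|Y'\|_\infty$ and the analogous terms for $\bar Y$. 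Lemma~\ref{lem:Y <= R^Y'}, specifically \eqref{e:Y q-var infty}, bounds the $p$-variation of $Y$ by $(1+\|\delta\mathrm X\|_{p\text{-var}})\thicknm{(Y,Y')}$. Young's inequality then puts everything in the form $(1+|\delta\mathbf X|^{\gamma-1})(1+\thicknm{\cdot}^{\gamma-1})$. Lemma~\ref{lem:Y <= R^Y'} needs $p\ge q''/2$, which holds because $q''/2\le q/2\cdot 2$; if it fails, we use \eqref{e:bound for Y p-var'} directly.

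\emph{Martingale term.} By \eqref{e:rep of M^alpha} and boundedness of $DG$,
\[
\delta\langle M^\alpha\rangle^{1/2}\le C_G\bigl(\delta\langle M^Y\rangle^{1/2}+\delta\langle M^{\bar Y}\rangle^{1/2}\bigr),
\]
so $\|\delta M^\alpha\|_{\mathrm{BMO}}\le C_G(\|\delta M^Y\|_{\mathrm{BMO}}+\|\delta M^{\bar Y}\|_{\mathrm{BMO}})$.

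\emph{Remainder term.} We have $P=\int_0^1P^\lambda d\lambda$ with $P^\lambda=K^{\lambda;1}+\dots+K^{\lambda;4}$ as in \eqref{e:R^lambda}, and we estimate each piece in $\frac{q''}{2}$-variation. Since $q''/2\ge p/(\gamma-1)$ when $(\gamma-1)\ge 1+p/p'\ge 1+p/q'$ and $q\ge p$, Hölder gives:
\begin{itemize}
\item $K^{\lambda;1}\lesssim\|\delta Y^\lambda\|^{\gamma-2}_{p\text{-var}}\|\delta M^{Y^\lambda}\|_{p\text{-var}}$;
\item $K^{\lambda;3}\lesssim\|(Y^\lambda)'\|_\infty\|\delta Y^\lambda\|_{p\text{-var}}^{\gamma-2}\|\delta\mathrm X\|_{p\text{-var}}$;
\item $K^{\lambda;2}\lesssim\|P^{Y^\lambda}\|_{q''/2\text{-var}}$.
\end{itemize}
For $K^{\lambda;4}$ we use Lemma~\ref{lem:p-var martingale estim}, as in \eqref{e:K lambda 4}, with $r=q''/2$ and $r_1=p/(\gamma-2)$. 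For the conditional moments, the $M$-factors are controlled by L\'epingle/BMO (Lemma~\ref{lem:Lepingle}) and the $\delta Y$-factors by Lemma~\ref{lem:Y <= R^Y'} with $2k$ in place of $k$. Lemma~\ref{lem:BMO of q-var} lets us return from $2k$ to $k$ (indeed to $1$), so no integrability is lost. Products of the form $\thicknm{\cdot}^{\gamma-2}\cdot\thicknm{\cdot}$ are at most $1+\thicknm{\cdot}^{\gamma-1}$, and the powers of $|\delta\mathbf X|$ combine the same way. Minkowski in $\lambda$ then finishes \eqref{e:Psi alpha}.

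\emph{Integral estimate \eqref{e:Psi alpha'}.} We apply Proposition~\ref{prop:estim of integral} to $(\alpha,\alpha')$ on $[s,t]$ and write $\delta\int\alpha\,d\mathbf X=\alpha_u\delta\mathrm X_{u,v}+R^\Gamma_{u,v}$. The first piece is bounded by $\|\alpha\|_\infty\|\delta\mathrm X\|_{p\text{-var}}\le C_G\|\delta\mathrm X\|_{p\text{-var}}$. The remainder's $p$-variation is dominated by its $p/2$-variation, and the right side of Proposition~\ref{prop:estim of integral} is $|\delta\mathbf X|_{p\text{-var};[s,t]}$ times a quantity bounded by $\thicknm{(\alpha,\alpha')}$. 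Plugging in \eqref{e:Psi alpha} gives \eqref{e:Psi alpha'}.

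\emph{Main obstacle.} The step needing most care is the exponent bookkeeping in $K^{\lambda;1}$ and $K^{\lambda;4}$ under only $C^{\gamma-1}$ regularity. We must check that the Hölder exponents are compatible with $q''/2$ using $\gamma-2\ge p/p'\ge p/q'$. We also must verify that conditional $2k$-moments of $\|\delta Y\|_{p\text{-var}}^{\gamma-2}$ are dominated by $\thicknm{\cdot}_{k,\infty}$, which is exactly where Lemma~\ref{lem:BMO of q-var} is used.
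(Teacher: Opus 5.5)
Your proposal follows the paper's proof essentially step for step. It uses the same four-part bound on the seminorm and the same splitting $P^\lambda=K^{\lambda;1}+\dots+K^{\lambda;4}$ estimated in $\frac{q''}{2}$-variation, with Lemma~\ref{lem:p-var martingale estim} at $r=\frac{q''}{2}$, $r_1=\frac{p}{\gamma-2}$ for $K^{\lambda;4}$. Like the paper, you then use Lemma~\ref{lem:BMO of q-var} to pass from $2k$ back to $k$, Minkowski in $\lambda$, and Proposition~\ref{prop:estim of integral} for \eqref{e:Psi alpha'}. One point of order in that middle step: apply Lemma~\ref{lem:BMO of q-var} to $\|\delta Y^\lambda\|_{p\text{-}\mathrm{var}}$ itself, and only then invoke Lemma~\ref{lem:Y <= R^Y'} at level $k$, since the BMO lemma does not apply to the two-parameter remainder $P^Y$.

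The one line to correct is your justification of $p\ge \frac{q''}{2}$, which Lemma~\ref{lem:Y <= R^Y'} requires. The inequality $\frac{q''}{2}\le q$ does not give $\frac{q''}{2}\le p$, because $q\ge p$. Your fallback via \eqref{e:bound for Y p-var'} does not help either: it contains $\|P^Y\|_{p\text{-}\mathrm{var}}$, which the seminorm does not control when $\frac{q''}{2}>p$. The condition does hold under \ref{(p,q,q')}, since $\frac1q+\frac1{q'}>1-\frac1p>\frac12$ forces $\frac{q''}{2}<2<p$. That is the regime of every later application, and the paper's own proof uses \eqref{e:Y q-var infty} under this same unstated restriction.
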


\begin{proof}
Without loss of generality, we assume $\gamma \le 3$. By Lemma~\ref{lem:int of RSM} we have $(\alpha,\alpha') \in \mathrm{RSM}^{(p,p')\text{-}\mathrm{var}}_{\mathrm X}$. We now prove \eqref{e:Psi alpha}. 

Let $Y^{\lambda}_t := (1-\lambda)Y_t + \lambda \bar Y_t$, $(Y^{\lambda})'_t := (1-\lambda)Y'_t + \lambda \bar Y'_t$, and denote $G^{\lambda}, (G^{\lambda})'$ by \eqref{e:def of G^lambda}. By \eqref{e:def of alpha} in Lemma~\ref{lem:int of RSM}, we have $\|\alpha'\|_{\infty} \le C_G (\|Y'\|_{\infty}+ \|\bar Y'\|_{\infty})$. Furthermore for $\|\delta \alpha'\|_{q'\text{-}\mathrm{var}}$, by the proof leading to \eqref{e:G^lambda < Y} with $p'$ replaced by $q'$, we have 
\begin{equation}\label{e:G landa'}
\begin{aligned}
&\|\delta (G^{\lambda})'\|_{q'\text{-}\mathrm{var};[t,T]} \\
&\le C_G \Big( \|\delta Y'\|_{q'\text{-}\mathrm{var};[t,T]} + \|\delta \bar Y'\|_{q'\text{-}\mathrm{var};[t,T]} + \big(\|\delta Y\|^{\gamma - 2}_{p\text{-}\mathrm{var};[t,T]} + \|\delta \bar Y\|^{\gamma - 2}_{p\text{-}\mathrm{var};[t,T]}\big) (\|Y'\|_{\infty} + \|\bar Y'\|_{\infty}) \Big).
\end{aligned}
\end{equation}
Then applying Minkowski's inequality, noting $\alpha'_{\bm\cdot} = \int_{0}^{1} (G^{\lambda}_{\bm\cdot})' d\lambda$, we get 
\begin{equation*}
\begin{aligned}
&\|\delta \alpha'\|_{q'\text{-}\mathrm{var};[t,T]} \le C_G \times  \text{ RHS of \eqref{e:G landa'}}.
\end{aligned}
\end{equation*}
This, together with \eqref{e:Y q-var infty} in  Lemma~\ref{lem:Y <= R^Y'} yields 
\begin{equation}\label{e:alpha'}
\begin{aligned}
&\|\alpha'\|_{\infty} + \big\| \|\delta \alpha'\|_{q'\text{-}\mathrm{var}} \big\|_{\infty;k;\infty} \lesssim_{\gamma} C_G 
(1 + |\delta \mathbf X|^{\gamma - 2}_{p\text{-}\mathrm{var}})
\big( 1 + \thicknm{(Y,Y')}_{\mathrm X;q,q';k,\infty}^{\gamma - 1} + \thicknm{(\bar Y,\bar Y')}_{\mathrm X;q,q';k,\infty}^{\gamma - 1} \big).
\end{aligned}
\end{equation}
Then for $M^{\alpha}$, noting $\|\delta M^{\alpha}\|_{\mathrm{BMO}} \le \int_{0}^{1} \|\delta M^{G^{\lambda}}\|_{\mathrm{BMO}} \le C_G \int_{0}^{1} \|\delta M^{Y^{\lambda}}\|_{\mathrm{BMO}} \, d\lambda$, we have  
\begin{equation}\label{e:M^alpha}
\begin{aligned}
\|\delta M^{\alpha}\|_{\mathrm{BMO}} &\le C_{G} \Big( \|\delta M^{Y}\|_{\mathrm{BMO}} + \|\delta M^{\bar Y}\|_{\mathrm{BMO}} \Big) \le C_G \Big( \thicknm{(Y,Y')}_{\mathrm X;q,q';k,\infty} + \thicknm{(\bar Y,\bar Y' )}_{\mathrm X;q,q';k,\infty} \Big).
\end{aligned}
\end{equation}

Now we estimate $P^{\alpha}$. Denote $P^{\lambda} := P^{G^{\lambda}}$ and define $K^{\lambda;i}, i=1,2,3,4$, by \eqref{e:R^lambda}. For $K^{\lambda;1}$, $K^{\lambda;2}$ and $K^{\lambda;3}$, one can repeat the procedure in Step~1 of Lemma~\ref{lem:int of RSM}, but compute the $\frac{q''}{2}$-variation instead, and obtain (noting $\frac{q''}{2} \ge \frac{pp'}{p+p'} \ge \frac{p}{\gamma - 1}$ due to $q \ge p$ and $q' \ge p'$)
\begin{equation}\label{e:K1+K2+K3}
\begin{aligned}
\sum_{i=1}^{3}\|K^{\lambda;i}\|_{\frac{q''}{2}\text{-}\mathrm{var};[t,T]} &\le C_G \Big( \|\delta Y^{\lambda}\|^{\gamma - 2}_{p\text{-}\mathrm{var};[t,T]} \|\delta M^{Y^{\lambda}}\|_{p\text{-}\mathrm{var};[t,T]} + \|P^{Y^{\lambda}}\|_{\frac{q''}{2}\text{-}\mathrm{var};[t,T]} \\
&\quad + \|(Y^{\lambda})'\|_{\infty;[t,T]} \|\delta Y^{\lambda}\|^{\gamma - 2}_{p\text{-}\mathrm{var};[t,T]} \|\delta \mathrm X\|_{p\text{-}\mathrm{var};[t,T]} \Big).
\end{aligned}
\end{equation}
To estimate $K^{\lambda;4}$, similar to \eqref{e:K lambda 4}, by Lemma~\ref{lem:p-var martingale estim} with $r = \frac{q''}{2}$, $r_1 = \frac{p}{\gamma - 2}$, $k_1 = k_0 = 2k$, 
\begin{equation}\label{e:K lambda 4'}
\begin{aligned}
\big\| \| K^{\lambda;4} \|_{\frac{q''}{2}\text{-}\mathrm{var};[t,T]}  \big| \mathcal F_{t}\big\|_{k} & \lesssim_{k,\gamma} \big\| \|\delta DG(Y^{\lambda}_{\bm\cdot})\|_{\frac{p}{\gamma - 2}\text{-}\mathrm{var};[t,T]} \big| \mathcal F_{t} \big\|_{k_1} \big\| \delta \langle M^{Y^{\lambda}} \rangle^{\frac{1}{2}}_{t,T} \big| \mathcal F_{t} \big\|_{k_0} \\
&\le C_G \big\| \|\delta Y^{\lambda}\|_{p\text{-}\mathrm{var};[t,T]} \big| \mathcal F_t \big\|^{\gamma - 2}_{k} \big\| \delta \langle M^{Y^{\lambda}} \rangle^{\frac{1}{2}}_{t,T} \big| \mathcal F_{t} \big\|_{k_0}. 
\end{aligned}
\end{equation}  

The $k$-th conditional moment of the first term on the RHS of \eqref{e:K1+K2+K3} is bounded via H\"older's inequality, noting  $\frac{1}{k_1} + \frac{1}{k_0} = \frac{1}{k}$:
\begin{equation*}
\begin{aligned}
\big\| \|\delta Y^{\lambda}\|^{\gamma - 2}_{p\text{-}\mathrm{var};[t,T]} \|\delta M^{Y^{\lambda}}\|_{p\text{-}\mathrm{var};[t,T]}\big|\mathcal F_t \big\|_k &\le \big\| \|\delta Y^{\lambda}\|^{\gamma - 2}_{p\text{-}\mathrm{var};[t,T]} \big|\mathcal F_t\big\|_{k_1} \big\| \|\delta M^{Y^{\lambda}}\|_{p\text{-}\mathrm{var};[t,T]}\big|\mathcal F_t\big\|_{k_0} \\
&\le \big\| \|\delta Y^{\lambda}\|_{p\text{-}\mathrm{var};[t,T]} \big|\mathcal F_t\big\|^{\gamma - 2}_{k_1} \big\| \|\delta M^{Y^{\lambda}}\|_{p\text{-}\mathrm{var};[t,T]}\big|\mathcal F_t\big\|_{k_0}\\
&\lesssim_k \big\| \|\delta Y^{\lambda}\|_{p\text{-}\mathrm{var}} \big\|^{\gamma - 2}_{\infty;k,\infty} \big\| \|\delta M^{Y^{\lambda}}\|_{p\text{-}\mathrm{var};[t,T]}\big|\mathcal F_t\big\|_{k_0},
\end{aligned}
\end{equation*}
where the last inequality is due to $\| \|\delta Y^{\lambda}\|_{p\text{-}\mathrm{var}} \|_{\infty;k_1,\infty} \lesssim_k \| \|\delta Y^{\lambda}\|_{p\text{-}\mathrm{var}} \|_{\infty;k,\infty}$ from Lemma~\ref{lem:BMO of q-var}.
Thus, combining \eqref{e:K1+K2+K3} and \eqref{e:K lambda 4'}, and applying Lemma~\ref{lem:Lepingle} to $M^{Y^{\lambda}}$, we get 
\begin{equation}\label{e:R^lambda'}
\begin{aligned}
\big\| \|P^{\lambda} & \|_{\frac{q''}{2}\text{-}\mathrm{var};[t,T]}  \big| \mathcal F_{t} \big\|_{k} 
\lesssim_{k,\gamma} C_G \Big( 
\big\| \|\delta Y^{\lambda}\|_{p\text{-}\mathrm{var};[t,T]} \big| \mathcal F_t \big\|^{\gamma - 2}_{k} \big\| \delta \langle M^{Y^{\lambda}}\rangle^{\frac{1}{2}}_{t,T} \big| \mathcal F_t \big\|_{k_0} \\
& + \big\| \|P^{Y^{\lambda}} \|_{\frac{q''}{2}\text{-}\mathrm{var};[t,T]} \big| \mathcal F_t \big\|_{k} + \|(Y^{\lambda})'\|_{\infty} \big\| \|\delta Y^{\lambda}\|_{p\text{-}\mathrm{var};[t,T]} \big| \mathcal F_t \big\|^{\gamma - 2}_{k} \|\delta \mathrm X\|_{p\text{-}\mathrm{var};[t,T]} \Big).
\end{aligned}
\end{equation}

Since $(Y^{\lambda},(Y^{\lambda})',P^{Y^{\lambda}},M^{Y^{\lambda}})$ is a convex combination of $(Y,Y',P^Y,M^{Y})$ and $(\bar Y,\bar Y',P^{\bar Y},M^{\bar Y})$, and since the sum $\| \|\delta Y\|_{p\text{-}\mathrm{var}} \|_{\infty;k,\infty} + \| \|\delta \bar Y\|_{p\text{-}\mathrm{var}} \|_{\infty;k,\infty}$ is dominated by the product of $(1 + |\delta \mathbf X|_{p\text{-}\mathrm{var}})$ and $(\thicknm{(Y,Y')}_{\mathrm X;q,q';k,\infty} + \thicknm{(\bar Y,\bar Y')}_{\mathrm X;q,q';k,\infty})$ (see Lemma~\ref{lem:Y <= R^Y'}), estimate \eqref{e:R^lambda'} implies that 
\begin{equation*}
\big\| \|P^{\lambda}\|_{\frac{q''}{2}\text{-}\mathrm{var}} \big\|_{\infty;k,\infty} \lesssim_{k,\gamma} C_G (1 + |\delta \mathbf X|^{\gamma - 1}_{p\text{-}\mathrm{var};[0,T]}) \big( 1 +  \thicknm{(Y,Y')}_{\mathrm X;q,q';k,\infty}^{\gamma - 1} + \thicknm{(\bar Y,\bar Y')}_{\mathrm X;q,q';k,\infty}^{\gamma - 1} \big).
\end{equation*}
Therefore, applying Minkowski's inequality to $P^{\alpha}_{s,t} = \int_{0}^{1} P^{\lambda}_{s,t} d\lambda$ yields  
\begin{equation*}
\big\| \|P^{\alpha}\|_{\frac{q''}{2}\text{-}\mathrm{var}} \big\|_{\infty;k,\infty} \lesssim_{k,\gamma} C_G (1 + |\delta \mathbf X|^{\gamma - 1}_{p\text{-}\mathrm{var};[0,T]}) \big( 1 +  \thicknm{(Y,Y')}_{\mathrm X;q,q';k,\infty}^{\gamma - 1} + \thicknm{(\bar Y,\bar Y')}_{\mathrm X;q,q';k,\infty}^{\gamma - 1} \big).
\end{equation*}
This together with \eqref{e:alpha'} and \eqref{e:M^alpha} implies the desired estimate \eqref{e:Psi alpha}. 

Finally, assume further \ref{(p,q,q')} (hence \ref{(p,p')} holds from $p' \le q'$) and set $\Gamma_{t} := \int_{0}^{t}\alpha_r d\mathbf X_r$. Noting  $\delta\Gamma_{s,t} = R^{\Gamma}_{s,t} + \alpha'_{s} \delta \mathrm X_{s,t}$, the estimate \eqref{e:Psi alpha'} follows directly from \eqref{e:Psi alpha} and Proposition~\ref{prop:estim of integral}.
\end{proof}

%--------------------------

The next proposition shows that the seminorm $\thicknm{\bm\cdot}_{\mathrm X;q,q';k,\infty}$ over a large interval is dominated by the sum of the corresponding seminorms over small intervals.

\begin{proposition}\label{prop:Psi <= Psi + Psi}
Assume \eqref{cond31} holds. Let $(s,t) \in \Delta$, and suppose $\mathbf X \in \mathscr C^{p\text{-}\mathrm{var}}([s,t],\R^d)$ and $(Y,Y') \in \mathrm{RSM}^{(p,p')\text{-}\mathrm{var}}_{\mathrm X}([s,t],\R)$. Then, we have 
\begin{equation}\label{e:Psi <= Psi + Psi}
\thicknm{(Y,Y')}_{\mathrm X;q,q';k,\infty;[s,t]} \le  (1 + |\delta \mathbf X|_{q\text{-}\mathrm{var};[s,t]})\, \thicknm{(Y,Y')}_{\mathrm X;q,q';k,\infty;[s,u]} + \thicknm{(Y,Y')}_{\mathrm X;q,q';k,\infty;[u,t]}.
\end{equation}
\end{proposition}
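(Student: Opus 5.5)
The plan is to compare the four components of $\thicknm{(Y,Y')}_{\mathrm X;q,q';k,\infty;[s,t]}$ one at a time, by splitting at the intermediate time $u\in[s,t]$. For the three conditional components, each is a supremum over $(a,b)\in\Delta_{[s,t]}$. If $(a,b)\subset[s,u]$ or $(a,b)\subset[u,t]$, the quantity is bounded directly by the corresponding seminorm on that subinterval. So the only real work is the straddling case $a<u<b$. Throughout I will use the tower property in the form
\[
\big\|\xi\,\big|\,\mathcal F_a\big\|_{k,\infty}\le \big\|\xi\,\big|\,\mathcal F_u\big\|_{k,\infty},\qquad a\le u,
\]
which holds because $\E_a[|\xi|^k]=\E_a[\E_u[|\xi|^k]]\le \esssup \E_u[|\xi|^k]$. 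It turns conditioning at $a$ of a quantity living on $[u,b]$ into the $[u,t]$-seminorm.

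\emph{Gubinelli derivative and martingale components.} The term $\|Y'\|_{\infty}$ on $[s,t]$ is the maximum of the two sub-interval sup norms. For the $q'$-variation, I will use the elementary estimate $\|\delta Y'\|_{q'\text{-}\mathrm{var};[a,b]}\le \|\delta Y'\|_{q'\text{-}\mathrm{var};[a,u]}+\|\delta Y'\|_{q'\text{-}\mathrm{var};[u,b]}$. To prove it, refine any partition by inserting $u$, split the single straddling increment as $\delta Y'_{v,w}=\delta Y'_{v,u}+\delta Y'_{u,w}$, and apply Minkowski in $\ell^{q'}$. Then take $\|\cdot|\mathcal F_a\|_{k,\infty}$ and apply the tower estimate to the second piece. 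For the BMO part, write
\[
\E_a[\delta\langle M^Y\rangle_{a,b}]=\E_a[\delta\langle M^Y\rangle_{a,u}]+\E_a\big[\E_u[\delta\langle M^Y\rangle_{u,b}]\big]\le \|\delta M^Y\|^2_{\mathrm{BMO};[s,u]}+\|\delta M^Y\|^2_{\mathrm{BMO};[u,t]},
\]
and use $\sqrt{x^2+y^2}\le x+y$. The case $a\ge u$ is already covered by the $[u,t]$-norm.

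\emph{Pathwise remainder (main step).} $P^Y$ is not additive, so I first derive the Chen-type identity. Subtracting the decompositions $\delta Y_{v,w}=Y'_v\delta\mathrm X_{v,w}+P^Y_{v,w}+\delta M^Y_{v,w}$ over $[v,u]$ and $[u,w]$, and using the additivity of $\delta Y$, $\delta M^Y$ and $\delta\mathrm X$, gives for $v\le u\le w$
\[
P^Y_{v,w}=P^Y_{v,u}+P^Y_{u,w}+\delta Y'_{v,u}\,\delta\mathrm X_{u,w}.
\]
In any partition of $[a,b]$ refined at $u$, only one interval straddles $u$. Applying Minkowski in $\ell^{q''/2}$ to the three pieces, and then H\"older with $\frac{2}{q''}=\frac1q+\frac1{q'}$ to the cross term, yields
\[
\|P^Y\|_{\frac{q''}{2}\text{-}\mathrm{var};[a,b]}\le \|P^Y\|_{\frac{q''}{2}\text{-}\mathrm{var};[a,u]}+\|P^Y\|_{\frac{q''}{2}\text{-}\mathrm{var};[u,b]}+\|\delta Y'\|_{q'\text{-}\mathrm{var};[a,u]}\|\delta\mathrm X\|_{q\text{-}\mathrm{var};[u,b]}.
\]
Since $\mathrm X$ is deterministic, taking $\|\cdot|\mathcal F_a\|_{k,\infty}$ bounds the cross term by $|\delta\mathbf X|_{q\text{-}\mathrm{var};[s,t]}$ times the $q'$-component of the $[s,u]$-seminorm. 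The middle term is handled by the tower estimate.

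\emph{Summation.} Adding the four bounds gives the $[s,u]$-seminorm with coefficient at most $1+|\delta\mathbf X|_{q\text{-}\mathrm{var};[s,t]}$, coming only from the cross term. The $[u,t]$-seminorm appears with coefficient $1$. This is exactly \eqref{e:Psi <= Psi + Psi}. The only step needing care is the straddling-interval bookkeeping for $P^Y$, so that no constant larger than $1$ appears. This works because only one partition interval contains $u$, and Minkowski in $\ell^{r}$ with $r\ge1$ has constant $1$.
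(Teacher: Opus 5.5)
Your proposal is correct and follows the paper's own proof. That proof also splits componentwise at $u$ and uses the identity $P^Y_{v,w}=P^Y_{v,u}+P^Y_{u,w}+\delta Y'_{v,u}\,\delta\mathrm X_{u,w}$ with H\"older on the cross term. It likewise passes the $[u,b]$ pieces through conditioning at $\mathcal F_u$ and closes with the subadditivity of the $q'$-variation and BMO norms, which you justify in more detail via Minkowski and the tower property.
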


\begin{proof}
Using the subadditivity of $q'$-variation, we get that for $s \le a\le u \le b \le t$,
\begin{equation*}
\| \|Y'\|_{q'\text{-}\mathrm{var};[a,b]} | \mathcal F_{a} \|_{k,\infty} \le \| \|Y'\|_{q'\text{-}\mathrm{var};[a,u]} | \mathcal F_{a} \|_{k,\infty} + \| \|Y'\|_{q'\text{-}\mathrm{var};[u,b]} | \mathcal F_{u} \|_{k,\infty},
\end{equation*}
which implies  $$\| \|Y'\|_{q'\text{-}\mathrm{var}} \|_{\infty;k,\infty;[s,t]} \le \| \|Y'\|_{q'\text{-}\mathrm{var}} \|_{\infty;k,\infty;[s,u]} + \| \|Y'\|_{q'\text{-}\mathrm{var}} \|_{\infty;k,\infty;[u,t]}.$$
For the pathwise remainder $P^Y$, by Definition~\ref{def:rough semimartingale} we have that for $u\in[r,v] \subset [a,b]$,
\begin{equation*}
P^Y_{r,v} = P^Y_{r,u} + P^Y_{u, v} + \delta Y'_{r,u} \delta \mathrm X_{u, v},
\end{equation*}
and hence by H\"older's inequality, 
\begin{equation*}
\begin{aligned}
&\big\| \|P^Y\|_{\frac{q''}{2}\text{-}\mathrm{var};[a,b]} \big| \mathcal F_{a} \big\|_{k,\infty} \\
&\le \big\| \|P^Y\|_{\frac{q''}{2}\text{-}\mathrm{var};[a,u]} \big|\mathcal F_a \big\|_{k,\infty} + \big\| \|P^Y\|_{\frac{q''}{2}\text{-}\mathrm{var};[u,b]} \big| \mathcal F_u \big\|_{k,\infty}  + \big\| \| Y'\|_{q'\text{-}\mathrm{var};[a,u]}  | \mathcal F_{a} \big\|_{k,\infty} |\mathbf X|_{q\text{-}\mathrm{var};[s,t]}.
\end{aligned}
\end{equation*}
Therefore, we have
\begin{equation*}
\begin{aligned}
&\big\| \| P^Y\|_{\frac{q''}{2}\text{-}\mathrm{var}} \big\|_{\infty;k,\infty;[s,t]} \\
&\le \big\| \|P^Y\|_{\frac{q''}{2}\text{-}\mathrm{var}} \big\|_{\infty;k,\infty;[s,u]} + \big\| \|P^Y\|_{\frac{q''}{2}\text{-}\mathrm{var}} \big\|_{\infty;k,\infty;[u,t]} + \big\| \|Y'\|_{q'\text{-}\mathrm{var}} \big\|_{\infty;k,\infty;[s,u]} |\mathbf X|_{q\text{-}\mathrm{var};[s,t]}.
\end{aligned}
\end{equation*}
Finally, noting the subadditivity of BMO norms, the desired \eqref{e:Psi <= Psi + Psi} follows.
\end{proof}

\begin{lemma}\label{lem:Banach property of RSMs}
Assume \eqref{cond31}. Let $(s,t) \in \Delta$, and suppose $\mathbf X \in \mathscr C^{p\text{-}\mathrm{var}}([s,t],\R^d)$. Denote 
\begin{equation*}
\|(Y,Y')\|_{\mathrm X;q,q';k,\infty;[s,t]} := \|Y_{t}\|_{\infty} +\, \thicknm{(Y,Y')}_{\mathrm X;q,q';k,\infty;[s,t]}.
\end{equation*}
Then, the space $(\mathrm{RSM}^{(p,p')\text{-}\mathrm{var}}_{\mathrm X}([s,t],\R), \|\bm\cdot\|_{\mathrm X;q,q';k,\infty;[s,t]})$ forms a Banach space. 
\end{lemma}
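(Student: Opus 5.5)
The plan is to mirror the proof of Lemma~\ref{lem:Banach property of L C}: take a Cauchy sequence $(Y^n,Y^{n\prime})$ for $\|\bm\cdot\|_{\mathrm X;q,q';k,\infty;[s,t]}$, identify a candidate limit component by component, show the limit is again an RSM, and show convergence in the norm. Note first that $\|\bm\cdot\|_{\mathrm X;q,q';k,\infty;[s,t]}$ really is a norm on the space. It is a seminorm by construction. If it vanishes, then $Y'=0$, $P^Y=0$ and $M^Y$ is constant, since $\|\delta M^Y\|_{\mathrm{BMO}}=0$ forces $\langle M^Y\rangle\equiv0$. Hence $\delta Y=0$ and, as $Y_t=0$, also $Y\equiv0$. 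The decomposition is unique by Remark~\ref{rem:given (Y,Y') in RSm}, so the seminorm is well defined on pairs $(Y,Y')$.

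Given a Cauchy sequence, treat each component separately.
\begin{enumerate}[leftmargin=2em]
\item $Y^{n\prime}$ is Cauchy in $\mathcal L^\infty$ and $\| \|\delta Y^{n\prime}\|_{q'\text{-}\mathrm{var}}\|_{\infty;k,\infty}$. By Lemma~\ref{lem:Banach property of L C} (or directly, since uniform limits preserve continuity and adaptedness), it converges to a continuous adapted $Y'$.
\item $M^{Y^n}$ is Cauchy in $\mathrm{BMO}_{[s,t]}$. Completeness of continuous BMO martingales (a closed subspace of $\mathcal H^2$ by Doob and Burkholder--Davis--Gundy) gives a limit $M$, with $\sup_r|M^{Y^n}_r-M_r|\to0$ in $L^2$.
\item $P^{Y^n}$ is Cauchy in $L^{\infty;k,\infty}C^{q''/2\text{-}\mathrm{var}}_2$. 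By the two-parameter part of Lemma~\ref{lem:Banach property of L C}, it converges to a continuous two-parameter process $P$.
\item $Y^n_t$ converges in $L^\infty$ to some $\xi$.
\end{enumerate}
Define $Y_r:=\xi-Y'_r\delta\mathrm X_{r,t}-P_{r,t}-\delta M_{r,t}$. This is the natural choice because the RSM identity $\delta Y^n_{r,t}=Y^{n\prime}_r\delta\mathrm X_{r,t}+P^{Y^n}_{r,t}+\delta M^{Y^n}_{r,t}$ passes to the limit. Alternatively, by Lemma~\ref{lem:Y <= R^Y'}, $Y^n$ is Cauchy in $\mathcal L^\infty$, so $Y^n\to Y$ uniformly in $L^\infty$, and one checks that $Y$ is continuous and adapted.

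Next verify $(Y,Y')\in\mathrm{RSM}^{(p,p')\text{-}\mathrm{var}}_{\mathrm X}$. Along a subsequence, all the convergences above hold almost surely in the relevant pathwise norms, as in the Borel--Cantelli/L\'evy step of Lemma~\ref{lem:Banach property of L C}. The identity $\delta Y=Y'\delta\mathrm X+P+\delta M$ then holds for all $(u,v)$ almost surely. In particular $P_{u,v}=\delta(Y-M)_{u,v}-Y'_u\delta\mathrm X_{u,v}$, so $P$ is the remainder of $Y-M$.

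The one point requiring care is the variation exponents. The seminorm controls the $q'$-variation of $Y'$ and the $q''/2$-variation of $P$, whereas the definition of the space asks for $p'$- and $pp'/(p+p')$-variation. I would handle this by reading the space as the closure in which these norms are finite, i.e.\ interpreting the exponents consistently with \eqref{cond31}. If $p'$ and $q'$ differ, the cleanest fix is to take $p'=q'$ in the statement; then $pp'/(p+p')\le q''/2$ when $p\le q$ is false, so instead one uses the lower semicontinuity of the pathwise norms along the subsequence. Concretely, the a.s.\ finiteness of $\|\delta Y'\|_{p'\text{-}\mathrm{var}}$ and of the remainder norm follows from Fatou's lemma applied to the approximants, since each $Y^n$ is an RSM and the pathwise limits exist. $\|\delta(Y-M)\|_{p\text{-}\mathrm{var}}<\infty$ follows from the identity. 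This exponent bookkeeping, rather than any estimate, is what I expect to be the main obstacle.

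Finally, norm convergence follows from Fatou's lemma for conditional expectations together with lower semicontinuity of variation norms, exactly as in Lemma~\ref{lem:Banach property of L C}. For each term,
\[
\big\|\|P^{Y^{n}}-P\|_{\frac{q''}{2}\text{-}\mathrm{var};[u,v]}\big|\mathcal F_u\big\|_{k}\le\liminf_m\big\|\|P^{Y^{n}}-P^{Y^m}\|_{\frac{q''}{2}\text{-}\mathrm{var};[u,v]}\big|\mathcal F_u\big\|_{k},
\]
and analogously for $\delta Y'$, for $\|Y^n_t-Y_t\|_\infty$, and for the BMO norm (via $\E_u[\delta\langle M^{Y^n}-M^{Y^m}\rangle_{u,t}]$ and $L^2$ convergence). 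Taking the supremum over $(u,v)$ and letting $n\to\infty$ gives convergence in $\|\bm\cdot\|_{\mathrm X;q,q';k,\infty;[s,t]}$, which proves completeness.
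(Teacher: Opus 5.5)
Your outline follows the paper's proof essentially line by line. Both use componentwise limits from Lemma~\ref{lem:Banach property of L C} and completeness of BMO, the candidate $Y_r:=\xi-Y'_r\delta\mathrm X_{r,t}-P_{r,t}-\delta M_{r,t}$, an a.s.\ convergent subsequence, lower semicontinuity to place the limit in $\mathrm{RSM}^{(p,p')\text{-}\mathrm{var}}_{\mathrm X}$, and conditional Fatou for norm convergence. The weak point is exactly the one you flag, and your resolution does not close it. Along the subsequence, lower semicontinuity (equivalently, Fatou) only gives
\[
\|\delta Y'\|_{p'\text{-}\mathrm{var};[s,t]}\le\liminf_{n\to\infty}\|\delta Y^{n\prime}\|_{p'\text{-}\mathrm{var};[s,t]},
\]
and similarly for $\|P\|_{\frac{pp'}{p+p'}\text{-}\mathrm{var};[s,t]}$. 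Since each $Y^n$ is an RSM, every term on the right is finite. But nothing bounds them uniformly in $n$: the norm controls only the $q'$-variation of $Y'$ and the $\frac{q''}{2}$-variation of $P$. The paper's proof makes the same appeal to lower semicontinuity, so you have not deviated from it. The step goes through when the norm's exponents dominate the RSM exponents, i.e.\ $q'\le p'$ and $\frac1q+\frac1{q'}\ge\frac1p+\frac1{p'}$. In that case no semicontinuity is needed, since a.s.\ convergence in $q'$- and $\frac{q''}{2}$-variation already gives the required finiteness. (Also, with $p'=q'$ the inequality $\frac{pp'}{p+p'}\le\frac{q''}{2}$ is true, not false; it simply points the wrong way for an embedding.)

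Outside that range the statement itself fails, so no bookkeeping can rescue the argument. For instance, when $q'>p'$:
\begin{itemize}
\item take $\mathbf X=0$, $Y^n\equiv0$ and $M^{Y^n}=P^{Y^n}=0$;
\item let $Y^{n\prime}=g_n$ be deterministic piecewise linear interpolations of a continuous $g$ with finite $r$-variation for some $r\in(p',q')$ but infinite $p'$-variation.
\end{itemize}
Each $(0,g_n)$ is an RSM, and the sequence is Cauchy, since interpolating between $r$-variation and the uniform norm gives $g_n\to g$ in $q'$-variation. Yet any limit must have $Y'=g\notin C^{p'\text{-}\mathrm{var}}$. This is precisely the regime in which the paper uses the lemma ($p'=p<q'$ in the proof of Theorem~\ref{thm:the existence}).

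What your argument does prove, under \ref{(p,q,q')}, is completeness of
\[
\big\{(Y,Y')\in\mathrm{RSM}^{(p,\tilde p')\text{-}\mathrm{var}}_{\mathrm X}([s,t],\R):\ \|(Y,Y')\|_{\mathrm X;q,q';k,\infty;[s,t]}<\infty\big\},\qquad \frac1{\tilde p'}:=\frac1q+\frac1{q'}-\frac1p .
\]
Indeed, one checks:
\begin{itemize}
\item $q'\le\tilde p'<\infty$;
\item $\frac2p+\frac1{\tilde p'}>1$;
\item $\frac{p\tilde p'}{p+\tilde p'}=\frac{q''}{2}<p$.
\end{itemize}
So finiteness of the norm alone places your limit in this space, including $\|\delta(Y-M)\|_{p\text{-}\mathrm{var}}<\infty$ read off from the identity. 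For the Picard scheme, the $(p,p)$-regularity of the fixed point is then recovered a posteriori from the equation, as in the discussion after Definition~\ref{def:def of BSDEs}.
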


\begin{proof}
Let $\{(Y^n,(Y^n)')\}_{n \ge 1}$ be a Cauchy sequence in this space. By Lemma~\ref{lem:Banach property of L C} and the Banach property of \textrm{BMO} martingales (see, e.g., \cite[Chapter~2.1]{Kazamaki1994}) there exists $(Y',P,M)$ such that 
\begin{equation}\label{e:conver of for terms}
\begin{aligned}
&\lim_{n\to\infty} \|(Y^n)' - Y'\|_{\mathcal L^{\infty}([s,t] \times \Omega)} + \big\| \|\delta ((Y^n)' - Y')\|_{q'\text{-}\mathrm{var}} \big\|_{\infty;k,\infty;[s,t]} \\
&\quad + \big\| \|P^{Y^n} - P\|_{\frac{q''}{2}\text{-}\mathrm{var}} \big\|_{\infty;k,\infty;[s,t]} + \|\delta(M^{Y^n} - M)\|_{\mathrm{BMO};[s,t]} = 0.
\end{aligned}
\end{equation}

Furthermore, let $Y_r := \lim_{n \to \infty} Y^n_{t} - Y'_r \delta \mathrm X_{r,t} - P_{r,t} - \delta M_{r,t}$, we see that $\delta Y_{u,v} = Y'_{u} \delta \mathrm X_{u,v} + P_{u,v} + \delta M_{u,v}$. Hence, to prove $(Y,Y') \in \mathrm{RSM}^{(p,p')\text{-}\mathrm{var}}_{\mathrm X}$, it suffices to check that a.s., $\|\delta Y\|_{p\text{-}\mathrm{var};[s,t]} + \|\delta Y'\|_{p'\text{-}\mathrm{var};[s,t]} + \|P\|_{pp'/(p+p')\text{-}\mathrm{var};[s,t]} < \infty$. 

Indeed, by the convergence in \eqref{e:conver of for terms}, after passing to a subsequence if necessary, $( (Y^{n})' , P^{Y^n} )$ converges to $(Y',P)$ in the uniform norm a.s. Hence, by the lower semi-continuity of $p'$-variation norm (and, respectively, the $pp'/(p+p')$-variation norm) established in \cite[Lemma~5.12]{friz2010multidimensional}, we see that a.s. $Y'$ and $P$ have finite $p'$-variation and $pp'/(p+p')$-variation, respectively. Furthermore, by Lemma~\ref{lem:Lepingle}, after passing to a further subsequence, $M^{Y^n}$ converges to $M$ in the uniform norm a.s. By the equation $\delta Y_{u,v} = Y'_{u} \delta \mathrm X_{u,v} + P_{u,v} + \delta M_{u,v}$, it follows that $Y^n$ also converges to $Y$ in the uniform norm a.s., which in turn implies that $Y$ has finite $p$-variation a.s. Therefore, $(Y,Y') \in \mathrm{RSM}^{(p,p')\text{-}\mathrm{var}}_{\mathrm X}([s,t],\R)$ with $P^Y = P$ and $M^Y = M$. Finally, applying \eqref{e:conver of for terms} again we obtain 
\begin{equation*}
\lim_{n \to \infty}\, \thicknm{Y^n,(Y^n)';Y,Y'}_{\mathrm X,\mathrm X;q,q';k,\infty;[s,t]} = 0. 
\end{equation*}
Clearly $\lim_{n \to \infty}\|Y_t - Y^n_t\|_{\infty} = 0$, which completes the proof.  
\end{proof}

Given two RSMs $(Y, Y')$ and $(\bar Y, \bar Y')$ controlled by different rough paths $\mathbf X$ and $\bar{\mathbf X}$, consider $(H(Y),H(Y)')$ and $(\bar H(\bar Y), \bar H (\bar Y)')$ being the corresponding  composition of different nonlinear functions. The following Lemma~\ref{lem:Gamma and I} establishes an upper bound for the difference between the rough stochastic integral of $(H(Y),H(Y)')$ and $(\bar H(\bar Y), \bar H (\bar Y)')$. In particular, this estimate is locally Lipschitz continuous with respect to $\thicknm{Y, Y' ;\bar Y, \bar Y'}_{\mathrm X, \bar{\mathrm X};q,q';k,\infty}$. 

To facilitate the statement of Lemma~\ref{lem:Gamma and I}, let $R_1 > 0$ be a constant such that 
\begin{equation*}
\|\xi\|_{\infty} + \|\bar \xi\|_{\infty} + \|H\|_{C^{\gamma}_b} + \|\bar H\|_{C^{\gamma}_b} + |\delta \mathbf X |_{p\text{-}\mathrm{var};[0,T]} + |\delta \bar{\mathbf X} |_{p\text{-}\mathrm{var};[0,T]} \le R_1, 
\end{equation*}
and denote $\Theta_{1} := (\Theta,C_{f},R_1)$. Recall the parameter $q''/2 = qq'/(q+q')$.

\begin{lemma}\label{lem:Gamma and I}
Given $(s,t) \in \Delta$, assume \ref{(gamma)}, $H, \bar H\in C^{\gamma}_b$, and $\mathbf X, \bar{\mathbf X} \in \mathscr{C}^{p\text{-}\mathrm{var}}([0,T],\R^d)$. 
Suppose that $(Y,Y')$ and $(\bar Y,\bar Y')$ satisfy \eqref{e:Y in RSM} with $p' = p$ and $Y_{t},\bar Y_{t} \in L^{\infty}_t$. Denote 
\begin{equation*}
I^{\mathbf X}_{v} := \int_{s}^{v} H(Y_r) d\mathbf X_r,\ \bar I^{\bar{\mathbf X}}_{v} := \int_{s}^{v} \bar H (\bar  Y_r) d\bar {\mathbf X}_r,\quad v \in [s,t]. 
\end{equation*}
Assume further that for some constant $C>0$,  
\begin{equation}\label{e:upper bound uni in n}
\begin{aligned}
\thicknm{(Y,Y')}_{\mathrm X;q,q';k,\infty;[s,t]} + \thicknm{ (\bar Y,\bar Y' ) }_{\bar {\mathrm X} ,q, q';k,\infty;[s,t]} \le C,
\end{aligned}
\end{equation}

Then, the following estimates hold, with the constant depending implicitly on $(k,\gamma,\Theta_{1},C):$ 
\begin{equation}\label{e:R^Gamma <= R^H(Y)}
\begin{aligned}
&\big\| \|R^{I^{\mathbf X}} - R^{\bar I^{\bar {\mathbf X}}}\|_{\frac{p}{2}\text{-}\mathrm{var}} \big\|_{\infty;k,\infty;[s,t]} + \big\| \|\delta(I^{\mathbf X} - \bar I^{\bar{\mathbf X}})\|_{p\text{-}\mathrm{var}} \big\|_{\infty;k,\infty;[s,t]} \lesssim |\delta \mathbf X|_{p\text{-}\mathrm{var};[s,t]} \Big(\|Y_t - \bar Y_t\|_{\infty} 
\\ 
& \quad \quad + \thicknm{Y ,Y'; \bar Y, (\bar Y)'}_{\mathrm X,\bar{\mathrm X};q,q';k,\infty;[s,t]} + \|H - \bar H\|_{C^{\gamma - 1}_{b}}\Big) + |\delta( \mathbf X - \bar{\mathbf X} )|_{p\text{-}\mathrm{var};[s,t]}, \\
\end{aligned}
\end{equation}
\end{lemma}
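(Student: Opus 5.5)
The plan is to view $I^{\mathbf X}$ and $\bar I^{\bar{\mathbf X}}$ as rough stochastic integrals of the RSMs $(H(Y),DH(Y)Y')$ and $(\bar H(\bar Y),D\bar H(\bar Y)\bar Y')$, which are RSMs by Lemma~\ref{lem:int of RSM} with $Y=\bar Y$. We then apply Proposition~\ref{prop:estim of integral'} to this pair. This reduces \eqref{e:R^Gamma <= R^H(Y)} to bounding four groups of quantities:
\begin{itemize}
\item the composed remainders $P^{H(Y)}-P^{\bar H(\bar Y)}$;
\item the Gubinelli derivatives $DH(Y)Y'-D\bar H(\bar Y)\bar Y'$, in $\mathcal L^\infty$ and in $\|\|\delta\bm\cdot\|_{q'\text{-}\mathrm{var}}\|_{\infty;k,\infty}$;
\item the martingale parts $M^{H(Y)}-M^{\bar H(\bar Y)}$ in BMO;
\item the ``diagonal'' quantities $\thicknm{(H(Y),H(Y)')}$, which multiply $|\delta(\mathbf X-\bar{\mathbf X})|_{p\text{-}\mathrm{var}}$.
\end{itemize}
The diagonal quantities are bounded by a constant depending on $(k,\gamma,\Theta_1,C)$ using Lemma~\ref{lem:int of RSM'} (with $\bar Y=Y$, $G=H$) and \eqref{e:upper bound uni in n}. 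This gives the last term $|\delta(\mathbf X-\bar{\mathbf X})|_{p\text{-}\mathrm{var}}$.

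For the difference terms, we split $H(Y)-\bar H(\bar Y)=(H(Y)-H(\bar Y))+(H-\bar H)(\bar Y)$.
\begin{itemize}
\item The second piece is the composition of $\bar Y$ with $H-\bar H\in C^{\gamma-1}_b$. Its RSM seminorm is linear in $\|H-\bar H\|_{C^{\gamma-1}_b}$, by the argument of Lemma~\ref{lem:int of RSM'} applied to $G=H-\bar H$ (the bounds there are linear in $C_G$).
\item For the first piece, the remainder difference comes from Proposition~\ref{prop:R^G - R^G} with $G=H$. The hypothesis $q\le q'(\gamma-2)$ follows from \ref{(gamma)}.
\item The quantities $L_1,\dots,L_4$ appearing there are controlled via Lemma~\ref{lem:Y <= R^Y'}. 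Estimate \eqref{e:Y - Y q-var infty infty} bounds $\|Y-\bar Y\|_{\mathcal L^\infty}$ and $\|\|\delta(Y-\bar Y)\|_{q\text{-}\mathrm{var}}\|_{\infty;1,\infty}$ by $\|Y_t-\bar Y_t\|_\infty+\thicknm{Y,Y';\bar Y,\bar Y'}+|\delta(\mathbf X-\bar{\mathbf X})|$. Estimate \eqref{e:Y q-var infty} bounds the diagonal factors by constants.
\item The martingale difference is $\int DH(Y)\,d(M^Y-M^{\bar Y})+\int (DH(Y)-DH(\bar Y))\,dM^{\bar Y}$. Its BMO norm is at most $\|DH\|_\infty\|\delta\hat M\|_{\mathrm{BMO}}+\|D^2H\|_\infty\|Y-\bar Y\|_{\mathcal L^\infty}\|\delta M^{\bar Y}\|_{\mathrm{BMO}}$.
\item The derivative difference is handled by the elementary product estimate $DH(Y)Y'-DH(\bar Y)\bar Y'=DH(Y)(Y'-\bar Y')+(DH(Y)-DH(\bar Y))\bar Y'$. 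Its $q'$-variation part uses Lemma~\ref{lem:gamma - 2}, as in the proof of Proposition~\ref{prop:R^G - R^G}, together with Lemma~\ref{lem:BMO of q-var} to pass from $(1,\infty)$ to $(k,\infty)$ norms.
\end{itemize}

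Feeding everything into \eqref{e:esti for R - R 1}, every term carrying the difference data is multiplied by $\|\delta\bar{\mathrm X}\|_{p\text{-}\mathrm{var};[s,t]}$ or $\|\bar{\mathbb X}\|_{p/2\text{-}\mathrm{var};[s,t]}$. These are bounded by $|\delta\mathbf X|_{p\text{-}\mathrm{var};[s,t]}+|\delta(\mathbf X-\bar{\mathbf X})|_{p\text{-}\mathrm{var};[s,t]}$, and the cross term times bounded constants is absorbed into the final term. In the same places, $\|\mathbb X\|_{p/2}$ is replaced by $|\delta\mathbf X|_{p\text{-}\mathrm{var}}$ up to constants depending on $R_1$. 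The $p$-variation of $\delta(I^{\mathbf X}-\bar I^{\bar{\mathbf X}})$ then follows from the identity
\[
\delta(I-\bar I)_{u,v}=(H(Y_u)-\bar H(\bar Y_u))\delta\bar{\mathrm X}_{u,v}+H(Y_u)\delta(\mathrm X-\bar{\mathrm X})_{u,v}+(R^I-R^{\bar I})_{u,v}.
\]
Here $\|H(Y)-\bar H(\bar Y)\|_\infty\le\|DH\|_\infty\|Y-\bar Y\|_{\mathcal L^\infty}+\|H-\bar H\|_\infty$, and again Lemma~\ref{lem:Y <= R^Y'} applies. Since $p\ge q''/2$ may fail in general, I expect to use the $p/2\le p$ embedding only for $R^I$ and to rely on the pathwise bound for the other two terms.

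The main obstacle is the $\int(DH(Y)-DH(\bar Y))\,dM^{\bar Y}$-type term and the $q'$-variation of $DH(Y)-DH(\bar Y)$ when $H\in C^\gamma_b$ with only $\gamma-2\in(0,1]$ Hölder regularity of $D^2H$. In that regime the difference costs a power $\gamma-2$ of the increments times $\|Y-\bar Y\|_\infty$, so one must check that the exponents stay consistent with \ref{(gamma)}. This is exactly what Proposition~\ref{prop:R^G - R^G} and Lemma~\ref{lem:gamma - 2} were designed for. The integrability loss there, from $(2k,\infty)$ to $(1,\infty)$ norms, is repaired by Lemma~\ref{lem:BMO of q-var}, so I would lean on those results rather than redo the estimates.
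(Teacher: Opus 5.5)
Your proposal is correct and follows essentially the paper's own proof. You reduce via Proposition~\ref{prop:estim of integral'} to the difference seminorm of the composed integrands, split off $(H-\bar H)(\bar Y)$ and bound it with Lemma~\ref{lem:int of RSM'}, and handle $P^{H(Y)}-P^{H(\bar Y)}$ by Proposition~\ref{prop:R^G - R^G}. The martingale and Gubinelli-derivative differences are treated by the same BMO and Lemma~\ref{lem:gamma - 2} estimates, everything is converted through Lemma~\ref{lem:Y <= R^Y'}, and the proof closes with the increment identity for $\delta(I^{\mathbf X}-\bar I^{\bar{\mathbf X}})$; your explicit treatment of the cross terms via $\|\delta\bar{\mathrm X}\|\le\|\delta\mathrm X\|+\|\delta(\mathrm X-\bar{\mathrm X})\|$ and the diagonal bounds makes explicit a step the paper leaves implicit.

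Two small remarks. Your worry that $p\ge q''/2$ may fail is unfounded, since \ref{(p,q,q')} gives $q''/2<p/(p-1)<2<p$. Also, when applying Lemma~\ref{lem:int of RSM'} to $G=H-\bar H\in C^{\gamma-1}_b$, you should regard the $(p,p)$-RSMs as $(p,pq'/q)$-RSMs so that $\gamma-2\ge p/p'$ holds, as the paper does.
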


\begin{proof}
Without loss of generality, we assume $\gamma \le 3$. By Proposition~\ref{prop:estim of integral'}, we have 
\begin{equation}\label{e:R^Gamma <= R^H(Y)'}
\begin{aligned}
\big\| \|R^{I^{\mathbf X}} - R^{\bar I^{\bar{\mathbf X}}}\|_{\frac{p}{2}\text{-}\mathrm{var}} \big\|_{\infty;k,\infty;[s,t]}
&\lesssim_{\Theta_1} |\delta (\mathbf X - \bar{\mathbf X}) |_{p\text{-}\mathrm{var};[s,t]}\\
&\quad + |\delta \mathbf X|_{p\text{-}\mathrm{var};[s,t]}\, \thicknm{H(Y),H(Y)'; \bar H(\bar Y), \bar H (\bar Y)'}_{\mathrm X,\bar{\mathrm X};q,q';k,\infty;[s,t]}, 
\end{aligned}
\end{equation}
where
\begin{equation*}
H(Y)' := DH(Y) Y'\ \text{ and }\   \bar H(\bar Y)' := D \bar H(\bar Y)  \bar Y'.
\end{equation*}
Thus, in order to prove \eqref{e:R^Gamma <= R^H(Y)}, it suffices to estimate  $\thicknm{H(Y),H(Y)'; \bar H(\bar Y), \bar H(\bar Y)'}_{\mathrm X,\bar{\mathrm X};q,q';k,\infty}$, which will be split into three steps. 

{\bf Step 1.} In this step, we estimate $P^{H(Y)} - P^{ \bar H(\bar Y)}$. We first provide some upper bounds for both the uniform norm and the $\| \|\bm\cdot\|_{q\text{-}\mathrm{var}} \|_{\infty;k,\infty}$ norm of $Y - \bar Y$, $Y$, and $\bar Y$. By  Lemma~\ref{lem:Y <= R^Y'} we have 
\begin{equation}\label{e:Y <= R^Y}
\begin{aligned}
&\|Y - \bar Y\|_{\mathcal L^{\infty}([s,t] \times \Omega)} + \big\| \|\delta (Y - \bar Y)\|_{q\text{-}\mathrm{var}} \big\|_{\infty;k,\infty;[s,t]} \\
& \lesssim_{\Theta_{1}} \|Y_t - \bar Y_t\|_{\infty} + \thicknm{Y ,Y'; \bar Y, \bar Y'}_{\mathrm X,\bar{\mathrm X};q,q';k,\infty;[s,t]} + |\delta (\mathbf X - \bar{\mathbf X}) |_{p\text{-}\mathrm{var};[s,t]}. 
\end{aligned}
\end{equation}
Moreover, since $\|Y\|_{\mathcal L^{\infty}([s,t] \times \Omega)} + \, \| \|\delta Y\|_{q\text{-}\mathrm{var}} \|_{\infty;k,\infty;[s,t]}$ is dominated by $\|Y_t\|_{\infty} + \thicknm{(Y,Y')}_{\mathrm X;q,q';k,\infty;[s,t]}$, by upper bound given by \eqref{e:upper bound uni in n}, we have
\begin{equation}\label{e:Y^n 1/q <= (Y^n,Y^n')}
\begin{aligned}
&\|Y\|_{\mathcal L^{\infty}([s,t] \times \Omega)} + \|\bar Y\|_{\mathcal L^{\infty}([s,t] \times \Omega)} +  \big\| \|\delta Y\|_{q\text{-}\mathrm{var}} \big\|_{\infty;k,\infty;[s,t]} +  \big\| \|\delta \bar Y\|_{q\text{-}\mathrm{var};[s,t]} \big\|_{\infty;k,\infty;[s,t]} \\
&\lesssim_{\Theta_{1}} \|Y_t\|_{\infty} + \|\bar Y_t\|_{\infty} +  \thicknm{(Y,Y')}_{\mathrm X;q,q';k,\infty;[s,t]} + \thicknm{(\bar Y,\bar Y')}_{\bar{\mathrm X};q,q';k,\infty;[s,t]}\lesssim_{\Theta_{1},C} 1.
\end{aligned}
\end{equation}

Now we estimate $P^{H(Y)} - P^{H(\bar Y)}$  and $P^{H(\bar Y)} - P^{\bar H(\bar Y)}$, respectively. By Proposition~\ref{prop:R^G - R^G} (noting $\gamma \le 3$) and the inequality $\big\| \|\delta Y\|_{q\text{-}\mathrm{var}} \big\|_{\infty;k,\infty} \le 1 + \big\| \|\delta Y\|_{q\text{-}\mathrm{var}} \big\|^{\gamma - 1}_{\infty;k,\infty}$, noting $\frac{2}{q''} \le \max\{\frac{\gamma - 1}{q}, \frac{2}{q}\}$, 
\begin{align}\nonumber 
&\big\| \|P^{H(Y)} - P^{H(\bar Y)}\|_{\frac{q''}{2}\text{-}\mathrm{var}} \big\|_{\infty;k,\infty;[s,t]} \lesssim_{\gamma,\Theta_1} \\ \nonumber 
&\quad \Big( 1 + \big\| \|\delta Y\|_{q\text{-}\mathrm{var}} \big\|^{\gamma - 1}_{\infty;k,\infty;[s,t]} + \big\| \|\delta \bar Y\|_{q\text{-}\mathrm{var}} \big\|^{\gamma - 1}_{\infty;k,\infty;[s,t]} + \big\| \|P^{\bar Y}\|_{\frac{q''}{2}\text{-}\mathrm{var}} \big\|_{\infty;k,\infty;[s,t]} + \|M^{\bar Y}\|_{\mathrm{BMO};[s,t]} \\ \label{e:R^H - R^H Ft} 
&\quad + \big\| \|\delta \bar Y\|_{q\text{-}\mathrm{var}} \big\|^{\gamma - 2}_{\infty;k,\infty;[s,t]} \|M^{\bar Y}\|_{\mathrm{BMO};[s,t]} \Big) \times \Big( \|Y - \bar Y\|_{\mathcal L^{\infty}([s,t] \times \Omega)}    \\ \nonumber 
&\quad + \big\| \|\delta (Y - \bar Y)\|_{q\text{-}\mathrm{var}} \big\|_{\infty;k,\infty;[s,t]} + \big\| \|P^Y - P^{\bar Y}\|_{\frac{q''}{2}\text{-}\mathrm{var};[s,t]} \big\|_{\infty;k,\infty;[s,t]} + \|\delta (M^Y - M^{\bar Y})\|_{\mathrm{BMO};[s,t]} \Big).
\end{align}
Then, by \eqref{e:Y^n 1/q <= (Y^n,Y^n')} and \eqref{e:R^H - R^H Ft}, it follows that 
\begin{align}\label{e:first estim of R^H - R^H}
&\big\| \|P^{H(Y)} - P^{H(\bar Y)}\|_{\frac{q''}{2}\text{-}\mathrm{var}} \big\|_{\infty;k,\infty;[s,t]}  
\lesssim_{\gamma,\Theta_{1},C} \|Y - \bar Y\|_{\mathcal L^{\infty}([s,t] \times \Omega)} \\ \nonumber 
&\ + \big\| \|\delta (Y - \bar Y)\|_{q\text{-}\mathrm{var}} \big\|_{\infty;k,\infty;[s,t]} + \big\| \|P^Y - P^{\bar Y}\|_{\frac{q''}{2}\text{-}\mathrm{var}} \big\|_{\infty;k,\infty;[s,t]} + \|\delta (M^Y - M^{\bar Y})\|_{\mathrm{BMO};[s,t]}. 
\end{align}
Therefore, combining \eqref{e:Y <= R^Y} and \eqref{e:first estim of R^H - R^H}, we get 
\begin{equation}\label{e:R^H - R^H Ft'}
\text{LHS of \eqref{e:first estim of R^H - R^H} } \lesssim_{\gamma,\Theta_1,C} \|Y_t - \bar Y_t\|_{\infty} + \thicknm{Y ,Y'; \bar Y, \bar Y'}_{\mathrm X,\bar{\mathrm X};q,q';k,\infty;[s,t]} + |\delta (\mathbf X - \bar{\mathbf X} )|_{p\text{-}\mathrm{var};[s,t]}.
\end{equation}
Denote
$
\hat H := H - \bar H\ \text{ and }\ \hat H(\bar Y)' := D\hat H(\bar Y) \bar Y'.
$
Then, Lemma~\ref{lem:int of RSM'} (letting $p' = q'p/q$ therein so that $\gamma - 2 \ge p/p'$) yields, in view of \eqref{e:upper bound uni in n}, 
\begin{equation}\label{e:H - H^star}
\big\| \|P^{\hat H(\bar Y)}\|_{\frac{q''}{2}\text{-}\mathrm{var}} \big\|_{\infty;k,\infty;[s,t]} \lesssim_{\gamma,\Theta_{1},C}  \|\hat H\|_{C^{\gamma - 1}_b}. 
\end{equation}

Thus, by \eqref{e:R^H - R^H Ft'} and \eqref{e:H - H^star} we get, noting $P^{H(Y)} - P^{\bar H(\bar Y)} = P^{H(Y)} - P^{H(\bar Y)} + P^{\hat H(\bar Y)}$, 
\begin{equation}\label{e:R^H - R^H Ft''}
\begin{aligned}
&\big\| \|P^{H(Y)} - P^{H(\bar Y)}\|_{\frac{q''}{2}\text{-}\mathrm{var}} \big\|_{\infty;k,\infty;[s,t]} + \big\| \|P^{\hat H(\bar Y)}\|_{\frac{q''}{2}\text{-}\mathrm{var}} \big\|_{\infty;k,\infty;[s,t]}  \\
&\lesssim_{\gamma,\Theta_1,C} \|Y_t - \bar Y_t\|_{\infty} + \thicknm{Y ,Y'; \bar Y, \bar Y'}_{\mathrm X,\bar{\mathrm X};q,q';k,\infty;[s,t]} + |\delta (\mathbf X - \bar{\mathbf X} )|_{p\text{-}\mathrm{var};[s,t]} + \|\hat H\|_{C^{\gamma - 1}_b}. 
\end{aligned}
\end{equation}

\

{\bf Step 2.} In this step,  we estimate $M^{H(Y)} - M^{\bar H(\bar Y)}$.  Note that for all $(u,v) \in \Delta_{[s,t]}$, 
\begin{equation*}
\delta (M^{H(Y)} - M^{\bar H(\bar Y)})_{u,v} = \int_{u}^{v} DH(Y_r) d (M^{Y}_r - M^{\bar Y}_r) + \int_{u}^{v} [DH(Y_r) - DH(\bar Y_r) + D\hat H (\bar Y_r)] dM^{\bar Y}_r. 
\end{equation*}
Then, it follows that  
\begin{equation}\label{e:MHY <= MY}
\begin{aligned} 
&\big\| \delta \big\langle M^{H(Y)} - M^{\bar H(\bar Y)} \big\rangle^{\frac{1}{2}}_{u,v} \big| \mathcal F_{u} \big\|_{2} \\ 
&= \Big\| \delta \Big\langle \int_s^{\bm\cdot} DH(Y_r) d (M^{Y}_r - M^{\bar Y}_r) + \int_0^{\bm\cdot} [DH(Y_r) - DH(\bar Y_r) + D\hat H(\bar Y_r)] dM^{\bar Y}_r \Big\rangle^{\frac{1}{2}}_{u,v} \Big| \mathcal F_{u} \Big\|_{2}\\ 
&\le \|DH\|_{\infty} \big\| \delta \big\langle M^{Y} - M^{\bar Y} \big\rangle^{\frac{1}{2}}_{u,v} \big| \mathcal F_{u}\big\|_{2} + (\|D^2 H\|_{\infty} \|Y - \bar Y\|_{\infty} + \|D\hat H\|_{\infty}) \big\| \delta \big\langle M^{\bar Y} \big\rangle_{u,v}^{\frac{1}{2}} \big| \mathcal F_{u} \big\|_2.
\end{aligned}
\end{equation}
Hence, by the boundedness of $D H$ and $D^2 H$, \eqref{e:upper bound uni in n}, and \eqref{e:Y <= R^Y}, the estimate \eqref{e:MHY <= MY} yields 
\begin{equation}\label{e:M^H^n - M^H^m}
\begin{aligned}
&\|\delta (M^{H(Y)} - M^{\bar H(\bar Y)})\|_{\mathrm{BMO};[s,t]}\\ 
&\lesssim_{\Theta_1} \|\delta (M^{Y} - M^{\bar Y})\|_{\mathrm{BMO};[s,t]} + (\|Y - \bar Y\|_{\mathcal L^{\infty}([s,t] \times \Omega)} + \|D\hat H\|_{\infty}) \|\delta M^{\bar Y}\|_{\mathrm{BMO};[s,t]}\\ 
&\lesssim_{\Theta_{1},C} \|\delta (M^{Y} - M^{\bar Y})\|_{\mathrm{BMO};[s,t]} + \|Y - \bar Y\|_{\mathcal L^{\infty}([s,t] \times \Omega)} + \|D\hat H\|_{\infty}\\
&\lesssim_{\Theta_1} \|Y_t - \bar Y_t\|_{\infty} + \thicknm{Y ,Y'; \bar Y, \bar Y'}_{\mathrm X,\bar{\mathrm X};q,q';k,\infty;[s,t]} + |\delta (\mathbf X - \bar{\mathbf X} )|_{p\text{-}\mathrm{var};[s,t]} + \|D\hat H\|_{\infty}.
\end{aligned}
\end{equation}

{\bf Step 3.} In this step, we estimate $H(Y)' - \bar H(\bar Y)'$. By the triangular inequality, we have  
\begin{align}\nonumber
&\|H(Y)' - \bar H(\bar Y)'\|_{\mathcal L^{\infty}([s,t] \times \Omega)} + \big\| \|\delta (H(Y)' - \bar H(\bar Y)')\|_{q'\text{-}\mathrm{var}} \big\|_{\infty;k,\infty;[s,t]}\\ \nonumber
&\le \|(DH(Y) - DH(\bar Y) ) Y'\|_{\mathcal L^{\infty}([s,t] \times \Omega)} + \|D\hat H(\bar Y)  Y'\|_{\mathcal L^{\infty}([s,t] \times \Omega)} + \| D\bar H(\bar Y) (Y' - \bar Y')\|_{\mathcal L^{\infty}([s,t] \times \Omega)}\\ \label{e:HY - HY <= Y - Y}
&\  + \big\| \|\delta( (DH(Y) - DH(\bar Y)) Y' )\|_{q'\text{-}\mathrm{var}} \big\|_{\infty;k,\infty;[s,t]} \\ \nonumber
&\  + \big\| \|\delta ( D \hat H(\bar Y) Y' )\|_{q'\text{-}\mathrm{var}} \big\|_{\infty;k,\infty;[s,t]} + \big\| \|\delta (D \bar H(\bar Y) (Y' - \bar Y') )\|_{q'\text{-}\mathrm{var}} \big\|_{\infty;k,\infty;[s,t]}.
\end{align}
For the first three terms on the RHS of \eqref{e:HY - HY <= Y - Y}, by the boundedness of $\|D^2 H\|_{\infty} + \|D^2 \bar H\|_{\infty}$,
\begin{align}\nonumber 
&
\|(DH(Y) - DH(\bar Y) ) Y'\|_{\mathcal L^{\infty}([s,t] \times \Omega)} +  \|D\hat H(\bar Y) Y'\|_{\mathcal L^{\infty}([s,t] \times \Omega)} + \| D \bar H(\bar Y) (Y' - \bar Y')\|_{\mathcal L^{\infty}([s,t] \times \Omega)}\\ \label{e:H - H* Y uniform} 
&\le \|D^{2}H\|_{\infty} 
\|Y - \bar Y\|_{\mathcal L^{\infty}([s,t] \times \Omega)} 
\|Y'\|_{\mathcal L^{\infty}([s,t] \times \Omega)} + \|D\hat H\|_{\infty}\|Y'\|_{\mathcal L^{\infty}([s,t] \times \Omega)} \\ \nonumber 
&\quad + \|D \bar H\|_{\infty} \|(Y' - \bar Y')\|_{\mathcal L^{\infty}([s,t] \times \Omega)}.
\end{align}
For the other three terms on the RHS of \eqref{e:HY - HY <= Y - Y}, by the inequality $|\delta( a\cdot b)_{u,v}| \le |\delta a_{u,v}| \cdot |b_{u}| + |\delta b_{u,v}| \cdot |a_{v}|$ and Lemma~\ref{lem:gamma - 2}, we get for all $(u,v) \in \Delta_{[s,t]}$,
\begin{align}\nonumber 
&\big\| \| \delta \big((DH(Y) - DH(\bar Y)) Y' \big)\|_{q'\text{-}\mathrm{var};[u,v]} \big| \mathcal F_{u}\big\|_{k,\infty} \\ \nonumber 
&\quad + \big\| \|\delta \big( D\hat H(\bar Y) Y' \big) \|_{q'\text{-}\mathrm{var};[u,v]} \big| \mathcal F_{u} \big\|_{k,\infty} +  \big\| \| \delta \big( D \bar H(\bar Y) (Y' - \bar Y') \big) \big\|_{q'\text{-}\mathrm{var};[u,v]} \big| \mathcal F_{u}\big\|_{k,\infty} \\ \label{e:H - H* Y hol}
&\lesssim \|DH\|_{C^{\gamma - 1}_b} \bigg[ \Big( \big(\big\| \|\delta Y\|_{q\text{-}\mathrm{var};[u,v]} \big| \mathcal F_{u} \big\|^{\gamma - 2}_{k,\infty} + \big\| \|\delta \bar Y\|_{q\text{-}\mathrm{var};[u,v]} \big| \mathcal F_u \big\|^{\gamma - 2}_{k,\infty}\big) \|Y - \bar Y\|_{\mathcal L^{\infty}([s,t] \times \Omega)} \\ \nonumber 
&\quad + \big\| \|\delta (Y - \bar Y)\|_{q\text{-}\mathrm{var};[u,v]} \big| \mathcal F_u \big\|_{k,\infty} \Big)\|Y'\|_{\mathcal L^{\infty}([s,t] \times \Omega)} + \|Y - \bar Y\|_{\mathcal L^{\infty}([s,t] \times \Omega)} \big\| \|\delta Y'\|_{q'\text{-}\mathrm{var};[u,v]} \big| \mathcal F_u \big\|_{k,\infty} \bigg] \\ \nonumber
&\quad + \|D \hat H\|_{C^1_b} \Big(\big\| \|\delta \bar Y\|_{q'\text{-}\mathrm{var};[u,v]} \big| \mathcal F_u \big\|_{k,\infty} \|Y'\|_{\mathcal L^{\infty}([s,t] \times \Omega)} + \big\| \|\delta Y'\|_{q'\text{-}\mathrm{var};[u,v]} \big| \mathcal F_u \big\|_{k,\infty} \\ \nonumber
&\qquad \qquad \qquad \times \|\bar Y\|_{\mathcal L^{\infty}([s,t] \times \Omega)} \Big) + \|D \bar H\|_{C^1_b} \Big(\big\| \|\delta \bar Y\|_{q'\text{-}\mathrm{var};[u,v]} \big| \mathcal F_u \big\|_{k,\infty} \|Y' - \bar Y'\|_{\mathcal L^{\infty}([s,t] \times \Omega)} \\ \nonumber
&\qquad \qquad \qquad \qquad \qquad \qquad \qquad \qquad \qquad  + \big\| \|\delta (Y' - \bar Y')\|_{q'\text{-}\mathrm{var};[u,v]} \big| \mathcal F_u \big\|_{k,\infty} \|\bar Y\|_{\mathcal L^{\infty}([s,t] \times \Omega)} \Big). 
\end{align}
Then, combining \eqref{e:HY - HY <= Y - Y}, \eqref{e:H - H* Y uniform}, and \eqref{e:H - H* Y hol}, and applying Young's inequality $|a|^{\gamma-2} + |a| \le 2 + 2 |a|^{(\gamma - 2) \vee 1}$, noting that $\frac{\gamma - 2}{q} \ge \frac{1}{q'}$ and $q \le q'$, we have 
\begin{equation}\label{e:H - H* Y hol'}
\begin{aligned}
&\|H(Y)' - \bar H(\bar Y)'\|_{\mathcal L^{\infty}([s,t] \times \Omega)} + \big\| \|\delta (H(Y)' - \bar H(\bar Y)' )\|_{q'\text{-}\mathrm{var}} \big\|_{\infty;k,\infty;[s,t]} \\
&\lesssim_{\Theta_{1}} \Big(1 + \|Y_{t}\|_{\infty} + \|\bar Y_{t}\|_{\infty} + \big\| \|\delta Y\|_{q\text{-}\mathrm{var}} \big\|^{(\gamma - 2)\vee 1}_{\infty;k,\infty;[s,t]} + \big\| \|\delta \bar Y\|_{q\text{-}\mathrm{var}} \big\|^{(\gamma - 2)\vee 1}_{\infty;k,\infty;[s,t]}\Big) \\
&\quad \times \Big( 1 + \|Y'\|_{\mathcal L^{\infty}([s,t] \times \Omega)} + \big\| \|\delta Y'\|_{q'\text{-}\mathrm{var}} \big\|_{\infty;k,\infty;[s,t]} \Big)\\
&\quad \times \Big( \|Y - \bar Y\|_{\mathcal L^{\infty}([s,t] \times \Omega)} + \big\| \|\delta (Y - \bar Y)\|_{q\text{-}\mathrm{var}} \big\|_{\infty;k,\infty;[s,t]} + \|D \hat H\|_{C^{1}_{b}}\\ 
&\quad\quad + \|Y' - \bar Y'\|_{\mathcal L^{\infty}([s,t] \times \Omega)} + \big\| \| \delta(Y' - \bar Y') \|_{q'\text{-}\mathrm{var}} \big\|_{\infty;k,\infty;[s,t]} \Big)\\
&\lesssim_{\gamma,\Theta_1,C} \Big( \|Y - \bar Y\|_{\mathcal L^{\infty}([s,t] \times \Omega)} + \big\| \|\delta (Y - \bar Y)\|_{q\text{-}\mathrm{var}} \big\|_{\infty;k,\infty;[s,t]} + \|D \hat H\|_{C^{1}_{b}}\\ 
&\quad\quad + \|Y' - \bar Y'\|_{\mathcal L^{\infty}([s,t] \times \Omega)} + \big\| \| \delta(Y' - \bar Y') \|_{q'\text{-}\mathrm{var}} \big\|_{\infty;k,\infty;[s,t]} \Big)\\
&\lesssim_{\Theta_{1}} \|Y_t - \bar Y_t\|_{\infty} + \thicknm{Y ,Y'; \bar Y, \bar Y'}_{\mathrm X,\bar{\mathrm X};q,q';k,\infty;[s,t]} + |\delta ( \mathbf X - \bar{\mathbf X} )|_{p\text{-}\mathrm{var};[s,t]} + \|D\hat H\|_{C^{1}_{b}},
\end{aligned}
\end{equation}
where the second inequality is due to \eqref{e:upper bound uni in n} and \eqref{e:Y^n 1/q <= (Y^n,Y^n')}; and the last is due to \eqref{e:Y <= R^Y}.

Finally, combining \eqref{e:R^H - R^H Ft''}, \eqref{e:M^H^n - M^H^m}, and \eqref{e:H - H* Y hol'}, which were established in the above three steps, we obtain 
\begin{equation}\label{e:HY <= Y}
\begin{aligned}
&\thicknm{H(Y),H(Y)' ; \bar H(\bar Y), \bar H(\bar Y)'}_{\mathrm X,\bar{\mathrm X};q,q';k,\infty;[s,t]} \\
&\lesssim_{\gamma,\Theta_{1},C} \|Y_t - \bar Y_t\|_{\infty} + \thicknm{Y ,Y'; \bar Y, \bar Y'}_{\mathrm X,\bar{\mathrm X};q,q';k,\infty;[s,t]} + |\delta (\mathbf X - \bar{\mathbf X}) |_{p\text{-}\mathrm{var};[s,t]} + \|\hat H\|_{C^{\gamma - 1}_{b}}. 
\end{aligned}
\end{equation}
Hence, by \eqref{e:R^Gamma <= R^H(Y)'} and \eqref{e:HY <= Y}, $\| \|R^{I^{\mathbf X}} - R^{\bar I^{\bar{\mathbf X}}}\|_{\frac{p}{2}\text{-}\mathrm{var}} \|_{\infty;k,\infty;[s,t]}$ is bounded by the RHS of \eqref{e:R^Gamma <= R^H(Y)}. 

\

To conclude the proof, we prove the estimate~\eqref{e:R^Gamma <= R^H(Y)}. Denote $(H(Y)\delta \mathrm X)_{u,v} := H(Y_u)\delta \mathrm X_{u,v}$ and $(\bar H(\bar Y)\delta \bar{\mathrm X})_{u,v} := \bar H(\bar Y_u) \delta \bar{\mathrm X}_{u,v}$. Note that 
\begin{equation}\label{e:Gubinelli deri of Gamma}
\begin{aligned}
&\big\| \|H(Y)\delta \mathrm X - \bar H(\bar Y)\delta \bar{\mathrm X} \|_{p\text{-}\mathrm{var}} \big\|_{\infty;k,\infty;[s,t]} \\ 
&\le |\delta \bar{\mathbf X} |_{p\text{-}\mathrm{var};[s,t]} \|H(Y) - \bar H(\bar Y)\|_{\mathcal L^{\infty}([s,t] \times \Omega)} + |\delta (\mathbf X - \bar{\mathbf X}) |_{p\text{-}\mathrm{var};[s,t]} \|H(Y)\|_{\mathcal L^{\infty}([s,t] \times \Omega)} \\ 
&\lesssim_{\Theta_1} |\delta \bar{\mathbf X} |_{p\text{-}\mathrm{var};[s,t]} \Big( \|H(Y_t) - \bar H(\bar Y_t)\|_{\infty} + \thicknm{H(Y),H(Y)'; \bar H(\bar Y), \bar H(\bar Y)'}_{\mathrm X, \bar{\mathrm X};q,q';k,\infty;[s,t]} \Big)	\\
&\quad + |\delta (\mathbf X - \bar{\mathbf X}) |_{p\text{-}\mathrm{var};[s,t]} \\ 
&\lesssim_{\gamma,\Theta_1,C} |\delta \bar {\mathbf X} |_{p\text{-}\mathrm{var};[s,t]} \Big( \|Y_t - \bar Y_t\|_{\infty} + \thicknm{Y ,Y'; \bar Y, \bar Y'}_{\mathrm X,\bar{\mathrm X};q,q';k,\infty;[s,t]} + \|\hat H\|_{C^{\gamma - 1}_{b}} \Big) \\
&\quad + |\delta (\mathbf X - \bar{\mathbf X}) |_{p\text{-}\mathrm{var};[s,t]},
\end{aligned}
\end{equation}
where the second and the last inequalities follow from the fact that 
\[\|H(Y) - \bar H(\bar Y)\|_{\mathcal L^{\infty}([s,t] \times \Omega)} \le \|H(Y_t) - \bar H(\bar Y_t)\|_{\infty} + \thicknm{H(Y),H(Y)'; \bar H(\bar Y),\bar H(\bar Y)'}_{\mathrm X,\bar{\mathrm X};q,q';k,\infty;[s,t]}\]
and \eqref{e:HY <= Y}, respectively. Furthermore, noting $\delta(I^{\mathbf X} - \bar I^{\bar{\mathbf X}})_{u,v} = (R^{I^{\mathbf X}} - R^{\bar I^{\bar{\mathbf X}}})_{u,v} + H(Y_u)\delta \mathrm X_{u,v} - \bar H(\bar Y_u)\delta \bar{\mathrm X}_{u,v}$, we have  
\begin{equation*}
\begin{aligned}
&\big\| \| \delta (I^{\mathbf X} - \bar I^{\bar{\mathbf X}}) \|_{p\text{-}\mathrm{var}} \big\|_{\infty;k,\infty;[s,t]}\\ 
&\le \big\| \|(R^{I^{\mathbf X}} - R^{\bar I^{\bar{\mathbf X}}})\|_{p\text{-}\mathrm{var}} \big\|_{\infty;k,\infty;[s,t]} +  \big\| \|H(Y) \delta \mathrm X - \bar H(\bar Y) \delta \bar{\mathrm X} \|_{p\text{-}\mathrm{var}} \big\|_{\infty;k,\infty;[s,t]}. 
\end{aligned}
\end{equation*}
Therefore, since $\| \| R^{I^{\mathbf X}} - R^{\bar I^{\bar{\mathbf X}}}\|_{\frac{p}{2}\text{-}\mathrm{var}} \|_{\infty;k,\infty;[s,t]}$ is bounded by the RHS of \eqref{e:R^Gamma <= R^H(Y)}, by  \eqref{e:Gubinelli deri of Gamma}, we obtain the desired \eqref{e:R^Gamma <= R^H(Y)}. 
\end{proof}

%--------------------

\section{Rough BSDE}\label{sec:Rough BSDE}

In this section, we establish the well-posedness of the rough BSDE~\eqref{e:BSDE}. In Section~\ref{subsec:comparison}, we establish a comparison principle for rough BSDEs. In Section~\ref{subsec:a priori estimate}, we first derive an RSM \emph{a priori} estimate for the solution, by clarifying how the \textrm{RSM} seminorm $\thicknm{\bm\cdot}_{\mathrm X;q,q';k,\infty}$, defined in \eqref{e:Psi}, depends on the \textrm{BMO} norm of $M^Y$ and the essential supremum of $Y$.  
These key estimates are subsequently used in Section~\ref{subsec:Rough BSDE with low regularity H} to study the existence and stability of solutions. 

Throughout this section, $p\in(2,3)$ is a fixed number, $p',q,q'$ are numbers such that $p \le p'$, $p \le q$, and $q \le q'$.

We first introduce the notion of solutions to rough BSDE~\eqref{e:BSDE}. In particular, no integrability condition on the solution is required in the definition.

\begin{definition}\label{def:def of BSDEs}
Let $\mathbf X\in \mathscr{C}^{p\text{-}\mathrm{var}}([0,T],\R^d)$ and $H \in C^{2}(\R,\R^d)$. For a continuous adapted process $Y$ and a progressively measurable process $Z$, we call $(Y,Z)$ a solution to Eq.~\eqref{e:BSDE} with the parameter $(\xi,f,H,\mathbf X)$, if $(Y,-H(Y)) \in \mathrm{RSM}^{(p,p')\text{-}\mathrm{var}}_{\mathrm X}([0,T],\R)$ with some $(p,p')$ satisfying \ref{(p,p')}, $\int_{0}^{T} |Z_r|^2 dr < \infty$ a.s., and the equation~\eqref{e:BSDE} holds a.s.  
\end{definition}

We note that $M^Y_t = \int_0^t Z_r dW_r$. This relation is in force throughout this section, as follows from Remark~\ref{rem:given (Y,Y') in RSm}.

We also note that if $(Y,Z)$ is a solution to Eq.~\eqref{e:BSDE} with some $p'\ge p$, the regularity of the solution can be improved to $(Y,-H(Y)) \in \mathrm{RSM}^{(p,p)\text{-}\mathrm{var}}_{\mathrm X}$. Indeed, by a standard localization argument (see, e.g., \cite[Lemma~4.7]{dause2026controlled}), one may apply Corollary~\ref{cor:int of the solution} with $G = H$, and assume that the RHS of \eqref{e:(Gamma,-H(Y))} is finite. Since $(Y,Z)$ satisfies Eq.~\eqref{e:BSDE}, it holds that    
\begin{equation*}
\delta Y_{s,t} = -\delta \Lambda_{s,t} - \int_{s}^{t} f(r,Y_r,Z_r) dr + \delta M^{Y}_{s,t}, \quad P^{Y}_{s,t} = -R^{\Lambda}_{s,t} - \int_{s}^{t} f(r,Y_r,Z_r) dr.
\end{equation*}
Note that by \eqref{e:(Gamma,-H(Y))} the processes $\Lambda$ and $R^{\Lambda}$ have finite $p$-variation and finite $p/2$-variation, respectively. Thus, $Y$ and $H(Y)$ have finite $p$-variation, and the pathwise remainder $P^Y$ has finite $p/2$-variation. 
This implies that $(Y,-H(Y))$ belongs to $\mathrm{RSM}^{(p,p)\text{-}\mathrm{var}}_{\mathrm X}$.

\subsection{Comparison Theorem}\label{subsec:comparison}
In this subsection, we prove the comparison principle for rough BSDEs solved in the sense of Definition~\ref{def:def of BSDEs}, comparing two solutions corresponding to different terminal values and $f$-terms. As a corollary, the uniqueness of solutions is obtained.  

We first relate the norm $\|\bm\cdot| \mathcal F_s\|_{1,\infty}$ to the $\mathrm{VMO}^{p\text{-}\mathrm{var}}$ process. The John--Nirenberg inequality for $\mathrm{VMO}^{p\text{-}\mathrm{var}}$ processes, proved in \cite{le2022quantitative}, is essential for the proof of the comparison.   

\begin{lemma}\label{lem:cond for VMO process}
Assume $S$ is an adapted continuous process. For $p \ge 1$ and a control function $w$, assume further that $\| \delta S_{s,t} | \mathcal F_{s} \|_{1,\infty} \le w(s,t)^{1/p}$ for all $(s,t) \in \Delta$. 

Then, $S$ is a $\mathrm{VMO}^{p\text{-}\mathrm{var}}$ process, and (recall $\varrho(\bm\cdot)$ is the modulus of mean oscillation)
\begin{equation*}
\|\varrho(S)\|_{p\text{-}\mathrm{var};[0,T]} \le 2 w(0,T)^{\frac{1}{p}}. 
\end{equation*}
\end{lemma}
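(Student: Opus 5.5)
Since $S$ is continuous, the remark after the definition of BMO-processes (citing \cite[Proposition~2.2]{le2022quantitative}) lets us compute $\varrho_{s,t}(S)$ with deterministic times only, and $S_{\tau-}=S_\tau$. So the task reduces to bounding, for $s\le u\le v\le t$, the quantity $\|S_v-S_u|\mathcal F_u\|_{1,\infty}=\|\delta S_{u,v}|\mathcal F_u\|_{1,\infty}$. The hypothesis gives this directly: $\|\delta S_{u,v}|\mathcal F_u\|_{1,\infty}\le w(u,v)^{1/p}\le w(s,t)^{1/p}$, using the superadditivity of the control $w$ (which makes it monotone under inclusion of intervals). Hence
\[
\varrho_{s,t}(S)\le w(s,t)^{1/p}\quad\text{for all }(s,t)\in\Delta .
\]
I keep the factor $2$ from the statement as slack. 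It would be needed if one insisted on working with genuine stopping times: there one controls $S_{\tau'}-S_\tau$ through $\E_\tau$ of increments over deterministic grids, or compares to $S_s$ via $|S_{\tau'}-S_{\tau}|\le|S_{\tau'}-S_s|+|S_\tau-S_s|$, and this is where a factor $2$ naturally appears. I would write that version as a safeguard in case the deterministic-time reduction needs justification.

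Next, the BMO and VMO properties. Taking $(s,t)=(0,T)$ gives $\varrho_{0,T}(S)\le w(0,T)^{1/p}<\infty$, so $S$ is a BMO-process. Since $w$ is continuous on the compact set $\Delta$ with $w(s,s)=0$, it is uniformly continuous. Therefore $\sup_{t-s\le h}w(s,t)\to0$ as $h\downarrow0$, and so $\sup_{t-s\le h}\varrho_{s,t}(S)\to0$, which is the VMO property.

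Finally, the $p$-variation bound. For any partition $\pi$ of $[0,T]$, the bound on $\varrho$ and superadditivity give
\[
\sum_{[s,t]\in\pi}\varrho_{s,t}(S)^p\le\sum_{[s,t]\in\pi}w(s,t)\le w(0,T).
\]
Taking the supremum over $\pi$ yields $\|\varrho(S)\|_{p\text{-var};[0,T]}\le w(0,T)^{1/p}\le 2w(0,T)^{1/p}$.

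The only delicate step is the reduction to deterministic times, which relies on the cited proposition; everything else is routine.
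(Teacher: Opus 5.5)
Your first step fails. You treat the reduction to deterministic times as an identity,
$\varrho_{s,t}(S)=\sup_{s\le u\le v\le t}\|\delta S_{u,v}|\mathcal F_u\|_{1,\infty}$.
For continuous $S$ this holds only up to a factor $2$. What the paper's proof actually takes from \cite[Proposition~2.2]{le2022quantitative} is the inequality $\varrho_{s,t}(S)\le 2\sup_{u\in[s,t]}\|\delta S_{u,t}|\mathcal F_u\|_{1,\infty}$.

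Equality genuinely fails. On $[0,2]$, take $S_v:=W^1_{v\wedge 1}+(v-1)^+$. A direct computation gives $\sup_{u\le v}\|\delta S_{u,v}|\mathcal F_u\|_{1,\infty}=\E|W^1_1+1|=:m$, attained at $(u,v)=(0,2)$. Now take $\tau:=0$, and the stopping time $\tau':=1$ on $\{W^1_1<-\tfrac12\}$, $\tau':=2$ otherwise. This gives $\|S_{\tau'}-S_0|\mathcal F_0\|_{1,\infty}=\E\max(|W^1_1|,|W^1_1+1|)>m$.

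Since $\|\delta S_{u,v}|\mathcal F_u\|_{1,\infty}\le c\sqrt{v-u}$ for some constant $c$, the control $w(u,v):=m^{p-2}c^2(v-u)$ satisfies the hypothesis for $p\ge 2$. Moreover $w(0,2)^{1/p}\to m$ as $p\to\infty$. So your intermediate bound $\varrho_{s,t}(S)\le w(s,t)^{1/p}$, and your final sharper bound, are false in general. The factor $2$ in the statement is not slack.

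The repair is to compare with the \emph{right} endpoint. For stopping times $s\le\tau\le\tau'\le t$, write $|S_{\tau'}-S_\tau|\le|S_t-S_\tau|+|S_t-S_{\tau'}|$, and use the tower property $\E_\tau|S_t-S_{\tau'}|=\E_\tau\big[\E_{\tau'}|S_t-S_{\tau'}|\big]$. Approximate a stopping time $\sigma\in[s,t]$ from above by finitely-valued ones, using continuity of $S$ and conditional Fatou. This shows $\|\E_\sigma|S_t-S_\sigma|\|_\infty\le\sup_{u\in[s,t]}\|\delta S_{u,t}|\mathcal F_u\|_{1,\infty}\le w(s,t)^{1/p}$. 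Hence $\varrho_{s,t}(S)\le 2w(s,t)^{1/p}$.

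Your proposed safeguard, comparing with the left endpoint $S_s$, does not work. The term $\E_\tau|S_\tau-S_s|=|S_\tau-S_s|$ is not bounded in $L^\infty$ by the hypothesis; this already fails for $S=W^1$.

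Once $\varrho_{s,t}(S)\le 2w(s,t)^{1/p}$ is established, your remaining steps go through unchanged: VMO from uniform continuity of $w$, and the partition sum via superadditivity. They then reproduce the paper's argument, and your explicit VMO check is a small addition the paper leaves implicit.
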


\begin{proof}
By \cite[Proposition~2.2]{le2022quantitative}, we have $\varrho_{s,t}(S) \le 2 \sup_{u\in[s,t]} \| \delta S_{u,t} | \mathcal F_{u}\|_{1,\infty}$. Consequently, 
\begin{equation*}
\|\varrho(S)\|_{p\text{-}\mathrm{var}} \le 2 \sup_{\pi\in \mathcal P_{[0,T]}} \Big\{ \sum_{[s,t]\in\pi} \sup_{u\in[s,t]} \|\delta S_{u,t} | \mathcal F_{u}\|^{p}_{1,\infty} \Big\}^{\frac{1}{p}} \le 2 \sup_{\pi\in \mathcal P_{[0,T]}} \Big\{ \sum_{[s,t]\in\pi} w(s,t) \Big\}^{\frac{1}{p}} \le 2 w(0,T)^{\frac{1}{p}},
\end{equation*}
which completes the proof. 
\end{proof}

\begin{theorem}[Comparison theorem]\label{thm:comparison}
Assume \ref{(gamma)} holds, $H:\R \to \R^d$ is a function such that $DH\in C^{\gamma - 1}_{b}(\R,\R^d)$, and assume $\mathbf X \in \mathscr{C}^{p\text{-}\mathrm{var}}([0,T],\R^d)$. Let $\xi$ and $\overline \xi$ be two terminal values belonging to $L^{\infty}$, and let $f$ and $\overline f$ be two functions satisfying \ref{(f)}. Suppose $(Y,Z)$ and $(\overline Y, \overline Z)$ are solutions to Eq.~\eqref{e:BSDE} with terminal values $\xi$ and $\overline \xi$ and generators $f$ and $\overline{f}$, respectively, such that 
\begin{equation*}
\thicknm{(Y,-H(Y))}_{\mathrm X;q,q';1,\infty} +\thicknm{(\overline Y,-H(\overline Y))}_{\mathrm X;q,q';1,\infty} < \infty.
\end{equation*}
Then, if $\xi \le \overline \xi$ and $f(r,\overline Y_r,\overline Z_r) \le \overline{f}(r,\overline Y_r,\overline Z_r)$ a.s. for every $r \in [t,T]$, we have $Y_{t} \le \overline Y_{t}$ a.s. 
Moreover, if $\xi < \overline \xi$ a.s., then the comparison is strict; namely, $Y_t < \overline Y_t$ a.s.   
\end{theorem}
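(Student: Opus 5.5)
We linearize the difference and solve the resulting linear rough BSDE through an exponential representation. Set $\hat Y:=\overline Y-Y$, $\hat Z:=\overline Z-Z$, $\hat\xi:=\overline\xi-\xi\ge0$. Write
\[
H(\overline Y_r)-H(Y_r)=\alpha_r\hat Y_r,\qquad \alpha_r:=\int_0^1 DH\big((1-\lambda)Y_r+\lambda\overline Y_r\big)\,d\lambda .
\]
By Lemma~\ref{lem:int of RSM} with $G=DH\in C^{\gamma-1}_b$, the pair $(\alpha,\alpha')$, with $\alpha'$ given by \eqref{e:def of alpha} and $Y'=-H(Y)$, $\overline Y'=-H(\overline Y)$, is an RSM. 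Its martingale part is \eqref{e:rep of M^alpha}. Lemma~\ref{lem:int of RSM'} bounds $\thicknm{(\alpha,\alpha')}_{\mathrm X;q,q';1,\infty}$ by the assumed finite seminorms of $Y$ and $\overline Y$.

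Next we decompose the driver difference in the standard way:
\[
\overline f(r,\overline Y_r,\overline Z_r)-f(r,Y_r,Z_r)=\hat f_r+a_r\hat Y_r+b_r\hat Z_r ,
\]
where $\hat f_r:=\overline f(r,\overline Y_r,\overline Z_r)-f(r,\overline Y_r,\overline Z_r)\ge0$ and $|a|,|b|\le C_f$ are progressive. (If $f$ depends on $(y,z)$, use $f$ evaluated at $\overline Y$ in the usual way.) Then $\hat Y$ satisfies the linear rough BSDE
\[
\hat Y_t=\hat\xi+\int_t^T(\hat f_r+a_r\hat Y_r+b_r\hat Z_r)\,dr+\int_t^T\alpha_r\hat Y_r\,d\mathbf X_r-\int_t^T\hat Z_r\,dW_r .
\]

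To solve it, we introduce the rough stochastic exponential
\[
\Gamma_t:=\exp\Big(\int_0^t\alpha_r\,d\mathbf X_r+\int_0^t a_r\,dr-\tfrac12\int_0^t\alpha'_r\,d[\mathbf X]_r\Big)\,\mathcal E\Big(\int_0^\cdot b_r\,dW_r\Big)_t .
\]
The bracket correction is chosen so that the Itô/rough product formula for RSMs, which is available since rough integrals of RSMs are again RSMs (Remark~\ref{rem:Gamma-RSM}) and by the rough Itô formula of \cite{friz2023rough}, gives that $\Gamma\hat Y+\int_0^\cdot\Gamma_r\hat f_r\,dr$ is a local martingale. The cross term between the $\mathbf X$-parts of $\Gamma$ and $\hat Y$ must cancel, and the stochastic covariation $\alpha\cdot d\langle\cdot\rangle$ terms vanish because $\int\alpha\,d\mathbf X$ has no martingale part. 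Granted that this local martingale is a true martingale, we obtain
\[
\hat Y_t=\Gamma_t^{-1}\,\E_t\Big[\Gamma_T\hat\xi+\int_t^T\Gamma_r\hat f_r\,dr\Big]\ge0 .
\]
If moreover $\hat\xi>0$ a.s., then since $\Gamma_T/\Gamma_t>0$ a.s., the conditional expectation of a strictly positive variable is strictly positive, which gives strict comparison.

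The main obstacle is integrability: $\Gamma$ involves the exponential of the random rough integral $S_t:=\int_0^t\alpha_r\,d\mathbf X_r$, which is not bounded. For this we use the John--Nirenberg inequality. By \eqref{e:Psi alpha'} of Lemma~\ref{lem:int of RSM'},
\[
\|\delta S_{s,t}|\mathcal F_s\|_{1,\infty}\lesssim|\delta\mathbf X|_{p\text{-}\mathrm{var};[s,t]} .
\]
The right-hand side is a control to the power $1/p$, so Lemma~\ref{lem:cond for VMO process} shows that $S$ is $\mathrm{VMO}^{p\text{-}\mathrm{var}}$. The John--Nirenberg inequality for such processes from \cite{le2022quantitative} then yields $\E_t[\exp(\lambda\sup_{r\in[t,T]}|\delta S_{t,r}|)]\le C_\lambda$ for every $\lambda>0$. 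The terms $\int a\,dr$ and $\int\alpha'\,d[\mathbf X]$ are bounded, and $\mathcal E(\int b\,dW)$ has all moments. Together with the boundedness of $\hat Y$ (from Lemma~\ref{lem:Y <= R^Y'}) and the BMO property of $\hat M=\int\hat Z\,dW$, this gives, via a BDG argument on the local martingale, that it is a true martingale. We will justify the local-martingale claim first by localization and then pass to the limit by dominated convergence using these exponential moments.
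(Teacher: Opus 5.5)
Your route is the paper's: linearize with the averaged $DH$, transform by an exponential, obtain integrability from John--Nirenberg for $\mathrm{VMO}^{p\text{-var}}$ processes, then take a conditional expectation and use positivity. However, two steps are wrong as written.

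\textbf{The bracket correction in $\Gamma$ has the wrong form.} Write $\Gamma=e^{S}$, so $\Gamma'=\Gamma\alpha$, and note $\hat Y'=-\alpha\hat Y$. The $d[\mathbf X]$-terms in $d(\Gamma\hat Y)$ come from three sources:
\begin{itemize}
\item the rough Itô correction of $e^{S}$, namely $\tfrac12\Gamma\,\alpha\alpha^\top d[\mathbf X]$;
\item your correction term, contributing $-\tfrac12\Gamma\,\alpha' d[\mathbf X]$;
\item the product cross term $\Gamma'\hat Y'\,d[\mathbf X]=-\Gamma\hat Y\,\alpha\alpha^\top d[\mathbf X]$.
\end{itemize}
Together they give
\[
-\tfrac12\,\Gamma_r\hat Y_r\,\mathrm{tr}\big((\alpha'_r+\alpha_r\alpha_r^\top)\,d[\mathbf X]_r\big),
\]
which does not vanish in general. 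For instance, at $Y=\overline Y$ with $d=1$ the coefficient is $DH(Y)^2-D^2H(Y)H(Y)$, which is identically $1$ for $H=\sin$. So for non-geometric $\mathbf X$, the process $\Gamma\hat Y+\int\Gamma\hat f\,dr$ is not a local martingale. The cancellation you describe forces the correction $+\tfrac12\,\mathrm{tr}\int\alpha\alpha^\top d[\mathbf X]$ instead. This is exactly the term in the paper's exponent $S^t_r$, to which the paper applies the rough Itô formula of \cite[Theorem~4.19]{dause2026controlled}.

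\textbf{The bracket integral is not bounded.} Whether you take $\int\alpha\alpha^\top d[\mathbf X]$ or your $\int\alpha'\,d[\mathbf X]$, it is a Young integral against $[\mathbf X]$, which is only of finite $p/2$-variation. Its size is governed by the $p$-variation of $\alpha$ (respectively the $q'$-variation of $\alpha'$). That quantity has bounded conditional moments but is not in $L^\infty$, since it inherits the martingale oscillations of $Y$ and $\overline Y$.

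It must be treated like the rough integral. The Young estimate gives $\|\int_u^v\alpha\alpha^\top d[\mathbf X]\,|\,\mathcal F_u\|_{1,\infty}\lesssim|\delta\mathbf X|_{p\text{-}\mathrm{var};[u,v]}$. Lemma~\ref{lem:cond for VMO process} then makes this term $\mathrm{VMO}^{p\text{-var}}$, and John--Nirenberg can be applied to the full exponent; this is what the paper does. Alternatively, split the exponential moments by Hölder.

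With these two repairs the rest of your argument goes through, including the BDG step. That step uses the $\sup$-form of the John--Nirenberg bound, which is slightly more careful than the paper's fixed-time $L^2$ bound on $e^{S^t_r}$.
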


\begin{remark}\label{rem:on strict comparison}
The strict comparison property is not immediate for solutions constructed via smooth approximation approaches, such as those in \cite[Theorem~3]{DF} and \cite[Definition~3.1]{liang2025multidimensional}. 
By contrast, the \emph{RSM} framework allows us to establish the strict inequality directly. 
\end{remark}

\begin{proof}
Assume $d=1$ without loss of generality. Let $R \in (0,\infty)$ be a constant such that  
\begin{equation*}
\|DH\|_{C^{\gamma - 1}_b} + C_{f} +  \thicknm{(Y,-H(Y))}_{\mathrm X;q,q';1,\infty} +\thicknm{(\overline Y,-H(\overline Y))}_{\mathrm X;q,q';1,\infty } + |\delta \mathbf X |_{p\text{-}\mathrm{var}} + T \le R.
\end{equation*}For $t\in[0,T]$, define 
\begin{equation*}
\begin{cases}\displaystyle
\alpha_{t} := \int_{0}^{1} DH(Y_t + \lambda(\overline Y_t - Y_t) ) d\lambda,\\ \displaystyle
\alpha'_t := - \int_{0}^{1} D^2 H(Y_{t} + \lambda(\overline Y_t - Y_t) ) \big( (1-\lambda) H(Y_t) + \lambda H(\overline Y_t) \big) d\lambda,\\ \displaystyle
\beta_{t} := \frac{f(t,Y_t,Z_t) - f(t,\overline Y_t,Z_t)}{Y_t - \overline Y_t}\mathbf 1_{\{Y_t - \overline Y_t \neq 0\}},\ \text{ and }\ \theta_{t} := \frac{f(t,\overline Y_t,Z_t) - f(t,\overline Y_t,\overline Z_t)}{Z_t - \overline Z_t}\mathbf 1_{\{Z_t - \overline Z_t \neq 0\}}. 
\end{cases}
\end{equation*}

One can assume $q' > q$ without loss of generality, since one can always choose a constant $\tilde q' > q'$ so that $(\gamma,p,q,\tilde q')$ still satisfies condition~\ref{(gamma)}.
According to \ref{(f)} and the condition $DH\in C^{\gamma - 1}_b$, we see that $\alpha$, $\beta$, $\theta$, and $\alpha'$ are bounded adapted processes. Furthermore, letting $p' := \frac{q'p}{q}$, then $\gamma - 1 \ge \frac{p+p'}{p'}$. Consequently, since $DH \in C^{\gamma - 1}_b(\R,\R^d)$ and $\thicknm{(Y,-H(Y))}_{\mathrm X;q,q';1,\infty} +\thicknm{(\overline Y,-H(\overline Y))}_{\mathrm X;q,q';1,\infty} \le R$, by Lemma~\ref{lem:int of RSM} we have that $(\alpha,\alpha')\in \mathrm{RSM}^{(p,p')\text{-}\mathrm{var}}_{\mathrm X}$. Moreover, it follows from \eqref{e:Psi alpha'} in Lemma~\ref{lem:int of RSM'} that for some constant $\hat C_{\gamma,R} > 0$, 
\begin{equation}\label{e:alpha <= X Psi alpha}
\Big\|\Big\|\delta \Big( \int_{u}^{\bm\cdot}\alpha_{r}d\mathbf X_r \Big) \Big\|_{p\text{-}\mathrm{var};[u,v]}\Big| \mathcal F_u \Big\|_{1,\infty} \le \hat C_{\gamma,R} |\delta \mathbf X|_{p\text{-}\mathrm{var};[u,v]} \ \text{ for all }\ (u,v) \in \Delta.
\end{equation}

Therefore, $(Y-\overline Y,Z-\overline Z)$ solves the following linear BSDE for $t\in[0,T]$: 
\begin{equation*}
\begin{aligned}
(Y-\overline Y)_{t} = \hat \xi &+ \int_{t}^{T} [\beta_{r}(Y - \overline Y)_r + \theta_{r}(Z - \overline Z)_r + \hat f_r] dr + \int_{t}^{T} \alpha_{r} (Y - \overline Y)_{r} d\mathbf X_{r} - \int_{t}^{T} (Z - \overline Z)_{r} dW_{r}, 
\end{aligned}
\end{equation*}
where $\hat \xi := \xi - \overline \xi$ and $\hat f_r := (f - \overline f)(r,\overline Y_r,\overline Z_r)$. For $(t,r) \in \Delta$, define 
\begin{equation*}
S^{t}_{r} := \int_{t}^{r} 
\Big(\beta_s - \frac{1}{2} |\theta_s|^2 \Big) ds + 
\frac{1}{2} \mathrm{tr}\Big( \int_{t}^{r} \alpha_s \alpha^{\top}_s d[\mathbf X]_s \Big)
+ \int_{t}^{r} \theta_{s} dW_{s} + \int_{t}^{r} \alpha_{s} d\mathbf X_s .
\end{equation*}
Then, an application of the rough It\^o formula in \cite[Theorem~4.19]{dause2026controlled} to $(Y_{\bm\cdot} - \overline Y_{\bm\cdot}) e^{S^{t}_{\bm\cdot}}$ yields
\begin{equation}\label{e:Ito to Y - Y}
(Y_t - \overline Y_t) = e^{S^{t}_{T}} \hat \xi + \int_{t}^{T} e^{S^t_r} \hat f_r dr - \int_{t}^{T} [ e^{S^t_r} (Z_r - \overline Z_r)  + \theta_{r} e^{S^{t}_r} (Y_r - \overline Y_r)] dW_{r}.
\end{equation}

Note $\| \int_{u}^{v}\alpha_{r} d\mathbf X_{r} | \mathcal F_{u}\|_{1,\infty} \le \|\|\delta ( \int_{u}^{\bm\cdot}\alpha_{r}d\mathbf X_r)\|_{p\text{-}\mathrm{var};[u,v]}\big| \mathcal F_u \|_{1,\infty}$ for all $(u,v) \in \Delta$. By \eqref{e:alpha <= X Psi alpha} and Lemma~\ref{lem:cond for VMO process}, $\int_{t}^{\bm\cdot}\alpha_{r}d\mathbf X_r$ is a $\mathrm{VMO}^{p\text{-}\mathrm{var}}$ process with $\| \varrho(\int_{t}^{\bm\cdot}\alpha_{r}d\mathbf X_r)\|_{p\text{-}\mathrm{var};[t,T]} \le 2\hat C_{\gamma,R} |\delta \mathbf X|_{p\text{-}\mathrm{var};[t,T]}$. 
Furthermore, according to the estimate for Young integrals shown in \cite[Theorem~6.8]{friz2010multidimensional}, it holds $\|\int_{u}^{v}\alpha_r \alpha^{\top}_r d[\mathbf X]_r | \mathcal F_{u}\|_{1,\infty} \le \tilde{C}_{\gamma,R} |\delta \mathbf X|_{p\text{-}\mathrm{var};[u,v]}$ for some constant $\tilde{C}_{\gamma,R} > 0$. Hence, by Lemma~\ref{lem:cond for VMO process} again, $\int_{t}^{\bm\cdot}\alpha_r \alpha^{\top}_r d[\mathbf X]_r$ is a $\mathrm{VMO}^{p\text{-}\mathrm{var}}$ process with $\| \varrho(\int_{t}^{\bm\cdot}\alpha_r \alpha^{\top}_r d[\mathbf X]_r)\|_{p\text{-}\mathrm{var};[t,T]} \le 2\tilde{C}_{\gamma,R} |\delta \mathbf X|_{p\text{-}\mathrm{var};[t,T]}$. 
Similarly, by the boundedness of $\beta,\theta$, one can prove that $\int_{t}^{\bm\cdot} (\beta_s - \frac{1}{2}|\theta_s|^2) ds + \int_{t}^{\bm\cdot} \theta_s dW_s$ is  a $\mathrm{VMO}^{p\text{-}\mathrm{var}}$ process, with the $p$-variation of its modulus of mean oscillation dominated by $R$. 

Consequently, $S^t_{\bm\cdot}$ is a $\mathrm{VMO}^{p\text{-}\mathrm{var}}$ process. Hence, by the John--Nirenberg inequality shown in \cite[Theorem~3.4]{le2022quantitative}, for some constant $C_{\gamma,R} > 0$ it holds that for all $r\in[t,T]$, 
$
\|e^{S^{t}_{r}}\|_{2} \le C_{\gamma,R}.
$
Therefore, by Eq.~\eqref{e:Ito to Y - Y} $\int_{t}^{\bm\cdot} [ e^{S^{t}_{r}}(Z_r - \overline Z_r) + \theta_r e^{S^{t}_{r}} (Y_r - \overline Y_r) ] dW_r$ is uniformly integrable and thus a martingale. Taking conditional expectation of \eqref{e:Ito to Y - Y}, we have 
\begin{equation*}
(Y - \overline Y)_{t} = \E_{t} \Big[e^{S^{t}_{T}} \hat \xi + \int_{t}^{T} e^{S^{t}_{r}} \hat f_r dr \Big].
\end{equation*}
This yields the (strict) comparison result, noting that $e^{S_r^t} > 0$  and $\hat f_r \ge 0$ a.s.
\end{proof}

\begin{remark}\label{rem:comparison to sup solution}
The theory of BSDEs also includes supersolutions, which correspond to dynamics of the form \eqref{e:BSDE} augmented by an additional increasing process. We refer to \cite{el1997backward} for further discussion of supersolutions of BSDEs. As for the comparison theorem, we note that Theorem~\ref{thm:comparison} extends without difficulty to the case where $\overline Y_t$ is a supersolution.
\end{remark}

\begin{corollary}\label{cor:uniqueness}
Assume \ref{(gamma)} holds,  $H:\R\to\R^d$ is a function such that $DH\in C^{\gamma - 1}_{b}(\R,\R^d)$, and $\mathbf X \in \mathscr{C}^{p\text{-}\mathrm{var}}([0,T],\R^d)$. Suppose that $\xi \in L^{\infty}$ and that $f$ satisfies \ref{(f)}. Then, Eq.~\eqref{e:BSDE} admits at most one solution $(Y,Z)$ satisfying $\thicknm{(Y,-H(Y))}_{\mathrm X;q,q';1,\infty} < \infty$.
\end{corollary}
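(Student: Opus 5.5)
The plan is to obtain uniqueness directly from the comparison theorem, Theorem~\ref{thm:comparison}, applied twice with the roles of the two solutions exchanged. Let $(Y,Z)$ and $(\overline Y,\overline Z)$ be two solutions of Eq.~\eqref{e:BSDE} with the same data $(\xi,f,H,\mathbf X)$, both satisfying $\thicknm{(Y,-H(Y))}_{\mathrm X;q,q';1,\infty}<\infty$ and $\thicknm{(\overline Y,-H(\overline Y))}_{\mathrm X;q,q';1,\infty}<\infty$. These are exactly the integrability hypotheses of Theorem~\ref{thm:comparison}. Moreover $\xi=\overline\xi\in L^\infty$, and $f=\overline f$ satisfies \ref{(f)}, so the ordering assumptions $\xi\le\overline\xi$ and $f(r,\overline Y_r,\overline Z_r)\le \overline f(r,\overline Y_r,\overline Z_r)$ hold trivially, with equality.

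The theorem therefore gives $Y_t\le \overline Y_t$ a.s. for every $t\in[0,T]$. Swapping the two solutions gives $\overline Y_t\le Y_t$, so $Y_t=\overline Y_t$ a.s. for each fixed $t$. Since both processes are continuous, we can take a countable dense set of times and conclude that $Y$ and $\overline Y$ are indistinguishable.

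Next, $Z=\overline Z$ should follow from the uniqueness of the RSM decomposition. By Remark~\ref{rem:given (Y,Y') in RSm}, which applies because $\frac1p+\frac1{p'}>\frac12$ under \ref{(p,p')}, the martingale part $M^Y$ is determined by $(Y,-H(Y))$. We also have $M^Y_t=\int_0^t Z_r\,dW_r$, as noted after Definition~\ref{def:def of BSDEs}. Hence $\int_0^\cdot (Z_r-\overline Z_r)\,dW_r\equiv 0$. Taking its quadratic variation gives $\int_0^T|Z_r-\overline Z_r|^2\,dr=0$ a.s., so $Z=\overline Z$ $dt\otimes d\mathbb P$-a.e.

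An alternative route to the same conclusion is to subtract the two equations: the $dr$ and $d\mathbf X$ terms cancel once $Y=\overline Y$. The only step needing care is making sure that the two solutions may come with different exponents $p'$. I handle this by using the remark following Definition~\ref{def:def of BSDEs}, which upgrades both solutions to $\mathrm{RSM}^{(p,p)\text{-}\mathrm{var}}_{\mathrm X}$, so the decomposition uniqueness applies with a common exponent.
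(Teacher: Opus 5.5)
Your argument is correct and is essentially the paper's. The comparison theorem applied in both directions gives $Y=\overline Y$, and your identification of $Z$ differs only in packaging: the paper subtracts the equations and uses that a continuous local martingale of finite variation is constant, while you invoke uniqueness of the RSM decomposition, which encodes the same fact. One small correction to your alternative route: once $Y=\overline Y$ the $d\mathbf X$ terms cancel but the $dr$ terms need not, since $f$ depends on $Z$. They differ by a finite-variation process, which is exactly why the paper's finite-variation argument is needed there.
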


\begin{proof}
Given any two solutions $(Y^{(1)},Z^{(1)})$ and $(Y^{(2)},Z^{(2)})$, Theorem~\ref{thm:comparison} yields that $Y^{(1)} = Y^{(2)}$. Consequently, by  Eq.~\eqref{e:BSDE} we get $\int_{0}^{t}(Z^{(1)}_{r} -Z^{(2)}_{r})dW_{r} = \int_{0}^{t}\big(f(r,Y^{(1)}_r,Z^{(1)}_r) - f(r,Y^{(2)}_r,Z^{(2)}_r) \big) dr$. 
This implies $Z^{(1)} = Z^{(2)}$, since a continuous local martingale of finite variation is constant.
\end{proof}

\subsection{The Rough Semimartingale \emph{a priori} Estimate}\label{subsec:a priori estimate}

In this subsection, assuming that $(Y,Z)$ is a solution to Eq.~\eqref{e:BSDE}, and that $\thicknm{(Y,-H(Y))}_{\mathrm X;q,q';k,\infty}$ is finite, we establish upper bounds for $\|Y\|_{\infty}$ and $\thicknm{(Y,-H(Y))}_{\mathrm X;p,p;k,\infty}$. 

Throughout this subsection, we assume \ref{(p,q,q')} holds, $\mathbf X\in \mathscr{C}^{p\text{-}\mathrm{var}}([0,T], \mathbb R^d)$, $H\in C^{2}_{b}(\mathbb R,\mathbb R^d)$, $\xi \in L^{\infty}$, and assumption~\ref{(f)} is satisfied. Let $(Y,Z)$ be a solution to Eq.~\eqref{e:BSDE} in the sense of Definition~\ref{def:def of BSDEs}, and thus (see the discussion under Definition~\ref{def:def of BSDEs}) $(Y,-H(Y)) \in \mathrm{RSM}^{(p,p)\text{-}\mathrm{var}}_{\mathrm X}$. 

\begin{proposition}\label{prop:bounded by BMO}
Assume $q<q'$. For any $k>1$, assume $(Y,Z)$ is a solution to Eq.~\eqref{e:BSDE} with 
\begin{equation}\label{e:cond that Psi < infty}
\thicknm{(Y,-H(Y))}_{\mathrm X;q,q';k,\infty} < \infty, 
\end{equation}
Assume $\|H\|_{C^{2}_b} + C_f + |\delta \mathbf X |_{p\text{-}\mathrm{var}} + T \le K$ for some positive constant $K$.  Then, there exists a constant $C_{k,K} > 0$ depending only on $(k,K)$ such that with $\theta := {\frac{2}{q-2}} > 1$,
\begin{equation}\label{e:bound-Psi}
{ \thicknm{(Y,-H(Y))}_{\mathrm X;p,p;k,\infty} \le C_{k,K} \Big(1+ \|\xi\|_{\infty} + 
\|\delta M^{Y}\|^{\theta}_{\mathrm{BMO}} \Big). }
\end{equation}
\end{proposition}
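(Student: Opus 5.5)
The plan is to bound each of the four pieces of $\thicknm{(Y,-H(Y))}_{\mathrm X;p,p;k,\infty}$ — namely $\|H(Y)\|_{\infty}$, $\|\|\delta H(Y)\|_{p\text{-}\mathrm{var}}\|_{\infty;k,\infty}$, $\|\|P^Y\|_{\frac p2\text{-}\mathrm{var}}\|_{\infty;k,\infty}$ and $\|\delta M^Y\|_{\mathrm{BMO}}$ — directly from the equation. The last term already appears on the right-hand side, since $\theta>1$ and $x\le 1+x^\theta$. The term $\|H(Y)\|_\infty\le K$ is trivial. The Gubinelli derivative term satisfies
\[
\|\delta H(Y)\|_{p\text{-}\mathrm{var};[s,t]}\le \|DH\|_\infty\|\delta Y\|_{p\text{-}\mathrm{var};[s,t]} .
\]
So everything reduces to controlling $\|\|\delta Y\|_{p\text{-}\mathrm{var}}\|_{\infty;k,\infty}$ and $\|\|P^Y\|_{\frac p2\text{-}\mathrm{var}}\|_{\infty;k,\infty}$.

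From the equation, $P^Y_{s,t}=-R^{\Lambda}_{s,t}-\int_s^t f(r,Y_r,Z_r)\,dr$ with $\Lambda=\int_0^\cdot H(Y)\,d\mathbf X$. The drift part is handled by the Lipschitz bound on $f$. Its $1$-variation on $[s,t]$ is at most $C_f\int_s^t(1+|Y_r|+|Z_r|)\,dr$. Its conditional $k$-moment is then bounded by
\[
(t-s)^{1/2}\bigl(\|Y\|_\infty+\|\delta M^Y\|_{\mathrm{BMO};[s,t]}\bigr)+(t-s),
\]
using Cauchy--Schwarz together with the energy inequality (Lemma~\ref{lem:Energy inequality}) applied to $\langle M^Y\rangle$. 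For $\|Y\|_\infty$, I would use
\[
|Y_u|\le \|\xi\|_\infty+\|\E_u[\delta Y_{u,T}]\|_\infty
\]
as in Lemma~\ref{lem:Y <= R^Y'}, which reduces it to the same quantities. For $R^\Lambda$ I apply Corollary~\ref{cor:int of the solution} with $G=H$ and $Y'=-H(Y)$ (so $\|Y'\|_\infty\le K$). This bounds $\|\|R^\Lambda\|_{\frac p2\text{-}\mathrm{var};[s,t]}|\mathcal F_s\|_k$ and $\|\|\delta\Lambda\|_{p\text{-}\mathrm{var};[s,t]}|\mathcal F_s\|_k$ by
\[
|\delta\mathbf X|_{p\text{-}\mathrm{var};[s,t]}\Big(1+\mathcal N_{[s,t]}+\|\delta M^Y\|^{\theta}_{\mathrm{BMO}}\Big),
\]
where $\mathcal N_{[s,t]}$ collects the conditional $q$-variation of $Y$, the $q'$-variation of $H(Y)$ and the $\frac{q''}{2}$-variation of $P^Y$ on $[s,t]$. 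These are dominated by their $p$-counterparts because $q\ge p$ and $\frac{q''}2\ge \frac p2$.

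This yields a closed inequality. On an interval $[s,t]$ write
\[
\Psi_{[s,t]}:=\|\|\delta Y\|_{p\text{-}\mathrm{var}}\|_{\infty;k,\infty;[s,t]}+\|\|P^Y\|_{\frac p2\text{-}\mathrm{var}}\|_{\infty;k,\infty;[s,t]}.
\]
Using
\[
\delta Y=-\delta\Lambda-\int f\,dr+\delta M^Y
\]
and L\'epingle (Lemma~\ref{lem:Lepingle}) for $M^Y$, I get
\[
\Psi_{[s,t]}\le C_{k,K}\Big(|\delta\mathbf X|_{p\text{-}\mathrm{var};[s,t]}+(t-s)^{1/2}\Big)\,\Psi_{[s,t]}+C_{k,K}\Big(1+\|Y\|_\infty+\|\delta M^Y\|^\theta_{\mathrm{BMO}}\Big).
\]
This is where finiteness assumption \eqref{e:cond that Psi < infty} is needed, so that one may absorb. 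One has to be careful that the $\mathcal N$ terms coming from Corollary~\ref{cor:int of the solution} are multiplied by $|\delta\mathbf X|$, except for the term $|\delta\mathbf X|\cdot\|\|\delta Y\|_q\|$. That term is still small, with coefficient $|\delta\mathbf X|(1+|\delta\mathbf X|)$.

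Next I partition $[0,T]$ into $N=N(k,K)$ intervals on which $C_{k,K}(|\delta\mathbf X|_{p\text{-}\mathrm{var}}+(t-s)^{1/2})\le\frac12$. This is possible with $N$ depending only on $K$, since $w(s,t)=|\delta\mathbf X|^p_{p\text{-}\mathrm{var};[s,t]}$ is superadditive up to constants. On each piece I absorb the first term, then glue the pieces via Proposition~\ref{prop:Psi <= Psi + Psi}. Each gluing costs a factor $(1+K)$, so the final constant is $(1+K)^N$, which is still $C_{k,K}$.

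The sup norm $\|Y\|_\infty$ appearing on the right must itself be handled backwards interval by interval: $\|Y\|_{\mathcal L^\infty([s,t])}\le\|Y_t\|_\infty+\Psi_{[s,t]}$. Iterating from $T$ with $Y_T=\xi$ then gives \eqref{e:bound-Psi}.

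The main obstacle is the absorption step. I must check that every occurrence of $\Psi$ on the right of the estimate from Corollary~\ref{cor:int of the solution} carries a small factor, which is the role of $|\delta\mathbf X|$. I must also check that the BMO term enters only through the exponent $\theta$ from Young's inequality, as noted in Remark~\ref{rem:on the exponent}, and not multiplied by $\Psi$. If a cross term $\Psi^{(4-q)/2}\|\delta M^Y\|_{\mathrm{BMO}}$ survives instead, I would apply Young with a small weight $\varepsilon$ to split it as $\varepsilon\Psi+C_\varepsilon\|\delta M^Y\|^\theta_{\mathrm{BMO}}$.
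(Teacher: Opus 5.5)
Your proposal is correct and is essentially the paper's proof. As in the paper, Corollary~\ref{cor:int of the solution} and the Lipschitz bound on $f$ give an inequality in which every occurrence of the seminorm carries the factor $|\delta\mathbf X|_{p\text{-}\mathrm{var};[s,t]}+(t-s)^{1/2}$. This is absorbed on the pieces of a fine partition, $\|Y_{t_{i+1}}\|_\infty$ is propagated backwards via Lemma~\ref{lem:Y <= R^Y'}, and the pieces are glued by Proposition~\ref{prop:Psi <= Psi + Psi}.

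The one local difference is the martingale part. You bound it directly by L\'epingle's inequality together with $\|\delta M^Y\|_{\mathrm{BMO}}\le 1+\|\delta M^Y\|^{\theta}_{\mathrm{BMO}}$. The paper's Step~1 instead goes through $\delta M^Y_{s,T}=\xi-Y_s+\delta(I^{\mathbf X}+\mathcal I)_{s,T}$ and Doob's inequality. That route keeps the BMO term with a small coefficient, which the stated bound does not need, so your shortcut is legitimate.

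There are two details to tighten.

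First, the hypothesis \eqref{e:cond that Psi < infty} only gives finiteness of the $(q,q')$-quantities, not of your $\Psi_{[s,t]}$. Before absorbing, note as the paper does that $\big\|\|P^Y\|_{\frac p2\text{-}\mathrm{var}}\big\|_{\infty;k,\infty}<\infty$. This follows from one application of Corollary~\ref{cor:int of the solution}, whose right-hand side involves only $(q,q')$-quantities, together with the drift bound. Also $\big\|\|\delta Y\|_{p\text{-}\mathrm{var}}\big\|_{\infty;k,\infty}<\infty$ by Lemma~\ref{lem:Y <= R^Y'}, since $q''/2<2<p$.

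Second, in your displayed closed inequality the term $\|Y\|_{\mathcal L^\infty([s,t]\times\Omega)}$ must keep the small factor $(t-s)$ that it carries from the drift. Otherwise, substituting $\|Y\|_{\mathcal L^\infty([s,t]\times\Omega)}\le\|Y_t\|_\infty+\Psi_{[s,t]}$ would not allow absorption.

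Your worry about a surviving cross term is unnecessary. Corollary~\ref{cor:int of the solution} already performs that Young splitting, so the BMO norm enters only through the power $\theta$, multiplied by $|\delta\mathbf X|_{p\text{-}\mathrm{var};[s,t]}$.
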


\begin{proof}
For $t\in[0,T]$, denote
\begin{equation}\label{e:def of Gamma}
I^{\mathbf X}_{t} := \int_{0}^{t}H(Y_{r})d\mathbf X_{r},\quad \mathcal I_{t} := \int_{0}^{t}f(r,Y_r,Z_r) dr.
\end{equation}
We present the proof in four steps to facilitate readability.

{\textbf{Step 1.}} 
First, we establish upper bounds for the $p$-variations of $M^{Y}$ and $Y$. Note that
\begin{equation*}
\delta M^{Y}_{s,T} = \int_{s}^{T}Z_{r} dW_r = \xi - Y_{s} + \delta I^{\mathbf X}_{s,T} + \delta \mathcal I_{s,T}, \quad s\in[0,T]. 
\end{equation*}
By Lemma~\ref{lem:Lepingle} and Doob's maximal inequality, noting $k > 1$, we have
\begin{equation}\label{e:tilde Z}
\begin{aligned}
\big\| \|\delta M^{Y}\|_{p\text{-var};[s,T]} \big|\mathcal F_{s} \big\|_{k} &\lesssim_{k} \big\| \| M^{Y}_{\bm\cdot} - M^Y_s \|_{\infty;[s,T]} \big|\mathcal F_{s} \big\|_{k} 
\lesssim_{k} \|  \delta M^{Y}_{s,T} |\mathcal F_{s} \|_{k}\\
& \le \|\xi - Y_{s} |\mathcal F_{s} \|_{k} + \| \delta I^{\mathbf X}_{s,T} | \mathcal F_{s}\|_{k}+ \|\delta \mathcal I_{s,T} | \mathcal F_{s} \|_{k}.
\end{aligned}
\end{equation}
Note $\xi - Y_{s} = \xi - \E_{s}[\xi] + \E_{s}[\xi - Y_s]$ and $\E_{s}[\xi - Y_s] = - \E_{s}[\delta (I^{\mathbf X} + \mathcal I)_{s,T}]$. Hence, we have
\begin{equation}\label{e:xi <= Gamma}
\begin{aligned}
\|\xi - Y_{s} |\mathcal F_{s} \|_{k} \le \|\xi - \E_{s}[\xi] |\mathcal F_{s} \|_{k} + \| \E_{s}[\delta (I^{\mathbf X} + \mathcal I)_{s,T}] |\mathcal F_{s}\|_{k}\le 2\|\xi\|_{\infty} + \| \delta (I^{\mathbf X} + \mathcal I)_{s,T} |\mathcal F_{s}\|_{k}.
\end{aligned}
\end{equation}
Then, combining \eqref{e:tilde Z} and \eqref{e:xi <= Gamma}, we have 
\begin{equation}\label{e:M^Y <= Gamma}
\big\| \|\delta M^{Y}\|_{p\text{-var};[s,T]} \big|\mathcal F_{s} \big\|_{k} \lesssim_{k} \|\xi\|_{\infty} + \big\| \|\delta I^{\mathbf X}\|_{p\text{-var};[s,T]} \big| \mathcal F_{s} \big\|_{k} + \big\| \|\delta \mathcal I\|_{p\text{-var};[s,T]} \big| \mathcal F_{s} \big\|_{k}.
\end{equation}
This, together with the fact that $\delta Y = -\delta I^{\mathbf X} - \delta \mathcal I + \delta M^Y$, yields
\begin{equation}\label{e:estim of Yn+1 (1)}
\big\| \|\delta Y\|_{p\text{-var};[s,T]} \big| \mathcal F_{s} \big\|_{k} 
\lesssim_{k} \|\xi\|_{\infty} + \big\| \|\delta I^{\mathbf X}\|_{p\text{-var};[s,T]} \big| \mathcal F_{s} \big\|_{k} + \big\|  \|\delta \mathcal I\|_{p\text{-var};[s,T]} \big| \mathcal F_{s} \big\|_{k}.
\end{equation}

{\textbf{Step 2.}} 
In this step, we estimate the terms involving $\|\delta I^{\mathbf X}\|_{p\text{-var}}$ and $\|\delta \mathcal I\|_{p\text{-var}}$ on the RHS of \eqref{e:M^Y <= Gamma} and \eqref{e:estim of Yn+1 (1)}. By Corollary~\ref{cor:int of the solution} (where $q<q'$ is required), we have for all $(s,t)\in\Delta$, (recall $q''/2 := qq'/(q+q')$) 
\begin{equation}\label{e:R^tildeY'}
\begin{aligned}
&\big\| \|R^{I^{\mathbf X}}\|_{\frac{p}{2}\text{-var};[s,t]} \big| \mathcal F_{s} \big\|_{k} + \big\| \|\delta I^{\mathbf X}\|_{p\text{-var};[s,t]} \big| \mathcal F_{s} \big\|_{k}\\
&\lesssim_{k,K}|\delta \mathbf X|_{p\text{-}\mathrm{var};[s,t]} \Big( 1 + \big\| \|\delta Y\|_{q\text{-var};[s,t]} \big| \mathcal F_{s}\big\|_{k} +  \|\delta M^{Y}\|^{\theta}_{\mathrm{BMO};[s,t]} + \big\| \|P^Y\|_{\frac{q''}{2}\text{-}\mathrm{var};[s,t]} \big| \mathcal F_{s} \big\|_{k}\Big).
\end{aligned}
\end{equation}
In addition, by Assumption~\ref{(f)} and the fact $|Y_r| \le \|\xi\|_{\infty} + \|\delta Y\|_{q\text{-}\mathrm{var};[s,T]}$ for $r\in[s,t]$, we get
\begin{align}\nonumber
\big\| \|\delta \mathcal I\|_{\frac{p}{2}\text{-}\mathrm{var};[s,t]} \big| \mathcal F_{s} \big\|_{k} & \le \Big\| \int_{s}^{t} |f(r,Y_r,Z_r)| dr \Big|\mathcal F_{s}\Big\|_{k} \\ \label{e:hat Gamma <= M}
& \le \Big\| \int_{s}^{t} |f(r,0,0)| dr \Big|\mathcal F_{s}\Big\|_{k} + \Big\| \int_{s}^{t} |f(r,Y_r,Z_r) - f(r,0,0)| dr \Big|\mathcal F_{s}\Big\|_{k}\\ \nonumber
&\lesssim_{k,K} (t - s) \Big(1 + \|\xi\|_{\infty} + \big\| \|\delta Y\|_{q\text{-}\mathrm{var};[s,T]} \big| \mathcal F_s \big\|_{k}\Big) + (t - s)^{\frac{1}{2}}\|\delta M^{Y}\|_{\mathrm{BMO};[s,t]}.
\end{align}

{\bf Step 3.} Now, we prove the desired bound \eqref{e:bound-Psi} for small $T$.
Plugging both \eqref{e:R^tildeY'} and \eqref{e:hat Gamma <= M} into \eqref{e:M^Y <= Gamma} and \eqref{e:estim of Yn+1 (1)}, 
noting that  $\|\delta M^Y\|_{\mathrm{BMO}} \le 1 + \|\delta M^{Y}\|^{\theta}_{\mathrm{BMO}}$ by Young's inequality, we get
\begin{equation}\label{e:bound of tilde Z}
\begin{aligned}
&\big\| \|\delta M^{Y}\|_{p\text{-var};[s,T]} \big| \mathcal F_{s}\big\|_{k} + \big\| \|\delta Y\|_{p\text{-var};[s,T]} \big| \mathcal F_{s}\big\|_{k} \\
&\lesssim_{k,K} 1 + \|\xi\|_{\infty} + \big(  |T-s|^{\frac{1}{2}} + |\delta \mathbf X|_{p\text{-}\mathrm{var};[s,T]} \big)\\
&\qquad \times \Big( 1 + \big\| \|\delta Y\|_{q\text{-var};[s,T]} \big| \mathcal F_{s} \big\|_{k} +  \|\delta M^{Y}\|^{\theta}_{\mathrm{BMO}} + \big\| \|P^Y\|_{\frac{q''}{2}\text{-}\mathrm{var};[s,T]} \big| \mathcal F_{s}\big\|_{k}\Big).
\end{aligned}
\end{equation}
Next, noting $\|\delta Y'\|_{p\text{-var}}=\|\delta H(Y)\|_{p\text{-var}} \le \|DH\|_\infty \|\delta Y\|_{p\text{-var}}$, we have 
\begin{equation*}
\|\|\delta Y'\|_{p\text{-var};[s,T]}|\mathcal F_{s}\|_{k}=\|\|\delta H(Y)\|_{p\text{-var};[s,T]}|\mathcal F_{s}\|_{k}\lesssim_{k,K} \text{RHS of \eqref{e:bound of tilde Z}}.
\end{equation*}
Moreover, by Remark~\ref{rem:Gamma-RSM}, we have that $(I^{\mathbf X},H(Y))\in\mathscr{D}^{(p,p)\text{-var}}_{\mathrm X}$ a.s. with $P^Y = - R^{I^{\mathbf X}} - \delta \mathcal I$. Hence, combining \eqref{e:R^tildeY'}, \eqref{e:hat Gamma <= M}, and \eqref{e:bound of tilde Z}, we get 
\begin{equation}\label{e:three in one}
\begin{aligned}
 \|H(Y)\|_{\infty}  &+ \big\| \|\delta H(Y)\|_{p\text{-var};[s,T]} \big| \mathcal F_{s} \big\|_{k} + \big\| \|P^Y\|_{\frac{p}{2}\text{-var};[s,T]} \big| \mathcal F_{s}\big\|_{k} \\
& + \big\| \|\delta M^{Y}\|_{p\text{-var};[s,T]} \big| \mathcal F_{s} \big\|_{k}\lesssim_{k,K} \text{RHS of \eqref{e:bound of tilde Z}}. 
\end{aligned}
\end{equation}
Taking the essential supremum over $[0,T]\times \Omega$ in \eqref{e:three in one}, the finiteness of $\thicknm{(Y,-H(Y))}_{\mathrm X;q,q';k,\infty} $ assumed in \eqref{e:cond that Psi < infty} 
yields that $\thicknm{(Y,-H(Y))}_{\mathrm X;p,p;k,\infty} < \infty$.

Furthermore, by \eqref{e:bound for Y p-var'} in Lemma~\ref{lem:Y <= R^Y'}, 
\begin{equation}\label{e:Y <= 1 + Psi}
\big\| \|\delta Y\|_{q\text{-var};[s,T]} \big| \mathcal F_{s} \big\|_{k} \le \big\| \|\delta Y\|_{p\text{-var};[s,T]} \big| \mathcal F_{s} \big\|_{k}\le (1 + \|\delta \mathrm X\|_{p\text{-}\mathrm{var};[s,T]}) \times \text{LHS of \eqref{e:three in one}}.
\end{equation}
Therefore, by \eqref{e:three in one} and \eqref{e:Y <= 1 + Psi}, noting $\big\| \|P^Y\|_{\frac{q''}{2}\text{-}\mathrm{var};[s,T]} \big| \mathcal F_{s}\big\|_{k} \le \big\| \|P^Y\|_{\frac{p}{2}\text{-}\mathrm{var};[s,T]} \big| \mathcal F_{s}\big\|_{k}$, we get
\begin{equation}\label{e:LHS of three in one}
\begin{aligned}
\text{LHS of \eqref{e:three in one}}&\lesssim_{k,K}  (1 + |\delta \mathbf X|_{p\text{-}\mathrm{var};[0,T]}) (1 + \|\xi\|_{\infty}) + (|T|^{\frac{1}{2}} +  |\delta \mathbf X|_{p\text{-}\mathrm{var};[0,T]})\|\delta M^{Y}\|^{\theta}_{\mathrm{BMO}}\\
&\qquad + (|T|^{\frac{1}{2}} + |\delta \mathbf X|_{p\text{-}\mathrm{var};[0,T]}) (1 + |\delta \mathbf X|_{p\text{-}\mathrm{var};[0,T]}) \times \text{LHS of \eqref{e:three in one}}.
\end{aligned}
\end{equation}
By taking essential supremum over $(s,\omega)\in [0,T]\times \Omega$ in \eqref{e:LHS of three in one}, and applying Lemma~\ref{lem:Lepingle},
\begin{align} \nonumber
\thicknm{(Y,-H(Y))}_{\mathrm X;p,p;k,\infty}
&\le C_{k,K} \Big((1 + |\delta \mathbf X|_{p\text{-}\mathrm{var};[0,T]}) (1 + \|\xi\|_{\infty}) + (|T|^{\frac{1}{2}} +  |\delta \mathbf X|_{p\text{-}\mathrm{var};[0,T]})\|\delta M^{Y}\|^{\theta}_{\mathrm{BMO}}\\ \label{Psi <= RHS}
&\quad + (|T|^{\frac{1}{2}} +  |\delta \mathbf X|_{p\text{-}\mathrm{var};[0,T]}) (1 + |\delta \mathbf X|_{p\text{-}\mathrm{var};[0,T]}) \,  \thicknm{(Y,-H(Y))}_{\mathrm X;p,p;k,\infty} \Big),
\end{align}
where $C_{k,K}>0$ is a constant independent of $\xi$. Denote $\varepsilon := \big(C_{k,K}(1 + |\delta \mathbf X|_{p\text{-}\mathrm{var};[0,T]}) \big)^{-1}$, and choose $T$ sufficiently small such that $ (|T|^{\frac{1}{2}} + |\delta\mathbf X|_{p\text{-}\mathrm{var};[0,T]}) \le \frac{1}{2} \varepsilon$. Then, it follows from \eqref{Psi <= RHS} that 
\begin{equation}\label{e:Psi <= M}
\thicknm{(Y,-H(Y))}_{\mathrm X;p,p;k,\infty}\le (2 C_{k,K} +1) (1 + \|\xi\|_{\infty}) + \|\delta M^{Y}\|^{\theta}_{\mathrm{BMO}}.
\end{equation}

{\bf Step 4.}
Now we establish an estimate for $\thicknm{(Y,-H(Y))}_{\mathrm X;p,p;k,\infty}$  on an arbitrary interval $[0,T]$. 
Let $C_{k,K}$ be the constant chosen in \eqref{Psi <= RHS}, and let $\varepsilon$ be defined as above. Since $\hat w(s,t) := |t-s| + \|\delta \mathrm X\|^p_{p\text{-}\mathrm{var};[s,t]} + \|\mathbb X\|^{p/2}_{p/2\text{-}\mathrm{var};[s,t]}$ defines a control function, it follows from  \cite[Lemma~1.5]{FrizZhang-2018} (see also \cite[Lemma~2.4]{li2024reflected}) that  there exists a partition  
$
\pi = \{[t_i,t_{i+1}];\ i=0,1,\dots,N-1\}\in\mathcal P_{[0,T]}
$
such that $\hat w(t_i,t_{i+1}) \le \min\{|\varepsilon/6|^{2},|\varepsilon/6|^{p},|\varepsilon/6|^{p/2}\}$ for all $i$. Consequently, $|t_{i+1} - t_{i}|^{\frac{1}{2}} + |\delta \mathbf X|_{p\text{-}\mathrm{var};[t_{i},t_{i+1}]} \le \varepsilon/2$. Moreover, by the superadditivity of the control function $\hat w$, the size $N$ of the partition is bounded by a constant depending only on $T$, $|\delta \mathbf X|_{p\text{-}\mathrm{var};[0,T]}$ and $\varepsilon$ (and hence only on $K$ and $\varepsilon$).

Then, by repeating the procedure in Step~1--Step~3 on each interval $[t_i,t_{i+1}]\in \pi$ in place of $[0,T]$, we obtain, as in \eqref{e:Psi <= M}, 
\begin{equation}\label{e:Psi <= M,[u,v]}
\thicknm{(Y,-H(Y))}_{\mathrm X;p,p;k,\infty;[t_{i},t_{i+1}]} \le (2 C_{k,K} +1) (1 + \|Y_{t_{i+1}}\|_{\infty}) + \|\delta M^{Y}\|^{\theta}_{\mathrm{BMO}}.
\end{equation}
For $\|Y_{t_{i+1}}\|_{\infty}$, by \eqref{e:Y q-var infty infty} in Lemma~\ref{lem:Y <= R^Y'}, there exists a constant $\hat C_{k,K} > 0$ independent of $i$ such that  
\begin{equation}\label{e:Y <= M}
\|Y_{t_{i+1}}\|_{\infty} \le \hat C_{k,K} \big(\|\xi\|_{\infty} + \thicknm{(Y,-H(Y))}_{\mathrm X;p,p;k,\infty;[t_{i+1},T]}\big).
\end{equation}
Combining \eqref{e:Psi <= M,[u,v]} and \eqref{e:Y <= M}, it holds, for some constant $\tilde{C}_{k,K} > 0$ independent of $i$, 
\begin{equation}\label{e:Psi <= 1 + xi + Psi + M^Y}
\begin{aligned}
&\thicknm{(Y,-H(Y))}_{\mathrm X;p,p;k,\infty;[t_{i},t_{i+1}]} \\
&\le  \tilde{C}_{k,K} (1 + \|\xi\|_{\infty}) + \tilde{C}_{k,K}\, \thicknm{(Y,-H(Y))}_{\mathrm X;p,p;k,\infty;[t_{i+1},T]} + \|\delta M^{Y}\|^{\theta}_{\mathrm{BMO}}.
\end{aligned}
\end{equation}
Furthermore, by Proposition~\ref{prop:Psi <= Psi + Psi}, we see that 
\begin{equation*}
\thicknm{(Y,-H(Y))}_{\mathrm X;p,p;k,\infty;[t_{i},T]} \lesssim_{K} \thicknm{(Y,-H(Y))}_{\mathrm X;p,p;k,\infty;[t_{i},t_{i+1}]} + \thicknm{ (Y,-H(Y))}_{\mathrm X;p,p; k,\infty;[t_{i+1},T]}.
\end{equation*} 
Then by \eqref{e:Psi <= 1 + xi + Psi + M^Y}, there exists a constant $\overline{C}_{k,K} > 0$ such that for $i=0,1,\dots, N - 1$,
\begin{equation}\label{e:iter 1}
\thicknm{(Y,-H(Y))}_{\mathrm X;p,p;k,\infty;[t_{i},T]} \le \overline{C}_{k,K}\big(1 + \|\xi\|_{\infty} + \thicknm{(Y,-H(Y))}_{\mathrm X;p,p;k,\infty;[t_{i+1},T]} + \|\delta M^{Y}\|^{\theta}_{\mathrm{BMO}} \big).
\end{equation}
In particular, for $i = N - 1$, this yields
\begin{equation*}
\thicknm{(Y,-H(Y))}_{\mathrm X;p,p;k,\infty;[t_{N - 1},T]} \le (2C_{k,K} + 1)(1+\|\xi\|_{\infty}) + \|\delta M^{Y}\|^{\theta}_{\mathrm{BMO}}.
\end{equation*}
The desired \eqref{e:bound-Psi} then follows by iterating \eqref{e:iter 1}.
\end{proof}

In the previous result, we show that the \textrm{RSM} seminorm  $\thicknm{(Y,-H(Y))}_{\mathrm X;p,p;k,\infty}$ is dominated by $1+\|\xi\|_\infty+\|\delta M^Y\|_{\text{BMO}}^{\theta}$ for some $\theta > 1$. Next in Proposition~\ref{prop:BMO and ess sup}, we show that  $\|\delta M^{Y}\|_{\mathrm{BMO}}$ can be bounded by the variant depending on $\|Y\|_\infty$ and system parameters. 

\begin{proposition}\label{prop:BMO and ess sup}
Let $(Y,Z)$ be a solution to Eq.~\eqref{e:BSDE} satisfying \eqref{e:cond that Psi < infty} with $k > 1$. Assume $\|Y\|_{\infty} + \|H\|_{C^{2}_b} + C_f + |\delta \mathbf X |_{p\text{-}\mathrm{var};[0,T]} + T \le \hat K$, where $\hat K > 0$ is a constant.
Then, there exists a constant $C_{k,\hat K} > 0$ depending only on $(k,\hat K)$ s.t.  
\begin{equation}\label{e:BMO <= 1}
\|\delta M^{Y}\|_{\mathrm{BMO}} \le C_{k,\hat K}.
\end{equation}
\end{proposition}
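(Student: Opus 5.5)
We will use It\^o's formula applied to $Y^2$, which is the classical route for bounding $\|\delta M^Y\|_{\mathrm{BMO}}$ by $\|Y\|_\infty$ in BSDE theory. The rough integral term will be absorbed through Proposition~\ref{prop:bounded by BMO}, which has power $\theta = 2/(q-2) < 2$ since $q>3$ (Remark~\ref{rem:on the exponent}). Working on the RSM $(Y,-H(Y))$, the rough It\^o formula (as used in Theorem~\ref{thm:comparison}) gives, for $(s,t)\in\Delta$,
\begin{equation*}
|Y_s|^2 + \int_s^t |Z_r|^2 dr = |Y_t|^2 + 2\int_s^t Y_r f(r,Y_r,Z_r)\,dr + 2\int_s^t Y_r H(Y_r)\,d\mathbf X_r - \mathrm{tr}\int_s^t \ldots\, d[\mathbf X]_r - 2\int_s^t Y_r Z_r\,dW_r .
\end{equation*}
The $[\mathbf X]$-term has integrand bounded by $\|H\|_{C^2_b}(1+\|Y\|_\infty)$. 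Taking $\E_s$, the Brownian integral vanishes: it is a true martingale by localization, since $Y$ is bounded and the BMO norm is finite under \eqref{e:cond that Psi < infty}. The drift is handled by Lipschitz bounds and Young's inequality: $2|Y||f|\le C(1+\|Y\|^2_\infty)+\frac14|Z|^2$. We therefore get
\begin{equation*}
\E_s\!\int_s^t|Z_r|^2dr \lesssim_{\hat K} 1 + \big\|\E_s\!\int_s^t Y_rH(Y_r)\,d\mathbf X_r\big\|_\infty .
\end{equation*}

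The main obstacle is the rough integral $\int Y H(Y)\,d\mathbf X$. Here $G(y):=yH(y)$ is only locally $C^2$, but because $Y$ is bounded by $\hat K$ we may replace $G$ by a $C^2_b$ cutoff with norm $\lesssim_{\hat K}1$. Corollary~\ref{cor:int of the solution} (with $k$, Gubinelli derivative $-H(Y)$ bounded) then bounds the conditional moment on $[s,t]$ by
\begin{equation*}
|\delta\mathbf X|_{p\text{-var};[s,t]}\Big(1+\thicknm{(Y,-H(Y))}_{\mathrm X;p,p;k,\infty} + \|\delta M^Y\|^{\theta}_{\mathrm{BMO};[s,t]}\Big),
\end{equation*}
where $\|\|\delta Y\|_{q\text{-var}}\|$ and $\|P^Y\|$ are controlled through Lemma~\ref{lem:Y <= R^Y'} by the seminorm. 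Inserting Proposition~\ref{prop:bounded by BMO} with $\|\xi\|_\infty\le\hat K$ gives
\begin{equation*}
\|\delta M^Y\|^2_{\mathrm{BMO};[s,t]} \le C_{k,\hat K}\big(1 + \|\delta M^Y\|^{\theta}_{\mathrm{BMO}}\big).
\end{equation*}
Taking $[s,t]=[0,T]$ and the supremum over $s$, this reads $x^2\le C(1+x^\theta)$ for $x=\|\delta M^Y\|_{\mathrm{BMO}}$, which is finite by \eqref{e:cond that Psi < infty} and Lemma~\ref{lem:Lepingle}. Since $\theta<2$, Young's inequality gives $x\le C_{k,\hat K}$, which is \eqref{e:BMO <= 1}.

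If $\theta<2$ were unavailable, the fallback is to work on a partition with $|\delta\mathbf X|_{p\text{-var};[t_i,t_{i+1}]}$ small, so as to absorb into the left side, and then patch using subadditivity of the BMO norm over finitely many intervals, exactly as in Step~4 of Proposition~\ref{prop:bounded by BMO}. The points to verify carefully are the validity of the rough It\^o formula for $Y^2$ within the RSM class, and that the $\E_s$ of the rough integral is controlled by the conditional $(1,\infty)$ norm.
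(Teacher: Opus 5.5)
Your overall route is the paper's: the rough It\^o formula for $|Y|^2$, a cutoff of $y\mapsto yH(y)$ so that Corollary~\ref{cor:int of the solution} applies, Proposition~\ref{prop:bounded by BMO} to reduce the remaining terms to $1+\|\delta M^Y\|^{\theta}_{\mathrm{BMO}}$, and Young's inequality with $\theta<2$. Absorbing the drift via $2|Y||f|\le C+\frac14|Z|^2$, instead of truncating $f$, is an inessential variant.

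The gap is that you simply assert $q>3$. The hypotheses do not give this. The standing assumption \ref{(p,q,q')} permits any $q\in[p,4)$, and \eqref{e:cond that Psi < infty} is assumed only for the given pair $(q,q')$. Remark~\ref{rem:on the exponent} says that $q>3$ is \emph{needed}, not that it holds. For $q\le 3$ one has $\theta=2/(q-2)\ge 2$, and $x^2\le C(1+x^{\theta})$ gives no control of $x$. Your fallback does not repair this. An inequality $x^2\le C(1+\varepsilon x^{\theta})$ with $\theta>2$ admits arbitrarily large solutions however small $\varepsilon$ is. No continuity argument rescues it either, since the BMO norm over short intervals need not be small.

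What is missing is the bootstrap on the exponents that the paper performs at the start of its proof.
\begin{enumerate}
\item Enlarge $q'$ slightly, keeping the strict inequality in \ref{(p,q,q')}, so that $q<q'$. This only decreases the seminorm, so Proposition~\ref{prop:bounded by BMO} applies and gives $\thicknm{(Y,-H(Y))}_{\mathrm X;p,p;k,\infty}<\infty$.
\item Whenever $\tilde q,\tilde q'\ge p$, one has $\|\bm\cdot\|_{\tilde q'\text{-var}}\le\|\bm\cdot\|_{p\text{-var}}$ and $\tilde q\tilde q'/(\tilde q+\tilde q')\ge p/2$. Hence this $(p,p)$-seminorm dominates the $(\tilde q,\tilde q')$-seminorm for every such pair.
\item Because $p<3$, you can choose $3<\tilde q<\tilde q'$, depending only on $p$, close enough to $3$ that $\frac1p+\frac1{\tilde q}+\frac1{\tilde q'}>1$.
\end{enumerate}
Rerunning Corollary~\ref{cor:int of the solution} and Proposition~\ref{prop:bounded by BMO} with $(\tilde q,\tilde q')$ then gives $\theta=2/(\tilde q-2)\in(1,2)$, and your closing argument goes through.

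There is also a smaller inaccuracy in your treatment of the $[\mathbf X]$-term. It is a Young integral against a path of finite $p/2$-variation with $p/2>1$, which for non-geometric $\mathbf X$ need not have bounded variation. So a sup bound on the integrand $H(Y)H(Y)^{\top}$ does not control it. You need the Young estimate involving $\E_t[\|\delta Y\|_{q\text{-var};[t,T]}]$. By Lemma~\ref{lem:Y <= R^Y'} and Proposition~\ref{prop:bounded by BMO} this is again $\lesssim 1+\|\delta M^Y\|^{\theta}_{\mathrm{BMO}}$, hence harmless once $\theta<2$.
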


\begin{proof} 
Without loss of generality, one can choose a sufficiently larger $q'$ so that $q < q'$ and conditions \ref{(p,q,q')} and \eqref{e:cond that Psi < infty} still hold. By Proposition~\ref{prop:bounded by BMO} (where $q < q'$ is required due to the application of Corollary~\ref{cor:int of the solution}), we have 
$\thicknm{(Y,-H(Y))}_{\mathrm X;p,p;k,\infty}<\infty$. Therefore, we may assume further without loss of generality that $q>3$, hence in $q \in (3,4)$, and that $q'$ is chosen sufficiently close to $q$, such that \ref{(p,q,q')} and \eqref{e:cond that Psi < infty} hold. Note that this implies $\theta = {\frac{2}{q-2}} \in (1,2)$. Denote the process $I^{[\mathbf X]}_t := \mathrm{tr} ( \int_{0}^{t} H(Y_{r})H(Y_{r})^{\top} d[\mathbf X]_r)$. By the rough It\^o formula shown in \cite[Theorem~4.19]{dause2026controlled}, we have 
\begin{equation}\label{e:Ito}
\begin{aligned}
|Y_{t}|^2 + \|Z\|^2_{L^{2}([t,T])} &= |\xi|^2 + 2\int_{t}^{T} Y_r H(Y_r) d\mathbf X_r - 2\int_{t}^{T} Y_rZ_r dW_r \\
&\quad + 2\int_{t}^{T}Y_r f(r,Y_r,Z_r) dr - \delta 
I^{[\mathbf X]}_{t,T}. 
\end{aligned}
\end{equation}
Taking the conditional expectation $\E_t[\bm \cdot]$ on \eqref{e:Ito}, for each $t\in[0,T]$ we get 
\begin{equation}\label{e:Ito to Y with smooth X}
\begin{aligned}
\E_{t}\Big[\|Z\|^2_{L^{2}([t,T])}\Big] \le \E_{t}\Big[|\xi|^2 &+ 2\int_{t}^{T} Y_r H(Y_r) d\mathbf X_r + 2\int_{t}^{T}Y_r f(r,Y_r,Z_r) dr - \delta 
I^{[\mathbf X]}_{t,T}\Big].
\end{aligned}
\end{equation}

Let $\kappa (\bm\cdot)$ be a smooth cut-off function s.t. $\kappa(y) = 1$ if $|y| \le \hat K$ and $\kappa(y) = 0$ if $|y| > \hat K + 1$. Denote 
\begin{equation*}
\tilde{H}(y) := \kappa(y) y\cdot  H(y),\quad \tilde{f}(t,y,z) := \kappa(y)y \cdot f(t,\kappa(y)y,z).
\end{equation*}
Then, by the condition $\|H\|_{C^{2}_{b}}<\infty$ and Assumption~\ref{(f)}, it follows for all $t\in[0,T]$, 
\begin{equation}\label{e:for tilde f}
\|\tilde{H}\|_{C^{2}_{b}}\lesssim_{\hat K} 1, \quad |\tilde{f}(t,y,z)| \lesssim_{\hat K} 1 + |z|.
\end{equation}
Furthermore, since $\hat K$ is an upper bound of $\|Y\|_{\infty}$, the inequality \eqref{e:Ito to Y with smooth X} leads to:
\begin{equation}\label{e:Ito to Y with smooth X'}
\begin{aligned}
\E_{t}\left[\delta \langle M^{Y} \rangle_{t,T}\right] = \E_{t}\Big[\|Z\|^2_{L^{2}([t,T])}\Big] \le \|\xi \|^{2}_{\infty} + 2\E_{t} \Big[ \delta (\tilde{I}^{\mathbf X} + \tilde{\mathcal I})_{t,T} - \delta 
I^{[\mathbf X]}_{t,T} \Big],
\end{aligned}
\end{equation}
where 
\begin{equation*}
\tilde{I}^{\mathbf X}_{t} := \int_{0}^{t} \tilde{H}(Y_{r})d\mathbf X_{r},\quad \tilde{\mathcal I}_{t} := \int_{0}^{t} \tilde{f}
(r,Y_r,Z_r) dr,\quad t\in[0,T].
\end{equation*}

To estimate $\tilde{I}^{\mathbf X}$, using Corollary~\ref{cor:int of the solution}, we have for $t\in[0,T]$,
\begin{equation}\label{e:1st estim for tilde Gamma}
\| \delta \tilde{I}^{\mathbf X}_{t,T} | \mathcal F_{t} \|_{k} \lesssim_{k,\hat K} |\delta \mathbf X|_{p\text{-}\mathrm{var}} \Big(1 + \big\| \|\delta Y\|_{q\text{-var};[t,T]} \big| \mathcal F_{t} \big\|_{k} +  \|\delta M^{Y}\|^{\theta}_{\mathrm{BMO}} + \big\| \|P^Y\|_{\frac{q''}{2}\text{-}\mathrm{var};[t,T]} \big| \mathcal F_{t}\big\|_{k} \Big).
\end{equation}
In addition, by \eqref{e:Y q-var infty infty} in Lemma~\ref{lem:Y <= R^Y'} and Proposition~\ref{prop:bounded by BMO} (where it requires $k > 1$), we have 
\begin{equation}\label{e:bound for Y p-var}
\begin{aligned}
\big\| \|\delta Y\|_{q\text{-var};[t,T]} \big| \mathcal F_{t} \big\|_{k}  &\lesssim_{k,\hat K} 1 + \thicknm{(Y,-H(Y))}_{\mathrm X;q,q';k,\infty} \lesssim_{k,\hat K} 1 + \|\delta M^{Y}\|^{\theta}_{\mathrm{BMO}}. 
\end{aligned}
\end{equation}
Moreover, by Proposition~\ref{prop:bounded by BMO} again, we also have 
\begin{equation}\label{e:R^{J^Y}}
\big\| \|P^Y\|_{\frac{q''}{2}\text{-}\mathrm{var};[t,T]} \big| \mathcal F_{t}\big\|_{k} \lesssim_{k,\hat K} 1 + \|\delta M^{Y}\|^{\theta}_{\mathrm{BMO}}.
\end{equation}
Hence, combining \eqref{e:1st estim for tilde Gamma}, \eqref{e:bound for Y p-var}, and \eqref{e:R^{J^Y}}, it follows that 
\begin{equation}\label{e:tilde H <= Z}
\big\| \delta \tilde{I}^{\mathbf X}_{t,T} \big| \mathcal F_{t} \big\|_{k} \lesssim_{k,\hat K} 1  +  \|\delta M^{Y}\|^{\theta}_{\mathrm{BMO}}.
\end{equation}

On the other hand, for the term $\tilde{\mathcal I}$, by  \eqref{e:for tilde f} and H\"older's inequality, we obtain that  
\begin{equation}\label{e:hat Gamma <= Z}
\begin{aligned}
\big\| \delta \tilde{\mathcal I}_{t,T} \big| \mathcal F_{t} \big\|_k = \Big\| \int_{t}^{T} \tilde{f}(r,Y_r,Z_r) dr \Big|\mathcal F_{t}\Big\|_{k}  \lesssim_{k,\hat K} 1 + \Big\| \|Z\|_{L^{2}([t,T])} \Big|\mathcal F_{t}\Big\|_{k} \lesssim_k 1 + \|\delta M^{Y}\|_{\mathrm{BMO}}.
\end{aligned}
\end{equation}

In addition, by the estimate for Young integrals (see, e.g., \cite[Theorem~5]{diehl2017young}) and \eqref{e:bound for Y p-var}, 
\begin{equation}\label{e:bound for [X] term}
\begin{aligned}
\delta 
I^{[\mathbf X]}_{t,T}
&\lesssim_{\hat K} \big(1 + \E_{t}[\|\delta Y\|_{q\text{-var};[t,T]}] \big) \|\delta [\mathbf X]\|_{\frac{p}{2}\text{-var};[t,T]} \lesssim_{k,\hat K} 1 + \|\delta M^{Y}\|^{\theta}_{\mathrm{BMO}}.
\end{aligned}
\end{equation}

Combining \eqref{e:Ito to Y with smooth X'}, \eqref{e:tilde H <= Z}, \eqref{e:hat Gamma <= Z}, and \eqref{e:bound for [X] term}, 
we see that, with $\theta < 2$,
\begin{equation*}
\|\delta M^{Y}\|^2_{\mathrm{BMO}}  = \esssup_{(t,\omega)\in[0,T]\times\Omega} \E_{t}\left[\delta \langle M^{Y} \rangle_{t,T}\right] \lesssim_{k,\hat K}  1 + \|\delta M^{Y}\|^{\theta}
_{\mathrm{BMO}},
\end{equation*}
and then obtain \eqref{e:BMO <= 1} via
Young's inequality.
\end{proof}

The following Theorem~\ref{thm:BMO bound} gives an estimate for $\thicknm{(Y,-H(Y))}_{\mathrm X;p,p;k,\infty}$. Recall that $\Theta$ is defined in \eqref{e:Theta}, and define $\Theta_2 := (\Theta, C_{f}, R_2)$. 

\begin{theorem}\label{thm:BMO bound} 
Assume $H\in C^{\gamma}_b$ for some $\gamma > p$, and assume further that  \ref{(fdet)} holds.
Suppose that for some constant $R_2 > 0$,
\begin{equation*}
\|\xi\|_{\infty} + \|H\|_{C^{2}_b} + |\delta \mathbf X |_{p\text{-}\mathrm{var};[0,T]} + T \le R_2.
\end{equation*}
Let $(Y,Z)$ be a solution to Eq.~\eqref{e:BSDE} satisfying \eqref{e:cond that Psi < infty} with $k > 1$. 
Then, there is a constant $C_{\gamma,\Theta_2} > 0$ depending only on $(\gamma,\Theta_2)$ such that  
\begin{equation}\label{e:rough BSDE bound}
\left\|Y\right\|_{\infty} + \|\delta M^{Y}\|_{\mathrm{BMO}} \le C_{\gamma,\Theta_2}.
\end{equation}
Moreover, we have that for some constant $C_{k,\gamma,\Theta_2} > 0$ depending only on $(k,\gamma,\Theta_2)$, 
\begin{equation}\label{e:bound (Y,-H(Y))}
\thicknm{(Y,-H(Y))}_{\mathrm X;p,p;k,\infty} \le C_{k,\gamma,\Theta_2}.
\end{equation}
\end{theorem}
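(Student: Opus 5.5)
The plan is to reduce everything to a bound on $\|Y\|_\infty$. Once \eqref{e:rough BSDE bound} holds for $\|Y\|_\infty$, the BMO bound follows from Proposition~\ref{prop:BMO and ess sup} with $\hat K := C + R_2 + C_f$. Then \eqref{e:bound (Y,-H(Y))} follows by inserting that BMO bound into \eqref{e:bound-Psi} of Proposition~\ref{prop:bounded by BMO}, with $K := R_2 + C_f$. If needed, we first enlarge $q'$ so that $q<q'$ and \eqref{e:cond that Psi < infty} is preserved, exactly as at the start of the proof of Proposition~\ref{prop:BMO and ess sup}. So the real content is the sup-norm bound, and the introduction tells us how to get it: compare $Y$ with two deterministic RDE solutions using Theorem~\ref{thm:comparison}.

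\textbf{Construction of the barriers.} Since $f$ is deterministic by \ref{(fdet)}, set $\bar f(t,y):=f(t,y,0)$, which is Lipschitz in $y$ and satisfies $|\bar f(t,0)|\le C_f$. Let $\psi^{\pm}$ be the deterministic solutions of the backward RDEs
\[
\psi^\pm_t=\pm\|\xi\|_\infty+\int_t^T\bar f(r,\psi^\pm_r)\,dr+\int_t^T H(\psi^\pm_r)\,d\mathbf X_r .
\]
These are ordinary (time-reversed) RDEs with an unbounded but Lipschitz drift and $H\in C^\gamma_b$, $\gamma>p$. By the appendix estimates for RDEs with unbounded drift, they exist, are unique, and satisfy $\|\psi^\pm\|_\infty\le C_{\gamma,\Theta_2}$. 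The pair $(\psi^\pm,0)$ solves Eq.~\eqref{e:BSDE} with terminal value $\pm\|\xi\|_\infty$ and generator $f$, because $f(r,\psi_r,0)=\bar f(r,\psi_r)$. Moreover $(\psi^\pm,-H(\psi^\pm))$ is a deterministic controlled path, so the rough stochastic integral coincides with the deterministic rough integral. Since $M^{\psi}=0$ and all conditional norms reduce to deterministic ones, we get $\thicknm{(\psi^\pm,-H(\psi^\pm))}_{\mathrm X;q,q';1,\infty}<\infty$.

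\textbf{Applying comparison.} Take $\xi$ versus $\|\xi\|_\infty$ and $-\|\xi\|_\infty$ versus $\xi$, with the same generator $f$. Then the generator ordering in Theorem~\ref{thm:comparison} holds with equality. For $Y$ itself, $\thicknm{(Y,-H(Y))}_{\mathrm X;q,q';1,\infty}\le\thicknm{(Y,-H(Y))}_{\mathrm X;q,q';k,\infty}<\infty$. Theorem~\ref{thm:comparison} requires $DH\in C^{\gamma-1}_b$ and \ref{(gamma)}. The first holds since $H\in C^\gamma_b$. For the second we choose $q\in(p,\gamma)$ close to $p$ and $q'$ large, so that $\gamma-1\ge (q+q')/q'$ and $\frac1p+\frac1q+\frac1{q'}>1$. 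This works because $\gamma>p>2$, and raising $q'$ only weakens the norm. Comparison then gives $\psi^-_t\le Y_t\le\psi^+_t$, hence $\|Y\|_\infty\le C_{\gamma,\Theta_2}$.

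\textbf{Main obstacle.} The hard step is the uniform bound on the deterministic RDE with Lipschitz, linearly growing drift $\bar f$. The standard RDE theory assumes bounded vector fields, so this is where the appendix result on RDEs with unbounded drift is needed. I would first try a Gronwall-type argument on small intervals of a control partition. Absent that result, a fallback is to truncate $\bar f$ at level $|y|\le C$ and use a priori growth estimates to show the truncation is inactive. A secondary technical point is the compatibility of the parameter choices across the three steps: the $q,q'$ used in comparison may differ from those in \eqref{e:cond that Psi < infty}. I would handle this by monotonicity of variation norms and the freedom to enlarge $q'$.
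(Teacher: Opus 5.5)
Your proof is correct and follows essentially the paper's argument: deterministic barrier RDEs with terminal values $\pm\|\xi\|_\infty$ and drift $f(\cdot,\cdot,0)$, solved and bounded via Proposition~\ref{prop:RDE with drift}, compared with $Y$ through Theorem~\ref{thm:comparison}, after which Propositions~\ref{prop:BMO and ess sup} and~\ref{prop:bounded by BMO} give the BMO and seminorm bounds. Your explicit check of \ref{(gamma)} addresses a point the paper's proof passes over, with two small corrections: $q'$ cannot simply be taken ``large'' but must lie in the window $q/(\gamma-2)\le q'<(1-\frac1p-\frac1q)^{-1}$ (nonempty for $q$ near $p$ because $\gamma>p$), and lowering $q$ is not covered by ``raising $q'$ weakens the norm'' — finiteness for the new pair is best obtained from the finiteness of $\thicknm{(Y,-H(Y))}_{\mathrm X;p,p;k,\infty}$ established in Proposition~\ref{prop:bounded by BMO}, which dominates every $\thicknm{\bm\cdot}_{\mathrm X;q,q';k,\infty}$ with $p\le q\le q'$.
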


\begin{proof}
Without loss of generality, one can choose a sufficiently larger $q'$ so that $q < q'$ and conditions \ref{(p,q,q')} and \eqref{e:cond that Psi < infty} still hold. 

We first establish an upper bound for $\|Y\|_{\infty}$. Consider the following (deterministic) RDEs:
\begin{equation*}
\begin{cases}\displaystyle
\overline Y_{t} = \overline{C} + \int_{t}^{T}f(r,\overline Y_r,0)dr + \int_{t}^{T} H(\overline Y_r) d\mathbf X_{r}\\ \displaystyle
\underline{Y}_{t} = \underline{C} + \int_{t}^{T}f(r,\underline{Y}_r,0)dr + \int_{t}^{T} H(\underline{Y}_r) d\mathbf X_{r},
\end{cases}
\end{equation*}
where  $\overline{C} := \|\xi\|_{\infty}$ and $\underline{C} := -\|\xi\|_{\infty}$.
Noting $H \in C^{\gamma}_b$, by Proposition~\ref{prop:RDE with drift}, the above two equations admit unique solutions $\overline Y$ and $\underline{Y}$, respectively, such that $(\overline Y,-H(\overline Y)),\,(\underline{Y},-H(\underline{Y})) \in \mathscr{D}^{(p,p)\text{-}\mathrm{var}}_{\mathrm X}$.
Since $\overline Y$ (resp. $\underline{Y}$) is deterministic, $(\overline Y,0)$ (resp. $(\underline{Y},0)$) uniquely solves BSDE~\eqref{e:BSDE} with $\xi$ replaced by $\overline{C}$ (resp. $\underline{C}$). Thus, we have 
\begin{equation*}
\begin{aligned}
&\thicknm{(\overline Y,-H(\overline Y))}_{\mathrm X;p,p;k,\infty} + \thicknm{(\underline{Y},-H(\underline{Y}))}_{\mathrm X;p,p;k,\infty} \\
&\le \|(\overline Y,-H(\overline Y))\|_{\mathscr{D}^{(p,p)\text{-}\mathrm{var}}_{\mathrm X}} + \|(\underline{Y},-H(\underline{Y}))\|_{\mathscr{D}^{(p,p)\text{-}\mathrm{var}}_{\mathrm X}} + 2\|H(\bm\cdot)\|_{\infty} < \infty. 
\end{aligned}
\end{equation*}
Therefore, we can apply Theorem \ref{thm:comparison}, noting $\underline{C} \le \xi \le \overline{C}$, to get
\begin{equation}\label{e:Y <= Y <= Y} 
\underline{Y}_t \le Y_t \le \overline Y_t.
\end{equation}
In addition, by the \emph{a priori} estimate for RDEs shown in Proposition~\ref{prop:RDE with drift}, we have 
\begin{equation*}
|\overline Y_t| \vee |\underline{Y}_t| \le C_{\gamma,\Theta_2}, \text{ for all } t\in[0,T],
\end{equation*}
for some constant $C_{\gamma,\Theta_2}$. This together with \eqref{e:Y <= Y <= Y} yields $\|Y\|_\infty\le C_{\gamma,\Theta_2}$. 

Finally, by Propositions~\ref{prop:BMO and ess sup} and then \ref{prop:bounded by BMO}  (noting $k > 1$ and $q < q'$ are required), the boundedness of $\|Y\|_{\infty}$ implies \eqref{e:rough BSDE bound} and \eqref{e:bound (Y,-H(Y))}.
\end{proof}

\begin{remark}\label{rem:deterministic}
In the above theorem, the function $f$ can be random, provided that it is uniformly bounded from above and below by two deterministic uniformly Lipschitz continuous functions. In contrast, Theorem~\ref{thm:comparison} does not permit a comparison in the coefficient $H(\bm\cdot)$, which serves as the integrand in the rough integral. Consequently, it is nontrivial to extend  $H(\bm\cdot)$ to a stochastic (controlled) field, as the forward case considered in \cite{fhl21,allan2024rough}.
\end{remark}

\begin{remark}\label{rem:Why 1-d}
In this paper, we focus on the $1$-dimensional setting, but see Theorem~\ref{thm:multi-d conditional} for a conditional multidimensional result. The main reason is that the \emph{a priori} estimate (of Theorem~\ref{thm:BMO bound}) relies heavily on the comparison theorem (Theorem~\ref{thm:comparison}) which is not applicable to multi-dimensional cases in general.\footnote{The situation is similar in viscosity PDE theory.}
To see this, Proposition~\ref{prop:bounded by BMO} and \ref{prop:BMO and ess sup} show that once the essential supremum of the modulus of the solution $Y$ is bounded, the \emph{RSM} seminorm is bounded as well. However, since $Z$ is also unknown in the equation, it is difficult to obtain \emph{a priori} estimates for $Y$ using the classical \emph{RDE} approach (e.g., \cite[Proposition~8.2]{friz2020course}). We refer to Liang and Tang \cite{liang2025multidimensional} for some special multidimensional cases via Doss--Sussmann transformation.
\end{remark}

Proposition~\ref{prop:classical BSDEs} shows that the solutions to classical BSDEs satisfy condition~\eqref{e:cond that Psi < infty}.

\begin{proposition}\label{prop:classical BSDEs}
Let $\mathbf X \in \mathscr{C}^{p\text{-}\mathrm{var}}([0,T],\R^d)$ be the canonical lift of a smooth path $\mathrm X: [0,T]\to \R^d$. Suppose that $\xi \in L^{\infty}$, and that \ref{(f)} holds. 
Then, there exists a unique solution to Eq.~\eqref{e:BSDE} such that for any $k\ge 1$,
\begin{equation}\label{e:quadratic integrability}
\thicknm{(Y,-H(Y))}_{\mathrm X;p,p;k,\infty}< \infty.
\end{equation}
\end{proposition}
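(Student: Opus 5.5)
The plan is to reduce the problem to a classical Lipschitz BSDE and then verify the rough semimartingale bounds by hand, using that $\dot{\mathrm X}:=\partial_r\mathrm X$ is bounded. By Remark~\ref{rem:degenerate Lebesgue}, for the canonical lift of a smooth path and any continuous adapted $Y$, we have $\int_t^T H(Y_r)\,d\mathbf X_r=\int_t^T H(Y_r)\dot{\mathrm X}_r\,dr$. So Eq.~\eqref{e:BSDE} coincides with the classical BSDE with generator
\[
g(r,y,z):=f(r,y,z)+H(y)\dot{\mathrm X}_r .
\]
Since $H\in C^2_b$ (the standing assumption of this subsection) and $\|\dot{\mathrm X}\|_\infty<\infty$, $g$ satisfies \ref{(f)} with constant $C_g:=C_f+\|H\|_{C^1_b}\|\dot{\mathrm X}\|_\infty$. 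The classical theory for Lipschitz BSDEs with bounded terminal value then gives a unique solution $(Y,Z)$ with $\E[\sup_t|Y_t|^2+\int_0^T|Z_r|^2dr]<\infty$.

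The first key step is to show that $Y$ is bounded and $M^Y=\int_0^\cdot Z\,dW$ is BMO. Boundedness follows from the standard linearization $g(r,Y,Z)-g(r,0,0)=\beta_rY_r+\theta_rZ_r$ with $\beta,\theta$ bounded, followed by a Girsanov change of measure. This yields $|Y_t|\le e^{C_gT}(\|\xi\|_\infty+TC_g)$. Applying Itô's formula to $|Y|^2$ and taking $\E_t$ gives
\[
\E_t\Big[\int_t^T|Z|^2dr\Big]\le \|\xi\|_\infty^2+2\|Y\|_\infty\,\E_t\Big[\int_t^T C_g(1+|Y_r|+|Z_r|)\,dr\Big].
\]
Absorbing the $|Z|$ term via Young's inequality then shows that $\|\delta M^Y\|_{\mathrm{BMO}}<\infty$.

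Next comes the RSM structure with $Y'=-H(Y)$. Writing
\[
\delta Y_{s,t}=-H(Y_s)\delta\mathrm X_{s,t}+P^Y_{s,t}+\delta M^Y_{s,t},
\qquad
P^Y_{s,t}=-\int_s^t\big(H(Y_r)-H(Y_s)\big)\dot{\mathrm X}_r\,dr-\int_s^t f(r,Y_r,Z_r)\,dr,
\]
we obtain $|P^Y_{u,v}|\le w(u,v):=\int_u^v\big(C+C_f|Z_r|\big)\,dr$ with $C$ depending on $\|H\|_\infty,\|\dot{\mathrm X}\|_\infty,C_f,\|Y\|_\infty$. Since $w$ is superadditive, $\|P^Y\|_{\frac p2\text{-var};[s,t]}\le w(s,t)$. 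By Cauchy--Schwarz, $w(s,t)\le C(t-s)+C_f(t-s)^{1/2}\big(\int_s^t|Z|^2dr\big)^{1/2}$. Its conditional $k$-th moments are then bounded uniformly by $\|\delta M^Y\|_{\mathrm{BMO}}$, using the energy inequality (Lemma~\ref{lem:Energy inequality}) applied to the BMO process $\langle M^Y\rangle$. This takes care of the $P^Y$ part of the seminorm $\thicknm{\cdot}_{\mathrm X;p,p;k,\infty}$. The remaining parts are handled as follows:
\begin{itemize}
\item $\|Y'\|_\infty\le\|H\|_\infty$ is immediate.
\item $\|\delta H(Y)\|_{p\text{-var}}\le\|DH\|_\infty\|\delta Y\|_{p\text{-var}}$, and the conditional moments of $\|\delta Y\|_{p\text{-var};[s,t]}$ are controlled by three contributions: $\|H\|_\infty\|\delta\mathrm X\|_{p\text{-var}}$, the bound on $P^Y$ above, and L\'epingle's inequality (Lemma~\ref{lem:Lepingle}) for the martingale part, which is controlled by the BMO norm.
\end{itemize}
Finally, $(Y-M^Y,-H(Y))\in\mathscr D^{(p,p)\text{-var}}_{\mathrm X}$ a.s. because each of these quantities is a.s. finite. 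Hence $(Y,Z)$ is a solution in the sense of Definition~\ref{def:def of BSDEs} and satisfies \eqref{e:quadratic integrability} for every $k\ge1$.

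For uniqueness, let $(\tilde Y,\tilde Z)$ be any solution with $\thicknm{(\tilde Y,-H(\tilde Y))}_{\mathrm X;p,p;k,\infty}<\infty$. By Remark~\ref{rem:degenerate Lebesgue} it solves the same classical BSDE. By \eqref{e:Y q-var infty infty} in Lemma~\ref{lem:Y <= R^Y'}, $\tilde Y$ is bounded, and $\int\tilde Z\,dW$ is BMO, hence square integrable. Classical uniqueness in $S^2\times H^2$ then gives $(\tilde Y,\tilde Z)=(Y,Z)$.

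I expect the main obstacle to be the uniform conditional control of $\int_s^t|Z_r|\,dr$ that enters $P^Y$ through the $z$-dependence of $f$, since this needs the BMO property of $M^Y$ and not merely square integrability. That is why the Itô/BMO step has to be carried out carefully before the seminorm estimates.
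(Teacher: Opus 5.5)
Your proof is correct, and its existence half matches the paper's. In both, Remark~\ref{rem:degenerate Lebesgue} reduces the equation to a classical Lipschitz BSDE. Then boundedness of $Y$ and the BMO property of $M^Y$ feed into the bound $|P^Y_{u,v}|\le\int_u^v\bigl(|H(Y_r)-H(Y_u)|\,|\partial_r\mathrm X_r|+|f(r,Y_r,Z_r)|\bigr)dr$, and Lépingle's inequality handles $\delta Y$ and $\delta H(Y)$. The only difference there is that the paper cites the classical theory for boundedness and BMO, while you rederive them via Girsanov and Itô's formula.

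The genuine difference is in uniqueness. The paper invokes Corollary~\ref{cor:uniqueness}, which rests on the RSM comparison theorem (exponential representation plus John--Nirenberg). You instead note that any competitor with finite seminorm has $\tilde Y$ bounded by \eqref{e:Y q-var infty infty} and $\int\tilde Z\,dW$ in BMO, since BMO is part of the seminorm. So the competitor lies in $\mathcal S^2\times\mathcal L^2$, and classical uniqueness applies.

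Your route is more elementary and uses only Lipschitz $H$. By contrast, Corollary~\ref{cor:uniqueness} formally needs $DH\in C^{\gamma-1}_b$ under \ref{(gamma)}, which is more than the standing $C^2_b$ assumption of that subsection. Your argument also loses no generality: it gives uniqueness in the larger class $\thicknm{(\tilde Y,-H(\tilde Y))}_{\mathrm X;q,q';1,\infty}<\infty$ that the corollary covers. This is because \ref{(p,q,q')} forces $q''/2<p/(p-1)<p$, so Lemma~\ref{lem:Y <= R^Y'} still applies there.
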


\begin{proof}
The well-posedness of Eq.~\eqref{e:BSDE} follows from the classical BSDE theory. Indeed, by \cite[Theorem~7.3.3]{zhang2017backward}, there exists a unique pair $(Y,Z)$ such that $Y$ is continuous, adapted and bounded, $Z$ is progressively measurable and quadratically integrable, and the equation \eqref{e:BSDE} holds. Moreover, clearly $Y$ is a semimartingale, and the rough stochastic integral $\int H(Y_r) d\mathbf X_r$ coincides with the Lebesgue integral $\int H(Y_r) \partial_{r}\mathrm X_r dr$. Thus, $(Y,-H(Y)) \in \mathrm{RSM}^{(p,p)\text{-}\mathrm{var}}_{\mathrm X}$, and $(Y,Z)$ solves Eq.~\eqref{e:BSDE} in the sense of Definition~\ref{def:def of BSDEs}. 

Furthermore, by \cite[Theorem~7.2.1]{zhang2017backward}, the process $M^{Y}_t = \int_{0}^{t}Z_r dW_r$ is a \textrm{BMO} martingale. Let $I^{\mathbf X}, \mathcal I$ be defined by \eqref{e:def of Gamma}. Then, for $(s,t)\in\Delta$, we have $P^{Y}_{s,t} = - \delta (I^{\mathbf X} + \mathcal I)_{s,t} + H(Y_{s}) \delta \mathrm X_{s,t}$, and hence
\begin{equation*}
\begin{aligned}
\Big\| \|P^{Y}\|_{\frac{p}{2}\text{-}\mathrm{var}} \Big\|_{\infty;k,\infty} &\le \sup_{s\in[0,T]} \Big\| \int_{s}^{T} |H(Y_r) - H(Y_{s})| \cdot |\partial_{r}  \mathrm X_{r}| dr + \int_{s}^{T} |f(r,Y_{r},Z_{r})| dr \Big| \mathcal F_{s} \Big\|_{k,\infty} \\
&\lesssim \|H\|_{\infty} \sup_{r\in[0,T]}|\partial_{r}\mathrm X_r| T + C_{f} (T + \|Y\|_{\infty} T + \|\delta M^Y\|_{\mathrm{BMO}} T^{\frac{1}{2}}) < \infty.
\end{aligned}
\end{equation*}
Similarly, it also holds $\|\|\delta(I^{\mathbf X} + \mathcal I)\|_{p\text{-}\mathrm{var}}\|_{\infty;k,\infty} < \infty$. Noting that $\delta Y_{s,t} = - \delta I^{\mathbf X}_{s,t}  - \delta \mathcal I_{s,t} + \delta M^{Y}_{s,t}$, and that $\|\|\delta M^Y\|_{p\text{-}\mathrm{var}}\|_{\infty;k,\infty}\lesssim_k \|\delta M^Y\|_{\mathrm{BMO}} < \infty$ (see Lemma~\ref{lem:Lepingle}), we get $\|\|\delta Y\|_{p\text{-}\mathrm{var}}\|_{\infty;k,\infty} < \infty$, and hence
$
\|\|\delta H(Y)\|_{p\text{-}\mathrm{var}}\|_{\infty;k,\infty} < \infty.
$
Consequently, the desired \eqref{e:quadratic integrability} is obtained, which, combined with Corollary~\ref{cor:uniqueness}, also implies that $(Y,Z)$ is the unique solution to Eq.~\eqref{e:BSDE}. 
\end{proof}

\subsection{Well-posedness of Rough BSDEs}\label{subsec:Rough BSDE with low regularity H} 

In this section, we establish the well-posedness of Eq.~\eqref{e:BSDE} in the sense of Definition~\ref{def:def of BSDEs}. We first define the solution map, and then prove the invariance and the contraction of the solution map on small intervals. Next, a solution is constructed via Picard's iteration, and then the local well-posedness together with the \emph{a priori} estimate of solutions, allows us to extend the unique solution globally. After proving the well-posedness of the solution, we establish the stability of the solution with respect to coefficients. Throughout this subsection, we assume \ref{(gamma)}. 

To facilitate the statement of Lemma~\ref{lem:solu map}, let $R > 0$ be a constant such that 
\begin{equation*}
\|H\|_{C^{\gamma - 1}_b} + |\delta \mathbf X|_{p\text{-}\mathrm{var};[0,T]} + T \le R,
\end{equation*}
and denote $\bar \Theta := (\Theta,C_f,R)$. 

\begin{lemma}\label{lem:solu map}
Let $(s,t) \in \Delta$ and $k \ge 1$. Assume \ref{(gamma)}, \ref{(f)}, $\mathbf X = (\mathrm X,\mathbb X) \in \mathscr C^{p\text{-}\mathrm{var}}([s,t],\R^d)$, $\xi \in L^{\infty}_{t}$, and $H \in C^{\gamma - 1}_b$. Let $(Y,Y') \in \mathrm{RSM}^{(p,p)\text{-}\mathrm{var}}_{\mathrm X}([s,t],\R)$, $Z$ be a progressively measurable process with $\int_{s}^{t} | Z_r |^2 dr < \infty$ a.s. Assume $\|Y_t\|_{\infty} < \infty$, $\thicknm{(Y,Y')}_{\mathrm X;q,q';k,\infty;[s,t]} < \infty$, and $\delta M^{Y}_{s,t} = \int_{s}^{t} Z_r dW_r$. 

Then, there exists a unique triplet $(\tilde Y,-H(Y),\tilde Z)$ such that $(\tilde Y,- H (Y)) \in \mathrm{RSM}^{p,p}_{\mathrm X}([s,t],\R)$, and $\tilde Z$ is a progressively measurable, $\int_{s}^{t} | \tilde Z_r |^2 dr < \infty$ a.s., $\thicknm{(\tilde Y,- H (Y))}_{\mathrm X;q,q';k,\infty;[s,t]} < \infty$, $\delta M^{\tilde Y}_{s,t} = \int_{s}^t \tilde Z_r dW_r$, and 
\begin{equation}\label{e:solu map} 
\tilde Y_v = \xi + \int_{v}^{t} f(r,Y_r,Z_r) dr + \int_v^t H(Y_r) d\mathbf X_r - \int_v^t \tilde Z_r dW_r,\quad v \in [s,t]. 
\end{equation}
Moreover, we have 
\begin{equation}\label{e:bound for solution map}
\begin{aligned}
&\thicknm{(\tilde Y,- H (Y))}_{\mathrm X;q,q';k,\infty;[s,t]} \lesssim_{k,\gamma,\bar \Theta} 1 + \|\xi\|_{\infty} + \|Y_t\|_{\infty} \\
&\qquad + \Big(|\delta \mathbf X|_{p\text{-}\mathrm{var};[s,t]} + (t - s)^{\frac{1}{2}}\Big) \thicknm{( Y,Y')}^{\gamma - 1}_{\mathrm X;q,q';k,\infty;[s,t]} + \thicknm{( Y,Y')}^{\,p/q'}_{\mathrm X;q,q';k,\infty;[s,t]}. 
\end{aligned}
\end{equation}
\end{lemma}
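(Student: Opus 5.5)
The plan is to build $\tilde Y$ by martingale representation and then estimate each piece of the seminorm. Set $\Lambda_v := \int_s^v H(Y_r)\,d\mathbf X_r$ and $\mathcal I_v := \int_s^v f(r,Y_r,Z_r)\,dr$. The random variable $\xi + \delta\Lambda_{s,t} + \delta\mathcal I_{s,t}$ is $\mathcal F_t$-measurable. I will check that it is integrable using the bounds below, in which all conditional moments are finite. Define
\[
\tilde Y_v := \E_v\big[\xi + \delta\Lambda_{v,t} + \delta\mathcal I_{v,t}\big].
\]
Then $N_v := \E_v[\xi + \delta\Lambda_{s,t} + \delta\mathcal I_{s,t}]$ is a martingale for the Brownian filtration. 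The martingale representation theorem gives a unique progressively measurable $\tilde Z$ with $N_v = N_s + \int_s^v \tilde Z_r\,dW_r$. By construction, $\tilde Y_v = N_v - \delta\Lambda_{s,v} - \delta\mathcal I_{s,v}$, and this is exactly \eqref{e:solu map}. It also shows $M^{\tilde Y} = \int \tilde Z\,dW$ and $P^{\tilde Y} = -R^{\Lambda} - \delta\mathcal I$, with Gubinelli derivative $-H(Y)$.

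By Remark~\ref{rem:Gamma-RSM} we have $(\Lambda, H(Y)) \in \mathscr D^{(p,p)\text{-var}}$ a.s. Since $\mathcal I$ has finite variation, $(\tilde Y,-H(Y)) \in \mathrm{RSM}^{(p,p)}$. Uniqueness is immediate: any two such triplets differ by a continuous local martingale that vanishes at $t$ and has finite variation, so they coincide.

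For the estimate \eqref{e:bound for solution map}, I bound the four pieces of $\thicknm{(\tilde Y,-H(Y))}$ one at a time.

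\emph{(i) The derivative part.} The uniform norm $\|H(Y)\|_\infty \le R$. For the variation, $\|\delta H(Y)\|_{q'\text{-var}} \le \|DH\|_\infty\,\|\delta Y\|_{q'\text{-var}}$. Combining this with Lemma~\ref{lem:Y <= R^Y'} gives a bound of order $1 + \|Y_t\|_\infty + \thicknm{(Y,Y')}$. To reach the stated power $p/q'$, I will instead interpolate: write $\|\cdot\|_{q'\text{-var}} \le \|\cdot\|_{p\text{-var}}^{p/q'}\,\|\cdot\|_\infty^{1-p/q'}$, use the uniform bound $\|H(Y)\|_\infty \le R$ on the second factor, and apply Jensen to the conditional moment.

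\emph{(ii) The remainder part.} Proposition~\ref{prop:estim of integral} applied to $G(Y) = H(Y)$, together with Lemma~\ref{lem:int of RSM'} (or Proposition~\ref{prop:G(Y) in D}) to control the RSM seminorm of $(H(Y), DH(Y)Y')$, bounds $\|\|R^\Lambda\|_{p/2\text{-var}}\|_{\infty;k,\infty}$ by
\[
|\delta\mathbf X|_{p\text{-var};[s,t]}\,\big(1 + \thicknm{(Y,Y')}^{\gamma-1}\big)
\]
up to constants depending on $(\|Y_t\|_\infty,\bar\Theta)$. For the drift, the Lipschitz bound on $f$ plus Cauchy--Schwarz gives
\[
\int_s^t |f(r,Y_r,Z_r)|\,dr \lesssim (t-s)\big(1 + \|Y\|_\infty\big) + (t-s)^{1/2}\,\|\delta M^Y\|_{\mathrm{BMO}},
\]
and Lemma~\ref{lem:Y <= R^Y'} controls $\|Y\|_\infty$.

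\emph{(iii) The martingale part.} Taking $\E_u$ of $\big(\delta M^{\tilde Y}_{u,t}\big)^2$ and using the identity
\[
\delta M^{\tilde Y}_{u,t} = \xi - \tilde Y_u + \delta\Lambda_{u,t} + \delta\mathcal I_{u,t},
\]
as in Step~1 of Proposition~\ref{prop:bounded by BMO}, bounds $\|\delta M^{\tilde Y}\|_{\mathrm{BMO}}$ by $\|\xi\|_\infty$ plus the $(2,\infty)$ conditional norms of $\delta\Lambda$ and $\delta\mathcal I$. The $\delta\Lambda$ norm is handled by Corollary~\ref{cor:int of the solution} (or \eqref{e:Psi alpha'}), which gives a bound of order $|\delta\mathbf X|\,(1+\thicknm{(Y,Y')}^{\gamma-1})$. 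Lemma~\ref{lem:BMO of q-var} lets me pass between the $k$- and $2$-conditional moments, so the choice of $k$ does not matter here.

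The main obstacle is bookkeeping the exponents so that every contribution of $\thicknm{(Y,Y')}$ carries a small factor, either $|\delta\mathbf X|_{p\text{-var};[s,t]}$ or $(t-s)^{1/2}$, except for the $p/q'$ term from part (i). The delicate term is $K^4$, the martingale-transform term inside the composition estimate of Lemma~\ref{lem:int of RSM'}. There I need the $(k,\infty)$ norms at different integrability levels to be comparable, which is exactly what Lemma~\ref{lem:BMO of q-var} supplies. If the $\gamma-1$ power does not come out directly from that step, I will apply Young's inequality to absorb lower powers into $1 + \thicknm{(Y,Y')}^{\gamma-1}$.
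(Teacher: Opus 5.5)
Your construction of $(\tilde Y,\tilde Z)$ and your derivation of \eqref{e:bound for solution map} follow the paper's proof step by step. That means $\tilde Y$ as the conditional expectation \eqref{e:tilde-Y}, $\tilde Z$ from martingale representation, $P^{\tilde Y}=-R^{\Lambda}-\delta\mathcal I$ controlled through Proposition~\ref{prop:estim of integral} and Lemma~\ref{lem:int of RSM'}, the BMO norm of $M^{\tilde Y}$ through a conditional $L^2$ bound plus Lemma~\ref{lem:BMO of q-var}, and the $p/q'$ power through interpolation. Two small corrections apply there.
\begin{itemize}
\item Since $H$ is only $C^{\gamma-1}_b$, the alternatives you list (Proposition~\ref{prop:G(Y) in D}, Corollary~\ref{cor:int of the solution}) are not available. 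They need $C^2_b$, and the corollary would produce $\|\delta M^Y\|_{\mathrm{BMO}}^{2/(q-2)}$, a power that can exceed $\gamma-1$ and cannot be absorbed by Young's inequality. You must go through Lemma~\ref{lem:int of RSM'} and \eqref{e:Psi alpha'}.
\item The constants do not depend on $\|Y_t\|_\infty$, contrary to your hedge. Lemma~\ref{lem:int of RSM'} involves no sup-norm of $Y$, and the drift contributes $\|Y_t\|_\infty$ only additively. This matters, because the statement allows dependence on $(k,\gamma,\bar\Theta)$ only.
\end{itemize}

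The genuine gap is the uniqueness step. Suppose two triplets $(\tilde Y^i,-H(Y),\tilde Z^i)$ satisfy \eqref{e:solu map}. Then $D:=\tilde Y^1-\tilde Y^2$ satisfies $D_v=L_v-L_t$ with $L:=\int_s^{\cdot}(\tilde Z^1_r-\tilde Z^2_r)\,dW_r$, and adaptedness of $D$ only tells you that $L_t$ is $\mathcal F_s$-measurable. Nothing here has finite variation: both solutions share the pathwise remainder $-R^{\Lambda}-\delta\mathcal I$, so $D$ is purely a local-martingale term. The finite-variation trick of Corollary~\ref{cor:uniqueness} applies only once $Y^1=Y^2$ is known, when $\int(Z^1-Z^2)\,dW$ equals a $dr$-integral.

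Moreover, a continuous local martingale with $L_s=0$ and $\mathcal F_s$-measurable $L_t$ need not vanish. By Dudley's representation theorem there are progressively measurable $\phi\not\equiv0$ with $\int_s^t|\phi_r|^2dr<\infty$ a.s. and $\int_s^t\phi_r\,dW_r=0$. So uniqueness fails without the class condition, and your argument never uses it.

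The fix is the paper's. Finiteness of $\thicknm{(\tilde Y^i,-H(Y))}_{\mathrm X;q,q';k,\infty;[s,t]}$ includes $\|\delta M^{\tilde Y^i}\|_{\mathrm{BMO};[s,t]}<\infty$, so $\int\tilde Z^i\,dW$ are true martingales. Taking $\E_v$ in \eqref{e:solu map} then forces $\tilde Y^i_v$ to equal \eqref{e:tilde-Y}, hence $D\equiv0$. Finally $\int_s^{\cdot}(\tilde Z^1_r-\tilde Z^2_r)\,dW_r\equiv0$ gives $\tilde Z^1=\tilde Z^2$, $dr\otimes d\mathbb P$-a.e.
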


\begin{proof}
For simplicity, we denote $I^{\mathbf X}_v := \int_s^v H(Y_r) d\mathbf X_r$ and $\mathcal I_v := \int_{s}^v f(r,Y_r,Z_r) dr$, $v \in [s,t]$. 	

By Lemma~\ref{lem:BMO of q-var}, and by conditions $\thicknm{(Y,Y')}_{\mathrm X;q,q';k,\infty;[s,t]} < \infty$ and $\xi \in L^{\infty}_{t}$,  
\begin{equation}\label{e:xi Gamma Gamma}
\begin{aligned}
&\left\|\xi\right\|_2 + \Big\| \big\| \delta \mathcal I \big\|_{1\text{-}\mathrm{var};[s,t]} + \big\|\delta I^{\mathbf X} \big\|_{p\text{-var};[s,t]} \Big\|_{2} \\
&\lesssim_k \|\xi\|_{\infty} + \big\| \|\delta \mathcal I\|_{1\text{-}\mathrm{var}} \big\|_{\infty;k,\infty;[s,t]} + \big\| \| \delta I^{\mathbf X}\|_{p\text{-}\mathrm{var}} \big\|_{\infty;k,\infty;[s,t]} \\ 
&\lesssim_{k,\gamma,\bar \Theta} \|\xi\|_{\infty} + (|t - s| + |t - s|^{\frac{1}{2}}) \left(1 + \|Y\|_{\mathcal L^{\infty}([s,t] \times \Omega)} + \|\delta M^Y\|_{\mathrm{BMO};[s,t]} \right) \\[1mm]
&\quad\quad +|\delta \mathbf X|_{p\text{-}\mathrm{var};[s,t]} \left( 1 + \thicknm{(Y,Y')}^{\gamma - 1}_{\mathrm X;q,q';k,\infty;[s,t]} \right)\\
&\lesssim_{\bar \Theta} \|\xi\|_{\infty}  + \|Y_t\|_{\infty}+ (|\delta \mathbf X|_{p\text{-}\mathrm{var};[s,t]} + |t - s|^{\frac{1}{2}}) \left( 1 + \thicknm{(Y,Y')}^{\gamma - 1}_{\mathrm X;q,q';k,\infty;[s,t]} \right),
\end{aligned}
\end{equation}
where we apply the following inequalities,
\begin{equation}\label{e:mathcal I <=}
\begin{aligned}		
\big\|\delta \mathcal I \big\|_{1\text{-}\mathrm{var};[v,t]} &\le C_f \Big|\int_{v}^{t} ( 1+|Y_{r}|+|Z_{r}| )dr\Big|\\
&\lesssim_{\bar \Theta} |t - v|\Big(1 + |Y_t| + \|Y\|_{p\text{-}\mathrm{var};[v,t]}\Big) + |t -  v|^{\frac{1}{2}} \Big|\int_{v}^{t}|Z_{r}|^{2}dr\Big|^{\frac{1}{2}},
\end{aligned}
\end{equation}
and by \eqref{e:Psi alpha'} in Lemma~\ref{lem:int of RSM'} (letting $p' = pq'/q$ therein so that $\gamma - 2 \ge p/p'$), 
\begin{equation*}
\begin{aligned}
\big\| \big\|\delta I^{\mathbf X} \big\|_{p\text{-var}} \big\|_{\infty;k,\infty;[s,t]} 
\lesssim_{k,\gamma,\bar \Theta} |\delta \mathbf X|_{p\text{-}\mathrm{var};[s,t]} \left( 1 + \thicknm{(Y,Y')}^{\gamma - 1}_{\mathrm X;q,q';k,\infty;[s,t]} \right). 
\end{aligned}
\end{equation*}
Hence, since $\|\xi\|_2 + \big\|\delta (\mathcal I +  I^{\mathbf X})_{v,t} \big\|_2 \le \|\xi\|_2 + \big\| \|\delta \mathcal I\|_{1\text{-}\mathrm{var};[s,t]} + \|\delta I^{\mathbf X}\|_{p\text{-}\mathrm{var};[s,t]} \big\|_2$, \eqref{e:xi Gamma Gamma} yields 
\begin{equation*}
\|\xi\|_2 + \sup_{v \in [s,t]}\big\|\delta \mathcal I_{v,t} + \delta I^{\mathbf X}_{v,t} \big\|_2 < \infty.
\end{equation*}
Then, by the martingale representation theorem for $L^2$-integrable random variable (see, e.g., \cite[Problem~4.17]{karatzas1991brownian}), there exists a unique progressively measurable
process $\tilde Z$ such that a.s., 
\begin{align*}
\int_v^t \tilde Z_{r} dW_r = \xi + \delta \mathcal I_{s,t} + \delta I^{\mathbf X}_{s,t} - \mathbb{E}_v\left[\xi + \delta \mathcal I_{s,t} + \delta I^{\mathbf X}_{s,t}\right]\text{ for all }v\in[s,t].
\end{align*}
Let 
\begin{equation}\label{e:tilde-Y}
\tilde{Y}_{v} = \mathbb{E}_{v}\left[\xi + \delta \mathcal I_{v,t} + \delta I^{\mathbf X}_{v,t} \right],~ v \in [s,t].
\end{equation}
Then $(\tilde Y, \tilde Z)$ satisfies \eqref{e:solu map}, and hence as explained under Definition~\ref{def:def of BSDEs}, one can directly check that $(\tilde Y, -H(Y)) \in \mathrm{RSM}^{(p,p)\text{-}\mathrm{var}}_{\mathrm X}([s,t],\R)$ with $\delta M^{\tilde Y}_{s,t} = \int_{s}^t \tilde Z_r dW_r$. 

Now we prove \eqref{e:bound for solution map}. By the lower bound of the BDG inequality and the estimate \eqref{e:xi Gamma Gamma}, we get 
\begin{align}\nonumber
\|\delta M^{\tilde Y}\|_{\mathrm{BMO};[s,t]} &\lesssim \sup_{r \in [s,t]} \big\|  \sup_{v\in[r,t]} |\delta M^{\tilde Y}_{r,v}| \big|\mathcal F_{r} \big\|_{2,\infty} \\ \label{e:M^Y BMO <= sup}
&\lesssim_k \|\xi\|_{\infty} + \big\| \|\delta \mathcal I\|_{1\text{-}\mathrm{var}} \big\|_{\infty;k,\infty;[s,t]} + \big\| \| \delta I^{\mathbf X}\|_{p\text{-}\mathrm{var}} \big\|_{\infty;k,\infty;[s,t]} \\ \nonumber
&\lesssim_{k,\gamma,\bar \Theta} 
\|\xi\|_{\infty} +  \|Y_t\|_{\infty}
+ (|\delta \mathbf X|_{p\text{-}\mathrm{var};[s,t]} + |t - s|^{\frac{1}{2}}) \left( 1 + \thicknm{(Y,Y')}^{\gamma - 1}_{\mathrm X;q,q';k,\infty;[s,t]} \right), 
\end{align}
where the second inequality is due to Doob's maximal inequality so that  
\begin{equation*}
\big\|\sup_{v \in [r,t]} |\delta M^{\tilde Y}_{r,v}| \big| \mathcal F_r \big\|_{2,\infty} \lesssim \big\| \delta M^{\tilde Y}_{r,t} \big| \mathcal F_r \big\|_{2,\infty} = \big\|\delta (\tilde Y + \mathcal I + I^{\mathbf X})_{r,t} \big| \mathcal F_r \big\|_{2,\infty} 
\end{equation*}
and by the equation $\delta \tilde Y_{r,t} = \xi - \E_{r}[\xi] - \E_{r}[\delta (\mathcal I + I^{\mathbf X})_{r,t}]$ and Lemma~\ref{lem:BMO of q-var} so that 
\begin{equation*}
\big\|\delta (\tilde Y + \mathcal I + I^{\mathbf X})_{r,t} \big| \mathcal F_r \big\|_{2,\infty} \lesssim_k \|\xi\|_{\infty} + \big\| \|\delta \mathcal I\|_{1\text{-}\mathrm{var}} \big\|_{\infty;k,\infty;[s,t]} + \big\| \| \delta I^{\mathbf X}\|_{p\text{-}\mathrm{var}} \big\|_{\infty;k,\infty;[s,t]}.
\end{equation*}

Furthermore, by Proposition~\ref{prop:estim of integral} and Lemma~\ref{lem:int of RSM'}, 
\begin{equation*}
\begin{aligned}
\big\| \big\|R^{I^{\mathbf X}} \big\|_{\frac{p}{2}\text{-var}} \big\|_{\infty;k,\infty;[s,t]} \lesssim_{k,\gamma,\bar \Theta} |\delta \mathbf X|_{p\text{-}\mathrm{var};[s,t]} \left( 1 + \thicknm{(Y,Y')}^{\gamma - 1}_{\mathrm X;q,q';k,\infty;[s,t]} \right). 
\end{aligned}
\end{equation*}
Consequently, noting $P^{\tilde Y} = - R^{I^{\mathbf X}} - \delta \mathcal I$, the above estimate together with \eqref{e:mathcal I <=} implies   
\begin{equation}\label{e:R^J tilde Y}
\big\| \big\|P^{\tilde Y} \big\|_{\frac{p}{2}\text{-var}} \big\|_{\infty;k,\infty;[s,t]} \lesssim_{k,\gamma,\bar \Theta} (|\delta \mathbf X|_{p\text{-}\mathrm{var};[s,t]} + (t-s)^{\frac{1}{2}}) \left( 1 +  \|Y_t\|_{\infty}  + \thicknm{(Y,Y')}^{\gamma - 1}_{\mathrm X;q,q';k,\infty;[s,t]} \right).  
\end{equation}

Moreover, since $\| \|Y\|_{p\text{-}\mathrm{var}} \|_{\infty;k,\infty;[s,t]}$ is dominated by $\thicknm{(Y,Y')}_{\mathrm X;q,q';k,\infty;[s,t]}$ (see Lemma~\ref{lem:Y <= R^Y'}), by the standard interpolation argument (noting $p \le q'$) we get  
\begin{equation}\label{e:H Y + H Y}
\begin{aligned}
\|H(Y)\|_{\infty} + \big\| \|H(Y)\|_{q'\text{-}\mathrm{var}} \big\|_{\infty;k,\infty;[s,t]} &\lesssim \|H\|_{\infty} + \|H\|^{(q'-p)/q'}_{\infty} \|D H\|^{p/q'}_{\infty} \big\| \|Y\|_{p\text{-}\mathrm{var}} \big\|^{p/q'}_{\infty;k,\infty;[s,t]}\\
&\lesssim_{\bar \Theta} 1 + \thicknm{(Y,Y')}^{\,p/q'}_{\mathrm X;q,q';k,\infty;[s,t]}. 
\end{aligned}
\end{equation}
Then the desired estimate \eqref{e:bound for solution map} follows by combining \eqref{e:M^Y BMO <= sup}, \eqref{e:R^J tilde Y}, and \eqref{e:H Y + H Y}. 

Finally, Eq.~\eqref{e:solu map} yields that $\tilde Y_v$ must be given by \eqref{e:tilde-Y}, and the uniqueness follows.
\end{proof}

The solution map $\Phi^{\xi}_{[s,t]}$ then can be well-defined as follows. 

\begin{definition}\label{def:solu map}
Under the same assumptions as in Lemma~\ref{lem:solu map}, define the solution map 
\begin{equation*}
\Phi^{\xi}_{[s,t]}(Y,Y',Z) := (\tilde Y,-H(Y),\tilde Z). 
\end{equation*}
\end{definition}

The following Proposition~\ref{prop:invariant within ball} shows that the solution  $\Phi^{\xi}_{[s,t]}(\bm\cdot)$ is  invariant within the ball with respect to the seminorm $\thicknm{\bm\cdot}_{\mathrm X;q,q';k,\infty;[s,t]}$ when $|\delta \mathbf X|_{p\text{-}\mathrm{var};[s,t]} + (t-s)^{1/2}$ is smaller than a specific constant $\theta$. In particular, $\theta$ can be chosen uniformly with respect to the upper bound of $\xi$.

\begin{proposition}\label{prop:invariant within ball}
Assume that $(Y,Y',Z)$ satisfies the same condition as in Lemma~\ref{lem:solu map}. Assume further that $p < q'$. Denote by $(\tilde Y,-H(Y),\tilde Z) = \Phi^{\xi}_{[s,t]}(Y,Y',Z)$. Let $\bar \Theta$ be the same as in Lemma~\ref{lem:solu map}. 
Then, there exists $K > 0$ such that for every $K' \ge K$, there exists $\theta > 0$ such that if $|\delta \mathbf X|_{p\text{-}\mathrm{var};[s,t]} + (t-s)^{1/2} \le \theta$ and $\,\thicknm{(Y,Y')}_{\mathrm X;q,q';k,\infty;[s,t]} \le K'$, it holds that 
\begin{equation}\label{e:<= K'} 
\thicknm{(\tilde Y,-H(Y))}_{\mathrm X;q,q';k,\infty;[s,t]} \le K'.
\end{equation} 
In particular, we can choose $K := C(1 + \|\xi\|_{\infty})$ and $\theta := 
\frac{1}{2} 
C^{-1} | K' |^{-(\gamma - 2)}$, where $C$ is a constant depending only on $(k,\gamma,\bar \Theta)$. 
\end{proposition}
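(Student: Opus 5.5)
The plan is to read \eqref{e:<= K'} off the bound \eqref{e:bound for solution map} in Lemma~\ref{lem:solu map}. That lemma gives, with a constant $C_0$ depending only on $(k,\gamma,\bar\Theta)$,
\begin{equation*}
\thicknm{(\tilde Y,-H(Y))}_{\mathrm X;q,q';k,\infty;[s,t]} \le C_0\Big(1+\|\xi\|_\infty+\|Y_t\|_\infty + \big(|\delta\mathbf X|_{p\text{-}\mathrm{var};[s,t]}+(t-s)^{1/2}\big)(K')^{\gamma-1} + (K')^{p/q'}\Big).
\end{equation*}
The obstruction is the term $\|Y_t\|_\infty$. In the intended application (the Picard iteration on $[s,t]$ with terminal datum $\xi$) the input satisfies $Y_t=\xi$. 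I will either work under this convention or use the fact that $Y_t$ is fixed along the iteration, so that $\|Y_t\|_\infty\le\|\xi\|_\infty$. This term is then absorbed into $1+\|\xi\|_\infty$, with the constant doubled.

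The two remaining terms must each be at most a fraction of $K'$. The sublinear term $(K')^{p/q'}$ is controlled because $p<q'$ gives $p/q'<1$. Young's inequality yields $C_0(K')^{p/q'}\le \tfrac14 K' + C_1$ with $C_1=C_1(C_0,p/q')$. Collecting constants, the bound becomes
\begin{equation*}
\thicknm{(\tilde Y,-H(Y))} \le C_2(1+\|\xi\|_\infty) + \tfrac14 K' + C_0\,\theta\,(K')^{\gamma-1}.
\end{equation*}
Let $C:=\max\{4C_2,C_0\}$ and $K:=C(1+\|\xi\|_\infty)$. For $K'\ge K$ the first term is at most $\tfrac14 K'$.

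The rough/time-increment term is handled by the choice of $\theta$. With $\theta=\tfrac12 C^{-1}(K')^{-(\gamma-2)}$,
\begin{equation*}
C_0\,\theta\,(K')^{\gamma-1}\le \tfrac12 K'.
\end{equation*}
The total is then $\tfrac14K'+\tfrac14K'+\tfrac12K'=K'$, which is \eqref{e:<= K'}.

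The only genuinely delicate points are the sublinear exponent and the handling of $\|Y_t\|_\infty$. The exponent $p/q'<1$ is exactly why the hypothesis $p<q'$ is imposed. The $\|Y_t\|_\infty$ term must not introduce dependence on $Y$ beyond the terminal value. Both of these are resolved as above, so the argument is constant bookkeeping on top of Lemma~\ref{lem:solu map}.
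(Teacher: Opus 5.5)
Your proof is correct and follows the paper's route: insert $\thicknm{(Y,Y')}_{\mathrm X;q,q';k,\infty;[s,t]}\le K'$ into \eqref{e:bound for solution map}, absorb the sublinear $(K')^{p/q'}$ term using $p<q'$, and take $K=C(1+\|\xi\|_\infty)$ and $\theta=\tfrac12 C^{-1}(K')^{-(\gamma-2)}$. Your bookkeeping is more careful than the paper's proof, which silently drops two terms that your $\tfrac14+\tfrac14+\tfrac12$ split accounts for: the $\|Y_t\|_\infty$ term, which is harmless only under a restriction like your $\|Y_t\|_\infty\le\|\xi\|_\infty$ (valid along the Picard iteration; without it the claim fails, e.g.\ for $Y\equiv c$, $f(r,y,z)=C_f y$, $|c|\to\infty$), and the leftover $\rho\,\thicknm{(Y,Y')}$ from its Young-type bound.
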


\begin{proof}
Noting $p < q'$, for every $\rho > 0$  we have 
\begin{equation*}
\thicknm{(Y,Y')}_{\mathrm X;q,q';k,\infty;[s,t]}^{\,p/q'} \lesssim_{\rho} 1 + \rho\, \thicknm{(Y,Y')}_{\mathrm X;q,q';k,\infty;[s,t]}.
\end{equation*}
Thus, by choosing $\rho$ sufficiently small, one can apply \eqref{e:bound for solution map} in Lemma~\ref{lem:solu map} to determine a constant $C_1 > 0$ such that 
\begin{equation*}
\begin{aligned}
\thicknm{(\tilde Y,- H (Y))}_{\mathrm X;q,q';k,\infty;[s,t]} \le C_1 \Big( 1 + \|\xi\|_{\infty} + \big(|\delta \mathbf X|_{p\text{-}\mathrm{var};[s,t]} + (t - s)^{\frac{1}{2}}\big) \thicknm{( Y,Y')}^{\gamma - 1}_{\mathrm X;q,q';k,\infty;[s,t]} \Big). 
\end{aligned}
\end{equation*}

Set $K := 2C_1 ( 1 + \|\xi\|_{\infty} )$ and $\theta := \frac{1}{2} C^{-1}_1 | K'|^{-(\gamma - 2)}$. If $(|\delta \mathbf X|_{p\text{-}\mathrm{var};[s,t]} + (t - s)^{\frac{1}{2}}) \le \theta$, we see that for $K' \ge K$ the condition \eqref{e:<= K'} implies 
\begin{equation*}
C_1 \big(|\delta \mathbf X|_{p\text{-}\mathrm{var};[s,t]} + (t - s)^{\frac{1}{2}} \big) \thicknm{( Y,Y')}^{\gamma - 1}_{\mathrm X;q,q';k,\infty;[s,t]} 
\le K'/2.
\end{equation*}
Consequently, noting $C_1 (1 + \|\xi\|_{\infty}) \le K/2 \le K'/2$, it holds $\thicknm{(\tilde Y,- H (Y))}_{\mathrm X;q,q';k,\infty;[s,t]} \le K'$. 
\end{proof}

The following Proposition~\ref{prop:contraction} shows that the twice composition of the solution map is a contraction on small intervals. The proof relies heavily on the  equivalence of norms proved in Lemma~\ref{lem:BMO of q-var}. 

\begin{proposition}\label{prop:contraction}
Assume \ref{(gamma)}, $\xi \in L^{\infty}_t$ and \ref{(f)} holds. Let $k \ge 1$, $H\in C^{\gamma}_{b}(\R,\R^d)$, and $\mathbf X\in \mathscr{C}^{p\text{-}\mathrm{var}}([s,t], \R^{d})$. Let $R_0 > 0$ be a constant  satisfying 
\begin{equation*}
\|\xi\|_{\infty} + \|H\|_{C^{\gamma}_b} + |\delta \mathbf X |_{p\text{-}\mathrm{var};[s,t]} \le R_0.
\end{equation*}
Denote $\Theta_0 := (\Theta,C_f, R_0)$. For $(Y,Y'), (\bar Y, \bar Y') \in \mathrm{RSM}^{(p,p)\text{-}\mathrm{var}}_{\mathrm X}$ with $Y_t = \bar Y_t$ and 
\begin{equation*}
\thicknm{(Y,Y')}_{\mathrm X;q,q';k,\infty;[s,t]} + \thicknm{(\bar Y,\bar Y')}_{\mathrm X;q,q';k,\infty;[s,t]} < \infty
\end{equation*}
Let $Z$ ($\bar Z$ resp.) be the unique progressively measurable process with $\|Z\|_{L^{2}([s,t])} < \infty$ ($\|\bar Z\|_{L^{2}([s,t])} < \infty$ resp.) a.s. such that $\delta M^{Y}_{s,t} = \int_{s}^{t} Z_r dW_r$ ($\delta M^{\bar Y}_{s,t} = \int_{s}^{t} \bar Z_r dW_r$ resp.). Denote the $i$-fold composition of $\Phi^{\xi}_{[s,t]}$ with itself by $(\Phi^{\xi}_{[s,t]})^{\circ i}$ and 
\begin{equation*}
\begin{cases}\displaystyle
(Y^{(i)},(Y^{(i)})',Z^{(i)}) := \left(\Phi^{\xi}_{[s,t]}\right)^{\circ i} (Y,Y',Z),\ \\
(\bar Y^{(i)},(\bar Y^{(i)})',\bar Z^{(i)}) := \left(\Phi^{\xi}_{[s,t]}\right)^{\circ i} (\bar Y,\bar Y',\bar Z),\ i = 0,1,2.
\end{cases}
\end{equation*}

Then, for every $\hat K > 0$ and $\varepsilon \in (0,1)$, there exists $\hat \theta > 0$ depending on $(k,\gamma,\Theta_0,\hat K,\varepsilon)$ such that if 
\begin{equation*}
\begin{cases}\displaystyle
\sup_{i=0,1,2}\,
\Big(\thicknm{(Y^{(i)},(Y^{(i)})')}_{\mathrm X;q,q';k,\infty;[s,t]} +  \thicknm{(\bar Y^{(i)},(\bar Y^{(i)})')}_{\mathrm X;q,q';k,\infty;[s,t]}\Big) \le \hat K,\\ \displaystyle
|\delta \mathbf X|_{p\text{-}\mathrm{var};[s,t]} + (t-s)^{1/2} \le \hat \theta,
\end{cases}
\end{equation*}
the following contraction property for $(\Phi^{\xi}_{[s,t]})^{\circ 2}$ holds: 
\begin{equation*}
\begin{aligned}
&\thicknm{Y^{(2)},(Y^{(2)})'; \bar Y^{(2)},(\bar Y^{(2)})'}_{\mathrm X,\mathrm X;q,q';k,\infty;[s,t]}\le (1 - \varepsilon) \, \thicknm{Y,Y'; \bar Y, \bar Y'}_{\mathrm X,\mathrm X;q,q';k,\infty;[s,t]} .
\end{aligned}
\end{equation*}
 
\end{proposition}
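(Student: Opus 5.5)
The plan is to estimate one application of $\Phi^{\xi}_{[s,t]}$ component by component, and then see why two applications are needed. Write $D_i := \thicknm{Y^{(i)},(Y^{(i)})';\bar Y^{(i)},(\bar Y^{(i)})'}_{\mathrm X,\mathrm X;q,q';k,\infty;[s,t]}$. This seminorm has four pieces: the Gubinelli-derivative sup and $q'$-variation terms, the remainder $P$, and the martingale $M$. For one step $(Y,Y',Z)\mapsto(Y^{(1)},\dots)$, the pieces $P$ and $M$ of the output difference will be shown to be small multiples of $D_0$. The derivative piece $-H(Y)+H(\bar Y)$ is \emph{not} small; it is only $O(D_0)$. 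This is why only the twofold composition contracts.

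\textbf{Step 1 (one step, $M$ and $P$ parts).} Since $Y^{(1)}_t=\bar Y^{(1)}_t=\xi$, by \eqref{e:tilde-Y} we have $Y^{(1)}-\bar Y^{(1)}=\E_\cdot[\delta(\mathcal I-\bar{\mathcal I})_{\cdot,t}+\delta(I^{\mathbf X}-\bar I^{\mathbf X})_{\cdot,t}]$. Arguing as in \eqref{e:M^Y BMO <= sup} (BDG, Doob, Lemma~\ref{lem:BMO of q-var}), $\|\delta(M^{Y^{(1)}}-M^{\bar Y^{(1)}})\|_{\mathrm{BMO}}$ is controlled by the $\|\|\cdot\|_{p\text{-var}}\|_{\infty;k,\infty}$ norms of $\delta(I^{\mathbf X}-\bar I^{\mathbf X})$ and $\delta(\mathcal I-\bar{\mathcal I})$. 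Moreover $P^{Y^{(1)}}-P^{\bar Y^{(1)}}=-(R^{I^{\mathbf X}}-R^{\bar I^{\mathbf X}})-\delta(\mathcal I-\bar{\mathcal I})$.

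\begin{itemize}
\item Rough integral: apply Lemma~\ref{lem:Gamma and I} with $\bar{\mathbf X}=\mathbf X$, $\bar H=H$ and $Y_t=\bar Y_t$, with $C=\hat K$. This bounds both quantities by $C|\delta\mathbf X|_{p\text{-var};[s,t]}\,D_0$.
\item Drift: the Lipschitz property of $f$, Cauchy--Schwarz and \eqref{e:Y - Y q-var infty infty} give $\lesssim (t-s)\|Y-\bar Y\|_\infty+(t-s)^{1/2}\|\delta(M^Y-M^{\bar Y})\|_{\mathrm{BMO}}\lesssim (t-s)^{1/2}D_0$.
\end{itemize}

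Hence the $P$ and $M$ parts of $D_1$ are at most $C\hat\theta D_0$. For the derivative part of $D_1$, the bound $\|H(Y)-H(\bar Y)\|_\infty+\|\|\delta(H(Y)-H(\bar Y))\|_{q'\text{-var}}\|_{\infty;k,\infty}\le C D_0$ follows as in Step~3 of Lemma~\ref{lem:Gamma and I} (using Lemma~\ref{lem:gamma - 2} and \eqref{e:Y - Y q-var infty infty}). In particular $D_1\le C D_0$.

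\textbf{Step 2 (second step, derivative part).} Applying Step~1 again to the pair $(Y^{(1)},\bar Y^{(1)})$ makes the $P$ and $M$ parts of $D_2$ at most $C\hat\theta D_1\le C^2\hat\theta D_0$. The remaining derivative term is $H(Y^{(1)})-H(\bar Y^{(1)})$. I will not bound it by $D_1$ through \eqref{e:Y - Y q-var infty infty}, because that would reintroduce the non-small factor. Instead I use the RSM decomposition directly:
\[
\delta(Y^{(1)}-\bar Y^{(1)})=-(H(Y)-H(\bar Y))\,\delta\mathrm X+(P^{Y^{(1)}}-P^{\bar Y^{(1)}})+\delta(M^{Y^{(1)}}-M^{\bar Y^{(1)}}).
\]
By Lépingle (Lemma~\ref{lem:Lepingle}), this gives
\[
\big\|\|\delta(Y^{(1)}-\bar Y^{(1)})\|_{q\text{-var}}\big\|_{\infty;k,\infty}\lesssim \|H(Y)-H(\bar Y)\|_\infty|\delta\mathbf X|_{p\text{-var}}+C\hat\theta D_0\lesssim \hat\theta D_0 .
\]
Because the terminal values agree, the sup norm of $Y^{(1)}-\bar Y^{(1)}$ is bounded by the same quantity, via $|Y_u|\le \E_u|\delta Y_{u,t}|$.

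Feeding these bounds into the estimate of Step~3 of Lemma~\ref{lem:Gamma and I} for $H(Y^{(1)})-H(\bar Y^{(1)})$ gives a bound of order $\hat\theta D_0$. This needs $\|Y^{(1)\prime}\|$ and $\|Y^{(1)}\|$ bounded by $\hat K$, and it uses Lemma~\ref{lem:gamma - 2} together with Lemma~\ref{lem:BMO of q-var} to pass between the $(1,\infty)$ and $(k,\infty)$ norms. Altogether $D_2\le C_{k,\gamma,\Theta_0,\hat K}\,\hat\theta\,D_0$, and choosing $\hat\theta$ so that $C\hat\theta\le 1-\varepsilon$ concludes.

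\textbf{Main obstacle.} The delicate point is the $q'$-variation of $H(Y^{(1)})-H(\bar Y^{(1)})$ in conditional $(k,\infty)$ norm. Lemma~\ref{lem:gamma - 2} produces products of sup norms of the difference with powers $\gamma-2$ of conditional $q$-variations of $Y^{(1)}$ and $\bar Y^{(1)}$. Closing the estimate requires Lemma~\ref{lem:BMO of q-var} to avoid losing integrability, and the condition $q\le q'(\gamma-2)$ from \ref{(gamma)}.
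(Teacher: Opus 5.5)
Your proposal is correct and follows essentially the same route as the paper. One application of $\Phi^{\xi}_{[s,t]}$ makes the $P$- and BMO-parts of the difference of order $\hat\theta D_0$ (via Lemma~\ref{lem:Gamma and I}, the drift estimate, and BDG/Doob with Lemma~\ref{lem:BMO of q-var}), while the derivative part $-H(Y)+H(\bar Y)$ is only $O(D_0)$. The second application then makes the derivative part small, because $Y^{(1)}-\bar Y^{(1)}$ vanishes at $t$ and its $p$-variation is already of order $\hat\theta D_0$; your RSM decomposition of $\delta(Y^{(1)}-\bar Y^{(1)})$ is just a rewriting of the paper's decomposition through $I^{(0)}-\bar I^{(0)}$, $\mathcal I^{(0)}-\bar{\mathcal I}^{(0)}$ and $M^{(1)}-\bar M^{(1)}$, and the step you flag as the main obstacle is harmless, since the sup-norm of the difference enters only as an $L^\infty$ factor.
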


\begin{proof}
For simplicity of notations, denote 
\begin{equation*}
\begin{cases}\displaystyle
P^{(i)} := P^{Y^{(i)}},\ I^{(i)}_v := \int_s^v H(Y^{(i)}_r) d\mathbf X_r,\ \mathcal I^{(i)}_v := \int_{s}^v f(r, Y^{(i)}_r,  Z^{(i)}_r) dr, \\ \displaystyle
\bar P^{(i)} := P^{\bar Y^{(i)}},\ \bar I^{(i)}_v := \int_s^v H(\bar Y^{(i)}_r) d\mathbf X_r,\  \bar{\mathcal I}^{(i)}_v := \int_{s}^v f(r,\bar Y^{(i)}_r,\bar Z^{(i)}_r) dr, \, i = 0,1,2.
\end{cases}
\end{equation*}

We first estimate $P^{(i+1)} - \bar P^{(i+1)}$. On the one hand, by Lemma~\ref{lem:Gamma and I}, we have for $i = 0,1$, 
\begin{equation}\label{e:I - I* i}
\begin{aligned}
&\big\| \|R^{I^{(i)}} - R^{\bar  I^{(i)}}\|_{\frac{p}{2}\text{-}\mathrm{var}} \big\|_{\infty;k,\infty;[s,t]} + \big\| \|\delta (I^{(i)} - \bar  I^{(i)})\|_{p\text{-}\mathrm{var}} \big\|_{\infty;k,\infty;[s,t]}\\
&\lesssim_{k,\gamma,\Theta_0,\hat K} |\delta \mathbf X|_{p\text{-}\mathrm{var};[s,t]} \, \thicknm{Y^{(i)} ,(Y^{(i)})'; \bar Y^{(i)}, (\bar Y^{(i)})'}_{\mathrm X,\mathrm X;q,q';k,\infty;[s,t]}. 
\end{aligned}
\end{equation}
On the other hand, a similar estimate also holds for $\mathcal I^{(i)} - \bar{\mathcal I}^{(i)}$. Indeed, since $(Y^{(i)} - \bar Y^{(i)})_{t} = 0$ we have $\sup_{r \in [u,t]} |Y^{(i)} - \bar Y^{(i)}| \le \|Y^{(i)} - \bar Y^{(i)}\|_{p\text{-}\mathrm{var};[u,t]}$ for $u \in [s,t]$; and by Lemma~\ref{lem:Y <= R^Y'} we have 
\begin{equation*}
\big\| \|Y^{(i)} - \bar Y^{(i)}\|_{p\text{-}\mathrm{var}} \big\|_{\infty;k,\infty;[s,t]} \le \thicknm{Y^{(i)} ,(Y^{(i)})'; \bar Y^{(i)}, (\bar Y^{(i)})'}_{\mathrm X,\mathrm X;q,q';k,\infty;[s,t]}.
\end{equation*}	
Thus, according to inequalities $|f(r,y,z) - f(r,\bar y,\bar z)| \le C_f (|y - \bar y| + |z - \bar z|)$ and $\int_{u}^{t} (|Y^{(i)}_r - \bar Y^{(i)}_r| + |Z^{(i)}_r - \bar Z^{(i)}_r|) dr \le (t - s) \|Y^{(i)} - \bar Y^{(i)}\|_{p\text{-}\mathrm{var};[u,t]} + (t-s)^{1/2} \{\int_{u}^t |Z^{(i)}_r - \bar Z^{(i)}_r|^2 dr\}^{1/2}$, it holds that
\begin{equation}\label{e:Gamma-Gamma* i}
\begin{aligned}
&\big\| \|\delta(\mathcal I^{(i)} - \bar{\mathcal I}^{(i)}) \|_{1\text{-}\mathrm{var}} \big\|_{\infty;k,\infty;[s,t]}  \lesssim_{k,\gamma,\Theta_0,\hat K} (t - s)^{\frac{1}{2}} \, \thicknm{Y^{(i)} ,(Y^{(i)})'; \bar Y^{(i)}, (\bar Y^{(i)})'}_{\mathrm X,\mathrm X;q,q';k,\infty;[s,t]}. 
\end{aligned}
\end{equation}
Therefore, noting that $P^{(i+1)} - \bar P^{(i+1)} = - R^{I^{(i)}} + R^{\bar I^{(i)}} - \delta (\mathcal I^{(i)} - \bar{\mathcal I}^{(i)})$, we have 
\begin{equation}\label{e:R (i) - R (i-1)}
\begin{aligned}
&\big\| \|P^{(i+1)} - \bar P^{(i+1)}\|_{\frac{p}{2}\text{-}\mathrm{var}} \big\|_{\infty;k,\infty;[s,t]} \\
&\lesssim_{k,\gamma,\Theta_0,\hat K} \Big(|\delta \mathbf X|_{p\text{-}\mathrm{var};[s,t]} + (t - s)^{\frac{1}{2}}\Big) \, \thicknm{Y^{(i)} ,(Y^{(i)})'; \bar Y^{(i)}, (\bar Y^{(i)})'}_{\mathrm X,\mathrm X;q,q';k,\infty;[s,t]}.
\end{aligned}
\end{equation}

Denote $M^{(i)} := M^{Y^{(i)}}$ and $\bar M^{(i)} := M^{\bar Y^{(i)}}$, $i = 0,1,2$. Next, we estimate $M^{(i+1)} - \bar M^{(i+1)}$ and $Y^{(i+1)} - \bar Y^{(i+1)}$ for $i = 0,1$. 

We first estimate $M^{(i+1)} - \bar M^{(i+1)}$. Since for $r \in [s,t]$ it holds $(Y^{(i+1)} - \bar Y^{(i+1)})_r = \E_r [ \delta (I^{(i)} - \bar I^{(i)})_{r,t} + \delta (\mathcal I^{(i)} - \bar{\mathcal I}^{(i)})_{r,t} ]$ and $Y^{(i)}_{t} - \bar Y^{(i)}_{t} = 0$, we have  
\begin{equation*}
\delta (Y^{(i + 1)} - \bar Y^{(i + 1)})_{r,t} = \E_{r} [\delta (Y^{(i+1)} - \bar Y^{(i+1)})_{r,t}] = - \E_{r} \left[ \delta (I^{(i)} - \bar I^{(i)})_{r,t} + \delta (\mathcal I^{(i)} - \bar{\mathcal I}^{(i)})_{r,t} \right].
\end{equation*}
Hence, it holds that 
\begin{equation*}
\begin{aligned}
\delta (M^{(i+1)} - \bar M^{(i + 1)})_{r,t} &= \delta (Y^{(i+1)} - \bar Y^{(i+1)})_{r,t} + \delta (I^{(i)} - \bar I^{(i)})_{r,t} + \delta (\mathcal I^{(i)} - \bar{\mathcal I}^{(i)})_{r,t} \\
& = - \E_{r}\big[ \delta (I^{(i)} - \bar I^{(i)} + \mathcal I^{(i)} - \bar{\mathcal I}^{(i)})_{r,t} \big] + \delta (I^{(i)} - \bar I^{(i)} + \mathcal I^{(i)} - \bar{\mathcal I}^{(i)})_{r,t}
\end{aligned}
\end{equation*}
This, together with Lemma~\ref{lem:BMO of q-var}  yields that 
\begin{equation}\label{e:esti for M^n,m'}
\begin{aligned}
&\big\| \|\delta (M^{(i+1)} - \bar M^{(i + 1)})\|_{p\text{-}\mathrm{var}} \big\|_{\infty;k,\infty;[s,t]}  \\ 
&\lesssim_k \|\delta (M^{(i+1)} - \bar M^{(i + 1)})\|_{\mathrm{BMO}} \lesssim \big\| \delta (I^{(i)} - \bar I^{(i)}) \big\|_{\infty;2,\infty;[s,t]} + \big\| \delta (\mathcal I^{(i)} - \bar{\mathcal I}^{(i)}) \big\|_{\infty;2,\infty;[s,t]},
\end{aligned}
\end{equation}
where the last inequality is due to the BDG inequality and Doob's maximal inequality: 
\begin{equation*}
\begin{aligned}
\big\|\delta \langle M^{(i + 1)} - \bar M^{(i + 1)} \rangle^{1/2}_{r,t} \big| \mathcal F_r \big\|_{2} &\lesssim \big\|\sup_{v\in[r,t]} |\delta (M^{(i + 1)} - \bar M^{(i + 1)})_{r,v}| \big| \mathcal F_r \big\|_{2}\\
&\lesssim  \big\| \delta (M^{(i + 1)} - \bar M^{(i + 1)})_{r,t} \big| \mathcal F_r \big\|_{2} \\
&\lesssim 
\big\| \delta (I^{(i)} - \bar I^{(i)} + \mathcal I^{(i)} - \bar{\mathcal I}^{(i)})_{r,t} |\mathcal F_r \big\|_{2}.
\end{aligned}
\end{equation*}
 
In the RHS of \eqref{e:esti for M^n,m'}, note that $\| \delta (I^{(i)} - \bar I^{(i)}) \|_{\infty;2,\infty;[s,t]} $ plus $ \| \delta (\mathcal I^{(i)} - \bar{\mathcal I}^{(i)}) \|_{\infty;2,\infty;[s,t]}$ is bounded by $\| \| \delta (I^{(i)} - \bar I^{(i)}) \|_{p\text{-}\mathrm{var}} \|_{\infty;2,\infty;[s,t]}  $ plus $\| \| \delta (\mathcal I^{(i)} - \bar{\mathcal I}^{(i)}) \|_{1\text{-}\mathrm{var}} \|_{\infty;2,\infty;[s,t]}$. Furthermore, by the equivalence of norms proved in Lemma~\ref{lem:BMO of q-var}, the latter one is dominated by $\| \| \delta (I^{(i)} - \bar I^{(i)}) \|_{p\text{-}\mathrm{var}} \|_{\infty;k,\infty;[s,t]}  $ plus $\| \| \delta (\mathcal I^{(i)} - \bar{\mathcal I}^{(i)}) \|_{1\text{-}\mathrm{var}} \|_{\infty;k,\infty;[s,t]}$ .
Then, by \eqref{e:esti for M^n,m'} and estimates \eqref{e:I - I* i} and \eqref{e:Gamma-Gamma* i}, we get 
\begin{equation}\label{e:M i - M i-1}
\begin{aligned}
& \big\| \|\delta (M^{(i+1)} - \bar M^{(i+1)})\|_{p\text{-}\mathrm{var}} \big\|_{\infty;k,\infty;[s,t]} + \|\delta (M^{(i+1)} - \bar M^{(i+1)})\|_{\mathrm{BMO};[s,t]} \\
&\lesssim_{k,\gamma,\Theta_0,\hat K} (|\delta \mathbf X|_{p\text{-}\mathrm{var};[s,t]} + (t-s)^{\frac{1}{2}}) \, \thicknm{Y^{(i)},(Y^{(i)})';  \bar Y^{(i)},(\bar Y^{(i)})'}_{\mathrm X,\mathrm X;q,q';k,\infty;[s,t]}. 
\end{aligned}
\end{equation}
 Note $\delta (Y^{(i+1)} - \bar Y^{(i+1)}) = \delta (- I^{(i)} + \bar I^{(i)} - \mathcal I^{(i)} + \bar{\mathcal I}^{(i)} + M^{(i+1)} - \bar M^{(i+1)})$. The above estimate together with \eqref{e:I - I* i} and \eqref{e:Gamma-Gamma* i} yields 
\begin{equation}\label{e:Y i - Y i-1}
\begin{aligned}
&\big\| \|\delta (Y^{(i+1)} - \bar Y^{(i+1)})\|_{p\text{-}\mathrm{var}} \big\|_{\infty;k,\infty;[s,t]} \\
&\lesssim_{k,\gamma,\Theta_0,\hat K} (|\delta \mathbf X|_{p\text{-}\mathrm{var};[s,t]} + (t-s)^{\frac{1}{2}}) \, \thicknm{Y^{(i)},(Y^{(i)})';  \bar Y^{(i)},(\bar Y^{(i)})'}_{\mathrm X,\mathrm X;q,q';k,\infty;[s,t]}. 
\end{aligned}
\end{equation}

For $(Y^{(i+1)})' - (\bar Y^{(i+1)})'$, since $(Y^{(i+1)})' - (\bar Y^{(i+1)})' = - H(Y^{(i)}) + H(\bar Y^{(i)})$, by Lemma~\ref{lem:gamma - 2} (noting $H \in C^{\gamma}_b$ so that $H \in C^{2}_b$) and the inequality $|(Y^{(i+1)})' - (\bar Y^{(i+1)})'|\lesssim_{\Theta_0} |Y^{(i)} - \bar Y^{(i)}|$ we have 
\begin{equation}\label{e:Y i+1 <= Y i}
\begin{aligned}
&\|(Y^{(i+1)})' - (\bar Y^{(i+1)})'\|_{\mathcal L^{\infty}([s,t] \times \Omega)} + \big\| \|\delta ( (Y^{(i+1)})' - (\bar Y^{(i+1)})' )\|_{q'\text{-}\mathrm{var}} \big\|_{\infty;k,\infty;[s,t]} \\
&\lesssim_{\Theta_0} \Big(1 + \big\| \|\delta Y^{(i)}\|_{q\text{-}\mathrm{var}} \big\|_{\infty;k,\infty;[s,t]} + \big\| \|\delta \bar Y^{(i)}\|_{q\text{-}\mathrm{var}} \big\|_{\infty;k,\infty;[s,t]} \Big) \|Y^{(i)} - \bar Y^{(i)}\|_{\mathcal L^{\infty}([s,t] \times \Omega)} \\
&\quad + \big\| \|\delta (Y^{(i)} - \bar Y^{(i)})\|_{q'\text{-}\mathrm{var}} \big\|_{\infty;k,\infty;[s,t]}.
\end{aligned}
\end{equation}
For the RHS of the above inequality, by \eqref{e:Y q-var infty} in Lemma~\ref{lem:Y <= R^Y'} we get 
\begin{equation}\label{e:Y + bar Y <= 1}
\big\| \|\delta Y^{(i)}\|_{p\text{-}\mathrm{var}} \big\|_{\infty;k,\infty;[s,t]} + \big\| \|\delta \bar Y^{(i)}\|_{p\text{-}\mathrm{var}} \big\|_{\infty;k,\infty;[s,t]} \lesssim_{\Theta_0,\hat K} 1. 
\end{equation}
Note $Y^{(i)}_t = \bar Y^{(i)}_t$. Then, combining \eqref{e:Y i+1 <= Y i}, \eqref{e:Y + bar Y <= 1}, and \eqref{e:Y - Y q-var infty infty} in Lemma~\ref{lem:Y <= R^Y'}, we have 
\begin{equation*}
\begin{aligned}
&\|(Y^{(i+1)})' - (\bar Y^{(i+1)})'\|_{\mathcal L^{\infty}([s,t]\times\Omega)} + \big\| \|\delta ( (Y^{(i+1)})' - (\bar Y^{(i+1)})' )\|_{q'\text{-}\mathrm{var}} \big\|_{\infty;k,\infty;[s,t]} \\
&\lesssim_{\gamma,\Theta_0,\hat K} \thicknm{Y^{(i)},(Y^{(i)})'; \bar Y^{(i)},(\bar Y^{(i)})'}_{\mathrm X,\mathrm X;q,q';k,\infty;[s,t]}.
\end{aligned}
\end{equation*}
Consequently, by \eqref{e:R (i) - R (i-1)} and \eqref{e:M i - M i-1} we have 
\begin{equation}\label{e:(2) < = (1)}
\thicknm{Y^{(i+1)},(Y^{(i+1)})'; \bar Y^{(i+1)},(\bar Y^{(i+1)})'}_{\mathrm X,\mathrm X;q,q';k,\infty;[s,t]} \lesssim_{k,\gamma,\Theta_0,\hat K} \thicknm{Y^{(i)},(Y^{(i)})'; \bar Y^{(i)},(\bar Y^{(i)})'}_{\mathrm X,\mathrm X;q,q';k,\infty;[s,t]}. 
\end{equation}
However, it is not obvious to extract a small interval factor in the above estimate. Nevertheless, we can estimate $(Y^{(i+1)})' - (\bar Y^{(i+1)})'$ by $Y^{(i-1)} - \bar Y^{(i-1)}$-terms times a small interval factor. 

Indeed, since $Y^{(1)}_t - \bar Y^{(1)}_t = 0$, by the same proof as in Lemma~\ref{lem:Y <= R^Y'}, it holds that $\|Y^{(1)} - \bar Y^{(1)}\|_{\mathcal L^{\infty}([s,t] \times \Omega)} \le \| \|\delta (Y^{(1)} - \bar Y^{(1)})\|_{p\text{-}\mathrm{var}} \|_{\infty;k,\infty;[s,t]}$. Then, \eqref{e:Y i+1 <= Y i} and \eqref{e:Y + bar Y <= 1} yield, noting $p\le q\le q'$, 
\begin{equation*}
\begin{aligned}
&\|(Y^{(2)})' - (\bar Y^{(2)})'\|_{\mathcal L^{\infty}([s,t]\times\Omega)} + \big\| \|\delta ((Y^{(2)})' - (\bar Y^{(2)})') \|_{q'\text{-}\mathrm{var}}\big\|_{\infty;k,\infty;[s,t]} \\
&\lesssim_{\gamma,\Theta_0,\hat K} \big\| \|\delta (Y^{(1)} - \bar Y^{(1)})\|_{p\text{-}\mathrm{var}} \big\|_{\infty;k,\infty;[s,t]}.
\end{aligned}
\end{equation*}
This, together with the estimate of $Y^{(1)} - \bar Y^{(1)}$ shown in  \eqref{e:Y i - Y i-1} (letting $i = 0$), yields  
\begin{equation}\label{H Y 2 - H Y 1}
\begin{aligned}
&\|(Y^{(2)})' - (\bar Y^{(2)})'\|_{\mathcal L^{\infty}([s,t]\times\Omega)} + \big\| \|\delta( (Y^{(2)})' - (\bar Y^{(2)})' )\|_{q'\text{-}\mathrm{var}} \big\|_{\infty;k,\infty;[s,t]} \\
&\lesssim_{k,\gamma,\Theta_0,\hat K} (|\delta \mathbf X|_{p\text{-}\mathrm{var};[s,t]} + (t-s)^{\frac{1}{2}}) \, \thicknm{Y,Y';  \bar Y,\bar Y'}_{\mathrm X,\mathrm X;q,q';k,\infty;[s,t]}.
\end{aligned}
\end{equation}

Finally, combining \eqref{e:R (i) - R (i-1)}, \eqref{e:M i - M i-1}, and \eqref{H Y 2 - H Y 1}, we obtain that 
\begin{equation*}
\begin{aligned}
&\thicknm{Y^{(2)},(Y^{(2)})'; \bar Y^{(2)}, (\bar Y^{(2)})'}_{\mathrm X;q,q';k,\infty;[s,t]}\lesssim_{k,\gamma,\Theta_0,\hat K} (|\delta \mathbf X|_{p\text{-}\mathrm{var};[s,t]} + (t-s)^{1/2} )\\
&\qquad\qquad   \times \left(\thicknm{Y^{(1)},(Y^{(1)})'; \bar Y^{(1)}, (\bar Y^{(1)})'}_{\mathrm X;q,q';k,\infty;[s,t]}  + \thicknm{Y,Y'; \bar Y, \bar Y'}_{\mathrm X;q,q';k,\infty;[s,t]} \right).
\end{aligned}
\end{equation*}
This, together with \eqref{e:(2) < = (1)} (letting $i = 0$), yields that for some constant $C_{k,\gamma,\Theta_0,\hat K} > 0$, 
\begin{equation*}
\begin{aligned}
\thicknm{Y^{(2)},(Y^{(2)})'; \bar Y^{(2)}, (\bar Y^{(2)})'}_{\mathrm X;q,q';k,\infty;[s,t]} &\le C_{k,\gamma,\Theta_0,\hat K} (|\delta \mathbf X|_{p\text{-}\mathrm{var};[s,t]} + (t-s)^{1/2} )\\
&\quad \times \thicknm{Y,Y'; \bar Y, \bar Y'}_{\mathrm X;q,q';k,\infty;[s,t]}.  
\end{aligned}
\end{equation*}
Setting $\hat \theta := (1 - \varepsilon) C^{-1}_{k,\gamma,\Theta_0,\hat K}$, we get the desired contraction property. 
\end{proof}

In the following Theorem~\ref{thm:the existence}, we prove the well-posedness of  Eq.~\eqref{e:BSDE} via Picard's iteration. By the contraction property of the solution map on small intervals, we extend the solution to the whole interval, via the telescopic argument which is valid due to the \emph{a priori} estimate for the solution established in Theorem~\ref{thm:BMO bound}.

\begin{theorem}\label{thm:the existence}
Assume \ref{(gamma)}, $\xi \in L^{\infty}$ and \ref{(fdet)} holds. Let $H\in C^{\gamma}_{b}(\R,\R^d)$ and $\mathbf X\in \mathscr{C}^{p\text{-}\mathrm{var}}([0,T], \R^{d})$.
Then, there exists a unique solution $(Y,Z)$ to Eq.~\eqref{e:BSDE} with
\begin{equation*}
(Y,-H(Y))\in \mathrm{RSM}^{(p,p)\text{-}\mathrm{var}}_{\mathrm X}\ \text{ and }\ \ \thicknm{(Y,-H(Y))}_{\mathrm X;q,q';1,\infty}   < \infty.
\end{equation*}
Moreover, for every $k\ge 1$,
\begin{equation}\label{e:|(Y,-H(Y))|_X;p,p;k,infty < infty for every k}
\thicknm{(Y,-H(Y))}_{\mathrm X;p,p;k,\infty} < \infty. 
\end{equation}
In particular, $\|Y\|_{\infty} + \|\delta (\int_{0}^{\bm\cdot} Z_r dW_r)\|_{\mathrm{BMO}} < \infty$.
\end{theorem}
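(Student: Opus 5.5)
The plan is to build the solution backward on a partition of $[0,T]$ into short intervals. On each interval we obtain a local solution from the contraction property of $(\Phi^{\xi}_{[s,t]})^{\circ 2}$ (Proposition~\ref{prop:contraction}) within the invariant ball of Proposition~\ref{prop:invariant within ball}. We then glue the local solutions. The essential point is that the interval length must not shrink from step to step. This is guaranteed by the \emph{a priori} bound of Theorem~\ref{thm:BMO bound}, which controls $\|Y_{t_i}\|_\infty$ uniformly in $i$ by a constant $C_{\gamma,\Theta_2}$ depending only on the data. Uniqueness is already given by Corollary~\ref{cor:uniqueness}, so only existence and the bound \eqref{e:|(Y,-H(Y))|_X;p,p;k,infty < infty for every k} need proof.

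\textbf{Local step.} Fix $k>1$ and assume WLOG $q<q'$ and $p<q'$. Set $R_0$ from $\|H\|_{C^\gamma_b}$, $|\delta\mathbf X|_{p\text{-var}}$ and the uniform bound $L:=C_{\gamma,\Theta_2}$ for terminal values, with $\Theta_2$ computed from $\|\xi\|_\infty$. Proposition~\ref{prop:invariant within ball} gives $K=C(1+L)$; take $K'=K$ and the corresponding $\theta$. Apply Proposition~\ref{prop:contraction} with $\hat K=2K'$ and $\varepsilon=1/2$ to get $\hat\theta$. Choose a partition $0=t_0<\dots<t_N=T$ with $|\delta\mathbf X|_{p\text{-var};[t_i,t_{i+1}]}+(t_{i+1}-t_i)^{1/2}\le\min(\theta,\hat\theta)$; this is possible with $N$ finite via the control function $\hat w$, as in Step~4 of Proposition~\ref{prop:bounded by BMO}.

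On $[t_{N-1},T]$, start the iteration from the constant process $Y^{(0)}\equiv\xi$ (a legitimate starting point, with an appropriate $Z^{(0)}$), so $Y^{(0)}_T=\xi$. Invariance keeps every iterate in the ball of radius $K'$. Applying contraction to the pair $(Y^{(n)},Y^{(n+1)})$, which share terminal value $\xi$, shows that the even and odd subsequences are Cauchy in the seminorm $\thicknm{\cdot}$. Since the terminal value is fixed, they are Cauchy in the Banach norm of Lemma~\ref{lem:Banach property of RSMs} as well. The whole sequence then converges, because consecutive differences contract. The limit $(Y,Y')$ lies in the ball.

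\textbf{Identifying the limit as a fixed point.} I expect this to be the main obstacle. One must show that $Y'=-H(Y)$ and that $(Y,Z)$ solves \eqref{e:solu map} with $Y$ on both sides. The first part holds because $(Y^{(n+1)})'=-H(Y^{(n)})$ and $H$ is Lipschitz, with $\mathcal L^\infty$ convergence. For the equation itself, use Lemma~\ref{lem:Gamma and I} with $\bar{\mathbf X}=\mathbf X$ and $\bar H=H$. It gives $\int H(Y^{(n)})d\mathbf X\to\int H(Y)d\mathbf X$ in $\|\|\cdot\|_{p\text{-var}}\|_{\infty;k,\infty}$. The drift integrals converge by the Lipschitz property of $f$ together with BMO convergence of $M^{(n)}$. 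The martingale parts converge in BMO. Passing to the limit in \eqref{e:solu map} therefore yields a solution on $[t_{N-1},T]$.

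\textbf{Backward induction and gluing.} Theorem~\ref{thm:BMO bound}, applied on $[t_{N-1},T]$ with $k>1$ and finite seminorm, gives $\|Y_{t_{N-1}}\|_\infty\le L$. This uses \ref{(fdet)} for the comparison with deterministic RDEs. Repeat the local step on $[t_{N-2},t_{N-1}]$ with terminal value $Y_{t_{N-1}}$, using the same $\theta,\hat\theta$, and continue backward. Concatenating the pieces gives a continuous adapted $Y$ and a progressive $Z$. The RSM property on $[0,T]$ follows from additivity of the decomposition: the remainder gets a Chen-type correction, and rough and Itô integrals are additive. Proposition~\ref{prop:Psi <= Psi + Psi}, applied $N$ times, shows that $\thicknm{(Y,-H(Y))}_{\mathrm X;q,q';k,\infty}<\infty$; this implies the seminorm with $k=1$ is finite, which is what uniqueness requires.

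Finally, Theorem~\ref{thm:BMO bound} on $[0,T]$ gives \eqref{e:bound (Y,-H(Y))} for every $k>1$; the case $k=1$ follows because the conditional norms are monotone in $k$. It also gives $\|Y\|_\infty+\|\delta M^Y\|_{\mathrm{BMO}}<\infty$.
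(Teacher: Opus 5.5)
Your construction follows the paper's proof: Picard iteration of $\Phi^{\xi}$, the invariance ball of Proposition~\ref{prop:invariant within ball}, contraction of $(\Phi^{\xi})^{\circ 2}$ from Proposition~\ref{prop:contraction}, a partition that can be chosen uniformly because Theorem~\ref{thm:BMO bound} bounds every terminal value by $L$, and backward gluing. You identify the limit differently, by passing to the limit in \eqref{e:solu map} via Lemma~\ref{lem:Gamma and I}. The paper instead observes that $\Phi^{\xi}$ applied to the unique fixed point of $(\Phi^{\xi})^{\circ 2}$ in the ball is again such a fixed point. Both arguments work; the paper's needs no continuity estimate. You are also more careful than the paper in putting $L$ into $R_0$, which makes $\hat\theta$ uniform over the subintervals.

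The last step does not go through as written. Theorem~\ref{thm:BMO bound} with exponent $k'$ assumes \eqref{e:cond that Psi < infty} for that same $k'$. Its proof passes through Proposition~\ref{prop:bounded by BMO}, whose absorption step needs the $(q,q';k',\infty)$-seminorm to be finite beforehand. You have this only for the single $k$ fixed in the construction. You cannot upgrade it for free, because Lemma~\ref{lem:BMO of q-var} applies to one-parameter processes and not to the two-parameter remainder $P^Y$.

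The repair is short:
\begin{itemize}
\item Run your construction for each $k'>1$; the constants $\theta,\hat\theta$ and the partition then depend on $k'$.
\item All the resulting solutions have finite $k=1$ seminorm, so Corollary~\ref{cor:uniqueness} shows they coincide. This is how the paper's ``fix $k>1$'' should be read; compare the proof of Theorem~\ref{thm:multi-d conditional}.
\end{itemize}

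Two smaller points:
\begin{itemize}
\item $Y^{(0)}\equiv\xi$ is not adapted. Take $Y^{(0)}_t=\E_t[\xi]$ instead: its seminorm is $\|\delta M^{Y^{(0)}}\|_{\mathrm{BMO}}\le\|\xi\|_\infty$, which lies inside the ball once $C\ge 1$. Alternatively start from $0$ as the paper does, since every iterate with $i\ge 1$ already has terminal value $\xi$.
\item The bound $\|Y_{t_i}\|_\infty\le L$ must come from Theorem~\ref{thm:BMO bound} applied to the \emph{glued} solution on $[t_i,T]$ with terminal value $\xi$. So the gluing and Proposition~\ref{prop:Psi <= Psi + Psi} belong inside the backward induction, not after it.
\end{itemize}
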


\begin{proof}
Uniqueness is already proved in Corollary~\ref{cor:uniqueness} by the comparison principle, so it suffices to construct a solution satisfying \eqref{e:|(Y,-H(Y))|_X;p,p;k,infty < infty for every k}. Since \eqref{e:|(Y,-H(Y))|_X;p,p;k,infty < infty for every k} is independent of $(q,q')$, we can assume  $p < q'$ without loss of generality for the application of Proposition~\ref{prop:invariant within ball}. In addition, we fix $k > 1$, where the condition $k > 1$ is also required in Theorem~\ref{thm:BMO bound} which will be used later. 

For $t \in [0,T]$, denote $(Y^{(i)},(Y^{(i)})',Z^{(i)})$ by 
\begin{equation*}
(Y^{(i)},(Y^{(i)})',Z^{(i)}) := \Phi^{\xi}_{[t,T]}(Y^{(i-1)},(Y^{(i-1)})',Z^{(i-1)}),\ i\ge 1;\quad (Y^{(0)},(Y^{(0)})',Z^{(0)}) := (0,0,0).
\end{equation*}
Our goal is  to construct a solution $(Y,Z)$ to \eqref{e:BSDE} by taking the limit of $(Y^{(i)},Z^{(i)})$.  For the reader's convenience, we divide the construction into three steps.

{\bf Step 1.} In this step, we prove the convergence of $\{(Y^{(i)},(Y^{(i)})',Z^{(i)})\}_{i \ge 1}$ on $[t,T]$, assuming $|\delta \mathbf X|_{p\text{-}\mathrm{var};[t,T]} + (T-t)^{1/2}$ is small enough. 

By the \emph{a priori} estimate established in Theorem~\ref{thm:BMO bound}, there exists a constant $L$ such that for any time $t \in [0,T]$ and any solution $(\hat Y,\hat Z)$ on $[t,T]$ with $\thicknm{(\hat Y,-H(\hat Y))}_{\mathrm X;q,q';k,\infty;[t,T]} < \infty$, it holds 
\begin{equation}\label{e:a pri esti}
\sup_{r\in[t,T]} \|\hat Y_r\|_{\infty} \le L.
\end{equation}
In particular, $\|\xi\|_{\infty} \le L$. Let $C > 0$ be the constant obtained in Proposition~\ref{prop:invariant within ball} (where $p < q'$ is needed, and $C$ is independent of $\xi$). Denote $K_1$ as follows: 
\begin{equation*}
K_1 := C (1 + L).
\end{equation*}
Noting $K_1 \ge C (1 + \|\xi\|_{\infty})$, Proposition~\ref{prop:invariant within ball} yields that if assuming further 
\begin{equation}\label{e:cond of T-t}
|\delta \mathbf X|_{p\text{-}\mathrm{var};[t,T]} + (T-t)^{1/2} \le  C^{-1} K^{-(\gamma - 2)}_1 /2  =: \theta_1,
\end{equation}
then for $(\tilde \psi,\tilde \psi',\tilde z) := \Phi^{\xi}_{[t,T]}(\psi,\psi',z)$ it holds:  
\begin{equation}\label{e:invariance of Phi}
\thicknm{(\tilde \psi, \tilde \psi')}_{\mathrm X;q,q';k,\infty;[t,T]} \le K_1\ \text{ whenever }\ \thicknm{( \psi,  \psi')}_{\mathrm X;q,q';k,\infty;[t,T]} \le K_1.
\end{equation}
Since $\thicknm{(Y^{(0)},(Y^{(0)})')}_{\mathrm X;q,q';k,\infty;[t,T]} = 0 \le K_1$, assuming \eqref{e:cond of T-t} holds, we have 
\begin{equation}\label{e:upper bound in i}
\sup_{i \ge 0} \  \thicknm{(Y^{(i)},(Y^{(i)})')}_{\mathrm X;q,q';k,\infty;[t,T]} \le K_1.
\end{equation}

Now we show the contraction of $(\Phi^{\xi}_{[t,T]})^{\circ 2}$. Denote by $\mathfrak B^{\xi;q,q'}_{[t,T]}(K_1)$ the set as follows:  
\begin{equation*}
\begin{aligned}
\mathfrak B^{\xi;q,q'}_{[t,T]}(K_1) := &\bigg\{(\psi,\psi',z);\ (\psi,\psi') \in \mathrm{RSM}^{(p,p)\text{-}\mathrm{var}}_{\mathrm X}([t,T]),\ \thicknm{(\psi,\psi')}_{\mathrm X;q,q';k,\infty;[t,T]} \le K_1,\ \psi_{T} = \xi, \\
&z\ \text{is progressively measurable with } \|z\|_{L^2([t,T])} < \infty\ \text{ a.s.},\  \text{ and } \delta M^{\psi}_{u,v} = \int_{u}^{v} z_r dW_r \bigg\}.
\end{aligned}
\end{equation*}
By \eqref{e:upper bound in i}, we have $(Y^{(i)},(Y^{(i)})',Z^{(i)}) \in \mathfrak B^{\xi;q,q'}_{[t,T]}(K_1)$ for every $i \ge 1$. Moreover, since the map $\Phi^{\xi}_{[t,T]}$ is invariant in $\mathfrak B^{\xi;q,q'}_{t,T}(K_1)$ under the condition \eqref{e:cond of T-t}, by Proposition~\ref{prop:contraction} there exists $\theta_2 > 0$ depending on $K_1$ such that if assuming further 
\begin{equation}\label{e:cond of T-t'}
|\delta \mathbf X|_{p\text{-}\mathrm{var};[t,T]} + (T-t)^{1/2} \le \theta_1 \land \theta_2, 
\end{equation}
then the map $(\Phi^{\xi}_{[t,T]})^{\circ 2}$ is a contraction in $\mathfrak B^{\xi;q,q'}_{[t,T]}(K_1)$.

Assume \eqref{e:cond of T-t'} holds. Then, by the completeness of $\mathfrak B^{\xi;q,q'}_{[t,T]}(K_1)$ shown in Lemma~\ref{lem:Banach property of RSMs} and the contraction property of $(\Phi^{\xi}_{[t,T]})^{\circ 2}$, there exists $(Y,Y',Z)$, the unique fixed point of $(\Phi^{\xi}_{[t,T]})^{\circ 2}$ in $\mathfrak B^{\xi;q,q'}_{[t,T]}(K_1)$, such that $(Y,Y') \in \mathrm{RSM}^{(p,p)\text{-}\mathrm{var}}_{\mathrm X}([t,T],\R)$ and 
\begin{equation*}
\lim_{i \to \infty} \, \thicknm{Y^{(i)},(Y^{(i)})'; Y,Y'}_{\mathrm X,\mathrm X;q,q';k,\infty;[t,T]} = 0. 
\end{equation*}
Clearly, since $(Y,Y',Z) \in \mathfrak B^{\xi;q,q'}_{[t,T]}(K_1)$, we have $\thicknm{(Y,Y')}_{\mathrm X;q,q';k,\infty;[t,T]} \le K_1$.

\textbf{Step 2.} In this step, we prove the existence of the solution $(Y,Z)$ on small intervals.

In the step 1, we show that $(Y,Y',Z)$ is the unique fixed point of $(\Phi^{\xi}_{[t,T]})^{\circ 2}$ in $\mathfrak B^{\xi;q,q'}_{[t,T]}(K_1)$. Now we show that it is also a fixed point of $\Phi^{\xi}_{[t,T]}$ in $\mathfrak B^{\xi;q,q'}_{[t,T]}(K_1)$. 

Denote $(\tilde Y,\tilde Y',\tilde Z) := \Phi^{\xi}_{[t,T]}(Y,Y',Z)$. Since $(Y,Y',Z) \in \mathfrak B^{\xi;q,q'}_{[t,T]}(K_1)$, by \eqref{e:invariance of Phi} $(\tilde Y,\tilde Y',\tilde Z)$ also belongs to $\mathfrak B^{\xi;q,q'}_{[t,T]}(K_1)$. Furthermore, the triplet  $(\Phi^{\xi}_{[t,T]})^{\circ 3}(Y,Y',Z)$, which is equal to $(\tilde Y,\tilde Y',\tilde Z)$, is also a fixed point of $(\Phi^{\xi}_{[t,T]})^{\circ 2}$. Therefore, the uniqueness of the fixed point of $(\Phi^{\xi}_{[t,T]})^{\circ 2}$ yields 
\begin{equation*}
(\tilde Y,\tilde Y',\tilde Z) = (Y,Y',Z). 
\end{equation*}
Consequently, $(Y,Y',Z)$ is a fixed point of $\Phi^{\xi}_{[t,T]}$, and hence $(Y,Z)$ solves Eq.~\eqref{e:BSDE} on $[t,T]$.

Since $(Y,Z)$ solves Eq.~\eqref{e:BSDE} with $\thicknm{(Y,Y')}_{\mathrm X;q,q';k,\infty;[t,T]} < \infty$, the \emph{a priori} estimate proved in Theorem~\ref{thm:BMO bound} implies $\thicknm{(Y,Y')}_{\mathrm X;p,p;k,\infty;[t,T]} < \infty$, which meets our requirement \eqref{e:|(Y,-H(Y))|_X;p,p;k,infty < infty for every k}. 

\textbf{Step 3.}
Now we apply a backward telescoping argument to prove the existence of a solution on an arbitrary interval $[0,T]$. 

We claim that there exists a partition $\pi=\{[t_i,t_{i+1}]; i=0,1,\ldots,N-1\}$ of $[0,T]$ such that, on each subinterval $[t_i,t_{i+1}]$, we can construct a solution as in Steps~1 and 2. To see this, since $\theta_1$, $K_1$, and $\theta_2$ are chosen independent of the terminal value, and since for any terminal value $Y_v$ the condition $K_1 \ge C(1 + \|Y_v\|_{\infty})$ always holds so that Proposition~\ref{prop:invariant within ball} always applies at any terminal time $v$, we can choose the partition fine enough so that the following condition holds:  
\begin{equation*}
(|\delta \mathbf X|_{p\text{-}\mathrm{var};[t_i,t_{i+1}]} + |t_{i+1} - t_{i}|^{1/2}) \le \theta_1 \land \theta_2, 
\end{equation*}
and then the arguments in Steps~1 and 2 apply for each subinterval $[t_i,t_{i+1}]$.  

Finally, the global existence of the solution follows by iterating the procedures in Steps~1 and 2 on each subinterval, proceeding backward from the terminal time $T$ to the initial time $0$, which can be done in finitely many steps.
\end{proof}

We have the following quantitative stability result for the equation. Denote $\Theta'_1 := (\Theta,C_f, R'_1)$. 

\begin{theorem}\label{thm:stability}
Assume both $(\xi,f,H,\mathbf X)$ and $(\bar \xi, \bar f, \bar H, \bar{\mathbf X})$ satisfy the conditions in Theorem~\ref{thm:the existence}. Let $(Y,Z)$ ($(\bar Y,\bar Z)$ resp.) be the unique solution to \eqref{e:BSDE} with the coefficient $(\xi,f,H,\mathbf X)$ (the coefficient $(\bar \xi,\bar f, \bar H, \bar{\mathbf X})$ resp.) such that  
\begin{equation*}
\thicknm{(Y,-H(Y))}_{\mathrm X;q,q';1,\infty}  + \thicknm{(\bar Y,- \bar H(\bar Y))}_{\bar{\mathrm X};q,q';1,\infty} < \infty.
\end{equation*}
Let $R'_1 > 0$ be a constant such that 
\begin{equation*}
\|\xi\|_{\infty} + \|\bar \xi\|_{\infty} + \|H\|_{C^{\gamma}_b} + \|\bar H\|_{C^{\gamma}_b} + |\delta \mathbf X |_{p\text{-}\mathrm{var}} + |\delta  \bar{\mathbf X} |_{p\text{-}\mathrm{var}} + T \le R'_1.
\end{equation*}

Then, for any $k \ge 1$ the following local Lipschitz inequality holds up to a multiplicative constant depending on $(k,\gamma,\Theta'_1)$,
\begin{equation}
\begin{aligned}\label{ine:stab}
&\thicknm{Y,-H(Y) ; \bar Y, - \bar H(\bar Y)}_{\mathrm X,\bar{\mathrm X};p,p;k,\infty}     \\ 
& \qquad \lesssim \|\xi - \bar \xi\|_{\infty} + \|f - \bar f\|_{\infty} + \|H - \bar H\|_{C^{\gamma - 1}_b} + |\delta (\mathbf X - \bar{\mathbf X}) |_{p\text{-}\mathrm{var}} . 
\end{aligned}
\end{equation}
In particular, $\|Y - \bar Y\|_{\infty} + \|\delta \big( \int_0 ^{\bm\cdot} (Z_r - \bar Z_r) dW_r \big)\|_{\mathrm{BMO}}$ is also bounded by the right-hand side of the above inequality.
\end{theorem}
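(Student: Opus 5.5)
The plan is a local-in-time difference estimate on a fine partition, propagated backward from $T$ to $0$, in the same way the \emph{a priori} bound was globalised in Proposition~\ref{prop:bounded by BMO}.

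First, Theorem~\ref{thm:BMO bound} applied to both solutions (with $k>1$; the case $k=1$ is then dominated) gives a constant $C$, depending only on $(k,\gamma,\Theta'_1)$, with
\[
\thicknm{(Y,-H(Y))}_{\mathrm X;q,q';k,\infty}+\thicknm{(\bar Y,-\bar H(\bar Y))}_{\bar{\mathrm X};q,q';k,\infty}+\|Y\|_\infty+\|\bar Y\|_\infty\le C.
\]
Hence hypothesis \eqref{e:upper bound uni in n} of Lemma~\ref{lem:Gamma and I} holds on every subinterval. Write $\varepsilon_0:=\|f-\bar f\|_\infty+\|H-\bar H\|_{C^{\gamma-1}_b}+|\delta(\mathbf X-\bar{\mathbf X})|_{p\text{-}\mathrm{var}}$, and fix an interval $[s,t]$.

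On $[s,t]$ we repeat the difference computations in the proof of Proposition~\ref{prop:contraction}, with two changes: the terminal values now differ ($Y_t\neq\bar Y_t$), and the coefficients differ.

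\textbf{(i) Remainders.} We have $P^Y-P^{\bar Y}=-(R^{I}-R^{\bar I})-\delta(\mathcal I-\bar{\mathcal I})$. Lemma~\ref{lem:Gamma and I} bounds the first part by
\[
|\delta\mathbf X|_{p\text{-}\mathrm{var};[s,t]}\bigl(\|Y_t-\bar Y_t\|_\infty+\Delta_{[s,t]}+\|H-\bar H\|_{C^{\gamma-1}_b}\bigr)+|\delta(\mathbf X-\bar{\mathbf X})|_{p\text{-}\mathrm{var}},
\]
where $\Delta_{[s,t]}:=\thicknm{Y,-H(Y);\bar Y,-\bar H(\bar Y)}_{\mathrm X,\bar{\mathrm X};q,q';k,\infty;[s,t]}$. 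The Lipschitz property of $f$, together with the addition of $(t-s)\|f-\bar f\|_\infty$, bounds the drift part by $(t-s)^{1/2}\bigl(\|Y_t-\bar Y_t\|_\infty+\Delta_{[s,t]}\bigr)+(t-s)\|f-\bar f\|_\infty$. Here Lemma~\ref{lem:Y <= R^Y'}, inequality \eqref{e:Y - Y q-var infty infty}, controls $\|Y-\bar Y\|_{\infty}$.

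\textbf{(ii) Martingale parts.} We write $\delta(M^Y-M^{\bar Y})_{r,t}=(\xi_t\text{-part})+(\text{integrals})-\E_r[\cdots]$ with $Y_t-\bar Y_t$ as terminal datum. BDG and Doob, as in \eqref{e:esti for M^n,m'}, together with Lemma~\ref{lem:BMO of q-var} to trade $(2,\infty)$ for $(k,\infty)$ norms, give the same type of bound plus $\|Y_t-\bar Y_t\|_\infty$.

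\textbf{(iii) Gubinelli derivatives.} Here $Y'-\bar Y'=-(H(Y)-\bar H(\bar Y))$. Lemma~\ref{lem:gamma - 2} and Lemma~\ref{lem:Y <= R^Y'} bound this by $\|Y_t-\bar Y_t\|_\infty+\|\delta(Y-\bar Y)\|+\|H-\bar H\|_{C^1_b}$. The increment of $Y-\bar Y$ is in turn bounded using (i)--(ii), so the derivative gains a small factor only through this second substitution, exactly the two-step trick of Proposition~\ref{prop:contraction}.

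Collecting (i)--(iii) and writing $\eta_{[s,t]}:=|\delta\mathbf X|_{p\text{-}\mathrm{var};[s,t]}+(t-s)^{1/2}$, we obtain
\[
\Delta_{[s,t]}\le C_1\bigl(\|Y_t-\bar Y_t\|_\infty+\varepsilon_0\bigr)+C_1\,\eta_{[s,t]}\,\Delta_{[s,t]}.
\]
If the partition is chosen, via the control function $\hat w$ as in Step~4 of Proposition~\ref{prop:bounded by BMO}, so that $C_1\eta\le1/2$ on each piece, the last term is absorbed; the number of pieces depends only on $\Theta'_1$. Then $\|Y_{t_i}-\bar Y_{t_i}\|_\infty$ is controlled by \eqref{e:Y - Y q-var infty infty} on $[t_i,t_{i+1}]$, which gives a recursion in the spirit of \eqref{e:iter 1}. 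Iterating backward from $\|Y_T-\bar Y_T\|_\infty=\|\xi-\bar\xi\|_\infty$ and gluing the pieces with a difference analogue of Proposition~\ref{prop:Psi <= Psi + Psi} (the cross term $\delta Y'\,\delta\mathrm X$ splits into $\delta(Y'-\bar Y')\,\delta\bar{\mathrm X}+\delta Y'\,\delta(\mathrm X-\bar{\mathrm X})$) yields the $(q,q')$ version of \eqref{ine:stab}.

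To upgrade from $(q,q')$ to $(p,p)$, we redo steps (i)--(ii) using the $p/2$-variation of $R^I-R^{\bar I}$ and $\delta\mathcal I$, which Lemma~\ref{lem:Gamma and I} already provides, and the $p$-variation of $H(Y)-\bar H(\bar Y)$, controlled through $Y-\bar Y$; everything on the right side is already bounded. The final statement about $\|Y-\bar Y\|_\infty$ and the BMO norm then follows from \eqref{e:Y - Y q-var infty infty} and the definition of the seminorm.

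The main obstacle is (iii) together with the absorption. The derivative difference $H(Y)-\bar H(\bar Y)$ carries no small factor directly, and it enters through $\|Y-\bar Y\|_\infty$, which depends on the terminal difference on each subinterval. The fix is to bound $Y-\bar Y$ first, using only $P$ and $M$ but with $Y'$ entering via $\|\delta Y\|$, and only afterwards the derivative. If that is circular, I would instead use the contraction structure directly: view $\bar Y$ as an approximate fixed point of $(\Phi^{\xi}_{[t_i,t_{i+1}]})^{\circ2}$ with defect $O(\varepsilon_0+\|Y_{t_{i+1}}-\bar Y_{t_{i+1}}\|_\infty)$, and apply Proposition~\ref{prop:contraction}.
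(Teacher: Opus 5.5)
Your proposal is correct and follows essentially the paper's route. It uses the \emph{a priori} bounds of Theorem~\ref{thm:BMO bound}, local difference estimates for $P^Y-P^{\bar Y}$ (Lemma~\ref{lem:Gamma and I} plus the Lipschitz drift bound), for $M^Y-M^{\bar Y}$ (BDG/Doob as in \eqref{e:esti for M^n,m'}) and for $Y'-\bar Y'$ via the increments of $Y-\bar Y$, then absorption on subintervals where $|\delta\mathbf X|_{p\text{-}\mathrm{var}}+(t-s)^{1/2}$ is small, and finally backward telescoping as in Step~4 of Proposition~\ref{prop:bounded by BMO}.

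The differences are only cosmetic. The paper works directly with the $(p,p)$-seminorm, feeding Lemma~\ref{lem:Gamma and I} through $\thicknm{\bm\cdot}_{q,q'}\le\thicknm{\bm\cdot}_{p,p}$, so no separate upgrade step is needed. Also, $Y$ and $\bar Y$ are genuine solutions, so $Y'=-H(Y)$ does not lag one iterate behind as in the Picard scheme. A single substitution of the increment bound therefore handles the derivative, and neither the two-step trick nor your contraction fallback is required.
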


\begin{proof}
Without loss of generality, we assume $k > 1$ to apply  Theorem~\ref{thm:BMO bound}.
Recall $Y' = -H(Y)$ and $\bar Y' = - \bar H(\bar Y)$. For simplicity of notation, denote 
\begin{equation*}
\begin{cases}\displaystyle
L := \|H - \bar H\|_{C^{\gamma - 1}_b} + |\delta (\mathbf X - \bar{\mathbf X})|_{p\text{-}\mathrm{var};[0,T]} + \|f - \bar f\|_{\infty} \\ \displaystyle
I^{\mathbf X}_t := \int_0^t H(Y_r) d\mathbf X_r,\ \mathcal I_t := \int_{0}^t f(r, Y_r,  Z_r) dr, \\ \displaystyle
\bar I^{\bar{\mathbf X}}_t := \int_0^t \bar H(\bar Y_r) d\bar{\mathbf X}_r,\  \bar{\mathcal I}_t := \int_{0}^t \bar f(r,\bar Y_r,\bar Z_r) dr. 
\end{cases}
\end{equation*}
By the \emph{a priori} estimate made by Theorem~\ref{thm:BMO bound}, we have for some constant $\hat K$ depending only on $k,\gamma$, and $\Theta'_1$, 
\begin{equation*}
\thicknm{(Y,Y')}_{\mathrm X;p,p;k,\infty;[0,T]} +  \thicknm{(\bar Y,\bar Y')}_{\bar{\mathrm X};p,p;k,\infty;[0,T]} \le \hat K.
\end{equation*}

First, we estimate $P^{Y} - P^{\bar Y}$. By Lemma~\ref{lem:Gamma and I}, we have (noting the dependence of $\hat K$ is absorbed by $(k,\gamma,\Theta'_1)$ and $\thicknm{Y ,Y'; \bar Y, \bar Y'}_{\mathrm X,\bar{\mathrm X};q,q';k,\infty;[s,t]} \le \thicknm{Y ,Y'; \bar Y, \bar Y'}_{\mathrm X,\bar{\mathrm X};p,p;k,\infty;[s,t]}$)
\begin{equation*}
\begin{aligned}
&\big\| \|R^{I^{\mathbf X}} - R^{\bar I^{\bar {\mathbf X}}}\|_{\frac{p}{2}\text{-}\mathrm{var}} \big\|_{\infty;k,\infty;[s,t]} + \big\| \|\delta(I^{\mathbf X} - \bar I^{\bar{\mathbf X}})\|_{p\text{-}\mathrm{var}} \big\|_{\infty;k,\infty;[s,t]} \lesssim_{k,\gamma,\Theta'_1} |\delta \mathbf X|_{p\text{-}\mathrm{var};[s,t]} \Big(\|Y_t - \bar Y_t\|_{\infty} 
\\ 
& \quad \quad + \thicknm{Y ,Y'; \bar Y, \bar Y'}_{\mathrm X,\bar{\mathrm X};p,p;k,\infty;[s,t]} + \|H - \bar H\|_{C^{\gamma - 1}_{b}}\Big) + |\delta( \mathbf X - \bar{\mathbf X} )|_{p\text{-}\mathrm{var};[s,t]}.
\end{aligned}
\end{equation*}
Moreover, by the same calculation leading to \eqref{e:Gamma-Gamma* i}, but with the difference of $(Y - \bar Y)_t$ and $f - \bar f$ added, we have 
\begin{equation*}	
\begin{aligned}
&\big\| \|\delta(\mathcal I - \bar{\mathcal I}) \|_{1\text{-}\mathrm{var}} \big\|_{\infty;k,\infty;[s,t]} \\ &\lesssim_{k,\gamma,\Theta'_1} (t - s) \|(Y - \bar Y)_t\|_{\infty} + (t - s)^{\frac{1}{2}} \, \thicknm{Y ,Y'; \bar Y, \bar Y'}_{\mathrm X,\bar{\mathrm X};p,p;k,\infty;[s,t]} + \|f - \bar f\|_{\infty}. 
\end{aligned}
\end{equation*}
Therefore, noting that $P^{Y} - P^{\bar Y} = - R^{I^{\mathbf X}} + R^{\bar I^{\bar{\mathbf X}}} - \delta (\mathcal I - \bar{\mathcal I})$, we have 
\begin{equation}\label{e:R  - bar R}
\begin{aligned}
&\big\| \|P^{Y} - P^{\bar Y}\|_{\frac{p}{2}\text{-}\mathrm{var}} \big\|_{\infty;k,\infty;[s,t]}\\ &\lesssim_{k,\gamma,\Theta'_1} \|Y_t - \bar Y_t\|_{\infty} +  \Big(|\delta \mathbf X|_{p\text{-}\mathrm{var};[s,t]} + (t - s)^{\frac{1}{2}}\Big) \thicknm{Y ,Y'; \bar Y, \bar Y'}_{\mathrm X,\bar{\mathrm X};p,p;k,\infty;[s,t]} + L.
\end{aligned}
\end{equation} 

Next, we estimate $M^Y - M^{\bar Y}$ and $Y - \bar Y$. Note that for $r \in [s,t]$,
\begin{equation*}
\delta (M^{Y} - M^{\bar Y})_{r,t} = - \E_{r}\left[Y_t - \bar Y_t + \delta (I^{\mathbf X} - \bar I^{\bar {\mathbf X}} + \mathcal I - \bar{\mathcal I})_{r,t}\right] + Y_t - \bar Y_t + \delta (I^{\mathbf X} - \bar I^{\bar {\mathbf X}} + \mathcal I - \bar{\mathcal I})_{r,t}. 
\end{equation*}
Then, since $I^{\mathbf X} - \bar I^{\bar {\mathbf X}}$ and $\mathcal I - \bar{\mathcal I}$ are already estimated as follows:
\begin{equation}\label{e:I + I - I - I}
\begin{aligned}
&\big\| \|\delta(I^{\mathbf X} - \bar I^{\bar{\mathbf X}})\|_{p\text{-}\mathrm{var}} \big\|_{\infty;k,\infty;[s,t]} + \big\| \|\delta(\mathcal I - \bar{\mathcal I}) \|_{1\text{-}\mathrm{var}} \big\|_{\infty;k,\infty;[s,t]}\\
& \lesssim_{k,\gamma,\Theta'_1} \|Y_t - \bar Y_t\|_{\infty} + \Big(|\delta \mathbf X|_{p\text{-}\mathrm{var};[s,t]} + (t - s)^{\frac{1}{2}}\Big)\thicknm{Y ,Y'; \bar Y, \bar Y'}_{\mathrm X,\bar{\mathrm X};p,p;k,\infty;[s,t]} + L, 
\end{aligned}
\end{equation}
we can follow the same calculation as \eqref{e:esti for M^n,m'}--\eqref{e:M i - M i-1} to obtain  
\begin{equation}\label{e:M - bar M}
\begin{aligned}
&\big\| \|\delta (M^Y - M^{\bar Y})\|_{p\text{-}\mathrm{var}} \big\|_{\infty;k,\infty;[s,t]} + \|\delta (M^Y - M^{\bar Y})\|_{\mathrm{BMO};[s,t]} \lesssim_{k,\gamma,\Theta'_1} \text{RHS of \eqref{e:I + I - I - I}.}
\end{aligned}
\end{equation}
Note $\delta (Y - \bar Y) = \delta (- I^{\mathbf X} + \bar I^{\bar{\mathbf X}} - \mathcal I + \bar{\mathcal I} + M^{Y} - M^{\bar Y})$. The above estimate together with \eqref{e:I + I - I - I} yields 
\begin{equation}\label{e:Y - bar Y}
\big\| \|\delta (Y - \bar Y)\|_{p\text{-}\mathrm{var}} \big\|_{\infty;k,\infty;[s,t]} \lesssim_{k,\gamma,\Theta'_1} \text{RHS of \eqref{e:I + I - I - I}.} 
\end{equation}

Now, for $Y' - \bar Y' = - H(Y) + H(\bar Y) - (H - \bar H)(\bar Y)$, by Lemma~\ref{lem:gamma - 2} (noting $H \in C^{\gamma}_b$ so that $H \in C^{2}_b$) and inequalities $|Y' - \bar Y'|\lesssim_{\Theta'_1} |Y - \bar Y| + \|H - \bar H\|_{\infty}$ and $\|\delta (H - \bar H)(\bar Y)\|_{p\text{-}\mathrm{var}} \le \|D(H - \bar H)\|_{\infty} \|\delta \bar Y\|_{p\text{-}\mathrm{var}}$, we have 
\begin{equation}\label{Y' - bar Y'}
\begin{aligned}
&\|Y' - \bar Y'\|_{\mathcal L^{\infty}([s,t] \times \Omega)} + \big\| \|\delta (Y' - \bar Y' )\|_{p\text{-}\mathrm{var}} \big\|_{\infty;k,\infty;[s,t]} \\ 
&\lesssim_{\Theta'_1} \Big(1 + \big\| \|\delta Y\|_{p\text{-}\mathrm{var}} \big\|_{\infty;k,\infty;[s,t]} + \|\delta \bar Y\|_{p\text{-}\mathrm{var}} \big\|_{\infty;k,\infty;[s,t]} \Big) \|Y - \bar Y\|_{\mathcal L^{\infty}([s,t] \times \Omega)}\\
&\quad + \big\| \|\delta (Y - \bar Y)\|_{p\text{-}\mathrm{var}} \big\|_{\infty;k,\infty;[s,t]} + \|H - \bar H\|_{C^1_b} \Big(1 + \big\| \|\delta \bar Y\|_{p\text{-}\mathrm{var}} \big\|_{\infty;k,\infty;[s,t]} \Big)\\
&\lesssim_{k,\gamma,\Theta'_1} \|Y_t - \bar Y_t\|_{\infty} + \big\| \|\delta (Y - \bar Y)\|_{p\text{-}\mathrm{var}} \big\|_{\infty;k,\infty;[s,t]} + \|H - \bar H\|_{C^{1}_{b}}. 
\end{aligned}
\end{equation}
By \eqref{e:Y - bar Y}, the RHS of the above estimate is, in turn, bounded by the RHS of \eqref{e:I + I - I - I}. 

Combining \eqref{e:R  - bar R}, \eqref{e:M - bar M}, and \eqref{Y' - bar Y'}, for some constant $C > 0$ independent of $t$ we have 
\begin{equation*}
\begin{aligned}
&\thicknm{Y ,Y'; \bar Y, \bar Y'}_{\mathrm X,\bar{\mathrm X};p,p;k,\infty;[s,t]} \\
&\le C \bigg( \|Y_t - \bar Y_t\|_{\infty} + \Big(|\delta \mathbf X|_{p\text{-}\mathrm{var};[s,t]} + (t - s)^{\frac{1}{2}}\Big) \thicknm{Y ,Y'; \bar Y, \bar Y'}_{\mathrm X,\bar{\mathrm X};p,p;k,\infty;[s,t]} + L \bigg). 
\end{aligned}
\end{equation*}
Hence, the following inequality holds whenever $(|\delta \mathbf X|_{p\text{-}\mathrm{var};[s,t]} + (t - s)^{1/2}) \le (2C)^{-1}$: 
\begin{equation*}
\thicknm{Y ,Y'; \bar Y, \bar Y'}_{\mathrm X,\bar{\mathrm X};p,p;k,\infty;[s,t]} \le 2C \|Y_t - \bar Y_t\|_{\infty} + 2 C L.  
\end{equation*}
In particular, noting $Y_T - \bar Y_T = \xi - \bar \xi$, it holds $\thicknm{Y ,Y'; \bar Y, \bar Y'}_{\mathrm X,\bar{\mathrm X};p,p;k,\infty;[t,T]} \le 2C \|\xi - \bar \xi\|_{\infty} + 2 C L$ whenever $(|\delta \mathbf X|_{p\text{-}\mathrm{var};[t,T]} + (T-t)^{1/2}) \le (2C)^{-1}$. 

Since for each $t \in [0,T]$, by \eqref{e:Y - Y q-var infty infty} in Lemma~\ref{lem:Y <= R^Y'} the difference $\|Y_t - \bar Y_t\|_{\infty}$ is dominated by $\|\xi - \bar \xi\|_{\infty} +\thicknm{Y ,Y'; \bar Y, \bar Y'}_{\mathrm X,\bar{\mathrm X};p,p;k,\infty;[t,T]}$. Therefore, the desired estimate can be verified via a backward telescoping argument, in the same way as in Step~4 of Proposition~\ref{prop:bounded by BMO}.  
\end{proof}

\begin{remark}\label{rem:on controlled fields}
In this paper, we assume that $H(\bm\cdot)$ is independent of time $t$. Under the assumption that $H$ is deterministic, indeed, one can also consider time-dependent $\mathrm X$-controlled vector fields of the type considered in, e.g., \cite{nx20,dause2026controlled}. 
We restrict ourselves to the time-independent case mainly to avoid lengthy but routine calculations.
However, the genuinely random vector fields case, such as $H(W_t,Y_t)$, is substantially more delicate, as explained in Remark~\ref{rem:deterministic}.
\end{remark} 

\begin{remark}\label{rem:under higher regularity}
Under higher regularity assumptions on $H(\bm\cdot)$, Eq.~\eqref{e:BSDE} can be solved through Doss--Sussmann transformation and It\^o--Wentzell formula. We sketch the argument of the well-posedness by this method under assumptions $H(\bm\cdot) \in C^{5}_b(\R,\R^d)$ and $\mathbf X \in \mathscr C^{0,p\text{-}\mathrm{var}}_{g}([0,T],\R^d)$. 

For $T' \in [0,T]$, let $\phi^{T'}(t,y)$ be the unique solution to the following RDE:
\begin{equation*}
\phi^{T'}(t,y) = y + \int_{t}^{T'} H(\phi^{T'}(r,y)) d\mathbf X_r, \quad (t,y) \in [0,T'] \times \R.
\end{equation*}
Then, by \cite[Proposition~11.11]{friz2010multidimensional}, $\phi^{T'}(t,y)$ forms a flow of $C^3$-diffeomorphisms. 

Thanks to the $C^3$-smoothness of $\phi^{T'}$, the approximation solution $(Y,Z)$ to Eq.~\eqref{e:BSDE} obtained by \cite[Theorem~3]{DF} has an explicit representation through classical quadratic BSDEs. Indeed, further assume that $\xi \in L^{\infty}$ and \ref{(f)} holds. Denote $\tilde{f}^{T'}$ as follows:
\begin{equation*}
\tilde{f}^{T'}(t,y,z) := \Big(f(t,\phi^{T'}(t,y),\partial_y \phi^{T'}(t,y) z) + \frac{1}{2} \partial^2_{yy} \phi^{T'}(t,y)|z|^2 \Big) / \partial_y \phi^{T'}(t,y),\quad (t,y,z) \in [0,T'] \times \R^{1+d}. 
\end{equation*}
Then, by the proof of \cite[Theorem~3]{DF}, there exists $\pi$, a partition on $[0,T]$, s.t. for each $[u,v] \in \pi$, 
\begin{equation*}
Y_t = \phi^{v}(t,\tilde Y^{v}_{t}),\quad Z_{t} = \partial_y\phi^{v}(t,\tilde Y^{v}_{t}) \tilde Z^{v}_t,\quad t \in [u,v].
\end{equation*}
Here, $(\tilde Y^v,\tilde Z^v)$ is the unique solution to the following quadratic BSDE in $\mathcal L^{\infty}([u,v]\times\Omega,\R) \times \mathcal L^{2}([u,v]\times\Omega,\R^d)$:
\begin{equation*}
\tilde Y^v_{t} = Y_v + \int_{t}^{v} \tilde f^{v}(r,\tilde Y^v_{r},\tilde Z^v_{r}) dr - \int_{t}^{v} \tilde Z^v_r dW_r,\quad t\in [u,v].
\end{equation*} 
Note that for $[T',T] \in \pi$, $Y_T = \xi$ is given so that $(\tilde Y^{T},\tilde Z^{T})$ can be solved on $[T',T]$. Consequently, the next terminal value $Y_{T'} = \phi^{T}(T',\tilde Y^{T}_{T'})$ can be obtained, and hence one can repeat this procedure on subintervals up to time $0$.  

To see that $(Y,Z)$ satisfies Definition~\ref{def:def of BSDEs}, it suffices to show that $(Y,-H(Y))$ is an \emph{RSM}, and that the integral equation Eq.~\eqref{e:BSDE} holds. Since $H(\bm\cdot) \in C^{5}_b$, by \cite[Example~3.16]{dause2026controlled}, for each $[u,v] \in \pi$, 
\begin{equation*}
(\phi^{v}, - H(\phi^v), \partial_y \phi^v, D H(\phi^v) H(\phi^v), \partial^2_{yy} \phi^{v} ,0) \in \mathcal D^{3/p}_{\mathbf X} \mathrm{Lip}^3_y (\R; \R),
\end{equation*}
where the controlled field space $\mathcal D^{3/p}_{\mathbf X} \mathrm{Lip}^3_y (\R; \R)$ is defined in \cite[Definition~3.6]{dause2026controlled}. Clearly $\tilde Y^v$ is a semimartingale and hence $(\tilde Y^v, 0)$ forms an \emph{RSM}. Therefore, by the rough stochastic It\^o--Wentzell formula \cite[Theorem~4.16]{dause2026controlled}, $(Y_t,-H(Y_t)) = (\phi^{v}(t,\tilde Y^{v}), -H(\phi^v(t,\tilde Y^{v}_t)) )$, $t\in[u,v]$, is an \emph{RSM}, and it holds 
\begin{equation*}
Y_t = Y_v + \int_{t}^{v} f(r, Y_r, Z_r) dr + \int_t^v H(Y_r) d\mathbf X_r - \int_t^v Z_r dW_r,\quad t \in [u,v]. 
\end{equation*}
This proves that $(Y,Z)$ is a solution to Eq.~\eqref{e:BSDE}. 

Finally, we show the uniqueness of the solution. Indeed, let $\{\mathbf X^n:=(\mathrm X^n , \mathbb X^n)\}_{n\ge 1} $ be a sequence of rough paths canonically lifted by smooth paths $\{\mathrm X^n\}_{n\ge 1}$, and assume $|\delta( \mathbf X^n - \mathbf X )|_{p\text{-}\mathrm{var}}\to 0$. By \cite[Theorem~3]{DF}, we see that 
\begin{equation*}
Y^n \to Y \ \text{ uniformly on }[0,T]\ \text{ a.s. and }\ Z^n \to Z \ \text{ in }\mathcal L^2([0,T]\times \Omega,\R^d). 
\end{equation*}
Here, $(Y^n,Z^n) \in \mathcal L^{\infty}([0,T]\times \Omega,\R) \times \mathcal L^{2}([0,T]\times \Omega,\R^d)$ is the unique solution to Eq.~\eqref{e:BSDE} with $\mathbf X$ replaced by $\mathbf X^n$. Furthermore, by Propositions~\ref{prop:classical BSDEs} and Theorem~\ref{thm:BMO bound}, we have 
\begin{equation*}
\sup_{n} \, \, \thicknm{(Y^n,-H(Y^n)) }_{\mathrm X^n;p,p;2,\infty} < \infty.
\end{equation*}
Then, due to the convergence of $(Y^n,Z^n)$, Fatou's lemma yields that $\thicknm{(Y,-H(Y)) }_{\mathrm X;p,p;2,\infty} < \infty$. Consequently, one can apply Corollary~\ref{cor:uniqueness} to see that $(Y,Z)$ is the unique solution to Eq.~\eqref{e:BSDE}. 

As discussed in \cite[Remark~3.17]{dause2026controlled}, due to the non-explosion condition obtained in \cite[Chapter~11]{friz2010multidimensional}, one may relax the $C^5_b$ assumption on $H(\bm\cdot)$ to $C^{\gamma + 2}_b$ with $\gamma > p$. However, it is still stronger than the condition $H(\bm\cdot) \in C^{\gamma}_b$ required in Theorem~\ref{thm:the existence}. 
\end{remark}

\subsection{The Multidimensional Case}\label{subsec:Multi dimensional}

Apart from the \emph{a priori} bound of Theorem~\ref{thm:BMO bound}, whose proof goes through the comparison principle and is therefore genuinely scalar, the whole argument above is insensitive to the dimension of the state space. We make this precise, since it isolates the single missing ingredient in the multidimensional theory.

Fix $m\ge 1$ and consider the $\R^m$-valued rough BSDE
\begin{equation}\label{e:BSDE multi}
Y_{t}
=
\xi
+
\int_{t}^{T} f(r,Y_{r},Z_{r})\,dr
+
\int_{t}^{T} H(Y_{r})\,d\mathbf X_r
-
\int_{t}^{T} Z_r\,dW_r,
\qquad t\in[0,T],
\end{equation}
where $\mathbf X \in \mathscr{C}^{p\text{-}\mathrm{var}}([0,T],\R^d)$, and
\begin{equation*}
\xi \in L^{\infty}(\mathcal F_T,\R^m), \quad
f: \Omega\times[0,T]\times \R^m \times \mathscr L(\R^d,\R^m) \to \R^m, \quad
H \in C^{\gamma}_b\big(\R^m, \mathscr L(\R^{d},\R^m)\big),
\end{equation*}
with $f$ satisfying \ref{(f)} (with $|\bm\cdot|$ the Euclidean norms of the
respective spaces). Definition~\ref{def:def of BSDEs}, the seminorm
\eqref{e:Psi}, and the solution map of Definition~\ref{def:solu map} are
unchanged, with $\R$ replaced by $\R^m$ and $\mathscr L(\R^d,\R^m)$ where
appropriate. We consider the following hypothesis, in which $k>1$ and $(q,q')$
are as in \ref{(gamma)}.

\begin{assumptionp}{($\mathbf{A}_{\mathbf{apr}}$)}
\noindent \\[0.5em]
\cond{$\mathrm{apr}$}{(apr)}:
There exists a constant $L>0$ such that, for every $s\in[0,T]$, every solution
$(\hat Y,\hat Z)$ of \eqref{e:BSDE multi} on $[s,T]$ with terminal value $\xi$
and with
$\thicknm{(\hat Y,-H(\hat Y))}_{\mathrm X;q,q';k,\infty;[s,T]} < \infty$
satisfies
\begin{equation*}
\|\hat Y\|_{\mathcal L^{\infty}([s,T]\times\Omega)} \le L.
\end{equation*}
\end{assumptionp}

\begin{theorem}\label{thm:multi-d conditional}
Assume \ref{(gamma)}, \ref{(f)}, and \ref{(apr)} for some $k>1$. Let $H\in C^{\gamma}_{b}(\R^m,\mathscr L(\R^d,\R^m))$, $\xi\in L^{\infty}$, and $\mathbf X\in \mathscr{C}^{p\text{-}\mathrm{var}}([0,T], \R^{d})$. 
Then, \eqref{e:BSDE multi} admits a solution $(Y,Z)$ with $(Y,-H(Y))\in \mathrm{RSM}^{(p,p)\text{-}\mathrm{var}}_{\mathrm X}$, and it is the unique solution satisfying $\thicknm{(Y,-H(Y))}_{\mathrm X;q,q';k,\infty} < \infty$. 
Moreover, for every $k'\ge 1$,
\begin{equation}\label{e:multi-d bound}
\thicknm{(Y,-H(Y))}_{\mathrm X;p,p;k',\infty} < \infty,
\end{equation}
and in particular
$\|Y\|_{\infty} + \|\delta (\int_{0}^{\bm\cdot} Z_r dW_r)\|_{\mathrm{BMO}} < \infty$.
\end{theorem}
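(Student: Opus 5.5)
The plan is to rerun the scalar well-posedness argument of Theorem~\ref{thm:the existence}. Assumption~\ref{(apr)} is used exactly where the scalar proof invoked the comparison-based \emph{a priori} bound of Theorem~\ref{thm:BMO bound}. First we check that everything in Sections~\ref{sec:advanced analysis of rough path} and~\ref{subsec:a priori estimate} except Theorem~\ref{thm:comparison} and Theorem~\ref{thm:BMO bound} is dimension-free. This covers Lemma~\ref{lem:BMO of q-var}, Propositions~\ref{prop:G(Y) in D}, \ref{prop:estim of integral}, \ref{prop:R^G - R^G}, \ref{prop:estim of integral'}, Corollary~\ref{cor:int of the solution}, Lemmas~\ref{lem:int of RSM'}, \ref{lem:Banach property of RSMs}, \ref{lem:Gamma and I}, \ref{lem:solu map}, and Propositions~\ref{prop:invariant within ball}, \ref{prop:contraction}, \ref{prop:bounded by BMO}, \ref{prop:BMO and ess sup}. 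Their proofs use only Taylor expansions with $\R^m$-valued $Y$, L\'epingle/BDG inequalities, and the rough It\^o formula. In Proposition~\ref{prop:BMO and ess sup} the product $Y_rH(Y_r)$ becomes $\langle Y_r, H(Y_r)\cdot\rangle$ and $|Y|^2$ is the Euclidean square, with no change in the estimates. Given $\|Y\|_\infty\le L$ from \ref{(apr)}, Propositions~\ref{prop:BMO and ess sup} and \ref{prop:bounded by BMO} therefore yield, for any solution on $[s,T]$ with finite $\thicknm{\cdot}_{\mathrm X;q,q';k,\infty;[s,T]}$, bounds on $\|\delta M^Y\|_{\mathrm{BMO}}$ and on $\thicknm{(Y,-H(Y))}_{\mathrm X;p,p;k,\infty;[s,T]}$ depending only on $(k,L,\ldots)$. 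This is the multidimensional substitute for Theorem~\ref{thm:BMO bound}.

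For existence, we follow Steps~1--3 of Theorem~\ref{thm:the existence} verbatim. Set $K_1:=C(1+L)$ with $C$ from Proposition~\ref{prop:invariant within ball}, and pick $\theta_1$ and then $\theta_2$ from Proposition~\ref{prop:contraction}. On any interval $[t_i,t_{i+1}]$ with $|\delta\mathbf X|_{p\text{-}\mathrm{var}}+(t_{i+1}-t_i)^{1/2}\le\theta_1\wedge\theta_2$, the second iterate of $\Phi^{Y_{t_{i+1}}}_{[t_i,t_{i+1}]}$ is a contraction on the complete ball $\mathfrak B(K_1)$. The resulting fixed point is a fixed point of $\Phi$ itself by the same argument, and hence a local solution.

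The delicate point in the telescoping is that $K_1$ must be admissible for the next terminal value $Y_{t_{i+1}}$. Here we must apply \ref{(apr)} to the already constructed solution on $[t_{i+1},T]$. To do so, we first have to verify that the concatenated solution on $[t_{i+1},T]$ has finite $\thicknm{\cdot}_{\mathrm X;q,q';k,\infty;[t_{i+1},T]}$, which follows inductively from Proposition~\ref{prop:Psi <= Psi + Psi}. Then $\|Y_{t_{i+1}}\|_\infty\le L$, so $K_1\ge C(1+\|Y_{t_{i+1}}\|_\infty)$, and the mesh thresholds stay uniform. The number of subintervals is finite by the control-function argument of Step~4 in Proposition~\ref{prop:bounded by BMO}. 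The finiteness of \eqref{e:multi-d bound} for $k'\le k$ is trivial. For larger $k'$ it follows from Lemma~\ref{lem:BMO of q-var} (the $k$-norms are equivalent for all $k$), applied to $\delta Y'$, $P^Y$ and $\delta Y$ once the $p$-variation quantities are finite in the $(1,\infty)$ sense. Together with the BMO bound, this gives $\|Y\|_\infty+\|\delta M^Y\|_{\mathrm{BMO}}<\infty$.

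Uniqueness is the main obstacle, since Corollary~\ref{cor:uniqueness} relied on comparison. Instead, we plan a local stability/contraction argument. Take two solutions $(Y,Z)$ and $(\bar Y,\bar Z)$ with finite seminorms. By \ref{(apr)} and the first paragraph, both are bounded in $\thicknm{\cdot}_{\mathrm X;p,p;k,\infty}$ by some $\hat K$. We then run the proof of Theorem~\ref{thm:stability} with identical data, $L=0$, on the last small interval $[t_{N-1},T]$ where $\|Y_T-\bar Y_T\|_\infty=0$. This gives $\thicknm{Y,Y';\bar Y,\bar Y'}\le 2C\cdot0=0$, which is also what Proposition~\ref{prop:contraction} gives, since both are fixed points of $\Phi$ in a common ball. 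Hence the two solutions agree on that interval, so $Y_{t_{N-1}}=\bar Y_{t_{N-1}}$, and we iterate backward. The $Z$-components coincide because $M^Y=M^{\bar Y}$. The point to check carefully is that Lemma~\ref{lem:Gamma and I} and the stability estimates require only the a priori seminorm bounds and not scalar structure, which we expect to hold.
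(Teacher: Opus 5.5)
Your existence and uniqueness arguments are essentially the paper's. Existence reruns Steps~1--3 of Theorem~\ref{thm:the existence} with \ref{(apr)} replacing Theorem~\ref{thm:BMO bound}. Uniqueness uses the local contraction of Proposition~\ref{prop:contraction} on a fine partition, iterated backward. Two small remarks:
\begin{itemize}
\item The paper takes $\hat K$ directly from the two solutions' own finite seminorms. So uniqueness does not need \ref{(apr)} or the detour through the stability proof.
\item Your explicit check, via Proposition~\ref{prop:Psi <= Psi + Psi}, that \ref{(apr)} applies to the concatenated solution on $[t_{i+1},T]$ is a detail the paper leaves implicit.
\end{itemize}

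The gap is in the upgrade of \eqref{e:multi-d bound} to $k'>k$. Lemma~\ref{lem:BMO of q-var} concerns the $q$-variation of increments $\delta V$ of a one-parameter process. Its proof rests on $\|\delta V\|_{q\text{-}\mathrm{var};[r,t]}-\|\delta V\|_{q\text{-}\mathrm{var};[r,u]}\le\|\delta V\|_{q\text{-}\mathrm{var};[u,t]}$. This makes $u\mapsto\|\delta V\|_{q\text{-}\mathrm{var};[r,u]}$ an increasing BMO process, to which Lemma~\ref{lem:Energy inequality} applies. That works for $\delta Y'$.

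It does not work for the pathwise remainder $P^Y$. This is a genuinely two-parameter object, obeying only $P^Y_{r,t}=P^Y_{r,u}+P^Y_{u,t}+\delta Y'_{r,u}\,\delta\mathrm X_{u,t}$. The natural bound for the increment of $u\mapsto\|P^Y\|_{\frac p2\text{-}\mathrm{var};[r,u]}$ therefore contains the extra term $\|\delta Y'\|_{p\text{-}\mathrm{var};[r,u]}\,\|\delta\mathrm X\|_{p\text{-}\mathrm{var};[u,t]}$. This term is $\mathcal F_u$-measurable but not essentially bounded, so the BMO hypothesis of Lemma~\ref{lem:Energy inequality} is unavailable. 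The lemma gives nothing for $P^Y$.

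The paper never claims such an equivalence for remainders. In Proposition~\ref{prop:R^G - R^G} and the remark after it, $P$ enters through its $(k,\infty)$ norm, and only $\delta Y$ is measured in the $(1,\infty)$ norm. Nor can you recover $\big\|\|P^Y\|_{\frac p2\text{-}\mathrm{var}}\big\|_{\infty;k',\infty}$ from Corollary~\ref{cor:int of the solution} or Proposition~\ref{prop:bounded by BMO} at level $k'$. There $P^Y$ at level $k'$ appears on the right-hand side, and the absorption presupposes its finiteness.

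The paper's fix is simple. For $k'\ge k$, rerun Steps~1--3 with $k'$ in place of $k$. Assumption \ref{(apr)} still supplies the bound, because $\thicknm{\bm\cdot}_{\mathrm X;q,q';k,\infty}\le\thicknm{\bm\cdot}_{\mathrm X;q,q';k',\infty}$. This yields a solution with finite $\thicknm{\bm\cdot}_{\mathrm X;q,q';k',\infty}$, which coincides with $Y$ by uniqueness. Propositions~\ref{prop:BMO and ess sup} and~\ref{prop:bounded by BMO}, applied at level $k'$, then give \eqref{e:multi-d bound}.
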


\begin{proof}
All results of Section~\ref{sec:advanced analysis of rough path} are stated for processes with values in Euclidean spaces $V_1, V_2$, and their proofs use no scalar structure; the same is true of Lemma~\ref{lem:solu map}, Proposition~\ref{prop:invariant within ball}, and Proposition~\ref{prop:contraction}, the martingale representation theorem being applied componentwise. Propositions~\ref{prop:bounded by BMO} and \ref{prop:BMO and ess sup} likewise remain valid verbatim for Eq.~\eqref{e:BSDE multi}: the proof of the former only invokes Corollary~\ref{cor:int of the solution}, Lemma~\ref{lem:Y <= R^Y'}, Lemma~\ref{lem:Lepingle}, and Proposition~\ref{prop:Psi <= Psi + Psi}, while the proof of the latter applies the rough It\^o formula to $|Y_{\bm\cdot}|^2$ and uses the truncations $\tilde H(y) := \kappa(y)\, H(y)^{\top} y$ and $\tilde f(t,y,z) := \kappa(y)\, \langle y, f(t,\kappa(y)y,z)\rangle$, for which \eqref{e:for tilde f} continues to hold. Note that only \ref{(f)} is used, so $f$ may be random.

\emph{Existence.} 
Assumption~\ref{(apr)} provides precisely the bound \eqref{e:a pri esti} required in Step~1 of the proof of Theorem~\ref{thm:the existence}. Hence, for $k' \ge k$, Steps~1--3 of that proof apply with $k'$ in place of $k$, without any other change, and produce a solution on $[0,T]$ satisfying $\thicknm{(Y,-H(Y))}_{\mathrm X;q,q';k',\infty} < \infty$. Therefore, combining \ref{(apr)} with Propositions~\ref{prop:BMO and ess sup} and then \ref{prop:bounded by BMO} yields \eqref{e:multi-d bound}. 

\emph{Uniqueness.} 
Let $(Y,Z)$ and $(\bar Y,\bar Z)$ be two solutions with finite seminorm $\thicknm{\bm\cdot}_{\mathrm X;q,q';k,\infty}$. Let $\hat K>0$ be a constant such that 
\begin{equation*}
\thicknm{(Y,-H(Y))}_{\mathrm X;q,q';k,\infty} + \thicknm{(\bar Y,-H(\bar Y))}_{\mathrm X;q,q';k,\infty} \le \hat K.
\end{equation*}
For some fixed $\varepsilon \in (0,1)$, set $\hat \theta = \hat\theta(k,\gamma,\Theta_0,\hat K,\varepsilon)$ as in Proposition~\ref{prop:contraction}, and choose a partition $\{[t_i,t_{i+1}]\}_{i=0}^{N-1}$ of $[0,T]$ with $|\delta \mathbf X|_{p\text{-}\mathrm{var};[t_i,t_{i+1}]} + |t_{i+1}-t_i|^{1/2} \le \hat\theta$, which is possible by the superadditivity of the control function $\hat w$ used in Step~4 of Proposition~\ref{prop:bounded by BMO}. 
On $[t_{N-1},T]$ both triplets $(Y,-H(Y),Z)$ and $(\bar Y,-H(\bar Y),\bar Z)$ are fixed points of $\Phi^{\xi}_{[t_{N-1},T]}$, hence of $(\Phi^{\xi}_{[t_{N-1},T]})^{\circ 2}$, and they share the terminal value $\xi$; the contraction property therefore forces them to coincide on $[t_{N-1},T]$. Iterating backwards over the partition, with terminal value $Y_{t_{i+1}} = \bar Y_{t_{i+1}}$ at each stage, gives $(Y,Z) = (\bar Y, \bar Z)$ on $[0,T]$. 
\end{proof}

\begin{remark}\label{rem:once the a priori estimate is obtained}
In the scalar case, Theorem~\ref{thm:BMO bound} verifies \ref{(apr)} under the additional hypothesis \ref{(fdet)}, so that Theorem~\ref{thm:multi-d conditional} contains Theorem~\ref{thm:the existence}. The \emph{a priori} bound there is obtained by comparison with the deterministic RDEs of Theorem~\ref{thm:BMO bound}, and the comparison theorem is not available in the multidimensional setting; see Remark~\ref{rem:Why 1-d} for a discussion of why the classical RDE route to such a bound is also obstructed by the presence of the unknown $Z$. Verifying \ref{(apr)} for $m>1$ thus remains the only open point in the multidimensional theory, and we regard it as the natural next question. 

Assumption~\ref{(apr)} holds on a local interval $[t,T]$. Indeed, if $|\delta\mathbf X|_{p\text{-}\mathrm{var};[t,T]} + |T-t|^{1/2}$ is sufficiently small, then by a direct absorption argument from Proposition~\ref{prop:invariant within ball}, the Picard iteration sequence always stays in a ball with respect to the seminorm $\thicknm{\bm\cdot}_{\mathrm X;q,q';k,\infty;[t,T]}$. Moreover, the contraction property shown in Proposition~\ref{prop:contraction} implies that every solution $(\hat Y, \hat Z)$ coincides with the limit of the Picard iteration sequence; hence $\thicknm{(\hat Y,- H(\hat Y) )}_{\mathrm X;q,q';k,\infty;[t,T]}$ has an upper bound. 
We also refer to \cite{liang2025multidimensional} for further multidimensional settings, treated there by the Doss--Sussmann transformation. In their settings, Assumption~\ref{(apr)} is accessible on the whole interval $[0,T]$ provided either $\|\xi\|_{\infty}$ and $|\delta\mathbf X|_{p\text{-}\mathrm{var};[0,T]}$ are sufficiently small, or each component $H_i$ depends only on the corresponding component $y_i$.
\end{remark}

\section{Relation to BDSDEs}\label{sec:BDSDEs}

\subsection{Backward Causality of Rough BSDE Solutions}\label{subsec:causality}
We record a simple but important causality property of rough BSDEs.  Unlike
forward rough differential equations, where the solution up to time \(t\)
depends only on the past rough path \(\mathbf X|_{[0,t]}\), the solution of a rough BSDE on
\([0,T]\) depends, at time \(t\), only on the future increments of the rough
path on \([t,T]\).

For \(0\le t\le T\), denote by $\star$ the group product (also denoted by $\otimes$ in some references; see, e.g., \cite[Chapter~7.2.1]{friz2010multidimensional}) and write
\[
\theta_t\mathbf X
:=
\big(\mathbf X_u^{-1}\star \mathbf X_v\big)_{t\le u\le v\le T}
:=
\big(\delta \mathrm X_{u,v},\mathbb X_{u,v}\big)_{t\le u\le v\le T}
\]
for the translated restriction of \(\mathbf X\) to the time interval
\([t,T]\).  We shall say that a family of processes
\(\{(Y^{\mathbf X},Z^{\mathbf X})\}_{\mathbf X}\) is
\emph{\(\mathbf X\)-backward causal} if, for every \(t\in[0,T]\) and every two
rough paths \(\mathbf X,\widetilde{\mathbf X}\) satisfying
\[
\theta_t\mathbf X=\theta_t\widetilde{\mathbf X},
\]
one has
\[
Y_s^{\mathbf X}=Y_s^{\widetilde{\mathbf X}}
\quad\text{for all }s\in[t,T]\ \text{a.s.},
\qquad
Z_s^{\mathbf X}=Z_s^{\widetilde{\mathbf X}}
\quad\text{for Leb-a.e. }s\in[t,T]\ \text{a.s.}
\]
Equivalently, the restriction of the solution to \([t,T]\) is a function only
of the terminal value \(\xi\), the coefficients \(f,H\), the Brownian motion
\(W\), and the rough path increments \(\theta_t\mathbf X\).

The $\mathbf X$-backward causal property is immediate from the uniqueness. Indeed, let
\((Y^{\mathbf X},Z^{\mathbf X})\) be the solution of \eqref{e:BSDE} driven by
\(\mathbf X\).  For \(s\in[t,T]\), the same pair restricted to \([t,T]\)
satisfies
\[
Y_s^{\mathbf X}
=
\xi
+\int_s^T f(r,Y_r^{\mathbf X},Z_r^{\mathbf X})\,dr
+\int_s^T H(Y_r^{\mathbf X})\,d\mathbf X_r
-\int_s^T Z_r^{\mathbf X}\,dW_r .
\]
The rough integral over \([s,T]\subset[t,T]\) is defined entirely from the
increments \((\delta \mathrm X_{u,v},\mathbb X_{u,v})\), \(t\le u\le v\le T\).  Hence,
if \(\theta_t\mathbf X=\theta_t\widetilde{\mathbf X}\), then
\((Y^{\mathbf X},Z^{\mathbf X})|_{[t,T]}\) and
\((Y^{\widetilde{\mathbf X}},Z^{\widetilde{\mathbf X}})|_{[t,T]}\) solve the
same rough BSDE on \([t,T]\), with the same terminal condition \(\xi\), the
same coefficients \(f,H\), and the same driving restricted rough path.  By
Corollary~\ref{cor:uniqueness}, these two restrictions coincide.

Thus the solution can be represented in the following triangular form: 
\[
(Y_s^{\mathbf X},Z_s^{\mathbf X})_{s\in[t,T]}
=
\Phi_{[t,T]}\big(\xi,f,H,W,\theta_t\mathbf X\big),
\]
where \(\Phi_{[t,T]}\) denotes the solution map of the rough BSDE on the interval \([t,T]\), mapping the terminal condition, coefficients, Brownian motion, and the shifted rough path to the corresponding solution pair. In particular,
the solution is independent of the values of \(\mathbf X\) before time \(t\).  This is
the precise sense in which rough BSDEs are backward causal with respect to their rough
drivers.

\subsection{Randomization and Relation to BDSDEs}\label{subsec:randomization of BDSDEs}
In this section, we show that the solution of rough BSDE~\eqref{e:BSDE} can be randomized and identified with a solution of the following backward doubly stochastic differential equation (BDSDE):
\begin{equation}\label{e:BDSDE}
Y_{t} = \xi + \int_{t}^{T} f(r,Y_r,Z_r) dr + \int_{t}^{T} H(Y_r) \circ \overset{\leftarrow}{d \mathrm B}_r - \int_{t}^{T} Z_r dW_r,\quad t \in [0,T], 
\end{equation}
where $\mathrm B$ is another Brownian motion that is independent of $W$, and 
$$
\int_{t}^{T} H(Y_r) \circ \overset{\leftarrow}{d \mathrm B}_r := \int_{t}^{T} H(Y_r)  \overset{\leftarrow}{d \mathrm B}_r + \frac{1}{2}\int_{t}^{T} DH(Y_r)H(Y_r) dr
$$
is a (backward) Stratonovich integral. Here, $\int H(Y_r)  \overset{\leftarrow}{d \mathrm B_r}$ is a backward It\^o integral, as defined in \cite[Chapter~3.4]{Kunita90}, which is an It\^o integral with respect to the backward filtration $\mathcal F^{\mathrm B}_{t,T} := \sigma\{\delta \mathrm B_{s,T},\ s\in[t,T]\}$. 

Since we are working with random rough paths lifted from Brownian motions, we assume our rough path $\mathbf{X}$ (consider its restriction on $[s,t]$) to take values in the space of geometric rough paths $\mathscr{C}^{0,1/p}_g([s,t],\R^d)$, which we equip with the corresponding Borel $\sigma$-algebra $\mathfrak{C}^{1/p}_{[s,t]}$ for any $0\le s<t \le T$. We write $\mathfrak{C}^{1/p}$ when $s=0$ and $t=T$. Furthermore,
we denote by $\mathfrak P$ the predictable $\sigma$-algebra on $[0,T] \times \Omega$ generated by $W$, and thus a process is predictably measurable if and only if it is measurable with respect to $\mathfrak P$. In the following, $\mathcal B(\mathbf M)$ denotes the Borel $\sigma$-algebra on a metric space $\mathbf M$.

\begin{proposition}\label{prop:measurability of Y^X}
Assume \ref{(gamma)}, $\xi \in L^{\infty}$, and \ref{(fdet)} hold. Let $H \in C^{\gamma}_b (\R,\R^d)$. For any $\mathbf X = (\mathrm X,\mathbb X) \in \mathscr C^{0,1/p}_{g}$, denote by $(Y^{\mathbf X},Z^{\mathbf X})$ the unique solution to Eq.~\eqref{e:BSDE} on $[0,T]$ obtained by Theorem~\ref{thm:the existence}. Then, there exists a version $(\tilde Y,\tilde Z )(t,\omega;\mathbf X)$ of $(Y^{\mathbf X},Z^{\mathbf X})(t,\omega)$ in the sense that
\begin{itemize}
\item[(1)] the map $(t,\omega,\mathbf X) \mapsto (\tilde Y,\tilde Z )(t,\omega; \mathbf X)$ is $\mathcal B([0,T]) \otimes \MF_T \otimes \mathfrak C^{1/p}$-measurable, and $(\MF_t  \otimes \mathfrak C^{1/p} )_t$-adapted; 
\vspace{1mm}
\item[(2)] for any $\mathbf X \in \mathscr C^{0,1/p}_{g}$, it holds that $\tilde Z(t,\omega;\mathbf X) = Z^{\mathbf X}(t,\omega)$ a.e.-$dt \times d\mathbb P$, and that $\tilde Y(t,\omega;\mathbf X) = Y^{\mathbf X}(t,\omega)$ for all $t\in[0,T]$, a.s.;
\vspace{1mm}
\item[(3)] $(\tilde Y ,\tilde Z )(\cdot;{ \mathbf X})$ is $\BX$-{\it backward causal} in the sense that for any $t\in [0,T)$, and $(\tilde Y,\tilde Z)$ restricted on $[t,T]$, we have
\begin{equation*}
\begin{split}
&\tilde Z ( \cdot \ ;\mathbf X) =  \tilde Z ( \cdot \ ;\theta_t\mathbf X ) \  \text{ a.e.-Leb}|_{[t,T]} \times d\mathbb P;\\
&\tilde Y(s,\cdot; \mathbf X) = \tilde Y (s,\cdot; \theta_t\mathbf X )\  \text{ for all }\  s\in[t,T], \ a.s.   
\end{split}
\end{equation*}
\end{itemize}
\end{proposition}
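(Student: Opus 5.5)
The plan is to build the version through the stability estimate of Theorem~\ref{thm:stability}, which makes the solution map $\mathbf X\mapsto (Y^{\mathbf X},Z^{\mathbf X})$ continuous into a complete metric space of adapted processes. The measurable version then comes from a Dellacherie--Meyer / Denkert et al.\ \cite{Meyer1978,DKP24} selection argument applied to a measurably parametrized family of random variables. Concretely, I would first work in $E:=L^2(\Omega;C([0,T]))\times\mathcal L^2([0,T]\times\Omega,\R^d)$, with the first factor restricted to adapted processes and the second to predictable ones. Since $\mathscr C^{0,1/p}_g\subset\mathscr C^{p\text{-}\mathrm{var}}$ and $|\delta(\mathbf X-\bar{\mathbf X})|_{p\text{-var}}\lesssim|\delta(\mathbf X-\bar{\mathbf X})|_{1/p}$, Theorem~\ref{thm:stability} gives local Lipschitz continuity of $\mathbf X\mapsto Y^{\mathbf X}$ in $\mathcal L^\infty$ and of $\mathbf X\mapsto \int_0^{\cdot}Z^{\mathbf X}dW$ in BMO. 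By BDG and Itô's isometry this dominates the $E$-norm, so $\Psi:\mathbf X\mapsto(Y^{\mathbf X},Z^{\mathbf X})\in E$ is continuous, hence $\mathfrak C^{1/p}/\mathcal B(E)$-measurable.

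Next I would turn continuity into a jointly measurable version. The space $\mathscr C^{0,1/p}_g$ is Polish because it is the closure of the smooth lifts, so I can take a countable dense set $\{\mathbf X^j\}$ and approximate $\mathbf X$ by $\mathbf X^{j_n(\mathbf X)}$, where $j_n(\mathbf X)$ is the first index within distance $2^{-n}$ of $\mathbf X$. This index is Borel measurable in $\mathbf X$. Set $\tilde Y^n(t,\omega;\mathbf X):=\sum_j \mathbf 1_{\{j_n(\mathbf X)=j\}}Y^{\mathbf X^j}(t,\omega)$, and define $\tilde Z^n$ similarly. Each of these is measurable with respect to $\mathcal B([0,T])\otimes\mathcal F_T\otimes\mathfrak C^{1/p}$ and adapted, or predictable, in $(t,\omega)$ for each fixed $\mathbf X$, and in fact it is $(\mathcal F_t\otimes\mathfrak C^{1/p})$-adapted. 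By continuity of $\Psi$, $\tilde Y^n(\cdot;\mathbf X)\to Y^{\mathbf X}$ in $E$ for every $\mathbf X$. To get pointwise convergence, I would pass to a subsequence chosen measurably in $\mathbf X$, namely the first $n$ at which the $E$-distance between consecutive terms is below $2^{-k}$; this distance is a measurable function of $\mathbf X$. Then $\tilde Y:=\limsup$ along that subsequence, and $\tilde Z$ defined the same way, have the required measurability. Setting them to $0$ where the limit fails preserves measurability and gives (1). Since the pointwise limit equals the $E$-limit $(Y^{\mathbf X},Z^{\mathbf X})$ a.s.\ for each fixed $\mathbf X$, with uniform-in-$t$ convergence for $Y$ because $E$ uses the sup norm, this gives (2). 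To keep continuity of $\tilde Y$ in $t$, I would take the limit in $C([0,T])$ on the set where the sequence is uniformly Cauchy. I expect this step to be the main obstacle: the convergence is only in probability, and adaptedness must be preserved in the joint variable. The measurable choice of subsequence, which is the Dellacherie--Meyer device, is what I would try first.

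For (3), I would note that $\theta_t\mathbf X$ is again a geometric rough path on $[t,T]$, or after extension constant on $[0,t]$ it lies in $\mathscr C^{0,1/p}_g$. The causality discussion in Section~\ref{subsec:causality}, which follows from Corollary~\ref{cor:uniqueness}, gives $Y^{\mathbf X}=Y^{\theta_t\mathbf X}$ on $[t,T]$ a.s.\ and $Z^{\mathbf X}=Z^{\theta_t\mathbf X}$ a.e. Combining this with (2) applied to both $\mathbf X$ and $\theta_t\mathbf X$ transfers the identity to $(\tilde Y,\tilde Z)$, which gives (3).
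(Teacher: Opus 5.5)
Your argument is correct, but it reaches the version by a different route from the paper.

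\textbf{The paper's route.} It also starts from the continuity given by Theorem~\ref{thm:stability}. It then regards $(t,\mathbf X)\mapsto (Y^{\mathbf X},Z^{\mathbf X})((T-t)\vee\cdot)$ as an $\mathcal L^0([0,T]\times\Omega,\mathfrak P)$-valued map that is progressive for the backward filtration $\mathcal C_t$ generated by the increments of $\mathbf X$ on $[T-t,T]$. It applies \cite[Proposition~3.9]{DKP24} to obtain a jointly measurable adapted version, and recovers values at fixed $t$ by Lebesgue differentiation. Backward causality is thus built into the measurability: at time $t$ the version depends on $\mathbf X$ only through $\theta_t\mathbf X$, identically by construction. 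That is the convenient form when $\mathbf X$ is replaced by $\mathbf B(\omega'')$ in Section~\ref{subsec:randomization of BDSDEs}.

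\textbf{Your route.} You prove the measurable-version statement by hand, using a countable dense set and a measurable index. This is more elementary and self-contained. Because the stability estimate controls $\|Y-\bar Y\|_\infty$ and the BMO norm of the martingale difference, it gives (2) for $Y$ directly in the strong ``for all $t$, a.s.'' form. Property (3) then follows pointwise in $\mathbf X$ from uniqueness, which is exactly what is stated. For the later randomization, you would need an extra Fubini and null-set argument to get $\mathcal F'_t\vee\mathcal F''_{t,T}$-adaptedness, which the paper's $\mathcal C_{T-t}$-measurability supplies structurally.

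\textbf{Two small repairs.}
\begin{itemize}
\item You do not need a measurable subsequence. Since $d(\mathbf X^{j_n(\mathbf X)},\mathbf X)<2^{-n}$ and the stability bound is locally Lipschitz, the errors are already summable, so the whole sequence converges a.s.\ (uniformly in $t$ for $Y$, a.e.\ for $Z$). As stated, your criterion of small consecutive differences would not in any case produce a Cauchy subsequence.
\item Do not define $\tilde Y$ through the set where the sequence is uniformly Cauchy on all of $[0,T]$. That set is only $\mathcal F_T\otimes\mathfrak C^{1/p}$-measurable, and its $\mathbb P$-full sections cannot be absorbed into $\mathcal F_t\otimes\mathfrak C^{1/p}$, so it would in general destroy adaptedness. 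Take the $t$-wise $\limsup$ instead. It is $(\mathcal F_t\otimes\mathfrak C^{1/p})$-adapted and, for each $\mathbf X$, coincides a.s.\ with the continuous limit for all $t$.
\end{itemize}
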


\begin{proof}
The joint measurability can be established by adapting the argument of \cite{FLZ26,BS25}, which proceeds in two steps: first, one proves the existence of a jointly measurable version when \(\BX\) is smooth; second, one obtains a jointly measurable version of the original solution by an approximation argument. However, here we take a different approach motivated by \cite[Theorem~IV.30]{Meyer1978}. We equip the space $\mathscr{C}^{0,1/p}_g$ with the filtration $\mathbb C:=(\mathcal C_t)_t$ and $\sigma$-algebra $\mathcal C_t:=  \sigma(\{\BX_{s \vee . }^{-1}\star \BX_T | \ \BX \in \mathscr{C}^{0,1/p}_g, T-t \le s \le T  \})$.

Consider the mapping $$(t,\BX) \mapsto (Y^{\BX}, Z^{\BX} )((T-t)\vee\cdot, \cdot ) \in \mathcal L^0([0,T] \times \Omega, \FP, \text{Leb} \times \PP ; \R^{1+d}),$$ with $\mathcal L^0([0,T] \times \Omega, \FP, \text{Leb} \times \PP; \R^{1+d})$ equipped with the weak topology and the trivial filtration $( \mathcal P_t)_t:=(\FP)_t$. 

By Theorem \ref{thm:stability}, we see that the above mapping is continuous, and thus $\big(\mathcal B([0,T]) \otimes \FC^{1/p}\big)/ \mathcal B (\mathcal L^0) $-measurable. Furthermore, it is easy to check that the mapping is $\mathbb C$-progressively measurable. Moreover,   noting that $\FP$ is generated by the Brownian motion $W$, and thus it is separable. Then according to \cite[Proposition~3.9]{DKP24} with  $(\Omega, \mathcal F, \mathbb F) $ and  $(\Omega',  \mathcal G, \mathbb G, \PP)$ therein taken as $(\mathscr{C}^{0,1/p}_g,  \FC^{1/p}, \mathbb C)$ and $([0,T] \times \Omega, \FP, (\FP)_t, \text{Leb}\times\PP)$ respectively, there exists a $\big(\mathcal B([0,T]) \otimes \FC^{1/p} \otimes  \FP \big) /\mathcal B( \R^{1+d} )$-measurable, $\mathbb C \otimes (\FP)_t$-adapted process $$(\bar Y, \bar Z):[0,T] \times \mathscr{C}^{0,1/p}_g \times [0,T] \times \Omega \rightarrow   \R^{1+d },$$ such that for any $(t,\BX),$ 
$$
(\bar{Y}, \bar{Z})(t,\BX,\cdot)= (Y^{\BX}, Z^{\BX})((T-t)\vee \cdot, \cdot) \text{ in } \mathcal L^0([0,T]\times \Omega, \FP,  \text{Leb} \times \PP).
$$
Now let $\{\varepsilon_n\}_{n\ge 1}$ be a sequence of strictly positive numbers that converges to $0$. We take 
\begin{equation}
(\tilde Y, \tilde Z)(t,\omega, \BX):= \limsup_{n \to \infty} \frac{1}{\varepsilon_n} \int_{t}^{t+\varepsilon_n} (\bar{Y}, \bar{Z})(T-t, \BX, s, \omega)ds, 
\end{equation}
and thus $(\tilde Y, \tilde Z)$ is $(\mathcal C_{T-t} \otimes \MF^W_{t+})_t$-adapted. Moreover, for any fixed $\BX$, and $s\ge t,$ 
$$
(\bar{Y}, \bar{Z})(T-t, \BX, s, \omega)= (Y,Z)^{\BX}(t\vee s, \omega)= (Y^{\BX},Z^{\BX}) (s,\omega), \ \ ds\otimes d\PP\text{-a.s.}
$$
which implies, by Lebesgue's differentiation theorem, $(\tilde Y, \tilde Z)(t,\omega, \BX)= (Y,Z)^{\BX}(t,\omega), \ dt\otimes d\PP$-a.s. Thus it is easy to check all the three conditions from the proposition hold.
\end{proof}

In the following, we replace the rough path $\BX$ by the randomized rough path $\BB$. For simplicity of notation, we work with the product space. 
Consider 
\begin{equation}  \label{sec51productbasis}
( {\Omega}, \MG, \PP)  = (\Omega', \MG',\PP') \otimes (\Omega'', \MG'', \PP''),
\end{equation} 
where $(\Omega', \MG',\PP')$ supports a $d$-dimensional Brownian motion $W$, %$(\FP')_t$ is the predictable $\sigma$-algebra, 
and $(\Omega'', \MG'',\PP'')$ is a probability space with a $d$-dimensional Brownian motion $\mathrm B$. Moreover, we denote by $(\MF')_t$ the augmented filtration of $\MF^W$, $\MF''_{t,T}:= \sigma(\mathrm B_s- \mathrm B_t| s\in [t,T])$, $\MF''_t:=\MF''_{0,t}$, and $\MF_t:= \MF'_t \vee \MF''_{t,T}$ the standard non-monotone family of $\sigma$-algebras for BDSDEs. Denote by $\E'$ and $\E''$ the expectation of $(\Omega',\mathcal G',\mathbb P')$ and $(\Omega'',\mathcal G'',\mathbb P'')$, respectively. In the following, we write $\omega = (\omega',\omega'') \in \Omega$.

In view of \cite[Chapter~3]{friz2020course}, for each $(s,t)\in \Delta_T$, define $\mathbb{B}^{\text{It\^o}}_{s,t}=\int_s^t \delta \mathrm B_{s,r}d\mathrm B_r$ as an It\^o integral and $\mathbb{B}_{s,t}^{\mathrm{Strato}}:=\int_s^t \delta \mathrm B_{s,r}\circ d\mathrm B_r=\mathbb{B}^{\text{It\^o}}_{s,t}+\frac12 (t-s)I_d$ as a Stratonovich integral.
It is well-understood that $\BB^{\mathrm{Strato}}(\omega'')=(\mathrm B(\omega''),\mathbb{B}^{\mathrm{Strato}}(\omega'')) \in \cap_{\alpha\in(1/3,1/2)} \mathscr{C}^{0,\alpha}_g $ for all $\omega''\in N_1^c$, where $N_1$ is a $\PP''$-null set of $\Omega''$. Moreover, for any $t\in[0,T]$, the {\it lifting mapping} 
\begin{equation}\label{ito-lift}
\begin{array}{llll}
\BB := (\mathrm B, \mathbb B) := \BB^{\mathrm{Strato} }: & [0,T]\times \Omega'' & \rightarrow & (\mathscr{C}^{0,\alpha}_g, \FC^{1/p})	\\[2mm]
 & (t,\omega'') &\mapsto & \BB^{\mathrm{Strato} } (\omega'')_{.\wedge t}
 \end{array}   
\end{equation} 
is progressively measurable w.r.t. $(\MF''_t )_t$. For the backward Stratonovich integral and rough integral, we have

\begin{proposition}\label{prop:backward-equiv} 
Suppose that $(Z,Z')$ defined on $ [0,T] \times \Omega' \times \mathscr{C}^{0,1/p}_g  $ is $\mathcal B([0,T]) \otimes \MF'_T \otimes \mathfrak C^{1/p}$-measurable and $(\MF'_t \otimes \mathfrak C^{1/p})_t$-adapted satisfying the following conditions:
\begin{itemize}
\item[(1)] for any $\BX  \in \mathscr{C}^{0,1/p}_g,$ $(Z,Z')^{\BX}(\cdot):= (Z,Z')(\ \cdot \ ; \BX)   \in \mathrm{RSM}^{(p,p')\text{-}\mathrm{var}}_{\mathrm X}(\Omega')$, where $p' \ge p \ge 2$ such that \( 2/p+ 1/{p'}>1\);

\

\item[(2)] $(Z, Z', M^Z)^{\BX}$ is $\BX$-{\it backward causal}.
\end{itemize}
Then for any $t\in[0,T],$ 
\begin{equation}\label{eq:rough=ito} 
\begin{split}
\Big(  \int_t^T (Z, Z')^{\mathbf{X}} (\omega') d
\mathbf{X} \Big)   \bigg|_{\mathbf{X} = \mathbf{B} (\omega'')} 
&= \int_t^T Z^{\mathbf{B} (\omega'')}_r (\omega')   \overset{\leftarrow}{d \mathrm B}_r (\omega'') - \frac{1}{2} \int_t^T (Z')^{\BB(\omega'')}_r(\omega') dr\\
&=: \int_t^T Z^{\mathbf{B} (\omega'')}_r (\omega') \circ \overset{\leftarrow}{d \mathrm B}_r (\omega''), \ \ \ \PP\text{-a.s.}
\end{split}
\end{equation}

\end{proposition}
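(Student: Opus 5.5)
The plan is to identify both sides of \eqref{eq:rough=ito} as limits of Riemann sums along a common sequence of partitions, and to use backward causality to turn the compensated rough Riemann sums into backward Itô sums. Fix $t$ and a sequence of partitions $\pi_n$ of $[t,T]$ with $|\pi_n|\to0$. By Definition~\ref{def:def of int}, for each fixed $\mathbf X$,
\[
\sum_{[u,v]\in\pi_n}\Big(Z^{\mathbf X}_u\,\delta\mathrm X_{u,v}+(Z')^{\mathbf X}_u\,\mathbb X_{u,v}\Big)\to\int_t^T(Z,Z')^{\mathbf X}d\mathbf X
\]
in $\PP'$-probability. The joint measurability of $(Z,Z')$ makes the sums, and a version of the limit, measurable in $(\omega',\mathbf X)$. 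Composing with $\mathbf X=\mathbf B(\omega'')$ and using Fubini on the product space \eqref{sec51productbasis}, convergence in $\PP'$-probability for every $\mathbf X$ gives convergence in $\PP$-probability of the randomized sums. Dominated convergence applies to $\E''\big[\PP'(|S_n-S|>\varepsilon)\big]$.

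Next I split the evaluation point. In the Riemann sum $Z$ is evaluated at the left point $u$, which is not adapted to the backward filtration $\mathcal F''_{u,T}$; the backward Itô integral needs evaluation at $v$. So I rewrite
\[
Z_u\delta\mathrm B_{u,v}=Z_v\delta\mathrm B_{u,v}-\delta Z_{u,v}\delta\mathrm B_{u,v},
\qquad
\delta Z_{u,v}=Z'_u\delta\mathrm B_{u,v}+P^Z_{u,v}+\delta M^Z_{u,v}.
\]
Backward causality of $Z^{\mathbf X}$ on $[v,T]$ means $Z_v^{\mathbf B(\omega'')}(\omega')$ depends only on $\theta_v\mathbf B$, which is $\mathcal F''_{v,T}$-measurable. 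Together with the independence of $W$ and $\mathrm B$, this gives $\sum Z_v\delta\mathrm B_{u,v}\to\int_t^T Z\,\overset{\leftarrow}{d\mathrm B}$ in probability, by the standard backward Itô construction of \cite[Chapter~3.4]{Kunita90} and the $\mathcal F_v$-measurability in the enlarged filtration.

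The correction terms then combine as follows. First, $-\sum Z'_u\,\delta\mathrm B_{u,v}^{\otimes2}+\sum Z'_u\,\mathbb B^{\mathrm{Strato}}_{u,v}=-\sum Z'_u\big(\delta\mathrm B^{\otimes 2}_{u,v}-\mathbb B_{u,v}\big)$. Geometricity gives $\mathrm{Sym}\,\mathbb B=\tfrac12\delta\mathrm B^{\otimes2}$, so this equals $-\tfrac12\sum Z'_u\delta\mathrm B^{\otimes2}_{u,v}-\sum Z'_u\,\mathrm{Anti}(\mathbb B_{u,v})$. The Lévy-area part vanishes in the limit: conditionally on $\mathcal F''_{v,T}$ and $\omega'$, the area increments are centered and orthogonal, and $Z'_u$ is measurable w.r.t.\ the future, so this requires another backward-causality argument. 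The first part converges to $-\tfrac12\int_t^T Z'_r\,dr$, using $[\mathbf B]_t=tI_d$ in trace form, via a quadratic-variation argument.

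The remaining terms are $-\sum P^Z_{u,v}\delta\mathrm B_{u,v}$ and $-\sum\delta M^Z_{u,v}\delta\mathrm B_{u,v}$. The first tends to $0$ pathwise because $P^Z$ has finite $pp'/(p+p')$-variation and $\frac1p+\frac1{p'}+\frac1p>1$. The second is the main obstacle. $M^Z$ is a $W$-martingale depending on $\mathbf B$, and its joint variation with $\mathrm B$ is zero only heuristically. I would handle it by conditioning on $\omega''$ and noting that $M^{Z,\mathbf X}$ is, for fixed $\mathbf X$, an $\mathcal F'$-martingale. I would also use backward causality of $M^Z$, so that on $[u,v]$ the martingale increment depends on $\mathbf B$ only through $\theta_u\mathbf B$. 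A second-moment computation exploiting the independence of $W$ and $\mathrm B$ then kills the cross terms. Failing that, I would localize and bound the sum by the $p$-variation pairing of $M^Z$ (via Lemma~\ref{lem:Lepingle}) with $\mathrm B$ to show that it vanishes as the mesh tends to zero.
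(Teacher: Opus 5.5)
Your overall route is the paper's. You randomize the compensated Riemann sums via joint measurability and Fubini, and move the evaluation point of $Z$ from $u$ to $v$ so that backward causality turns $\sum Z_v\,\delta\mathrm B_{u,v}$ into a backward It\^o sum. You then expand $\delta Z_{u,v}=Z'_u\delta\mathrm B_{u,v}+P^Z_{u,v}+\delta M^Z_{u,v}$ and kill the $P^Z$-term pathwise using $2/p+1/p'>1$.

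\textbf{The L\'evy-area step fails as written.} You argue that $\sum Z'_u\,\mathrm{Anti}(\mathbb B_{u,v})\to0$ because the areas are centred conditionally on $\mathcal F''_{v,T}$ and $\omega'$, while $Z'_u$ is ``measurable w.r.t.\ the future''. But backward causality only makes $Z'_u$ a functional of $\theta_u\mathbf B$. That object contains $\mathrm B|_{[u,v]}$, i.e.\ exactly the increments that generate $\mathbb B_{u,v}$. So $Z'_u$ is not $\mathcal F'_T\vee\mathcal F''_{v,T}$-measurable, and the conditioning gives nothing.

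The missing step is the paper's $\Delta_{u,v}$ shift. Replace $Z'_u$ by $Z'_v$ in $\sum Z'_u(\delta\mathrm B_{u,v}^{\otimes2}-(v-u)I_d)$ and in $\sum Z'_u\mathbb B^{\text{It\^o}}_{u,v}$. The error is bounded by $\sum|\delta Z'_{u,v}|\bigl(|\delta\mathrm B_{u,v}|^2+|\mathbb B_{u,v}|+(v-u)\bigr)$, which tends to $0$ pathwise since $Z'$ is continuous with finite $p'$-variation and $1/p'+2/p>1$. After the shift, each coefficient $Z'_v$ is $\mathcal F'_T\vee\mathcal F''_{v,T}$-measurable, while the increments on $[u,v]$ are independent of that $\sigma$-algebra and centred. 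Both sums then vanish by orthogonality, after truncating $Z'_v$.

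Your symmetric-part claim is fine: for continuous $\bar Z'$, $\sum Z'_u\delta\mathrm B^{\otimes2}_{u,v}\to\int_t^T Z'_r\,dr$ (in trace form) even holds pathwise along subsequences. Two slips need fixing, though. First, $-\sum Z'_u(\delta\mathrm B^{\otimes2}_{u,v}-\mathbb B_{u,v})=-\tfrac12\sum Z'_u\delta\mathrm B^{\otimes2}_{u,v}+\sum Z'_u\,\mathrm{Anti}(\mathbb B_{u,v})$, so the sign of your area term is wrong. Second, in the paper's notation $[\mathbf B]=0$ for the Stratonovich lift; what you actually use is $\langle\mathrm B\rangle_t=tI_d$.

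\textbf{The martingale term.} For $\sum\delta M^Z_{u,v}\,\delta\mathrm B_{u,v}$, your first idea is exactly the paper's and already suffices; backward causality of $M^Z$ is not needed. Freeze $\omega''$, i.e.\ fix $\mathbf X$, so the weights $\delta\mathrm X_{u,v}$ are deterministic. Orthogonality of martingale increments then gives $\E'\bigl|\sum\delta M^Z_{u,v}\delta\mathrm X_{u,v}\bigr|^2\lesssim\max_{[u,v]}|\delta\mathrm X_{u,v}|^2\,\E'[\delta\langle M^Z\rangle_{t,T}]\to0$, after localization. Dominated convergence in $\omega''$ transfers this to $\PP$-probability, exactly as in your first step.

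Your fallback would not work. Two paths of finite $p$-variation with $p>2$ admit no Young-type pairing, since $2/p<1$; indeed $\sum\delta\mathrm B_{u,v}\otimes\delta\mathrm B_{u,v}\not\to0$.
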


\begin{proof}
Let $(\bar Z, \bar Z')(\omega', \omega''):=(Z,Z')^{\BB(\omega'')}(\omega')$.
Since $(Z,Z')^{\BX}$ is $(\MF'_t \otimes \FC^{1/p})_t$-adapted, continuous, and $\BX$-backward causal, $\bar Z $ is $(\MF_t)_t$-adapted, and thus the backward Stratonovich integral on the left-hand side of \eqref{eq:rough=ito}, is well-defined in the sense that 
$$
\int_t^T \bar Z_s  \circ \overset{\leftarrow}{d \mathrm B}_s  = \lim_{|\MP| \rightarrow 0} \sum_{[u,v] \in \MP} \left(\bar Z_v \delta \mathrm B_{u,v}- \frac12 \bar Z'_u(v-u) \right), \ \ \ \text{in $\PP$-probability},
$$
where $\MP$ is any interval partition of $[t,T].$ 

On the other hand, by a modification of \cite[Theorem~3.4]{FLZ26-2}, there exists an $\MF'_T \otimes \FC^{1/p}$-measurable, $\BX$-backward causal version of $\int_t^T (Z,Z')^{\BX}\,d\BX$ such that, for every $\BX$, it coincides with $\int_t^T (Z,Z')^{\BX}\,d\BX$ $\PP'$-almost surely. We still denote this version  by $\int_t^T(Z,Z')^{\BX}d\BX$. Then by \cite[Corollary~8.7]{FLZ26}, we have 
$$
\int_t^T (Z,Z')^{\mathbf{X}  }d\BX \Big|_{\BX=\BB} = \lim_{|\MP| \rightarrow 0} \sum_{[u,v] \in \MP} (\bar Z_u \delta \mathrm B_{u,v}+ \bar Z'_u \B_{u,v} ), \ \ \ \text{in $\PP$-probability.}
$$

To complete the proof, we only need to show the difference of right-hand sides of the above two identities 
\begin{align}\nonumber
&  \sum_{[u,v] \in \MP } (\delta \bar Z_{u,v} \delta \mathrm B_{u,v} - \bar Z'_u \B_{u,v} -\frac12 \bar Z'_u(v-u))\\ \nonumber
= & \sum_{[u,v] \in \MP }  \left[ \bar Z'_u (\delta \mathrm B_{u,v} \delta \mathrm B_{u,v} - (v-u)I_d) \right]  -  \sum_{[u,v] \in \MP } \bar Z'_u \B^{\text{It\^o}}_{u,v} + \sum_{[u,v] \in \MP } R^{\bar Z}_{u,v} \delta \mathrm B_{u,v}\\ \label{eq:decomp-roughsemi-inte}
=: &\sum_{[u,v] \in \MP }  \left[ \bar Z'_v (\delta \mathrm B_{u,v} \delta \mathrm B_{u,v} - (v-u) I_d) \right]  -  \sum_{[u,v] \in \MP } \bar Z'_v \B^{\text{It\^o}}_{u,v} + \sum_{[u,v] \in \MP } R^{\bar Z}_{u,v} \delta \mathrm B_{u,v} + \sum_{[u,v] \in \MP } \Delta_{u,v}
\end{align}
converges to zero in $\PP$-probability as $|\MP|\rightarrow 0$. It is standard to show $\sum_{[u,v]\in \MP} \Delta_{u,v}\to 0$  in $\PP$-probability. Moreover, since $\bar Z'$ is $(\MF_t)_t$-adapted, we have that the first summation of \eqref{eq:decomp-roughsemi-inte} converges to zero. Similarly, the second summation also converges to zero. It remains to show the third term converges to zero. Indeed, noting that $ R^{\bar Z}_{u,v} = P^{\bar Z}_{u,v} + \delta M^{\bar Z}_{u,v}$, the third summation equals
\begin{equation}\label{eq:fur-decom}
\sum_{[u,v]\in \MP} P^{\bar Z}_{u,v} \delta \mathrm B_{u,v} + \sum_{[u,v]\in \MP} \delta M^{\bar Z}_{u,v} \delta \mathrm B_{u,v}.   
\end{equation}
For the first summation of \eqref{eq:fur-decom}, since  $P^{Z}(\omega';\BX) \in C^{\frac{pp'}{p+p'}\text{-var}},$ $\PP'$-a.s., we have 
$$
\sum_{[u,v]\in \MP}   P^{Z}_{u,v}(\omega';\BX) \delta \mathrm X_{u,v} \rightarrow 0, \ \ \  \PP'\text{-}a.s.
$$
and thus the term involving $P^{\bar Z}(\omega)= P^Z(\omega';\BX)|_{\BX=\BB(\omega'')}$ times $\delta \mathrm B$ converges to zero $\PP$-a.s. For the second summation of \eqref{eq:fur-decom}, without loss of generality, assume that $\delta M^Z (\omega';\BX)$ is a square-integrable martingale on $(\Omega', \MF', \PP')$; otherwise, one may apply a localization argument first. By the Burkholder-Davis-Gundy inequality, we have 
$$
\E'\Big[ \Big|\sum_{[u,v]\in \MP} \delta  {M}^Z_{u,v} \delta \mathrm X_{u,v} \Big|^2 \Big] \lesssim \sum_{[u,v] \in \MP} |\delta \mathrm X_{u,v}|^2 \E' \left[|\delta \langle M^Z \rangle_{u,v}|\right] \lesssim \sup_{[u,v] \in \MP} |\delta \mathrm X_{u,v}|^2 \E'\left[ | \delta \langle M^Z \rangle_{0,T} |\right] \rightarrow 0. 
$$
It follows that the second summation of \eqref{eq:fur-decom} converges to zero in $\PP$-probability as well.    
\end{proof}

Now we are ready to establish the equivalence between rough BSDEs and BDSDEs. In view of \cite{PardouxPeng1994BDSDE}, we call a pair $(Y,Z) $ a square-integrable solution to \eqref{e:BDSDE} if $(Y,Z)$ satisfies the equation and $(Y,Z)\in \mathcal S^2([0,T]\times \Omega) \times \mathcal L^2([0,T]\times \Omega)$, where $\mathcal S^2$ is the set of continuous adapted processes $y$ satisfying $\|y\|_{\mathcal S^2}^2:= \E[\sup_{t\in [0,T]} |y_t|^2]<\infty. $  

\begin{theorem}\label{thm:BDSDE}
Under the same assumption as in Proposition~\ref{prop:measurability of Y^X} but with the probability space $(\Omega, \MF, \PP)$ there replaced by $(\Omega', \MF', \PP')$. Let $(  Y^{\mathbf X}, Z^{\mathbf X})$ be the jointly measurable, $\BX$-backward causal version obtained in Proposition~\ref{prop:measurability of Y^X}. Denote by $(\bar Y, \bar Z) (\omega):=  (Y^{ \mathbf B(\omega'')}(\omega'), Z^{\BB(\omega'')}(\omega') )$.
Then $(\bar Y, \bar Z)$ is the unique $L^2$-integrable solution to BDSDE~\eqref{e:BDSDE}. 
\end{theorem}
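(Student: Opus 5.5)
The plan has three parts: first show that $(\bar Y,\bar Z)$ is an $L^2$ solution of \eqref{e:BDSDE}, then prove uniqueness among $L^2$ solutions, with the integrability bounds coming from the a priori estimate.

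\textbf{Adaptedness and the equation.} By Proposition~\ref{prop:measurability of Y^X}, the map $(t,\omega',\mathbf X)\mapsto (Y,Z)^{\mathbf X}$ is jointly measurable and $\mathbf X$-backward causal. Since the lift $\mathbf B$ on $[t,T]$ depends only on the increments of $\mathrm B$ after $t$, the composition $(\bar Y_t,\bar Z_t)$ is $\mathcal F'_t\vee\mathcal F''_{t,T}$-measurable; this is the non-monotone adaptedness required for BDSDEs. For every fixed $\mathbf X$ the rough BSDE holds $\mathbb P'$-a.s. The set where it fails is jointly measurable, and each $\mathbf X$-section of it is $\mathbb P'$-null, so by Fubini the equation holds for $\mathbb P$-a.e. $\omega$ after substituting $\mathbf X=\mathbf B(\omega'')$.

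\textbf{Identifying the rough integral.} We apply Proposition~\ref{prop:backward-equiv} with $(Z,Z')=(H(Y),DH(Y)Y')=(H(Y),-DH(Y)H(Y))$. Lemma~\ref{lem:int of RSM} shows that this pair is an RSM for each $\mathbf X$. Backward causality of $(H(Y),DH(Y)H(Y),M^{H(Y)})$ follows from that of $(Y,Z)$, because $M^{H(Y)}=\int DH(Y)Z\,dW$. Joint measurability is inherited by composition with continuous functions. The proposition then turns $\int_t^T H(Y)\,d\mathbf X$ evaluated at $\mathbf X=\mathbf B$ into $\int_t^T H(\bar Y)\overset{\leftarrow}{d\mathrm B}+\tfrac12\int_t^T DH(\bar Y)H(\bar Y)\,dr$, which is exactly the backward Stratonovich integral in \eqref{e:BDSDE}. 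Hence $(\bar Y,\bar Z)$ solves \eqref{e:BDSDE}.

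\textbf{Integrability.} The key fact is that Theorem~\ref{thm:BMO bound} bounds $\|Y^{\mathbf X}\|_\infty+\|\delta M^{Y^{\mathbf X}}\|_{\mathrm{BMO}}$ by a constant depending on $|\delta\mathbf X|_{p\text{-var}}$ only through a polynomial or exponential expression. Concretely, the $\|Y\|_\infty$ bound is inherited from the RDE estimate of Proposition~\ref{prop:RDE with drift}, which grows at most exponentially in a power of the homogeneous $p$-variation below $2$. Gaussian integrability of $|\delta\mathbf B|_{p\text{-var}}$ (Fernique-type estimates for the Brownian rough path, or Cass--Litterer--Lyons tail bounds for the RDE estimate) then gives $\mathbb E[\sup_t|\bar Y_t|^2]<\infty$. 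For $\bar Z$, Fubini gives $\mathbb E\int|\bar Z|^2\,dr=\mathbb E''[\mathbb E'\int|Z^{\mathbf X}|^2\,dr|_{\mathbf X=\mathbf B}]\le\mathbb E''[\|\delta M^{Y^{\mathbf B}}\|^2_{\mathrm{BMO}}]$. Here one must make sure that the BMO bound, traced back through Propositions~\ref{prop:BMO and ess sup} and \ref{prop:bounded by BMO}, grows at most polynomially in $\|Y\|_\infty$ and $|\delta\mathbf X|$. I expect this step to be the main obstacle. If the RDE bound turns out to be genuinely exponential in $|\delta\mathbf X|_{p\text{-var}}^{p}$, I would localize by using the number of intervals in a greedy partition (the $N$ of Step~4 in Proposition~\ref{prop:bounded by BMO}), which has Gaussian tails.

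\textbf{Uniqueness.} Uniqueness among square-integrable solutions follows from Pardoux--Peng \cite{PardouxPeng1994BDSDE}, since $f$ and $H$ are Lipschitz. The backward Stratonovich integral can be rewritten as a backward Itô integral plus a Lipschitz drift $\tfrac12 DH\,H$; this drift is Lipschitz because $H\in C^\gamma_b$ with $\gamma>2$. The smallness condition of Pardoux--Peng on the $z$-coefficient is vacuous here because $H$ does not depend on $z$.
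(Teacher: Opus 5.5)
Most of your outline agrees with the paper: the substitution $\mathbf X=\mathbf B(\omega'')$ via joint measurability and Fubini, the identification of the rough integral through Proposition~\ref{prop:backward-equiv} applied to $(H(Y),-DH(Y)H(Y))$, the bound on $\mathbb E[\sup_t|\bar Y_t|^2]$, and uniqueness from \cite{PardouxPeng1994BDSDE}. Two minor corrections there. First, your Fubini step presupposes jointly measurable versions of the stochastic and rough integrals. You also still have to identify $\big(\int Z^{\mathbf X}dW\big)\big|_{\mathbf X=\mathbf B}$ with the It\^o integral $\int\bar Z\,dW$ on the product space; the paper cites \cite{FLZ26} for this. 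Second, the $\bar Y$ bound is polynomial, not exponential. Comparison with the two deterministic RDEs and Proposition~\ref{prop:RDE with drift} give $\|Y^{\mathbf X}\|_\infty\lesssim 1+\triplenorm{\delta\mathbf X}^{p}_{p\text{-}\mathrm{var}}$, so Fernique's estimate alone suffices.

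The genuine gap is the square integrability of $\bar Z$, which you leave open. Your bound $\mathbb E\int_0^T|\bar Z_r|^2dr\le\mathbb E''\big[\|\delta M^{Y^{\mathbf B}}\|^2_{\mathrm{BMO}}\big]$ only helps if the BMO constant depends on $\mathbf X$ in a way integrable against the Gaussian tails of $\mathbf B$. The paper's a priori estimates give no such control. The constant in Proposition~\ref{prop:BMO and ess sup} comes from Proposition~\ref{prop:bounded by BMO}, whose Step~4 iterates \eqref{e:iter 1} over $N$ subintervals. Both the mesh parameter $\varepsilon=(C_{k,K}(1+|\delta\mathbf X|_{p\text{-}\mathrm{var}}))^{-1}$ (hence $N$) and the step constant $\overline C_{k,K}$ depend on the global size of $\mathbf X$. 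The result is of order $\overline C_{k,K}^{\,N}$, at least exponential in a power of $|\delta\mathbf X|_{p\text{-}\mathrm{var}}$ exceeding $2$. Your greedy-partition fallback would mean redoing that whole section with purely local constants; as stated it is not an argument.

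The missing idea is that after randomization you do not need the pathwise BMO bound at all. It\^o's formula for BDSDEs applied to $|\bar Y|^2$ gives
\[
\begin{aligned}
\mathbb E'\Big[|\bar Y_0|^2+\int_0^T|\bar Z_r|^2dr\Big]
&=\mathbb E'\Big[|\xi|^2+\int_0^T\Big(\bar Y_r\big(2f(r,\bar Y_r,\bar Z_r)+DH(\bar Y_r)H(\bar Y_r)\big)+|H(\bar Y_r)|^2\Big)dr\\
&\qquad+2\int_0^T\bar Y_rH(\bar Y_r)\overset{\leftarrow}{d\mathrm B}_r\Big].
\end{aligned}
\]
Absorb $2C_f|\bar Y_r||\bar Z_r|\le\tilde C|\bar Y_r|^2+\tfrac12|\bar Z_r|^2$; this is legitimate since $\mathbb E'\int_0^T|Z^{\mathbf X}_r|^2dr<\infty$ for each fixed $\mathbf X$. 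Then take $\mathbb E''$. The backward It\^o integral has zero mean because $\bar YH(\bar Y)$ is square integrable, so $\mathbb E\int_0^T|\bar Z_r|^2dr\lesssim 1+\mathbb E[\sup_t|\bar Y_t|^2]<\infty$. This uses the martingale cancellation of the $\overset{\leftarrow}{d\mathrm B}$-term, which has no counterpart for a fixed rough path. That is exactly why the pathwise route forces you through the BMO machinery.
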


\begin{proof}
We complete the proof in two steps. First, we show that $(\bar Y,\bar Z)$ satisfies \eqref{e:BDSDE} $\PP$-almost surely. We then prove that $(\bar Y,\bar Z)\in\mathcal S^2\times\mathcal L^2$. It therefore follows from \cite[Theorem~1.1]{PardouxPeng1994BDSDE} that $(\bar Y,\bar Z)$ is the unique square-integrable solution.

{\bf Step 1.} Since $(\bar Y,\bar Z)$ is $(\MF_t)_t$-adapted, we may argue exactly as in the proof of \cite[Proposition~8.2]{FLZ26} to obtain
\begin{align*}
&\int_{t}^{T} Z^{\mathbf X}_r  dW_r \Big|_{\BX=\BB(\omega'')} = \int_{t}^{T} \bar Z_r dW_r, \quad \int_{t}^{T} f(r,Y^{\mathbf X}_r ,Z^{\mathbf X}_r ) dr\Big|_{\BX=\BB(\omega'')} = \int_{t}^{T} f(r, \bar Y_r, \bar Z _r) dr . 
\end{align*}
To show that $(\bar Y, \bar Z)$ satisfies \eqref{e:BDSDE}, it suffices to verify the following  identity
$$
\int_{t}^{T}H(Y^{\mathbf X}_r) d\mathbf X_{r} \Big|_{\BX=\BB(\omega'')} = \int_{t}^{T}H( \bar Y_r) \circ \overset{\leftarrow}{d \mathrm B}_r,
$$ 
which indeed follows from Proposition \ref{prop:backward-equiv} by properly choosing $(p,p')$.

{\bf Step 2.} Now we prove the square-integrability of $(Y^{\mathbf B},Z^{\mathbf B})$. By the proof of Theorem~\ref{thm:BMO bound}, we have that $Y^{\mathbf X}$ can be dominated by the solutions of RDEs. By Proposition~\ref{prop:RDE with drift}, the uniform norm of these solutions of RDEs is bounded by a constant multiple of $1 + |\delta \mathbf X|^p_{1/p}$; hence the same bound applies to $Y^{\mathbf X}$. Consequently, we have the following estimate for the randomization $Y^{\mathbf B}$: 
\begin{equation*}
\esssup_{\omega' \in \Omega'} \Big\{ \sup_{t \in [0,T]}|Y^{\mathbf B(\omega'')}_t(\omega')|\Big\} \lesssim 1 + |\delta \mathbf B|^p_{1/p}(\omega''),\ \text{ for all }\ \omega'' \in \Omega''. 
\end{equation*}
Then, the exponential integrability of $|\delta \mathbf B|_{1/p}$ (see, e.g., \cite[Corollary~13.14]{friz2010multidimensional}) implies that 
\begin{equation*}
\E \Big[\sup_{t \in [0,T]} |Y^{\mathbf B}_t|^2 \Big] < \infty.
\end{equation*}

For the integrability of $\int_{0}^{T}|Z^{\mathbf B}_r|^2 dr$, by It\^o's formula (see, e.g., \cite[Lemma~2.1]{ssz19}), it holds that 
\begin{equation*}
\begin{aligned}
&\E'\Big[ |Y^{\mathbf B}_{0}|^2 +  \int_{0}^{T} |Z^{\mathbf B}_r|^2 dr \Big]\\
&= \E'\Big[|\xi|^2 +  \int_{0}^{T} \left(  Y^{\mathbf B}_r \big( 2 f(r,Y^{\mathbf B}_r,Z^{\mathbf B}_r) +  DH(Y^{\mathbf B}_r) H(Y^{\mathbf B}_r) \big) + |H(Y^{\mathbf B}_r)|^2 \right) dr + 2\int_{0}^{T} Y^{\mathbf B}_r H(Y^{\mathbf B}_r) d\overset{\leftarrow}{\mathrm B}_r \Big].
\end{aligned}
\end{equation*}
By \ref{(f)} and Young's inequality, we have that $2 |yf(r,y,z)| \le 2 C_{f} (|y| + |y|^2 + |yz|) \le \tilde C (1 + |y|^2) + \frac{1}{2} |z|^2$ for some constant $\tilde C > 0$. Furthermore, we also have $|\xi|^2 \le \sup_{s\in[0,T]} |Y_s|^2$, $|y DH(y) H(y)| \le \hat C + |y|^2$, and $|H(y)|^2 \le \hat C $ for some constant $\hat C > 0$.  Hence, for some constant $C_0 > 0$
\begin{equation}\label{e:1/2 Z^2}
\frac{1}{2}\E'\Big[\int_{0}^{T} |Z^{\mathbf B}_r|^2 dr \Big] \le C_0 + C_0 \E'\Big[\sup_{s\in[0,T]}|Y^{\mathbf B}_{s}|^2\Big] + 2\E'\Big[\int_{0}^{T}Y^{\mathbf B}_r H(Y^{\mathbf B}_r) d\overset{\leftarrow}{\mathrm B}_r\Big].  
\end{equation}
Since $H$ is bounded, we have that $\sup_{s\in[0,T]}|Y^{\mathbf B}_s H(Y^{\mathbf B}_s)|^2$ is integrable. Then, by It\^o's isometry, $\int_{t}^{T} Y^{\mathbf B}_r H(Y^{\mathbf B}_r) d \overset{\leftarrow}{\mathrm B}_{r}$ is a backward square-integrable martingale with respect to $(\mathcal F''_{t,T})_t$, and hence 
\begin{equation*}
\E\Big[\int_{0}^{T}Y^{\mathbf B}_r H(Y^{\mathbf B}_r) d\overset{\leftarrow}{\mathrm B}_r \Big] = 0. 
\end{equation*}
Consequently, taking $\E''[\bm\cdot]$ in \eqref{e:1/2 Z^2} and applying Fubini's theorem, we obtain
\begin{equation*}
\E\Big[ \int_{0}^{T} |Z^{\mathbf B}_r|^2 dr \Big] \lesssim 1 + \E\Big[ \sup_{s\in[0,T]} |Y^{\mathbf B}_s|^2 \Big] < \infty,
\end{equation*}
where the last inequality follows from the square-integrability of $\sup_{s\in[0,T]}|Y^{\mathbf B}_s|$. 
\end{proof}

\appendix

\section{Auxiliary Results on Rough and Rough Stochastic Analysis}\label{sec:Appendix A}

In this section, we collect some results in rough and rough stochastic analysis.

The following L\'epingle's inequality for $p$-variation characterizes the $k$-moments of $p$-variation of martingales by its quadratic process.

\begin{lemma}\label{lem:Lepingle}
Assume $p>2$, $k\ge 1$, and $t\in[0,T]$. Let $M$ be a continuous martingale such that $\E[\delta \langle M\rangle_{t,T}^{k/2}]<\infty$.  Then, there exists a positive number $C$ depending only on $p$ and $k$ such that
\begin{equation}\label{e:BDG}
C^{-1}\E_{t}[\delta \langle M\rangle_{t,T}^{\frac k2}]\le \E_{t}[\|\delta M\|^{k}_{p\text{-}\mathrm{var};[t,T]}] \le C\E_{t}[ \delta \langle M\rangle_{t,T}^{\frac k2}].
\end{equation}
Consequently, there exists a positive constant $\hat C$ depending only on $p$ and $k$ such that 
\begin{equation}\label{e:BMO vs p-var}
\hat C^{-1}\|\delta M\|_{\mathrm{BMO}} \le \left\|\|\delta M\|_{p\text{-}\mathrm{var}}\right\|_{\infty;k,\infty} \le \hat C \|\delta M\|_{\mathrm{BMO}}.
\end{equation}
\end{lemma}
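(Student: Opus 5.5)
The plan is to treat the two assertions separately. The conditional two-sided bound \eqref{e:BDG} will come from the unconditional L\'epingle inequality together with the Burkholder--Davis--Gundy inequality. The equivalence \eqref{e:BMO vs p-var} will then follow by taking suprema and applying the energy inequality of Lemma~\ref{lem:Energy inequality} to $\langle M\rangle$, which lets us change the moment exponent.

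\textbf{Step 1 (unconditional bound).} For the upper bound in \eqref{e:BDG}, we invoke the classical L\'epingle inequality for continuous martingales with $p>2$:
\[
\E\big[\|\delta M\|^k_{p\text{-}\mathrm{var};[t,T]}\big]\le C_{p,k}\,\E\big[\delta\langle M\rangle^{k/2}_{t,T}\big].
\]
It can also be obtained by writing $M$ as a time-changed Brownian motion $\beta_{\langle M\rangle}$ and using the fact that the $p$-variation of Brownian motion over $[0,a]$ scales like $a^{1/2}$ with moments of all orders. The lower bound is easier. Since $\|\delta M\|_{p\text{-var};[t,T]}\ge \sup_{r\in[t,T]}|\delta M_{t,r}|$, BDG gives
\[
\E\big[\delta\langle M\rangle_{t,T}^{k/2}\big]\lesssim \E\Big[\sup_{r\in[t,T]}|\delta M_{t,r}|^k\Big].
\]

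\textbf{Step 2 (conditioning).} To pass to $\E_t$, take any $A\in\mathcal F_t$ and apply Step 1 to the martingale $N_r:=\mathbf 1_A(M_{r\vee t}-M_t)$. We have $\langle N\rangle=\mathbf 1_A\,\delta\langle M\rangle_{t,\cdot}$ and $\|\delta N\|_{p\text{-var}}=\mathbf 1_A\|\delta M\|_{p\text{-var};[t,T]}$. Since $A\in\mathcal F_t$ is arbitrary, this yields \eqref{e:BDG} with constants depending only on $(p,k)$. Integrability is guaranteed by the hypothesis $\E[\delta\langle M\rangle^{k/2}_{t,T}]<\infty$, and localisation can be used if needed.

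\textbf{Step 3 (BMO equivalence).} Apply \eqref{e:BDG} on each subinterval $[s,T']$ and take the essential supremum. This gives
\[
\big\|\|\delta M\|_{p\text{-var}}\big\|_{\infty;k,\infty}\simeq \sup_s \big\|\delta\langle M\rangle_{s,T}^{1/2}\big|\mathcal F_s\big\|_{k,\infty},
\]
where monotonicity of $\langle M\rangle$ handles intervals ending before $T$. It then remains to compare $\sup_s\|\delta\langle M\rangle^{1/2}_{s,T}|\mathcal F_s\|_{k,\infty}$ with $\|\delta M\|_{\mathrm{BMO}}=\sup_s\|\delta\langle M\rangle_{s,T}|\mathcal F_s\|^{1/2}_{1,\infty}$.

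For $k\le 2$, conditional Jensen gives the bound by $\|\delta M\|_{\mathrm{BMO}}$ directly. For $k>2$, we apply Lemma~\ref{lem:Energy inequality} to the continuous adapted non-decreasing process $A=\langle M\rangle$ with $C=\|\delta M\|^2_{\mathrm{BMO}}$. This gives $\|\delta\langle M\rangle_{s,T}|\mathcal F_s\|_{k/2,\infty}\le (\lceil k/2\rceil!)^{2/k}\|\delta M\|^2_{\mathrm{BMO}}$, which is the upper bound after taking square roots.

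For the reverse direction we need $\|\delta M\|_{\mathrm{BMO}}\lesssim \sup_s\|\delta\langle M\rangle^{1/2}_{s,T}|\mathcal F_s\|_{k,\infty}$ even when $k<2$. This uses John--Nirenberg-type self-improvement. The approach is to apply the $k=1$ case of \eqref{e:BDG}, giving $\E_\tau[\|\delta M\|_{p\text{-var};[\tau,T]}]\le c$, and then Lemma~\ref{lem:BMO of q-var}, which shows that the $p$-variation is a BMO process with all conditional moments controlled by $c$. Combining this with \eqref{e:BDG} at $k=2$ gives $\E_s[\delta\langle M\rangle_{s,T}]\lesssim c^2$.

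\textbf{Main obstacle.} I expect the main obstacle to be this self-improvement step in the lower bound for small $k$, together with making the conditional L\'epingle inequality rigorous with constants independent of $t$. The latter is handled by the $\mathbf 1_A$ trick in Step 2.
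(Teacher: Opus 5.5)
Your proposal is correct, and its skeleton matches the paper's. The paper obtains \eqref{e:BDG} by citing the $p$-variation BDG (L\'epingle) inequality of Friz--Victoir. It then gets \eqref{e:BMO vs p-var} by taking essential suprema in \eqref{e:BDG} and invoking Kazamaki's equivalence of $\mathrm{BMO}_k$ norms, $\esssup_{(t,\omega)}\E_t[\delta\langle M\rangle^{k/2}_{t,T}]\asymp\|\delta M\|^k_{\mathrm{BMO}}$.

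You differ from the paper in two places. First, you make the passage from the unconditional to the conditional inequality explicit via the $\mathbf 1_A$-localised martingale $N$. The paper leaves this implicit in the citation; it is the same device it sketches in the proof of Lemma~\ref{lem:p-var martingale estim}. Second, instead of citing Kazamaki, you reprove the needed equivalence with the paper's own tools:
\begin{itemize}
\item for the upper bound when $k>2$, Lemma~\ref{lem:Energy inequality} applied to $\langle M\rangle$;
\item for the lower bound when $k<2$, Lemma~\ref{lem:BMO of q-var} (self-improvement of the conditional $p$-variation norm) combined with \eqref{e:BDG} at exponents $1$ and $2$.
\end{itemize}
There is no circularity, since Lemma~\ref{lem:BMO of q-var} rests only on the energy inequality and subadditivity of $p$-variation. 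Your route is more self-contained, but longer; the paper's is two citations.

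Three small points to tidy:
\begin{itemize}
\item When you use \eqref{e:BDG} at exponent $2$, you only know $k$-th moments a priori. The direction you need there is the lower one, which is plain BDG and needs no integrability hypothesis; finiteness of the right-hand side then comes from Lemma~\ref{lem:BMO of q-var}.
\item Lemma~\ref{lem:Energy inequality} is phrased with stopping times. For the continuous process $\langle M\rangle$ in the Brownian filtration, this reduces to deterministic times exactly as in the proof of Lemma~\ref{lem:BMO of q-var}.
\item The time-change remark in Step~1 is not by itself a proof, because the horizon $\langle M\rangle_T$ is random and scaling alone does not control it. You only offer it as an aside, so the main argument is unaffected.
\end{itemize}
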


\begin{proof}
The proof of \eqref{e:BDG} can be found in \cite[Theorem~14.12]{friz2010multidimensional}. For \eqref{e:BMO vs p-var}, by \cite[Corollary~2.1]{Kazamaki1994} it follows that there exists a constant $\tilde{C}$ depending only on $k$ such that
\begin{equation}\label{e:k-BMO vs BMo}
\tilde{C}^{-1}\|\delta M\|^k_{\mathrm{BMO}} \le \esssup_{(t,\omega)\in[0,T]\times\Omega} \E_{t}[\delta \langle M\rangle_{t,T}^{\frac k2}] \le \tilde{C}\|\delta M\|^k_{\mathrm{BMO}}.
\end{equation}
Hence, by taking essential supremum to \eqref{e:BDG} and applying \eqref{e:k-BMO vs BMo}, we have \eqref{e:BMO vs p-var}.
\end{proof}

The following Lemma extends  \cite[Theorem~1.1]{friz2023rough} from the usual $L^k$-norm to the conditional $L^{k}$-norm, which is essential for estimating the remainder of the composition of the nonlinear function $H(\bm\cdot)$ and $(Y,Y') \in \mathrm{RSM}^{(p,p')\text{-}\mathrm{var}}_{\mathrm X}$. 

\begin{lemma}\label{lem:p-var martingale estim}
Let $k\ge 1,\ 1<k_1\le \infty,\ 1\le k_{0}<\infty$ and $0<r,r_1 < \infty$. Suppose $1/r < 1/r_1 + 1/2$, and $1/k = 1/k_{1} + 1/k_{0}$. Let $(F_{t})_{0\le t\le T}$ be a continuous adapted process and $M_{t}$ be a continuous martingale. Denote by
\begin{equation*}
A_{s,t} := \int_{s}^{t}\delta F_{s,r} dM_{r}.
\end{equation*}
Then, there is a deterministic constant $C$ depending only on $r,r_1,k,k_1,k_0$ such that 
\begin{equation*}
\big\|\|A_{\bm\cdot,\bm\cdot}\|_{r\text{-}\mathrm{var};[s,t]} | \mathcal F_{s} \big\|_{k} \le C \big\|\|\delta F\|_{r_1\text{-}\mathrm{var};[s,t]} | \mathcal F_{s} \big\|_{k_1} \big\|\|\delta M_{s,\bm\cdot}\|_{\infty;[s,t]} | \mathcal F_{s} \big\|_{k_0}.
\end{equation*}
In addition, for $r_1 = \infty$ and $1/r < 1/2$, we have
\begin{equation*}
\big\|\|A_{\bm\cdot,\bm\cdot}\|_{r\text{-}\mathrm{var};[s,t]} | \mathcal F_{s} \big\|_{k} \le C \big\|\|F\|_{\infty;[s,t]} | \mathcal F_{s}  \big\|_{k_1} \big\|\|\delta M_{s,\bm\cdot}\|_{\infty;[s,t]} | \mathcal F_{s}  \big\|_{k_0}.
\end{equation*}
\end{lemma}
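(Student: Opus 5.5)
The plan is to reduce the statement to the unconditional estimate of \cite[Theorem~1.1]{friz2023rough} by working under the regular conditional probability given $\mathcal F_s$. Fix $(s,t)\in\Delta$. Since $(\Omega,\mathcal F,\mathbb P)$ carries the Brownian filtration, it is standard (Polish path space) that there is a regular conditional distribution $\mathbb P^{\omega}:=\mathbb P(\bm\cdot\,|\,\mathcal F_s)(\omega)$. For $\mathbb P$-a.e.\ $\omega$ the following hold under $\mathbb P^{\omega}$, with respect to the filtration $(\mathcal F_r)_{r\in[s,t]}$: the process $(M_r-M_s)_{r\in[s,t]}$ is again a continuous (local) martingale, $F$ is still continuous and adapted, and the stochastic integral $A_{u,v}=\int_u^v\delta F_{u,r}\,dM_r$ for $s\le u\le v\le t$ coincides $\mathbb P^{\omega}$-a.s.\ with the same integral computed under $\mathbb P^{\omega}$. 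To see the last point, approximate $A$ by Riemann sums converging in probability, and use that convergence in $\mathbb P$-probability implies convergence in $\mathbb P^{\omega}$-probability along a subsequence for a.e.\ $\omega$.

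With this in hand, apply the $L^k$ estimate of \cite[Theorem~1.1]{friz2023rough} on $[s,t]$ under $\mathbb P^{\omega}$, with the same exponents $r,r_1,k,k_1,k_0$. Because the constant $C$ there depends only on these exponents, we get for a.e.\ $\omega$
\[
\mathbb E^{\omega}\big[\|A\|^k_{r\text{-}\mathrm{var};[s,t]}\big]^{1/k}\le C\,\mathbb E^{\omega}\big[\|\delta F\|^{k_1}_{r_1\text{-}\mathrm{var};[s,t]}\big]^{1/k_1}\mathbb E^{\omega}\big[\|\delta M_{s,\bm\cdot}\|^{k_0}_{\infty;[s,t]}\big]^{1/k_0}.
\]
Identifying $\mathbb E^{\omega}[\bm\cdot]$ with $\mathbb E[\bm\cdot\,|\,\mathcal F_s](\omega)$ then yields the claim. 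The case $r_1=\infty$ follows in the same way from the corresponding sup-norm version of \cite[Theorem~1.1]{friz2023rough}. The $r$-variation is measurable, being a supremum over rational partitions by continuity, so all quantities involved are random variables.

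The main obstacle is the justification of the first step: showing that martingale and stochastic-integral structure survives conditioning on $\mathcal F_s$, and handling null sets uniformly over $u,v$. If this becomes awkward, the fallback is to prove the conditional bound directly. For $B\in\mathcal F_s$, replace $F$ and $M$ by $F\mathbf 1_B$ and $(M_{\bm\cdot\vee s}-M_s)\mathbf 1_B$. These are still an adapted process and a martingale on $[s,t]$, and $A$ becomes $A\mathbf 1_B$. The unconditional theorem gives $\mathbb E[\mathbf 1_B\|A\|^k]\le C^k\,\mathbb E[\mathbf 1_B X^{k_1}]^{k/k_1}\mathbb E[\mathbf 1_B Y^{k_0}]^{k/k_0}$, where $X$ and $Y$ are the two factors. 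This bound alone does not give the pointwise product of conditional norms. To obtain that, rescale: replace $F$ by $F/\|X|\mathcal F_s\|_{k_1}$ and $M$ by $M/\|Y|\mathcal F_s\|_{k_0}$, which is legitimate since these normalizers are $\mathcal F_s$-measurable and so preserve adaptedness and the martingale property on $[s,t]$. Applying the previous inequality with arbitrary $B$ then yields $\mathbb E[\|A\|^k\,|\,\mathcal F_s]\le C^k\|X|\mathcal F_s\|^k_{k_1}\|Y|\mathcal F_s\|^k_{k_0}$, after localizing on the sets where the normalizers lie in $(0,\infty)$.
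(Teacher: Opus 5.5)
Your primary route is the paper's first argument: pass to the regular conditional probability $\mathbb P(\cdot\,|\,\mathcal F_s)(\omega)$, note that $M$ on $[s,t]$ remains a martingale there, and apply \cite[Theorem~1.1]{friz2023rough}, whose constant depends only on the exponents. You are more careful than the paper about identifying the stochastic integral under $\mathbb P^{\omega}$ and about null sets; the paper simply says ``obviously''.

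Your fallback, however, is genuinely different from the paper's alternative. The paper proposes to reopen the proof of \cite[Theorem~1.1]{friz2023rough}. There it would make the vector-valued BDG inequality \cite[Lemma~2.6]{friz2023rough} conditional, by multiplying the martingale differences by $\mathcal F_s$-measurable indicators, and then rerun the argument with conditional expectations. You instead use Theorem~1.1 as a black box. You localize with $\mathbf 1_B$, $B\in\mathcal F_s$, and divide $F$ and $M-M_s$ by the $\mathcal F_s$-measurable normalizers $N_1=\|X|\mathcal F_s\|_{k_1}$ and $N_0=\|Y|\mathcal F_s\|_{k_0}$. The right-hand side then collapses to $C^k\,\mathbb P(B)^{k/k_1}\mathbb P(B)^{k/k_0}=C^k\,\mathbb P(B)$, so
\[
\mathbb E\big[\mathbf 1_B (N_0N_1)^{-k}\,\mathbb E[\|A\|^k_{r\text{-}\mathrm{var};[s,t]}\,|\,\mathcal F_s]\big]\le C^k\,\mathbb P(B)\quad\text{for all } B\in\mathcal F_s .
\]
This gives the pointwise conditional bound. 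You should state explicitly that this works precisely because $1/k=1/k_1+1/k_0$ makes the estimate homogeneous of degree one in $\mathbb P(B)$.

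The fallback is complete and fully rigorous. It needs no Borel structure on $\Omega$, and no check that the augmented filtration and martingale property survive under $\mathbb P^{\omega}$. It also needs no knowledge of the internals of \cite{friz2023rough}, which the paper's second route does. The only bookkeeping is:
\begin{itemize}
\item restrict to $B\subset\{a\le N_0,N_1\le b\}$, so the rescaled $M$ is a true martingale;
\item note that $A=0$ a.s.\ on $\{N_0N_1=0\}$, by the local property of stochastic integrals;
\item read $N_1$ as a conditional essential supremum when $k_1=\infty$.
\end{itemize}
I would present the fallback as the actual proof and keep the regular-conditional-probability argument as the heuristic.
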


\begin{proof}
Assume at first that the underlying probability space is Borel which
guarantees the existence of a regular conditional probability $\mathbb P_s
(\omega) = \mathbb P (\bm\cdot | \mathcal F_s )$. Obviously $M$
restricted to $[s, t]$ is a  ($\mathbb P_s , (\mathcal F_r : r \geqslant s$))
-martingale, almost surely, so that the conditional estimate follows
immediately from the unconditional one. One can also avoid the use of regular conditional probabilities. Indeed, most arguments in the proof of \cite[Theorem~1.1]{friz2023rough} can be adapted to versions involving conditional expectations. The key point is that the vector-valued BDG inequality shown in \cite[Lemma~2.6]{friz2023rough} remains valid in the conditional setting. The extension to conditional estimates relies solely on standard scalar probabilistic arguments (specifically, localization via $\mathcal F_s$-measurable indicator functions, which amounts to replacing each martingale $(h^k_n: n \ge 0)$, in notation of \cite[Lemma~2.6]{friz2023rough}, by $h^k_n 1_O$, time-$0$ measurable set $O$, which yields immediately the required conditional form of \cite[Lemma~2.6]{friz2023rough}.) 
\end{proof}

\begin{lemma}[RDEs with BV drifts]
\label{lem:BV perturbed RDE}
Let $p\in(2,3)$ and 
$\mathbf X\in\mathscr C^{p\text{-}\mathrm{var}}([0,T],\R^d)$.
Let $A\in C^{1\text{-}\mathrm{var}}([0,T],\R^e)$, with $A_0 =0$, $V\in C_b^{2}(\R^e;\mathscr L(\R^d,\R^e))$, and assume $(Y, V(Y)) \in \mathscr D^{(p,p)\text{-}\mathrm{var}}_{\mathrm X}([0,T],\R^e)$ satisfies 
\[
Y_t = Y_0 + A_t +\int_0^t V(Y_r)\,d\mathbf X_r.
\]
Then, with $\rho(s,t) := \triplenorm{\delta \mathbf X}^p_{p\text{-}\mathrm{var};[s,t]} + \|\delta A\|_{1\text{-}\mathrm{var};[s,t]}$,
\[
\|\delta Y\|_{p\text{-}\mathrm{var};[s,t]}
\le C\bigl(\rho(s,t)^{1/p}+\rho(s,t)\bigr),
\]
where $C$ depends only on $p,d,e$, and $\|V\|_{C_b^{2}}$.
\end{lemma}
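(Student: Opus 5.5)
We plan to prove the estimate by a local argument on small intervals followed by a greedy partition. First fix $(s,t)$ with $\rho(s,t)\le \varepsilon$ for a small $\varepsilon$ to be chosen depending only on $(p,d,e,\|V\|_{C^2_b})$. We work with the controlled-path quantities $\|\delta Y\|_{p\text{-}\mathrm{var};[s,t]}$ and the remainder. Set $R^Y_{u,v}:=\delta Y_{u,v}-V(Y_u)\delta\mathrm X_{u,v}$. Using the equation, we write
\[
R^Y_{u,v}=\delta A_{u,v}+\Big(\int_u^v V(Y_r)\,d\mathbf X_r - V(Y_u)\delta \mathrm X_{u,v}-DV(Y_u)V(Y_u)\mathbb X_{u,v}\Big)+DV(Y_u)V(Y_u)\mathbb X_{u,v}.
\]
The middle bracket is estimated by the sewing lemma (as in \cite[Theorem~2.5]{FrizZhang-2018}). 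The Gubinelli derivative of $V(Y)$ is $DV(Y)V(Y)$, and the remainder of $V(Y)$ is $R^{V(Y)}_{u,v}=DV(Y_u)R^Y_{u,v}+O(|\delta Y_{u,v}|^2)$. Both $\|\delta (DV(Y)V(Y))\|_{p\text{-}\mathrm{var}}$ and $\|R^{V(Y)}\|_{p/2\text{-}\mathrm{var}}$ are then bounded by constants times $\|\delta Y\|_{p\text{-}\mathrm{var}}+\|R^Y\|_{p/2\text{-}\mathrm{var}}+\|\delta Y\|^2_{p\text{-}\mathrm{var}}$.

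This yields, with $a:=\|\delta Y\|_{p\text{-}\mathrm{var};[s,t]}$ and $b:=\|R^Y\|_{p/2\text{-}\mathrm{var};[s,t]}$ and $\omega:=\rho(s,t)^{1/p}\ge\triplenorm{\delta \mathbf X}_{p\text{-}\mathrm{var};[s,t]}$,
\[
b\le \|\delta A\|_{1\text{-}\mathrm{var};[s,t]} + C\omega^2 + C\omega\,(a+b+a^2),\qquad a\le C\omega + b.
\]
Here we use $\|\mathbb X\|_{p/2\text{-}\mathrm{var}}\le \omega^2$, $\|\delta\mathrm X\|_{p\text{-}\mathrm{var}}\le\omega$, and that the $p/2$-variation of $\delta A$ is bounded by its $1$-variation. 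Substituting the second inequality into the first gives a quadratic inequality for $b$. The term $C\omega a^2$ is the only genuinely nonlinear one. Since a priori $a,b<\infty$ by the standing assumption $(Y,V(Y))\in\mathscr D^{(p,p)}$, we would close the estimate with a continuity argument. Both $u\mapsto a_{[s,u]}$ and $u\mapsto b_{[s,u]}$ are continuous and vanish at $u=s$, so the bound $a\le 2C(\omega+\rho)$ propagates as long as $\varepsilon$ is small enough that $C\omega(1+a)\le 1/2$. On such intervals this gives $a\lesssim \rho(s,t)^{1/p}+\rho(s,t)$, where the term $\rho$ comes from $\|\delta A\|_{1\text{-}\mathrm{var}}$.

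For a general $[s,t]$, we would use the fact that $\rho$ is superadditive up to constants. More precisely, $\triplenorm{\cdot}^p$ is controlled by the control $\|\delta\mathrm X\|^p_{p\text{-}\mathrm{var}}+\|\mathbb X\|^{p/2}_{p/2\text{-}\mathrm{var}}$, so we may replace $\rho$ by an equivalent control $\hat\rho$. We then build a greedy partition $s=\tau_0<\dots<\tau_N=t$ with $\hat\rho(\tau_i,\tau_{i+1})=\varepsilon$ except for the last interval, so that $N\le 1+\hat\rho(s,t)/\varepsilon$. Summing by the standard inequality $\|\delta Y\|_{p\text{-}\mathrm{var};[s,t]}\le N^{1-1/p}\big(\sum_i\|\delta Y\|^p_{p\text{-}\mathrm{var};[\tau_i,\tau_{i+1}]}\big)^{1/p}$, each local piece is at most $C(\varepsilon^{1/p}+\varepsilon)\lesssim 1$. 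This gives a total of at most $C N\lesssim 1+\rho(s,t)$, and $1+\rho(s,t)\lesssim \rho(s,t)$ when $\rho(s,t)>\varepsilon$. Combining this case with the small-interval case yields the claim.

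The main obstacle is the local closing step, in particular the quadratic term $a^2$ coming from the $C^2$ Taylor remainder of $V$. We also need to be careful that the sewing constant only uses $V\in C^2_b$. Since $p<3$, the sewing exponent is $3/p>1$, and the germ defect involves $R^{V(Y)}\delta\mathrm X$ and $\delta(DVV)\mathbb X$, which is fine. If the continuity argument is awkward, we would instead first derive the estimate assuming $a\le1$ and then use continuity in the right endpoint to remove this assumption.
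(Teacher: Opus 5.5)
Your proposal is correct. Its globalization step matches the paper's: a greedy partition with respect to a control at a fixed level, the count $N\le 1+\rho(s,t)/\varepsilon$, and then summation of the local bounds. The difference lies entirely in the local estimate.

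The paper equips $(A,\mathrm X)$ with its canonical joint lift (mixed iterated integrals defined by Riemann--Stieltjes integration) and observes that this lift is controlled by $C_p\rho$ when $\rho\le 1$. It then cites the standard local RDE estimate (Friz--Victoir, Lemma~10.7) for the vector fields $(I_e,V)$.

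You instead keep $\mathbf X$ as the only rough driver and absorb $\delta A$ into the remainder $R^Y$. This is legitimate because $p/2\ge 1$, so $\|\delta A\|_{p/2\text{-}\mathrm{var}}\le\|\delta A\|_{1\text{-}\mathrm{var}}$. You then close the quadratic inequality for $b=\|R^Y\|_{p/2\text{-}\mathrm{var};[s,u]}$ obtained from the sewing lemma by a bootstrap in the right endpoint $u$. That bootstrap uses the a priori finiteness of $b$ supplied by the hypothesis $(Y,V(Y))\in\mathscr D^{(p,p)\text{-}\mathrm{var}}_{\mathrm X}$.

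The paper's route is shorter because it relies on a citation. Yours is self-contained and uses nothing beyond the sewing lemma and Chen's relation. It applies directly to the given controlled solution, and it makes transparent that only $\|V\|_{C^2_b}$ enters the constant. It also needs no geometricity of $\mathbf X$: the cited estimate is formulated for weakly geometric drivers, whereas the statement allows any $\mathbf X\in\mathscr C^{p\text{-}\mathrm{var}}$.

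The one step you should write out is the continuity of $u\mapsto\|R^Y\|_{p/2\text{-}\mathrm{var};[s,u]}$, with value $0$ at $u=s$. Since $R^Y$ is a non-additive two-parameter function, this is not the usual statement for paths. It does hold, by two facts:
\begin{enumerate}
\item uniform continuity of $R^Y$ on $\Delta_{[s,t]}$ controls the effect of moving the endpoint of the interval straddling $u$;
\item superadditivity and finiteness of $\|R^Y\|^{p/2}_{p/2\text{-}\mathrm{var}}$ force $\|R^Y\|_{p/2\text{-}\mathrm{var};[u,u+h]}\to 0$ as $h\downarrow 0$. Otherwise one could extract infinitely many disjoint subintervals each carrying a fixed positive amount of variation.
\end{enumerate}
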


\begin{proof}
Equipping $(A,\mathrm X)$ with its canonical joint rough-path lift, the mixed iterated integrals being defined by Riemann--Stieltjes integration. By basic Riemann--Stieltjes estimates, whenever $\rho(s,t)\le1$, this lift is controlled by $C_p \, \rho$ in $p$-variation, with $C_p > 0$ depending only on $p$. Hence the standard local RDE estimate (see, e.g., \cite[Lemma~10.7]{friz2010multidimensional}), applied to the vector fields $(I_e,V(\bm\cdot))$, yields constants $\eta\in(0,1]$ and $C_0$, depending only on $p,d,e$ and $\|V\|_{C_b^{2}}$, such that
\[
\|\delta Y\|_{p\text{-}\mathrm{var};[u,v]}
\le C_0 \rho(u,v)^{1/p}
\quad\text{whenever }\rho(u,v) \le \eta.
\]

Since $\rho$ is a control function, a greedy partition of $[s,t]$ at level $\eta$ has at most $1+\eta^{-1}\rho(s,t)$ intervals. Also, for $(u,v) \in \Delta_{[s,t]}$ satisfying $\rho(u,v) \le \eta$, it holds that 
\begin{equation*}
\rho(u,v)^{1/p} \bigl( 1 + \eta^{-1}\rho(s,t) \bigr) \le \rho(u,v)^{1/p} + \eta^{(1-p)/p} \rho(s,t) \le (1 + \eta^{(1-p)/p}) \bigl( \rho(s,t)^{1/p} + \rho(s,t) \bigr). 
\end{equation*}
Therefore, summing the local estimates proves
\[
\|\delta Y\|_{p\text{-}\mathrm{var};[s,t]}
\le C_0 (1 + \eta^{(1-p)/p}) \bigl( \rho(s,t)^{1/p}+\rho(s,t) \bigr).
\qedhere
\]
\end{proof}

The following proposition offers an estimate for RDEs with unbounded drift, seemingly unavailable in the existing literature, including \cite[Chapter~12]{friz2010multidimensional}, \cite[Theorem~3.4]{teichmann2011another}, and \cite{RiedelScheutzow2017}.

\begin{proposition}\label{prop:RDE with drift}
Assume $p\in(2,3)$, $\gamma>p$, and let
$\mathbf X\in\mathscr C^{p\text{-}\mathrm{var}}([0,T],\R^d)$. Let $F(t,\cdot)$ be a uniformly Lipschitz function with $\sup_{t \in [0,T]}|F(t,0)| < \infty$ and $H \in C_b^{\gamma}(\R^e;\mathscr L(\R^d,\R^e))$.
Then, the backward RDE
\begin{equation}\label{e:backward RDE}
\psi_t
=
y+\int_t^T F(r,\psi_r)\,dr
+
\int_t^T H(\psi_r)\,d\mathbf X_r
\end{equation}
admits a unique solution with
$(\psi,-H(\psi))\in\mathscr D_{\mathrm X}^{(p,p)\text{-}\mathrm{var}}([0,T],\R^e)$.
Moreover, if
\[
|y|
+
\sup_{t\in[0,T]}|F(t,0)|
+
\sup_{t\in[0,T]}
\|\delta F(t,\bm\cdot)\|_{1\text{-}\mathrm{H\ddot{o}l}}
+
\|H\|_{C_b^{2}} + T
\le M,
\]
then for some constant $C_{p,d,e,M}$ depending only on $p,d,e$ and $M$, it holds that 
\begin{equation}\label{e:estimate for RDEs}
\|\delta \psi\|_{p\text{-}\mathrm{var};[0,T]} 
\le
C_{p,d,e,M} 
\Big(
1+
\triplenorm{\delta \mathbf X}_{p\text{-}\mathrm{var};[0,T]}^{\,p}
\Big).
\end{equation}
\end{proposition}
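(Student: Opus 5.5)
We first reverse time, so that the statement becomes a forward RDE with a Lipschitz, possibly unbounded, drift: with $\tilde\psi_t := \psi_{T-t}$ and the time-reversed rough path $\overleftarrow{\mathbf X}$ (still a $p$-rough path with the same $p$-variation and $\triplenorm{\cdot}$ norms), \eqref{e:backward RDE} becomes $\tilde\psi_t = y + \int_0^t \tilde F(r,\tilde\psi_r)\,dr + \int_0^t H(\tilde\psi_r)\,d\overleftarrow{\mathbf X}_r$, where $\tilde F(r,\cdot) := F(T-r,\cdot)$. The Gubinelli derivative stays $H(\tilde\psi)$ up to sign, so the controlled-path space $\mathscr D^{(p,p)\text{-}\mathrm{var}}$ is preserved. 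Existence and uniqueness on a short interval come from the standard fixed point for RDEs in $\mathscr D^{(p,p)\text{-}\mathrm{var}}_{\mathrm X}$ with $H\in C^\gamma_b$, $\gamma>p$. The drift $\int \tilde F(r,\psi_r)dr$ is a Lipschitz functional of $\psi$ in sup norm, with a small factor $(t-s)$, so it only perturbs the contraction. Because the vector field $H$ is bounded and the drift has at most linear growth, there is no blow-up: a Gronwall bound on $\sup|\psi|$ (derived below) lets us patch the local solutions together along a partition determined by the control $\triplenorm{\delta\mathbf X}^p_{p\text{-var};[s,t]} + (t-s)$. Uniqueness follows by the same local contraction argument applied to the difference of two solutions.

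For the quantitative estimate \eqref{e:estimate for RDEs}, the key step is to treat the drift as a bounded variation path and apply Lemma~\ref{lem:BV perturbed RDE} with $A_t := \int_0^t \tilde F(r,\tilde\psi_r)\,dr$ and $V = H$. This gives
\[
\|\delta\psi\|_{p\text{-var};[s,t]} \le C\bigl(\rho(s,t)^{1/p}+\rho(s,t)\bigr), \qquad \rho(s,t) = \triplenorm{\delta\mathbf X}^p_{p\text{-var};[s,t]} + \|\delta A\|_{1\text{-var};[s,t]}.
\]
The obstacle is that $\|\delta A\|_{1\text{-var};[s,t]} \le M\int_s^t (1+|\psi_r|)\,dr$ depends on the unknown sup norm of $\psi$, which is not bounded a priori since $F$ is unbounded.

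To close the estimate, we first bound $\|\psi\|_\infty$. Let $N$ be the number of intervals in a greedy partition at level $1$ for the control $\triplenorm{\delta\mathbf X}^p$; then $N \le 1 + \triplenorm{\delta\mathbf X}^p_{p\text{-var};[0,T]}$. On each such interval the lemma yields
\[
\sup_{r\in[s,t]}|\psi_r - \psi_s| \le C\Bigl(1 + \int_s^t (1+|\psi_r|)\,dr\Bigr),
\]
with the superlinear term $\rho$ handled in the same way. The contribution of the rough increments over $[0,t]$ therefore adds up to at most $C(1 + \triplenorm{\delta\mathbf X}^p)$, and we obtain
\[
|\psi_t| \le |y| + C\bigl(1 + \triplenorm{\delta\mathbf X}^p_{p\text{-var};[0,T]}\bigr) + C\int_0^t |\psi_r|\,dr.
\]
Gronwall's lemma, with $T\le M$, then gives $\|\psi\|_\infty \le C_M(1 + \triplenorm{\delta\mathbf X}^p)$.

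The main difficulty is making this step rigorous: when $\rho$ is large, Lemma~\ref{lem:BV perturbed RDE} produces the linear term $\rho$ instead of $\rho^{1/p}$, so we must use a partition that is adapted to both the rough path and the drift. We split further into subintervals where $\int (1+|\psi|) \le 1$, then count the number of such pieces and apply Gronwall, which yields a linear inequality in $\sup|\psi|$. Once this is done, $\|\delta A\|_{1\text{-var};[0,T]} \le C_M(1+\triplenorm{\delta\mathbf X}^p)$. Substituting into the lemma on $[0,T]$, and using $\rho^{1/p}\le 1+\rho$, gives \eqref{e:estimate for RDEs}.
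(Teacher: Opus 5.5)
Your proof is correct. For the quantitative bound it uses the same ingredients as the paper: Lemma~\ref{lem:BV perturbed RDE} with $A$ the drift, $u^{1/p}\le 1+u$, linear growth of $F$, and Gronwall. It is, however, organized less economically.

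The paper applies the lemma directly on $[t,T]$ and runs a single backward Gronwall for $g(t):=\|\delta\psi\|_{p\text{-}\mathrm{var};[t,T]}$, using $|\psi_r|\le |y|+g(r)$. No partition and no separate sup-norm step are needed, because the lemma's bound $C(\rho^{1/p}+\rho)$ already holds for arbitrary $\rho$; the greedy partition is carried out inside the lemma's proof. Your greedy partition at level $1$ for $\triplenorm{\delta\mathbf X}^p$ therefore duplicates that step. The ``main difficulty'' in your last paragraph is also not a real obstacle: the term $\rho$ is linear in $\int(1+|\psi_r|)\,dr$, which is exactly the structure Gronwall needs. You can simply apply the lemma on $[0,t]$, which also avoids having the integral run to the right endpoint of the last partition interval rather than to $t$.

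For existence and uniqueness the two routes genuinely differ. The paper truncates $F$ to $F(t,\pi_n(y))$, invokes well-posedness for bounded drift, and removes the truncation with the a priori bound. Uniqueness then comes for free, since every solution is bounded a priori and hence solves a truncated equation. You instead rerun a local fixed point with the Lipschitz drift as a perturbation and patch the local solutions together. This works, but you must check two things that truncation sidesteps. First, the local step size must be uniform given the sup bound, since the size of the drift on each step depends on $|\psi|$ at its start. Second, every solution must lie in the contraction ball.

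One minor inaccuracy: for non-geometric $\mathbf X$ the reversed second level is $-\mathbb X_{T-t,T-s}+\delta\mathrm X^{\otimes 2}_{T-t,T-s}$. So $\triplenorm{\cdot}$ is preserved only up to a factor $2$, not exactly; this is harmless for the estimate.
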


\begin{proof}
We first prove the estimate \eqref{e:estimate for RDEs}. Assume $(\psi,-H(\psi)) \in \mathscr D^{(p,p)\text{-}\mathrm{var}}_{\mathrm X}([0,T],\R^e)$ is a solution to Eq.~\eqref{e:backward RDE}. 
Set
$A_t:=-\int_0^t F(r,\psi_r)\,dr$. Then, by applying 
Lemma~\ref{lem:BV perturbed RDE} backwardly on $[t,T]$ with vector field $V := - H$, it yields 
\[
\|\delta \psi\|_{p\text{-}\mathrm{var};[t,T]}
\le
\bar C_{p,d,e,M}
\bigl(\rho(t,T)^{1/p} + \rho(t,T)\bigr),
\quad \text{where }  
\rho(t,T)
:=
\triplenorm{\delta \mathbf X}_{p\text{-}\mathrm{var};[t,T]}^{\,p}
 +
 \int_t^T|F(r,\psi_r)|\,dr,
\]
and $\bar C_{p,d,e,M} > 0$ is a constant depending only on $p,d,e$, and $M$. 
Using $u^{1/p}\le1+u$, 
$|F(r,\psi_r)|\le M(1+|\psi_r|)$, and $|\psi_r| \le \|\delta \psi\|_{p\text{-}\mathrm{var};[r,T]} + |y|$, we obtain that for some constant $\tilde C_{p,d,e,M} > 0$, 
\[
\|\delta \psi\|_{p\text{-}\mathrm{var};[t,T]}
\le
\tilde C_{p,d,e,M} \Big(1 + \triplenorm{\delta \mathbf X}_{p\text{-}\mathrm{var};[t,T]}^{\,p}\Big) 
+
\tilde C_{p,d,e,M} \int_t^T \|\delta \psi\|_{p\text{-}\mathrm{var};[r,T]} \,dr.
\]
The desired estimate follows from Gronwall's inequality (in backward form).

After obtaining the \emph{a priori} estimate for the solution, one can prove the well-posedness of the RDE via a truncation argument. Indeed, when $F$ is bounded, the condition $H \in C^{\gamma}_b$ is sufficient to guarantee well-posedness for rough SDEs (see \cite{fhl21}), and hence for RDEs. Define the truncated function $F^{n}(t,y) := F(t,\pi_n (y))$ with 
\begin{equation*}
\pi_n(y):=
\begin{cases}
y, & |y|\le n,\\
n y/{|y|}, & |y|>n.
\end{cases}
\end{equation*}
Then, the \emph{a priori} estimate ensures that the corresponding truncated solution is, in fact, a solution for sufficiently large $n$; the well-posedness then follows. 
\end{proof}

\begin{lemma}\label{lem:gamma - 2}
Assume $\gamma \in (2,3]$ and $G:\R\to \R$ satisfies $DG \in C^{\gamma - 2}_b(\R,\R)$. Then for any functions $y,\bar y:[0,T] \to \R$, we have for all $(s,t) \in \Delta$, 
\begin{equation}\label{e:G y - G bar y}
|\delta (G(y) - G(\bar y))_{s,t}| \le \|DG\|_{C^{\gamma - 2}_b} \Big( (|\delta y_{s,t}|^{\gamma - 2} + |\delta \bar y_{s,t}|^{\gamma - 2}) |(y - \bar y)_{t}| + |\delta (y - \bar y)_{s,t}| \Big).
\end{equation}
Moreover, for $1 \le q \le q'$, assume $\gamma - 1 \ge (q+q')/q'$. Then, it holds 
\begin{equation}\label{e:G y - G bar y'}
\begin{aligned}
&\|\delta (G(y) - G(\bar y))\|_{q'\text{-}\mathrm{var};[s,t]} \\
&\le \|DG\|_{C^{\gamma - 2}_b} \Big( \big( \|\delta y\|_{q\text{-}\mathrm{var};[s,t]}^{\gamma - 2} + \|\delta \bar y\|_{q\text{-}\mathrm{var};[s,t]}^{\gamma - 2}\big) \sup_{r\in[s,t]}|(y - \bar y)_r| + \|\delta (y - \bar y)\|_{q'\text{-}\mathrm{var};[s,t]} \Big)
\end{aligned}
\end{equation}
\end{lemma}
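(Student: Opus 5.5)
The plan is to prove \eqref{e:G y - G bar y} by an interpolation identity that separates the dependence on $y-\bar y$ from that on the increments. Then \eqref{e:G y - G bar y'} follows by summing over partitions and applying H\"older's inequality. Write $\hat y := y - \bar y$. By the fundamental theorem of calculus,
\begin{equation*}
\delta (G(y) - G(\bar y))_{s,t} = \int_0^1 DG(\bar y_t + \lambda \hat y_t)\,d\lambda\, \hat y_t - \int_0^1 DG(\bar y_s + \lambda \hat y_s)\,d\lambda\, \hat y_s .
\end{equation*}
Adding and subtracting $\int_0^1 DG(\bar y_s + \lambda \hat y_s)\,d\lambda\,\hat y_t$ splits this into two pieces. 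The first is $\int_0^1 [DG(\bar y_t+\lambda\hat y_t) - DG(\bar y_s + \lambda \hat y_s)]\,d\lambda\,\hat y_t$. The second is $\int_0^1 DG(\bar y_s+\lambda\hat y_s)\,d\lambda\,\delta\hat y_{s,t}$. The second piece is bounded by $\|DG\|_\infty |\delta \hat y_{s,t}|$.

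For the first piece, I use the $(\gamma-2)$-H\"older continuity of $DG$. The relevant argument increment is $\delta\bar y_{s,t} + \lambda\delta\hat y_{s,t} = (1-\lambda)\delta \bar y_{s,t} + \lambda \delta y_{s,t}$. Since $\gamma - 2\in(0,1]$, the map $a\mapsto |a|^{\gamma-2}$ is subadditive. This gives $|(1-\lambda)\delta\bar y_{s,t}+\lambda\delta y_{s,t}|^{\gamma-2} \le |\delta y_{s,t}|^{\gamma-2} + |\delta\bar y_{s,t}|^{\gamma-2}$. Combining the two pieces yields \eqref{e:G y - G bar y}. The norm $\|DG\|_{C^{\gamma-2}_b}$ dominates both $\|DG\|_\infty$ and the H\"older seminorm, so a single constant suffices.

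For \eqref{e:G y - G bar y'}, fix a partition $\pi$ of $[s,t]$. Apply \eqref{e:G y - G bar y} on each $[u,v]\in\pi$ and use Minkowski's inequality in $\ell^{q'}$. The term $|\delta\hat y_{u,v}|$ contributes $\|\delta \hat y\|_{q'\text{-}\mathrm{var};[s,t]}$. For the other term, bound $|\hat y_v|\le \sup_{r}|\hat y_r|$. It then remains to show that
\begin{equation*}
\Big(\sum_{[u,v]\in\pi}|\delta y_{u,v}|^{(\gamma-2)q'}\Big)^{1/q'} \le \|\delta y\|^{\gamma-2}_{q\text{-}\mathrm{var};[s,t]},
\end{equation*}
and similarly for $\bar y$. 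This holds provided $(\gamma-2)q' \ge q$, since $\ell^{q}\subset \ell^{(\gamma-2)q'}$ with norm one. The hypothesis $\gamma - 1\ge (q+q')/q'$ is exactly equivalent to $(\gamma-2)q'\ge q$, so this step closes. Taking the supremum over $\pi$ completes the proof.

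The only delicate point is arranging the interpolation so that the H\"older increment multiplies $\hat y_t$ (a sup-norm quantity) and not $\delta\hat y$. This is done by the add-and-subtract choice above. A second check is that the exponent condition matches the embedding $\ell^q\subset\ell^{(\gamma-2)q'}$. Neither step should present real difficulty.
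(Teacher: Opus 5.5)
Your proposal is correct and follows essentially the same route as the paper. Both use the same add-and-subtract splitting via the fundamental theorem of calculus, so that the $(\gamma-2)$-H\"older increment of $DG$ multiplies $(y-\bar y)_t$, and then take $q'$-variation using $(\gamma-2)q'\ge q$, i.e.\ $1/q'\le(\gamma-2)/q$; you simply make explicit the subadditivity of $a\mapsto|a|^{\gamma-2}$, Minkowski's inequality, and the embedding $\ell^{q}\subset\ell^{(\gamma-2)q'}$, which the paper leaves implicit.
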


\begin{proof}
By the triangular inequality and Newton-Leibniz's formula, we have 
\begin{equation*}
\begin{aligned}
|\delta (G(y) - G(\bar y))_{s,t}| &\le \int_{0}^{1} |DG (\bar y_{t} + \lambda (y_t - \bar y_{t}) ) - DG (\bar y_{s} + \lambda (y_s - \bar y_{s}) )| d \lambda \cdot | (y - \bar y)_{t} | \\
& \quad + \int_{0}^{1} |DG(\bar y_s + \lambda(y_{s} - \bar y_{s}))| d \lambda \cdot |\delta(y - \bar y)_{s,t}|\\
&\le \|DG\|_{C^{\gamma - 2}_b} (|\delta y_{s,t}| + |\delta \bar y_{s,t}|)^{\gamma - 2} |(y - \bar y)_{t}| + \|DG\|_{\infty} |\delta (y - \bar y)_{s,t}|,
\end{aligned}
\end{equation*}
which yields \eqref{e:G y - G bar y}. Moreover, taking $q'$-variation to both sides of \eqref{e:G y - G bar y}, and noting $1/q' \le (\gamma - 2)/q$, it then follows \eqref{e:G y - G bar y'}.  
\end{proof}

\section{Symbolic Index}\label{app:symbolic_index}

\renewcommand{\arraystretch}{1.2}
\setlength{\tabcolsep}{6pt}
\begin{longtable}{@{}p{0.24\linewidth} p{0.56\linewidth} p{0.15\linewidth}@{}}
\textbf{Symbol} & \textbf{Meaning} & \textbf{Reference} \\
\hline
\endfirsthead
\textbf{Symbol} & \textbf{Meaning} & \textbf{Reference} \\
\hline
\endhead

$(\Omega,\mathcal F,\mathbb P)$ & underlying probability space & Sec.~\ref{sec:notations} \\
$(\mathcal F_t)_{t\in[0,T]}$ & augmented filtration generated by Brownian motion $W$ & Sec.~\ref{sec:notations} \\
$\|\xi\|_k$ & $L^k$-norm of a random variable $\xi$ & Sec.~\ref{sec:notations} \\
$\|S\|_{\mathcal L^{k}([s,t] \times \Omega)}$ & $L^k$-norm of a progressively measurable process $S$ & Sec.~\ref{sec:notations} \\
$\E_t[\bm \cdot]$ & conditional expectation $\E[\bm \cdot | \mathcal F_t]$ & Sec.~\ref{sec:notations} \\
$\|\xi|\mathcal G\|_m$, $\|\xi|\mathcal G\|_{m,n}$ & conditional $L^m$-norm and mixed $(m,n)$-norm & Eq.~\ref{e:mixed moment} \\
$\|F\|_{\infty;m,n}$ & supremum in time of the conditional mixed $(m,n)$-norm of a two parameter process $F$& Sec.~\ref{sec:notations} \\
$\Delta_{[s,t]}$, $\Delta$ & $\{(u,v):s\leq u\leq v\leq t\}$ and $\Delta_{[0,T]}$, respectively & Sec.~\ref{sec:notations} \\
$\mathcal P_{[s,t]}$, $|\pi|$ & partitions of $[s,t]$ and the mesh size of a partition $\pi$ & Sec.~\ref{sec:notations} \\
$w$ & control function on $\Delta$: $w(s,u)+w(u,t)\leq w(s,t)$ & Sec.~\ref{sec:notations} \\
$\mathscr L(V,K)$, $V\otimes K$ & space of linear maps from $V$ to $K$, and the algebraic tensor product & Sec.~\ref{sec:notations} \\
$\delta g_{u,v}$ & increment $g_v-g_u$ of a one-parameter path $g$ & Sec.~\ref{sec:notations} \\
$\|\bm \cdot\|_{\lambda\text{-}\mathrm{var};[s,t]}$,\ $(\|\bm\cdot\|_{\lambda\text{-}\mathrm{var}})_{s,t}$ & $\lambda$-variation norm and induced two-parameter function & Sec.~\ref{sec:notations}\\
$\|\bm\cdot\|_{\alpha\text{-}\mathrm{H\ddot{o}l};[s,t]}$ & H\"older seminorm & Sec.~\ref{sec:notations} \\
$\|\delta M\|_{\mathrm{BMO};[s,t]}$ & BMO norm of a continuous local martingale $M$ on $[s,t]$ & Sec.~\ref{sec:notations} \\
$\Theta$ & parameter set $(T,d,q,q',p)$ & Eq.~\ref{e:Theta} \\
$\lesssim$, $\lesssim_K$ & inequality up to a constant depending on $\Theta$, or on $(\Theta,K)$ & Sec.~\ref{sec:notations} \\
$C_b^\gamma(V,K)$ & functions with bounded derivatives up to order $\lfloor\gamma\rfloor$ and H\"older-continuous highest derivative & Sec.~\ref{sec:notations} \\
$\mathscr C^{p\text{-}\mathrm{var}}$ $(\mathscr C^{0,p\text{-}\mathrm{var}}_g)$ & space of (geometric) $p$-rough paths & Def.~\ref{def:rough path} \\
$\mathbf X=(\mathrm X,\mathbb X)$ & rough path and its first- and second-level components & Def.~\ref{def:rough path} \\
$|\delta \mathbf X|_{\lambda\text{-}\mathrm{var}}$ & $\lambda$-variation norm of rough path & Sec.~\ref{sec:notations}\\
$\triplenorm{\delta \mathbf X}_{\lambda\text{-}\mathrm{var}}$ & homogeneous $\lambda$-variation norm of rough path & Sec.~\ref{sec:notations} \\
$\mathscr C^{1/p}$ $(\mathscr C^{0,1/p}_g)$ & space of (geometric) $1/p$-H\"older rough paths & Def.~\ref{def:Holde rough path} \\
$\mathrm{RSM}_{\mathrm X}^{(p,p')\text{-}\mathrm{var}}$ & space of rough semimartingales controlled by $\mathrm X$ & Def.~\ref{def:rough semimartingale} \\
$P^Y$, $R^Y$, $M^Y$ &  controlled pathwise remainder part, controlled remainder part, and martingale part of $Y$ & Def.~\ref{def:rough semimartingale} \\
$\thicknm{(Y,Y')}_{\mathrm X;q,q';k,\infty}$ &  seminorm of rough semimartingales  & Eq.~\ref{e:Psi} 
\end{longtable}

\bibliographystyle{plain}

\bibliography{Reference-roughBSDE}

\end{document}